\theoremstyle{plain}
\newtheorem{definition}{Definition}[section]
\theoremstyle{plain}
\newtheorem{theorem}[definition]{Theorem}
\theoremstyle{plain}
\newtheorem{lemma}[definition]{Lemma}
\theoremstyle{plain}
\newtheorem{proposition}[definition]{Proposition}
\theoremstyle{plain}
\newtheorem{corollary}[definition]{Corollary}
\theoremstyle{definition}
\newtheorem{example}[definition]{Example}
\theoremstyle{definition}
\newtheorem{remark}[definition]{Remark}
\theoremstyle{definition}
\newtheorem{notation}[definition]{Notation}
\numberwithin{equation}{section}
\def\institute#1{\def\@institute{#1}}
\def\email#1{\def\@email{#1}}
\def\@maketitle{
\begin{center}
{\Large\bf \@title \par}
\vspace{15pt}
%\begin{flushleft}
{\large\bf \@author \par}
\vspace{10pt}
{ \@institute \par} 
\vspace{5pt}
{E-mail: \@email \par}
%\end{flushleft}
\end{center}
\par\vskip 10pt
}
\title{Vertex Algebras and the Equivariant Lie Algebroid Cohomology}
\author{Masanari Okumura\thanks{Research Fellow of Japan Society for the Promotion of Science}}
\date{}
\institute{Graduate School of Mathematical Sciences, The University of Tokyo, 3-8-1 Komaba, Meguro-ku, Tokyo 153-8914, Japan}  \email{okumura@ms.u-tokyo.ac.jp}
\begin{document}

\maketitle

\begin{abstract}
A vertex-algebraic analogue of the Lie algebroid complex is constructed, which generalizes the ``small" chiral de Rham complex  on smooth manifolds. 
The notion of VSA-inductive sheaves is also introduced. This notion  generalizes that of sheaves of vertex superalgebras. The complex mentioned above is constructed as a VSA-inductive sheaf.
With this complex, the equivariant Lie algebroid cohomology is generalized to a vertex-algebraic analogue, which we call the chiral equivariant Lie algebroid cohomology. 
In fact, the notion of the equivariant Lie algebroid cohomology contains that of the equivariant Poisson cohomology. Thus the chiral equivariant Lie algebroid cohomology is also a vertex-algebraic generalization of the equivariant Poisson cohomology.   
A special kind of complex is introduced and its properties are studied in detail. With these properties, 
some isomorphisms of cohomologies are developed, which enables us to compute the chiral equivariant Lie algebroid cohomology in some cases. Poisson-Lie groups are considered as such a special case. 
\end{abstract}

\tableofcontents

\section{Introduction}\label{section: Introduction}
The \textit{chiral de Rham complex} (\textit{CDR}) was introduced by Malikov-Schechtman-Vaintrob in \cite{MSV99}.  It is a sheaf of vertex superalgebras containing the usual de Rham complex. 
Gorbounov-Malikov-Schechtman generalized this notion by introducing the sheaf of \textit{chiral differential operators}, and studied the sheaf in a series of  papers \cite{GMS00,GMS03,GMS04}. 
Another construction of the CDR was done by means of formal loop spaces in \cite{KV04}  by Kapranov-Vasserot.  
Moreover the CDR was studied in relation to elliptic genera and mirror symmetry in \cite{BL02,Bo01,BL00}. 
Recently Lian-Linshaw introduced a new equivariant cohomology theory in \cite{LL}, and studied in detail the CDR in the $C^\infty$-setting. 
The CDR was also investigated in terms of \textit{SUSY vertex algebras} in \cite{BZHS08,EHKZ13,Hel09,HK07,HZ10,HZ11}. 

Let $G$ be a compact connected Lie group with complexified Lie algebra $\mathfrak{g}$ and let $M$ be a $G$-manifold. Then the  algebra $\Omega(M)$ of differential forms on $M$ has a canonical $G$-action. Together with the Lie derivatives and the interior products, this action of $G$ makes $\Omega(M)$ a $G^*$-\textit{algebra}. It is well- known that the equivariant cohomology of $M$ is computed as the equivariant cohomology of the $G^*$-algebra $\Omega(M)$ (see \cite{GS99}). As a vertex-algebraic analogue of this equivariant cohomology theory,  
the \textit{chiral equivariant cohomology} was introduced by Lian-Linshaw in \cite{LL}. This cohomology was defined for $O(\mathfrak{sg})$-\textit{algebras}, a vertex-algebraic analogue of $G^*$-algebras. 
The key to the construction is the fact that the \textit{semi-infinite Weil complex} $\mathcal{W}(\mathfrak{g})$ introduced by Feigin-Frenkel in \cite{FF91} has an $O(\mathfrak{sg})$-algebra structure. This fact was proved by Lian-Linshaw together with the fact that the space $\mathcal{Q}(M)$ of global sections of the CDR of  $M$ has an $O(\mathfrak{sg})$-algebra structure. 
Later  in \cite{LLS1}, Lian-Linshaw-Song introduced the notion of $\mathfrak{sg}[t]$-\textit{modules} as an analogue of $\mathfrak{g}$-differential complexes, a complex with a compatible action of the Lie superalgebra $\mathfrak{sg}=\mathfrak{g}\ltimes_{\mathrm{ad}} \mathfrak{g}$.
As a classical equivariant cohomology theory, the construction of the chiral equivariant cohomology was generalized for the case of  $\mathfrak{sg}[t]$-\textit{modules}. 
Moreover in \cite{LLS1}, Lian-Linshaw-Song also 
introduced a ``small" CDR as a subcomplex of the CDR, and pointed out that the global section of the subcomplex is an $\mathfrak{sg}[t]$-module. 
The small CDR itself is trivial since its vertex superalgebra structure is commutative. However when one consider the corresponding chiral equivariant cohomology, the vertex superalgebra structure is very complicated in general. Such a vertex superalgebra were studied in \cite{LLS1,LLS2}. 

An interesting example of $\mathfrak{g}$-differential complexes was considered in \cite{Gin99} by Ginzburg. This example comes from the space of multi-vector fields of a Poisson manifold with an action of $\mathfrak{g}$ and the corresponding equivariant cohomology is called the \textit{equivariant Poisson cohomology}. He also pointed out that one can define the same kind of equivariant cohomology  for Lie algebroids.  

In this paper, 
we construct $\mathfrak{sg}[t]$-modules from Lie algebroids with an action of $\mathfrak{g}$, generalizing the small CDR of Lian-Linshaw-Song. We then define the chiral equivariant Lie algebroid cohomology.  

The CDR as well as the small version constructed by Lian-Linshaw on smooth manifolds is actually not a sheaf but a presheaf with a property, called a \textit{weak sheaf} by Lian-Linshaw-Song. For this reason, one has a little ambiguity in the choice of morphisms or the gluing properties. Therefore we introduce the notion of VSA-inductive sheaves, which generalize that of sheaves of vertex superalgebras, and formulate the morphisms and the gluing properties. 
We construct a VSA-inductive sheaf associated with a Lie algebroid  and we obtain vertex-algebraic analogue of the Lie algebroid complex. 
Moreover we prove that the complex above has an $\mathfrak{sg}[t]$-module structure when the Lie algebroid has an action of $\mathfrak{g}$. This leads us to the definition of the chiral equivariant Lie algebroid cohomology. 
When the Lie algebroid is a tangent bundle, we recover the CDR of Lian-Linshaw-Song. 

In the classical equivariant cohomology theory, 
an important role is played by special complexes called  $W^*$-\textit{modules}. 
They have remarkable properties, which make it easy to compute their equivariant cohomologies (see \cite{GS99}). Motivated by this fact, we introduce the notion of chiral $W^*$-modules and prove that they have some properties analogous to those of $W^*$-modules. Moreover we prove that the complexes obtained from the VSA-inductive sheaves associated with a type of  Lie algebroid containing  the \textit{cotangent Lie algebroids} of Poisson-Lie groups have chiral $W^*$-module structures. We then compute their chiral equivariant Lie algebroid cohomologies by using the properties of chiral $W^*$-modules mentioned above. 

The article is organized as follows: 
in Section 2 we recall some basics of vertex superalgebras and the chiral equivariant cohomology. 
In Section 3, we introduce the chiral $W^*$-modules and 
a chiral Cartan model for $\mathfrak{sg[t]}$-modules. We mainly consider this chiral Cartan model when $\mathfrak{g}$ is commutative. 
The result in this section will be used in the last part of Section 6.    
In Section 4, we introduce the notion of VSA-inductive sheaves. Then we establish some gluing properties. Moreover we construct VSA-inductive sheaves from presheaves of degree-weight-graded vertex superalgebras with some properties. 
In Section 5, after recalling the notion of Lie algebroids and the Lie algebroid cohomology, we first construct an important VSA-inductive sheaf on $\mathbb{R}^m$  and its small version, which we denote  respectively by $\Omega_\mathrm{ch}(\mathbb{R}^{m|r})$ and $\Omega^{\gamma c}_\mathrm{ch}(\mathbb{R}^{m|r})$. Using  the gluing property proved in Section 4, we glue the small ones $\Omega^{\gamma c}_\mathrm{ch}(\mathbb{R}^{m|r})$   into the global VSA-inductive sheaf associated with an arbitrary vector bundle.  Next for a Lie algebroid, we construct a differential on the VSA-inductive sheaf associated with the Lie algebroid, using the vertex operators of the bigger one $\Omega_\mathrm{ch}(\mathbb{R}^{m|r})$. Thus we obtain a vertex-algebraic analogue of the Lie algebroid complex. Moreover in Section 6, we equip this complex with an $\mathfrak{sg}[t]$-module structure, when the Lie algebroid has an action of a Lie algebra $\mathfrak{g}$. 
For this construction, we also need the vertex operators of the bigger VSA-inductive sheaf.   
Then we introduce the chiral equivariant Lie algebroid cohomology. 
In the last part of Section 6, we compute this cohomology for an important Lie algebroid called a \textit{transformation Lie algebroid}. In particular, we compute that cohomology for the cotangent Lie algebroids associated with \textit{Poisson-Lie groups}.

\vspace{10pt}
Throughout this paper, $\mathbb{K}$ is the field of real numbers $\mathbb{R}$ or that of complex numbers $\mathbb{C}$, and we will work over $\mathbb{K}$. We assume that a grading on a super vector space is compatible with the super vector space structure.

\section{Preliminaries}\label{section: Preliminaries} 
\subsection{Vertex Superalgebras}

We first recall basic definitions and facts concerning vertex superalgebras, which was introduced in \cite{Bor86}. We will follow the formalism and results in \cite{FBZ04,Kac01,LL04}.

A \textit{vertex superalgebra} is a quadruple $(V, \mathbf{1},T,Y)$ consisting of a super vector space $V,$ an even vector $\mathbf{1} \in V$, called the vacuum vector, 
an even linear operator $T : V \to V$, called the translation operator, and
an even linear operation 
$
Y=Y(\, \cdot \, , z) : V \to (\mathrm{End} V)[[z^{\pm1}]],
$
taking each $A \in V$ to a field on $V$, called the vertex operator,
$$
Y(A,z)= \sum_{n \in \mathbb{Z}} A_{(n)} z^{-n-1},
$$
such that 
\begin{enumerate}[$\bullet$]
     \setlength{\topsep}{1pt}
     \setlength{\partopsep}{0pt}
     \setlength{\itemsep}{1pt}
     \setlength{\parsep}{0pt}
     \setlength{\leftmargin}{20pt}
     \setlength{\rightmargin}{0pt}
     \setlength{\listparindent}{0pt}
     \setlength{\labelsep}{3pt}
     \setlength{\labelwidth}{30pt}
     \setlength{\itemindent}{0pt}
\item
(vacuum axiom)\\
$Y(\mathbf{1} ,z) = \mathrm{id}_V$;\\
$
Y(A,z)\mathbf{1} \in V[[z]],
$
and 
$
Y(A,z)\mathbf{1} | _{z=0} = A
$
\quad for any $A \in V$;
\item
(translation axiom)\\
$T \mathbf{1} = 0$;\\
$
[T,Y(A,z)] = \frac{\mathit{d}}{\mathit{d}z} Y(A,z)
$
\quad for any $A \in V$; 
\item
(locality axiom)\\
For any $A, B \in V,$ $Y(A,z)$ and $Y(B,z)$ are mutually local.
\end{enumerate}
A vertex superalgebra  $(V, \mathbf{1}, T, Y)$ is said to be  
\textit{$\mathbb{Z}$-graded} 
when the super vector space 
$V$ is given a $\mathbb{Z}$-grading $V = \oplus_{n \in \mathbb{Z}} V[n]$ such that 
$\mathbf{1}$ is a vector of weight $0$, 
$T$ is a homogeneous linear operator of weight $1$, and
if $A \in V[n]$, then the field $Y(A,z)$ is homogeneous of conformal dimension $n$.  
We refer to such a grading as a \textit{weight-grading} on the vertex superalgebra.
Note that $\mathrm{Im}\,Y\subset(\mathrm{End}\,V)[[z^{\pm1}]]$ has a canonical structure of vertex superalgebra and that $Y: V\to \mathrm{Im}\,Y$ is an isomorphism of vertex superalgebras. We will often identify these two vertex superalgebras. 

For a vertex superalgebra $(V, \mathbf{1}, T, Y)$, the data of $Y$ is equivalent to that of bilinear maps 
$$
(n): V\times V\to V, \quad (A, B)\mapsto A_{(n)}B.
$$
Therefore vertex algebras are also written as $(V, \mathbf{1}, T, (n); n\in \mathbb{Z})$. 
Moreover the translation operator $T$ and the vertex operator $Y(A, z)$ are often denoted by $\partial$ and by $A(z)$, respectively.
A purely even vertex superalgebra is called simply a \textit{vertex algebra}.  

We use the following notation for the \textit{operator product expansion (OPE)} of the mutually local fields $A(z)$ and $B(z)$: 
$$
A(z)B(w)\sim \sum_{n\ge 0}C_n(w)(z-w)^{-n-1},
$$
where $C_n(w)$ are some fields. Note that the OPE formula gives the commutation relations among the coefficients of $A(z)$ and $B(w)$. (See  \cite{FBZ04} and \cite{Kac01} for details.) 

The following examples of vertex superalgebras will be used for the  construction of some kinds of cohomology later.

\begin{example}[affine vertex superalgebras]\label{ex:affine va}%affine va
Let $\mathfrak{g}$ be a Lie superalgebra with a supersymmetric invariant bilinear form $B$. 
Let $\Hat{\mathfrak{g}}=\mathfrak{g} [t^{\pm1}]\oplus \mathbb{K}K$ be 
the affine Lie algebra associated with $(\mathfrak{g}, B).$
Set 
$$
N(\mathfrak{g}, B):=U(\hat{\mathfrak{g}})\otimes_{U(\mathfrak{g}[t]\oplus \mathbb{K}K)}\mathbb{K}_1,
$$
where $\mathbb{K}_1$ is the one-dimensional $\mathfrak{g}[t]\oplus \mathbb{K}K$-module on which $\mathfrak{g}[t]$ acts by zero and $K$ by $1.$
This $\Hat{\mathfrak{g}}$-module $N(\mathfrak{g}, B)$ has a 
$\mathbb{Z}_{\ge 0}$-graded 
vertex superalgebra structure 
called the \textit{affine vertex superalgebra} associated with $\mathfrak{g}$ and $B$. 
Note that the operator $a_{(n)}$ has weight $-n$, where $a_{(n)}$  stands for the operator on $N(\mathfrak{g}, B)$ corresponding to $at^n\in \hat{\mathfrak{g}}$. 
The Lie superalgebra $\mathfrak{g}$ can be seen as a subspace of $N(\mathfrak{g}, B)$ by the injection $\mathfrak{g}\to N(\mathfrak{g}, B),\ a\mapsto a_{(-1)}\mathbf{1}$. We denote by $O(\mathfrak{g}, B)$ the corresponding vertex superalgebra $\mathrm{Im}(Y)=Y(N(\mathfrak{g}, B))\subset (\mathrm{End}\,N(\mathfrak{g}, B))[[z^{\pm1}]]$.
\end{example}%affine va

\begin{example}[$\beta \gamma$-systems]\label{ex:beta_gamma-systems}%bt gamma systems
Let $V$ be a finite-dimensional  vector space.
Let $\mathfrak{h}(V)$ $=(V[t^{\pm1}]\oplus V^*[t^{\pm1}]dt)\oplus \mathbb{K}\mathbf{\tau}$ be 
the \textit{Heisenberg Lie algebra} associated with $V$.
Set 
$$
\mathcal{S}(V):=U(\mathfrak{h}(V))\otimes_{U(V[t]\oplus V^*[t]dt\oplus \mathbb{K}\mathbf{\tau})}\mathbb{K}_1,
$$
where $\mathbb{K}_1$ is the one-dimensional $(V[t]\oplus V^*[t]dt\oplus \mathbb{K}\mathbf{\mathbf{\tau}})$-module in which $V[t]\oplus V^*[t]dt$ acts by zero and $\mathbf{\tau}$ by $1.$
We denote by $\beta^{v}_n,\gamma^{\phi}_n$ the elements $v\otimes t^n, \phi \otimes t^{n-1}dt \in \mathfrak{h}(V)$, respectively.
The $\mathfrak{h}(V)$-module $\mathcal{S}(V)$ has a $\mathbb{Z}_{\ge 0}$-graded vertex algebra structure 
called the $\beta \gamma$-\textit{system} associated with $V$. 
We sometimes denote $\beta^v_n$ and $\gamma^\phi_n$ by $\beta^v_{(n)}$ and $\gamma^\phi_{(n-1)}$, respectively.
\end{example}%bt gamma systems

\begin{example}[$bc$-systems]\label{ex:bc-systems}%bc-systems
Let $V$ be a finite-dimensional vector space.
We regard $V[t^{\pm1}]\oplus V^*[t^{\pm1}]dt$ as an odd abelian Lie algebra.
Consider the one-dimensional central extension
$
\mathfrak{j}(V)=(V[t^{\pm1}]\oplus V^*[t^{\pm1}]dt)\oplus \mathbb{K}\mathbf{\tau}
$
of that odd abelian Lie algebra 
with bracket
\begin{multline*}
[v_1\otimes f_1+\phi_1 \otimes g_1dt,v_2\otimes f_2+\phi_2 \otimes g_2dt] \\
=(\langle v_1,\phi_2\rangle\mathrm{Res}_{t=0}f_1g_2dt +\langle v_2,\phi_1\rangle\mathrm{Res}_{t=0}f_2g_1dt)\mathbf{\tau}.
\end{multline*}
Set 
$$
\mathcal{E}(V):=U(\mathfrak{j}(V))\otimes_{U(V[t]\oplus V^*[t]dt\oplus \mathbb{K}\mathbf{\tau})}\mathbb{K}_1,
$$
where $\mathbb{K}_1$ is the one-dimensional $(V[t]\oplus V^*[t]dt\oplus \mathbb{K}\mathbf{\tau})$-module in which $V[t]\oplus V^*[t]dt$ acts by zero and $\mathbf{\tau}$ by $1.$
We denote by $b^{v}_n, c^{\phi}_n$ the elements $v\otimes t^n, \phi \otimes t^{n-1}dt,$ respectively.
The $\mathfrak{j}(V)$-module $\mathcal{E}(V)$ has a $\mathbb{Z}_{\ge 0}$-graded vertex superalgebra structure called 
the $bc$-\textit{system} associated with $V$. 
We sometimes denote $b^v_n$ and $c^\phi_n$ by $b^v_{(n)}$ and $c^\phi_{(n-1)}$, respectively.
\end{example}%bc-systems

\begin{example}[semi-infinite Weil algebras]\label{ex:semi-infinite_Weil_algebras}%semi-infinite Weil algebras
For a vector space $V$, the tensor product vertex superalgebra 
$$
\mathcal{W}(V):=\mathcal{E}(V)\otimes \mathcal{S}(V),
$$
is called the \textit{semi-infinite Weil algebra} associated with $V$ (\cite{FF91}). 
\end{example}%semi-infinite Weil algebras

%degree-weight graded 
We recall some graded structures on vertex superalgebras.
A vertex superalgebra $V$ is \textit{degree-graded} if it is given a $\mathbb{Z}$-grading $V=\bigoplus_{p\in\mathbb{Z}}V^p$ such that 
$
A_{(n)}B\in V^{p+q}
$
for all $A\in V^p, B\in V^q$, $n\in\mathbb{Z}$ and $\mathbf{1}\in V^0$.
Recall that a $\mathbb{Z}$-grading $V=\bigoplus_{n\in \mathbb{Z}}V[n]$ on a vertex superalgebra $V$ is called a weight-grading if 
$
A_{(k)}B\in V[n+m-k-1]
$
for all $A\in V[n]$, $B\in V[m]$ and $k\in \mathbb{Z}$, and $\mathbf{1}\in V[0]$.
A vertex superalgebra $V$ is \textit{degree-weight-graded} if $V$ is both degree and weight-graded and the gradings are compatible, that is, $V=\bigoplus_{p, n\in \mathbb{Z}}V^p[n]$, where $V^p[n]=V^p\cap V[n]$.

%ANOTHER GRADING ON N(g, 0)
\begin{example}\label{ex: another grading on N(g, 0)}
We can define a degree-weight-grading on the affine vertex superalgeba $N(\mathfrak{g}, B)$ when $\mathfrak{g}$ has two compatible  $\mathbb{Z}$-grading and the invariant bilinear form $B$ is $0$. 
We call  the one grading on $\mathfrak{g}$ the weight-grading and the other the degree-grading. 
Then $N(\mathfrak{g}, 0)$ becomes a degree-weight-graded vertex superalgebra if we give the weight-grading by 
$
\mathrm{wt}\, a^1_{(n_1)}\dots a^r_{(n_r)}\mathbf{1} :=\sum_{i=1}^r(-n_i+\mathrm{wt}_\mathfrak{g}a^i),
$ and the degree-grading by 
$
\mathrm{deg}\, a^1_{(n_1)}\dots a^r_{(n_r)}\mathbf{1} :=\sum_{i=1}^r\mathrm{deg}_\mathfrak{g}a^i,
$ 
for degree-weight-homogeneous elements $a^1, \dots, a^r \in \mathfrak{g}$ and $n_1, \dots, n_r\in \mathbb{Z}_{<0}$. 
We call this grading the degree-weight-grading on the vertex superalgebra $N(\mathfrak{g}, 0)$ associated with the grading on $\mathfrak{g}$. 
\end{example}

In the sequel, we will always assume that an action of a degree-weight-graded vertex superalgebra on a degree-weight-graded super vector space is compatible with the gradings.

We give some lemmas used in Section \ref{section: Chiral Lie Algebroid Cohomology} and \ref{section: Chiral Equivariant Lie Algebroid Cohomology}. 
Recall the notion of vertex superalgebra derivation. 
A \textit{derivation} on a vertex superalgebra $V$ with parity $\bar{i}$ is an endomorphism $d$ on $V$ with  parity $\bar{i}$ such that 
$
[d, Y(A, z)]=Y(d\cdot A, z)
$
for any $A\in V$. 

\begin{lemma}\label{lem: odd derivation is 0 if so on generators}
Let $V$ be a vertex superalgebra generated by a subset $S\subset V$. Let $D$ be an odd derivation on the vertex superalgebra $V$ such that $D^2|_S=0$. Then $D^2=0$ hold on $V$.
\end{lemma}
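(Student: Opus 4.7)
The plan is to show that $D^2$ is itself an even derivation on $V$, and then conclude by induction using the fact that $S$ generates $V$.

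First I would note that any derivation on a vertex superalgebra annihilates the vacuum: applying the derivation property to $A=\mathbf{1}$ gives $[d,\mathrm{id}_V]=0=Y(d\mathbf{1},z)$, and then the vacuum axiom $Y(d\mathbf{1},z)\mathbf{1}|_{z=0}=d\mathbf{1}$ forces $d\mathbf{1}=0$. In particular $D\mathbf{1}=0$ and hence $D^2\mathbf{1}=0$.

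Next I would verify that $D^2$ is an even derivation. Since $D$ is odd, for any $A\in V$,
\begin{align*}
[D^2,Y(A,z)] &= D[D,Y(A,z)] - [D,Y(A,z)]D \\
             &= D\,Y(DA,z) - Y(DA,z)\,D \\
             &= [D,Y(DA,z)] = Y(D^2A,z),
\end{align*}
where the bracket $[D,Y(DA,z)]$ is the usual (not super) commutator because $Y(DA,z)$ has parity opposite to that of $Y(A,z)$. Equivalently, in terms of $(n)$-products, $D^2(A_{(n)}B)=(D^2A)_{(n)}B+A_{(n)}(D^2B)$ for all $A,B\in V$ and $n\in\mathbb{Z}$.

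Now since $V$ is generated by $S$ as a vertex superalgebra, every element of $V$ is a $\mathbb{K}$-linear combination of monomials $a^1_{(n_1)}a^2_{(n_2)}\cdots a^r_{(n_r)}\mathbf{1}$ with $a^i\in S$ and $n_i\in\mathbb{Z}$. I would induct on the length $r$: the base case $r=0$ gives $D^2\mathbf{1}=0$; for the inductive step, applying the Leibniz rule for the even derivation $D^2$ gives
\[
D^2\bigl(a^1_{(n_1)}(a^2_{(n_2)}\cdots a^r_{(n_r)}\mathbf{1})\bigr) = (D^2a^1)_{(n_1)}(a^2_{(n_2)}\cdots a^r_{(n_r)}\mathbf{1}) + a^1_{(n_1)}\,D^2(a^2_{(n_2)}\cdots a^r_{(n_r)}\mathbf{1}),
\]
which vanishes by $D^2|_S=0$ and the inductive hypothesis. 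Hence $D^2=0$ on all of $V$.

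No real obstacle arises here; the only point that requires mild care is the parity bookkeeping in showing that the commutator $[D,Y(A,z)]$ yields a bona fide derivation identity for $D^2$ (so that the Leibniz rule holds with the standard sign rather than a super sign), which is exactly the calculation written above.
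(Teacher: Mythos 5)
Your strategy is the same as the paper's: the paper's one-line proof observes that $[D,D]=2D^2$ is again a derivation, and then (implicitly) that a derivation annihilating $\mathbf{1}$ and a generating set annihilates all of $V$ — which is exactly your vacuum argument plus induction on the length of monomials $a^1_{(n_1)}\cdots a^r_{(n_r)}\mathbf{1}$. The lemma and your final identity $[D^2,Y(A,z)]=Y(D^2A,z)$ are correct, and the induction step is fine.

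However, the parity bookkeeping that you yourself single out as the delicate point is not right as written. For homogeneous $A$ of parity $p(A)$, the derivation property of the odd operator $D$ is the super-commutator identity $DY(A,z)-(-1)^{p(A)}Y(A,z)D=Y(DA,z)$. Your displayed chain is literally valid only when $A$ is odd: then $DA$ is even and $[D,Y(DA,z)]$ is indeed the ordinary commutator. When $A$ is even, the first line $[D^2,Y(A,z)]=D[D,Y(A,z)]-[D,Y(A,z)]D$ is false (its right-hand side equals $D^2Y(A,z)+Y(A,z)D^2-2DY(A,z)D$), and the correct intermediate expression is the \emph{anticommutator} $DY(DA,z)+Y(DA,z)D$, because $Y(DA,z)$ is then odd; so the parenthetical rule ``opposite parity, hence ordinary commutator'' has the signs backwards. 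The damage is local and easily repaired: either carry out the two-case computation with the correct super-brackets, or argue as the paper does, writing $D^2=\tfrac12[D,D]$ and using the super-Jacobi identity to get $[D^2,Y(A,z)]=[D,[D,Y(A,z)]]=Y(D^2A,z)$ in one stroke. With that correction, the remainder of your argument goes through unchanged.
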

\begin{proof}
Since the operator $[D,D]=2D^2$ is also a derivation, our assertion holds.
\end{proof}

\begin{lemma}\label{lem: sufficient condition for B-linearilty}
Let $f: V\to W$ be a morphism of vertex superalgebras. Let $N$ be a non-negative integer.  Suppose $V$ is generated by a subset $S\subset V$. Let $(A_{(n)})_{0\le n \le N}$ and $(B_{(n)})_{0\le n \le N}$ be linear maps on $V$ and $W$, respectively. Suppose the following hold:
\begin{gather}\label{eq: B-commutation relation}
[A_{(n)},v_{(k)}]=\sum_{i\ge0}\binom{n}{i}(A_{(i)}v)_{(n+k-i)}, \\ \label{eq: B'-commutation relation}
[B_{(n)},f(v)_{(k)}]=\sum_{i\ge0}\binom{n}{i}(B_{(i)}f(v))_{(n+k-i)}, 
\end{gather}
for all $v\in S$, $0\le n\le N$, and $k\in\mathbb{Z}$
Then if $f\circ A_{(n)}=B_{(n)}\circ f$ on $S$ for all $0\le n\le N$, then  $f\circ A_{(n)}=B_{(n)}\circ f$ holds on $V$ for any $0\le n\le N$.
\end{lemma}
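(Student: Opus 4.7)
The plan is to induct on the ``depth'' of an element of $V$ relative to the generating set $S$. Since $V$ is generated by $S$ as a vertex superalgebra, every element of $V$ is a $\mathbb{K}$-linear combination of vectors of the form $v^1_{(k_1)} v^2_{(k_2)} \cdots v^r_{(k_r)}\mathbf{1}$ with $v^i\in S$ and $k_i\in \mathbb{Z}$. I would prove $f(A_{(n)} w) = B_{(n)} f(w)$ on every such vector by induction on $r$, carried out simultaneously for all $0\le n\le N$.

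For the inductive step, writing $w = v_{(k)} w'$ with $v\in S$ and $w'$ of strictly smaller depth, I would expand $A_{(n)}(v_{(k)} w')$ via \eqref{eq: B-commutation relation} to obtain
\[
A_{(n)} w \;=\; \sum_{i=0}^{n}\binom{n}{i}\,(A_{(i)}v)_{(n+k-i)} w' \;+\; (-1)^{|A_{(n)}|\,|v|}\,v_{(k)}\, A_{(n)} w'
\]
(the sum on the right is finite because $\binom{n}{i}=0$ for $i>n$), then apply $f$ and rewrite the result using three ingredients: (i) $f$ is a morphism of vertex superalgebras, so $f(x_{(m)} y) = f(x)_{(m)} f(y)$; (ii) the hypothesis converts each factor $f(A_{(i)} v)$ into $B_{(i)} f(v)$, since $v\in S$ and $0\le i\le n\le N$; and (iii) the inductive hypothesis gives $f(A_{(n)} w') = B_{(n)} f(w')$. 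One then recognizes the resulting expression as $B_{(n)}\bigl(f(v)_{(k)} f(w')\bigr) = B_{(n)} f(w)$ by applying the companion relation \eqref{eq: B'-commutation relation} in reverse, closing the induction.

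The base case $w=\mathbf{1}$ is either covered by implicitly including $\mathbf{1}$ in $S$ (so that the hypothesis supplies it directly) or handled by the same expansion applied to $v_{(-1)}\mathbf{1} = v \in S$. The main obstacle is notational rather than conceptual: one must check that every index $i$ occurring in the inductive expansion lies in $[0,n]\subseteq[0,N]$ so that the hypothesis $fA_{(i)}=B_{(i)}f$ on $S$ is available for each summand --- this is precisely the role of the uniform bound $N$ in the statement --- and one must track Koszul signs carefully, which in particular requires the implicit compatibility that $A_{(n)}$ and $B_{(n)}$ share the same parity so that the signs on the two sides of the target identity line up.
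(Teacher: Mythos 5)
Your inductive step is essentially the paper's own argument: induct on the length $r$ of monomials $s^1_{(n_1)}\cdots s^r_{(n_r)}\mathbf{1}$, expand $A_{(n)}\bigl(v_{(k)}w'\bigr)$ by \eqref{eq: B-commutation relation}, push the result through $f$ using the morphism property, convert $f(A_{(i)}v)$ into $B_{(i)}f(v)$ by the hypothesis on $S$ (legitimate since every $i$ occurring satisfies $0\le i\le n\le N$), invoke the inductive hypothesis on $w'$, and reassemble via \eqref{eq: B'-commutation relation}; your remarks on parities are also appropriate. So the only issue is the base case, and there you do have a genuine gap.

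Neither of your two suggestions establishes $f(A_{(n)}\mathbf{1}_V)=B_{(n)}\mathbf{1}_W$, which the induction (and the conclusion itself, applied to $\mathbf{1}\in V$) requires. ``Implicitly including $\mathbf{1}$ in $S$'' silently strengthens the hypotheses: the intertwining on $S$ at the element $\mathbf{1}$ is exactly the base case you are trying to prove, and \eqref{eq: B-commutation relation} for $v=\mathbf{1}$ amounts to the extra condition $\sum_{i\ge0}\binom{n}{i}(A_{(i)}\mathbf{1})_{(n+k-i)}=0$, which is not assumed. Your second option only recovers the hypothesis at elements of $S$; it says nothing about $\mathbf{1}$ itself, nor about depth-one vectors $v_{(k)}\mathbf{1}$ with $k\neq-1$ (e.g. $Tv=v_{(-2)}\mathbf{1}$), where your own expansion leaves the residual term $\pm\, f(v)_{(k)}\bigl(f(A_{(n)}\mathbf{1})-B_{(n)}\mathbf{1}_W\bigr)$ that cannot be discarded without knowing the value at the vacuum. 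This is not a formality: the relations on $S$ alone only force $f(v)_{(k)}\bigl(f(A_{(n)}\mathbf{1})-B_{(n)}\mathbf{1}_W\bigr)=0$ for $v\in S$ and $k\ge-1$, which does not make the difference vanish in general (multiplication by $x$ on the commutative vertex algebra $\mathbb{K}[x]/(x^2)$ with $T=0$ supercommutes with every mode of the generator $x$ and kills $x$, yet does not kill $\mathbf{1}$). The paper's proof addresses precisely this point: it secures the base case by the vacuum identities $A_{(n)}\mathbf{1}=0$ and $B_{(n)}\mathbf{1}=0$, which are available in all the situations where the lemma is used (there $A_{(n)}$ and $B_{(n)}$ are non-negative modes of vertex operators). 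To complete your proof you should replace the base-case remark by an explicit verification of $f(A_{(n)}\mathbf{1}_V)=B_{(n)}\mathbf{1}_W$, e.g. by proving or assuming that $A_{(n)}$ and $B_{(n)}$ annihilate the respective vacua.
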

\begin{proof}
It suffices to show that $(f\circ A_{(n)})(v)=(B_{(n)}\circ f)(v)$ for $v\in V$ of the form $s^1_{(n_1)}\cdots s^r_{(n_r)}\mathbf{1}$ with $s^1, \dots, s^r\in S$ and $n_1, \dots, n_r\in\mathbb{Z}$. 
The assertion is proved by induction on $r$. Note that $A_{(n)}\mathbf{1}=0$ and $B_{(n)}\mathbf{1}=0$ are proved by induction on $n\in\mathbb{N}$ with \eqref{eq: B-commutation relation} and \eqref{eq: B'-commutation relation}. 
\end{proof}

\begin{lemma}\label{lem: generalized commutant}
Let $V$ be a vertex superalgebra. Let $\mathcal{A}=(A_{(m)}^\lambda)_{m\ge0, \lambda\in\Lambda}$ be a family of $\mathbb{Z}/2\mathbb{Z}$-homogeneous linear maps on $V$ such that 
$$
[A_{(m)}^\lambda, v_{(k)}]=\sum_{i\ge0}\binom{m}{i}(A_{(i)}^\lambda v)_{(m+k-i)},
$$
for all $m\ge0$, $\lambda\in\Lambda$, $k\in\mathbb{Z}$ and $v\in V$. 
Then $V^\mathcal{A}:=\{v\in V \bigm| A_{(m)}^\lambda v=0\ \textrm{for all}\ m\ge0\ \textrm{and}\ \lambda\in\Lambda\}$ is a subalgebra of $V$. 
\end{lemma}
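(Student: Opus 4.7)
The plan is to verify the two conditions defining a vertex subalgebra: that $\mathbf{1}\in V^\mathcal{A}$ and that $V^\mathcal{A}$ is closed under every $(k)$-product. Closure under the translation operator is then automatic via $Tu=u_{(-2)}\mathbf{1}$.

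\textbf{Step 1: Closure under $(k)$-products.} Given $u,v\in V^\mathcal{A}$, $m\ge 0$, $\lambda\in\Lambda$ and $k\in\mathbb{Z}$, I would rewrite the hypothesis as
\begin{equation*}
A_{(m)}^\lambda(u_{(k)}v)=[A_{(m)}^\lambda,u_{(k)}]v+(-1)^{|A^\lambda|\cdot|u|}u_{(k)}\bigl(A_{(m)}^\lambda v\bigr).
\end{equation*}
The second term vanishes since $v\in V^\mathcal{A}$. Applying the commutator formula stated in the lemma, the first term equals
\begin{equation*}
\sum_{i\ge 0}\binom{m}{i}\bigl(A_{(i)}^\lambda u\bigr)_{(m+k-i)}v,
\end{equation*}
which vanishes because each $A_{(i)}^\lambda u=0$ for $u\in V^\mathcal{A}$ and $i\ge 0$. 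Hence $u_{(k)}v\in V^\mathcal{A}$.

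\textbf{Step 2: The vacuum belongs to $V^\mathcal{A}$.} This is the step that requires a small argument rather than a direct rewrite, but it follows exactly the inductive remark made in the proof of Lemma~\ref{lem: sufficient condition for B-linearilty}. I would prove $A_{(m)}^\lambda\mathbf{1}=0$ by induction on $m\ge 0$. Specializing the commutator formula to $v=\mathbf{1}$ with $k=-1$ gives
\begin{equation*}
0=[A_{(m)}^\lambda,\mathrm{id}_V]=\sum_{i=0}^{m}\binom{m}{i}\bigl(A_{(i)}^\lambda\mathbf{1}\bigr)_{(m-1-i)}.
\end{equation*}
For $m=0$ this reduces to $(A_{(0)}^\lambda\mathbf{1})_{(-1)}=0$, and evaluating on $\mathbf{1}$ using the vacuum axiom gives $A_{(0)}^\lambda\mathbf{1}=0$. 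Assuming the claim for all $i<m$, only the $i=m$ summand survives, so $(A_{(m)}^\lambda\mathbf{1})_{(-1)}=0$ and again the vacuum axiom forces $A_{(m)}^\lambda\mathbf{1}=0$. Since this holds for every $\lambda$, we conclude $\mathbf{1}\in V^\mathcal{A}$.

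\textbf{Step 3: Conclusion.} Combining Steps~1 and~2, $V^\mathcal{A}$ is a subspace containing $\mathbf{1}$ and closed under all $(k)$-products. In particular, taking $k=-2$ with $\mathbf{1}$ yields closure under $T$, so $V^\mathcal{A}$ is a vertex sub-superalgebra of $V$. The only conceptual obstacle here is Step~2, since Step~1 is a direct substitution into the hypothesis; but Step~2 is handled by precisely the inductive trick already used earlier in the excerpt, so no new difficulty arises.
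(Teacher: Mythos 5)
Your proof is correct and follows essentially the same route as the paper: the paper's proof likewise establishes $A_{(m)}^\lambda\mathbf{1}=0$ by induction on $m$ and obtains closure under the $(k)$-products directly from the assumed commutator formula. You simply spell out the details (the specialization $v=\mathbf{1}$, $k=-1$ for the induction, and the supercommutator rewriting for closure) that the paper leaves implicit.
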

\begin{proof}
The equality $A_{(m)}^\lambda \mathbf{1}=0$ is proved by induction on $n$. 
The subspace $V^\mathcal{A}$ is closed under the $n$-th product due to the assumption. 
\end{proof}

\begin{lemma}\label{lem: tensor commutant}
Let $V$ and $W$ be vertex superalgebras. Let $\mathcal{A}=(A_{(m)}^\lambda)_{m\ge0, \lambda\in\Lambda}$ be a family of $\mathbb{Z}/2\mathbb{Z}$-homogeneous linear maps on $V$ and $\mathcal{B}=(B_{(m)}^\lambda)_{m\ge0, \lambda\in\Lambda}$ a family of $\mathbb{Z}/2\mathbb{Z}$-homogeneous linear maps on $W$ such that 
\begin{gather*}
[A_{(m)}^\lambda, v_{(k)}]=\sum_{i\ge0}\binom{m}{i}(A_{(i)}^\lambda v)_{(m+k-i)}, \\ 
[B_{(m)}^\lambda, w_{(k)}]=\sum_{i\ge0}\binom{m}{i}(B_{(i)}^\lambda w)_{(m+k-i)},
\end{gather*}
for all $m\ge0$, $\lambda\in\Lambda$, $k\in\mathbb{Z}$, $v\in V$ and $w\in W$. Then the family of $\mathbb{Z}/2\mathbb{Z}$-homogeneous linear maps on $V\otimes W$, $(A_{(m)}^\lambda\otimes \mathrm{id}+\mathrm{id}\otimes B_{(m)}^\lambda)_{m\ge0, \lambda\in\Lambda}$, satisfies the relation 
$$
[A_{(m)}^\lambda\otimes \mathrm{id}+\mathrm{id}\otimes B_{(m)}^\lambda, x_{(k)}]=\sum_{i\ge0}\binom{m}{i}\bigl((A_{(m)}^\lambda\otimes \mathrm{id}+\mathrm{id}\otimes B_{(m)}^\lambda) x\bigr)_{(m+k-i)},
$$
for any $m\ge0$, $\lambda\in\Lambda$, $k\in\mathbb{Z}$, $x\in V\otimes W$.
\end{lemma}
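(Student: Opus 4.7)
The plan is to verify the commutator identity on simple tensors $x = v \otimes w \in V \otimes W$; both sides of the claim are linear in $x$, so this suffices. Moreover, since $A^\lambda_{(m)} \otimes \mathrm{id}$ and $\mathrm{id} \otimes B^\lambda_{(m)}$ act on disjoint tensor factors, I will compute $[A^\lambda_{(m)} \otimes \mathrm{id}, (v \otimes w)_{(k)}]$ and $[\mathrm{id} \otimes B^\lambda_{(m)}, (v \otimes w)_{(k)}]$ separately and then sum.

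For the first piece, the standard formula for the vertex operator on a tensor product, $Y_{V\otimes W}(v\otimes w, z) = Y_V(v, z) \otimes Y_W(w, z)$ (with the usual super sign convention on the tensor product of endomorphisms), yields $(v \otimes w)_{(k)} = \sum_{n+p=k-1} v_{(n)} \otimes w_{(p)}$. Because $A^\lambda_{(m)}$ touches only the first factor, a short sign bookkeeping shows $[A^\lambda_{(m)} \otimes \mathrm{id}, v_{(n)} \otimes w_{(p)}] = [A^\lambda_{(m)}, v_{(n)}] \otimes w_{(p)}$. Substituting the hypothesis
$$
[A^\lambda_{(m)}, v_{(n)}] = \sum_{i\ge 0}\binom{m}{i}(A^\lambda_{(i)} v)_{(m+n-i)},
$$
and reindexing the double sum via $n' := m + n - i$, so that $n' + p = (m+k-i) - 1$, re-assembles the terms into $\sum_{i\ge 0} \binom{m}{i}((A^\lambda_{(i)} v) \otimes w)_{(m+k-i)}$. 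A strictly parallel computation for the second commutator, using the analogous hypothesis for the family $\mathcal{B}$, gives $\sum_{i \ge 0} \binom{m}{i}(v \otimes (B^\lambda_{(i)} w))_{(m+k-i)}$. Adding the two expressions and recognizing that the summands combine to $((A^\lambda_{(i)} \otimes \mathrm{id} + \mathrm{id} \otimes B^\lambda_{(i)})(v\otimes w))_{(m+k-i)}$ delivers the desired identity.

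The main subtlety I anticipate is the careful handling of the super signs that appear when permuting odd operators across tensor factors. The key structural reason they collapse cleanly is that $A^\lambda_{(m)} \otimes \mathrm{id}$ and $\mathrm{id} \otimes w_{(p)}$ involve disjoint tensor slots, so the Koszul signs produced by swapping their order exactly cancel the parity prefactor of the graded commutator, leaving only the intrinsic bracket $[A^\lambda_{(m)}, v_{(n)}]$ on $V$. No induction on a generating subset (in the style of Lemma~\ref{lem: sufficient condition for B-linearilty}) is needed, because the tensor-product vertex operator is already explicit; the whole argument reduces to bookkeeping of finite sums.
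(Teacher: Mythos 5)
Your proposal is correct and matches the paper, which simply records that the assertion "is proved by direct computations" — your expansion of the mode formula $(v\otimes w)_{(k)}=\sum_{n+p=k-1}v_{(n)}\otimes w_{(p)}$, the sign bookkeeping, and the reindexing $n'=m+n-i$ are exactly that computation. Note that your calculation produces $\bigl((A_{(i)}^\lambda\otimes \mathrm{id}+\mathrm{id}\otimes B_{(i)}^\lambda)x\bigr)_{(m+k-i)}$ on the right-hand side, which is the evidently intended reading of the statement (the printed subscript $(m)$ inside the sum is a typo).
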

\begin{proof}
The assertion is proved by  
 direct computations. 
\end{proof}

\subsection{Chiral Equivariant Cohomology}\label{subsection: Chiral Equivariant Cohomology}

We next recall the definition of the chiral equivariant cohomology. We  refer the reader to \cite{LL,LLS1} and partly to \cite{FF91}, 
 for more details.

%THE LIE ALGEBRA sg[t]
Let $\mathfrak{g}$ be a Lie algebra. The Lie superalgebra $\mathfrak{sg}$ is defined by 
$$
\mathfrak{sg}:=\mathfrak{g}\ltimes\mathfrak{g}_{-1},
$$
where $\mathfrak{g}_{-1}$ is the adjoint representation of $\mathfrak{g}$.

Let $O(\mathfrak{sg}, 0)$ be the affine vertex superalgebra associated with the Lie superalgebra $\mathfrak{sg}$ with an invariant bilinear form $0$. The Lie superalgebra derivation
$$
\mathfrak{sg} \to \mathfrak{sg}, \quad (\xi, \eta)\mapsto (\eta, 0),
$$
induces a vertex superalgebra derivation
$$
\mathbf{d}: O(\mathfrak{sg}, 0)\to O(\mathfrak{sg}, 0),\quad  (\xi, \eta)(z)\mapsto (\eta, 0)(z).
$$
This makes $O(\mathfrak{sg}):=(O(\mathfrak{sg},0), \mathbf{d})$ a differential degree-weight-graded vertex algebra, that is, a degree-weight-graded vertex superalgebra with a square-zero odd vertex superalgera derivation of degree $1$, where the gradings are given by $\mathrm{deg} ((\xi, 0)(z))=0, \mathrm{deg} ((0, \eta)(z))=-1$ and $\mathrm{wt} ((\xi, \eta)(z))=1$. 

Recall the notion of $O(\mathfrak{sg})$-algebras from \cite{LL}. 
An $O(\mathfrak{sg})$-\textit{algebra} is a differential degree-weight-graded vertex superalgebra $(\mathcal{A}, d)$ equipped with a morphism of differential degree-weight-graded vertex superalgebras $\Phi_\mathcal{A}: O(\mathfrak{sg})\to (\mathcal{A}, d)$. 
Next we recall  from \cite{LLS1} the notion of $\mathfrak{sg}[t]$-modules containing that of 
$O(\mathfrak{sg})$-algebras. 
Recall the Lie superalgebra $\mathfrak{sg}[t]=\mathfrak{sg}\otimes\mathbb{K}[t]$ has a differential $d$ defined by 
$$
d: \mathfrak{sg}[t] \to \mathfrak{sg}[t], \quad (\xi, \eta)t^n\mapsto (\eta, 0)t^n.
$$
An $\mathfrak{sg}[t]$-\textit{module} is a degree-weight-graded complex $(\mathcal{A}, d_{\mathcal{A}})$ equipped with a Lie superalgebra morphism 
$$
\rho: \mathfrak{sg}[t]\to \mathrm{End} (\mathcal{A}), \quad (\xi, \eta)t^n\mapsto \rho((\xi, \eta)t^n)=L_{\xi, (n)}+\iota_{\eta, (n)},
$$
such that for all $x\in \mathfrak{sg}[t]$ we have 
\begin{enumerate}[$\bullet$]
     \setlength{\topsep}{1pt}
     \setlength{\partopsep}{0pt}
     \setlength{\itemsep}{1pt}
     \setlength{\parsep}{0pt}
     \setlength{\leftmargin}{20pt}
     \setlength{\rightmargin}{0pt}
     \setlength{\listparindent}{0pt}
     \setlength{\labelsep}{3pt}
     \setlength{\labelwidth}{30pt}
     \setlength{\itemindent}{0pt}
\item $\rho(dx)=[d_\mathcal{A},\rho(x)]$; 
\item $\rho(x)$ has degree $0$ whenever $x$ is even in $\mathfrak{sg}[t]$, and degree $-1$ whenever $x$ is odd, and has weight $-n$ if $x\in \mathfrak{sg}t^n$.
\end{enumerate}

In this paper, 
we assume that the differential has odd parity and that the action of $\mathfrak{sg}[t]$ on $\mathcal{A}$ with the discrete topology is continuous, that is, for any $v\in\mathcal{A}$, $t^k\mathfrak{sg}[t]\cdot v=0$ for some sufficiently large $k\in\mathbb{N}$.   Moreover we call an $\mathfrak{sg}[t]$-module
a  \textit{
differential} $\mathfrak{sg}[t]$-\textit{module}, emphasizing its  differential. 

For a differential $\mathfrak{sg}[t]$-module $(\mathcal{A}, d_\mathcal{A})$, we often write $L_{\xi, (n)}$ and $\iota_{\xi, (n)}$ as $L_{\xi, (n)}^\mathcal{A}$ and $\iota_{\xi, (n)}^\mathcal{A}$, respectively.

%THE DIFFERENTIAL ON SEMI-INFINITE WEiL ALGEBRA 
We will recall the notion of the chiral equivariant cohomology.  
Consider the semi-infinite Weil algebra $\mathcal{W}=\mathcal{W}(\mathfrak{g})$ associated with a finite-dimensional Lie algebra $\mathfrak{g}$ (see Example \ref{ex:semi-infinite_Weil_algebras}). Let $(\xi_i)_i$ be a basis of $\mathfrak{g}$ with the dual basis $(\xi^*_i)_i$ for $\mathfrak{g}^*$. Recall that the vertex superalgebra  $\mathcal{W}(\mathfrak{g})$ 
is degree-weight-graded. The weight and degree-grading come from the diagonalizable operator ${\omega_{\mathcal{W}}}_{(1)}=(\omega_{\mathcal{E}}+\omega_{\mathcal{S}})_{(1)}$ and the operator ${j_{bc}}_{(0)}+2{j_{\beta\gamma}}_{(0)}$, respectively. 
Here $\omega_{\mathcal{S}}:=\sum_{i=1}^{\dim \mathfrak{g}}\beta^{x^i}_{-1}\partial\gamma^{x^{*}_i}_{0}\mathbf{1}$,  $\omega_{\mathcal{E}}:=-\sum_{i=1}^{\dim \mathfrak{g}}b^{x^i}_{-1}\partial c^{x^{*}_i}_0\mathbf{1}$ and $j_{bc}:=-\sum_{i=1}^{\dim \mathfrak{g}}b^{\xi_i}_{-1}c^{\xi^*_i}_0\mathbf{1}$,  $j_{\beta\gamma}:=\sum_{i=1}^{\dim \mathfrak{g}}\beta^{\xi_i}_{-1}\gamma^{\xi^*_i}_0\mathbf{1}$.

Set
\begin{gather}
D:=J+K, \\ 
\quad J:=-\sum_{i, j=1}^{\dim \mathfrak{g}}\beta^{[\xi_i,\xi_j]}_{-1}\gamma^{\xi^*_j}_0 c^{\xi^*_i}_0\mathbf{1}-\frac{1}{2}\sum_{i, j=1}^{\dim \mathfrak{g}}c^{\xi^*_i}_0c^{\xi^*_j}_0b^{[\xi_i,\xi_j]}_{-1}\mathbf{1}, \quad K:=\sum_{i=1}^{\dim \mathfrak{g}}\gamma^{\xi^*_i}_0b^{\xi_i}_{-1}\mathbf{1}.
\end{gather}
Then the operator $D_{(0)}$ is a differential on $\mathcal{W}$.
The differential degree-weight-graded vertex superalgebra $(\mathcal{W}^\bullet, d_\mathcal{W})$ is called the \textit{semi-infinite Weil complex}, where $d_\mathcal{W}=D_{(0)}$ (\cite{FF91}).

Set
\begin{gather}
\Theta_\mathcal{W}^\xi:=\Theta_\mathcal{E}^\xi+\Theta_\mathcal{S}^\xi, \quad\\
 \Theta_\mathcal{E}^\xi:=\sum_{i=1}^{\dim \mathfrak{g}}b^{[\xi,\xi_i]}_{-1}c^{\xi^*_i}_0\mathbf{1}, \quad \Theta_\mathcal{S}^\xi:=-\sum_{i=1}^{\dim \mathfrak{g}}\beta^{[\xi,\xi_i]}_{-1}\gamma^{\xi^*_i}_0\mathbf{1},
\end{gather}
for $\xi\in \mathfrak{g}$. 
The following theorem is proved in \cite[theorem 5.11]{LL04}. 

\begin{theorem}[Lian-Linshaw]
The  vertex superalgebra morphism 
$O(\mathfrak{sg})\to(\mathcal{W}(\mathfrak{g}), D_{(0)}),$ $(\xi, \eta)(z)$ $\mapsto\Theta_{\mathcal{W}}^\xi(z)+b^\eta(z)$ defines an $O(\mathfrak{sg})$-algebra structure on $\mathcal{W}(\mathfrak{g})$.
\end{theorem}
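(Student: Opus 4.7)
The plan is to verify three things separately: (i) the assignment $(\xi,\eta)(z)\mapsto\Theta_\mathcal{W}^\xi(z)+b^\eta(z)$ extends to a morphism of vertex superalgebras $O(\mathfrak{sg},0)\to\mathcal{W}(\mathfrak{g})$; (ii) this morphism intertwines the differentials $\mathbf{d}$ and $D_{(0)}$; (iii) it respects the degree-weight bigradings. Item (iii) is immediate by inspection: both $\Theta_\mathcal{E}^\xi$ and $\Theta_\mathcal{S}^\xi$ are of degree $0$ and weight $1$, matching $(\xi,0)(z)$, while $b^\eta(z)$ is of degree $-1$ and weight $1$, matching $(0,\eta)(z)$.

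For (i), since $O(\mathfrak{sg},0)$ is strongly generated by the fields of $\mathfrak{sg}$ with OPE dictated by the super-bracket at level zero, it suffices to verify that the proposed images satisfy
$$\bigl(\Theta_\mathcal{W}^{\xi_1}+b^{\eta_1}\bigr)(z)\,\bigl(\Theta_\mathcal{W}^{\xi_2}+b^{\eta_2}\bigr)(w)\ \sim\ \frac{\Theta_\mathcal{W}^{[\xi_1,\xi_2]}(w)+b^{[\xi_1,\eta_2]-[\xi_2,\eta_1]}(w)}{z-w}.$$
Splitting into four pairings, $b^{\eta_1}(z)b^{\eta_2}(w)\sim 0$ trivially, and the cross terms $\Theta_\mathcal{S}^{\xi_1}(z)b^{\eta_2}(w)\sim 0$ since $\Theta_\mathcal{S}$ lies in the $\beta\gamma$-subsystem, which commutes with $\mathcal{E}(\mathfrak{g})$. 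The cross term $\Theta_\mathcal{E}^{\xi_1}(z)b^{\eta_2}(w)$ is computed by Wick's theorem from $c^{\xi^*_i}(z)b^{\eta_2}(w)\sim\langle\eta_2,\xi^*_i\rangle/(z-w)$ and collapses to $b^{[\xi_1,\eta_2]}(w)/(z-w)$ after reindexing $\sum_i\langle\eta_2,\xi^*_i\rangle[\xi_1,\xi_i]=[\xi_1,\eta_2]$. The diagonal terms $\Theta_\mathcal{E}^{\xi_1}(z)\Theta_\mathcal{E}^{\xi_2}(w)$ and $\Theta_\mathcal{S}^{\xi_1}(z)\Theta_\mathcal{S}^{\xi_2}(w)$ each produce a simple pole $\Theta^{[\xi_1,\xi_2]}_{\mathcal{E},\mathcal{S}}(w)$ together with a double pole proportional to the Killing form $\kappa_\mathfrak{g}(\xi_1,\xi_2)$; the sign difference between fermionic and bosonic Wick contractions forces these double poles to cancel in the sum $\Theta_\mathcal{W}=\Theta_\mathcal{E}+\Theta_\mathcal{S}$, yielding the level-zero affine OPE.

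For (ii), since both $\mathbf{d}$ and $D_{(0)}$ are odd vertex superalgebra derivations, Lemma \ref{lem: sufficient condition for B-linearilty} reduces the compatibility to checking it on generators: $D_{(0)}(\Theta_\mathcal{W}^\xi)=0$ and $D_{(0)}(b^\eta)=\Theta_\mathcal{W}^\eta$. For the second identity, note that the field $K(z)$ contains no $c$-factor, so $K(z)b^\eta(w)\sim 0$, and hence $D_{(0)}b^\eta=J_{(0)}b^\eta$; contracting one $c^{\xi^*_i}$ in each of the two summands of $J$ against $b^\eta$ and reindexing reproduces exactly $\Theta_\mathcal{S}^\eta+\Theta_\mathcal{E}^\eta$. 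The first identity can then be obtained by applying the odd derivation $D_{(0)}$ to the already-verified relation $\Theta_\mathcal{W}^\xi(z)\,b^\eta(w)\sim b^{[\xi,\eta]}(w)/(z-w)$ and using $D_{(0)}^2=0$ together with $D_{(0)}b^\eta=\Theta_\mathcal{W}^\eta$; alternatively, a direct Wick computation using the explicit forms of $J$ and $K$ gives the same conclusion.

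The main obstacle is the double-pole cancellation in (i): it requires the bosonic and fermionic halves $\Theta_\mathcal{S}$ and $\Theta_\mathcal{E}$ to contribute Killing-form central terms of exactly opposite sign, which is what permits the source to be precisely $O(\mathfrak{sg},0)$ rather than a nontrivially centrally extended cousin. Once this cancellation is in hand, the remaining computations are routine Wick contractions on the generating fields.
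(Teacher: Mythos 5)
Your argument is correct. The paper itself gives no proof of this statement, quoting it as Lian--Linshaw's theorem, and your verification --- reducing to the generating fields via the universal property of the level-zero affine vertex superalgebra and a derivation-on-generators argument (Lemma \ref{lem: sufficient condition for B-linearilty} with $N=0$), computing the OPEs by Wick contractions with the Killing-form double poles cancelling between $\Theta_\mathcal{E}$ and $\Theta_\mathcal{S}$, checking the bigrading, and establishing $D_{(0)}b^\eta=\Theta_\mathcal{W}^\eta$, whence $D_{(0)}\Theta_\mathcal{W}^\eta=D_{(0)}^2b^\eta=0$ --- is essentially the standard proof given in that reference.
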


We will use the following relations proved in \cite[Lemma 5.12]{LL}.

\begin{lemma}[Lian-Linshaw]
Let $\eta, \xi$ be elements of $\mathfrak{g}$ and $\xi^*$ an element of $\mathfrak{g}^*$. Then 
the following hold:
\begin{gather}
\Theta_\mathcal{W}^\xi(z)c^{\xi^*}(w)\sim c^{\mathrm{ad}^*\xi\cdot\xi^*}(w)(z-w)^{-1}, \\
D_{(0)}c^{\xi^*}_0\mathbf{1}=-\frac{1}{2}\sum_{i=1}^{\dim{\mathfrak{g}}}c^{\mathrm{ad}^*\xi_i\cdot\xi^*}_0c^{\xi^*_i}_0\mathbf{1}+\gamma^{\xi^*}_0\mathbf{1}, \\ D_{(0)}\gamma^{\xi^*}_0\mathbf{1}=\sum_{i=1}^{\dim{\mathfrak{g}}}\gamma^{\mathrm{ad}^*\xi_i\cdot\xi^*}_0c^{\xi^*_i}_0\mathbf{1},
\end{gather}
where the first formula stands for the OPE of the fields $\Theta_\mathcal{W}^\xi(z)$ and $c^{\xi^*}(z)$, and $\mathrm{ad}^*$ is the coadjoint action of $\mathfrak{g}$ on $\mathfrak{g}^*$. 
\end{lemma}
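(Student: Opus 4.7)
The plan is to verify each of the three formulas by direct computation of operator product expansions in the tensor product $\mathcal{W}(\mathfrak{g}) = \mathcal{E}(\mathfrak{g})\otimes\mathcal{S}(\mathfrak{g})$, using only the elementary OPEs
$b^v(z)c^\phi(w)\sim\langle v,\phi\rangle(z-w)^{-1}$ and $\beta^v(z)\gamma^\phi(w)\sim\langle v,\phi\rangle(z-w)^{-1}$
(with all other pairs of generators mutually local in the strong sense of vanishing singular part), together with the non-commutative Wick formula for OPEs of normally ordered products, and the completeness identity $\sum_i\langle v,\xi_i^*\rangle\xi_i=v$ to collapse dual bases.

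For the first identity, observe that $\Theta_\mathcal{S}^\xi(z)$ is built from $\beta$'s and $\gamma$'s and therefore has no singular OPE with $c^{\xi^*}(w)$; only $\Theta_\mathcal{E}^\xi(z)=\sum_i {:}b^{[\xi,\xi_i]}c^{\xi^*_i}{:}(z)$ contributes. Applying Wick's theorem, the unique non-trivial contraction pairs $b^{[\xi,\xi_i]}$ with $c^{\xi^*}$, producing $\sum_i\langle[\xi,\xi_i],\xi^*\rangle c^{\xi^*_i}(w)(z-w)^{-1}$, which the completeness identity rewrites as $c^{\mathrm{ad}^*\xi\cdot\xi^*}(w)(z-w)^{-1}$.

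For the second identity, since $c^{\xi^*}(w)$ contracts nontrivially only with $b$-fields, I would split $D=J+K$ and examine each summand. The element $K=\sum_i\gamma^{\xi_i^*}_0 b^{\xi_i}_{-1}\mathbf{1}$ corresponds to the field $\sum_i{:}\gamma^{\xi^*_i}b^{\xi_i}{:}(z)$; the contraction of $b^{\xi_i}$ with $c^{\xi^*}$ yields $\sum_i\langle\xi_i,\xi^*\rangle\gamma^{\xi^*_i}(w)(z-w)^{-1}=\gamma^{\xi^*}(w)(z-w)^{-1}$, so $K_{(0)}c^{\xi^*}_0\mathbf{1}=\gamma^{\xi^*}_0\mathbf{1}$. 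Within $J$, the first summand contains no $b$-field and contributes $0$, while the second summand $-\tfrac12\sum{:}c^{\xi^*_i}c^{\xi^*_j}b^{[\xi_i,\xi_j]}{:}$ contracts via $b^{[\xi_i,\xi_j]}$ with $c^{\xi^*}$; the resulting coefficient $-\tfrac12\sum_{i,j}\langle[\xi_i,\xi_j],\xi^*\rangle$ times ${:}c^{\xi^*_i}c^{\xi^*_j}{:}(w)$ reorganizes, via the completeness identity applied in the $j$-index, into $-\tfrac12\sum_i c^{\mathrm{ad}^*\xi_i\cdot\xi^*}_0 c^{\xi^*_i}_0\mathbf{1}$. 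Adding the two contributions gives the stated formula.

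For the third identity the roles are reversed: $\gamma^{\xi^*}(w)$ contracts nontrivially only with $\beta$-fields, so $K$ contributes $0$ and within $J$ only the first summand $-\sum{:}\beta^{[\xi_i,\xi_j]}\gamma^{\xi^*_j}c^{\xi^*_i}{:}$ matters. Contracting $\beta^{[\xi_i,\xi_j]}$ against $\gamma^{\xi^*}$ produces $-\sum_{i,j}\langle[\xi_i,\xi_j],\xi^*\rangle{:}\gamma^{\xi^*_j}c^{\xi^*_i}{:}(w)(z-w)^{-1}$, which by the completeness identity in the $j$-index becomes $\sum_i\gamma^{\mathrm{ad}^*\xi_i\cdot\xi^*}(w)c^{\xi^*_i}(w)(z-w)^{-1}$ up to the absorbed sign, yielding the asserted expression after extracting the zero-mode and acting on $\mathbf{1}$. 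The only real obstacle is careful bookkeeping of fermionic signs when commuting $c^{\xi^*}(w)$ past $c^{\xi^*_i}$'s in the Wick contractions of the second identity, and matching the sign convention used in the paper for $\mathrm{ad}^*$; once these are fixed the calculation is mechanical.
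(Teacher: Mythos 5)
The paper offers no proof of this lemma at all—it is imported verbatim from \cite[Lemma 5.12]{LL}—so your direct Wick/OPE verification inside $\mathcal{W}(\mathfrak{g})=\mathcal{E}(\mathfrak{g})\otimes\mathcal{S}(\mathfrak{g})$ is the natural self-contained route, and it is essentially the computation carried out in the cited source. Your bookkeeping of which pieces can contribute is correct: only $\Theta^\xi_\mathcal{E}$ sees $c^{\xi^*}$ in the first identity, $K$ alone produces $\gamma^{\xi^*}_0\mathbf{1}$ and only the $ccb$-summand of $J$ produces the quadratic term in the second, and only the $\beta\gamma c$-summand of $J$ contributes in the third.

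One sign point in your sketch is not just a matter of ``matching conventions'' and should be repaired, because as written your intermediate claims for the first and second identities cannot be made consistent with a single convention for $\mathrm{ad}^*$. Contracting the \emph{first} factor of $:b^{[\xi,\xi_i]}c^{\xi^*_i}:$ against the odd external field $c^{\xi^*}(w)$ carries the Koszul sign $(-1)^{|b||c|}=-1$ (it is the annihilation part of $b$, which in the normal ordering stands to the right of $c^{\xi^*_i}$, that does the contracting; in modes, $(b_{(-1)}c')_{(0)}\,c^{\xi^*}_{(-1)}\mathbf{1}=-\langle b,\xi^*\rangle c'_{(-1)}\mathbf{1}$). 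Hence the OPE in the first identity is $-\sum_i\langle[\xi,\xi_i],\xi^*\rangle c^{\xi^*_i}(w)(z-w)^{-1}$, and the stated formula forces the genuine coadjoint convention $\langle\mathrm{ad}^*\xi\cdot\xi^*,\eta\rangle=-\langle\xi^*,[\xi,\eta]\rangle$, not the plain transpose your phrase ``which the completeness identity rewrites as $c^{\mathrm{ad}^*\xi\cdot\xi^*}$'' implicitly assumes. With that convention, together with the anticommutativity $c^{\xi^*_i}_0c^{\eta^*}_0\mathbf{1}=-c^{\eta^*}_0c^{\xi^*_i}_0\mathbf{1}$ that you do flag, the $J$-contribution in the second identity, $-\tfrac12\sum_{i,j}\langle[\xi_i,\xi_j],\xi^*\rangle c^{\xi^*_i}_0c^{\xi^*_j}_0\mathbf{1}$, equals $-\tfrac12\sum_i c^{\mathrm{ad}^*\xi_i\cdot\xi^*}_0c^{\xi^*_i}_0\mathbf{1}$, and the $\beta$-contraction in the third gives $\sum_i\gamma^{\mathrm{ad}^*\xi_i\cdot\xi^*}_0c^{\xi^*_i}_0\mathbf{1}$, exactly as stated; had you kept the transpose convention, correct sign-tracking would instead flip the first identity, so the three formulas only close up with the coadjoint convention. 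Once this is fixed, your computation is indeed complete and mechanical, as you say.
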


Recall the notion of the chiral horizontal, invariant and basic subspaces from \cite{LL,LLS1}.   
Let $(\mathcal{A}, d)$ be a differential $\mathfrak{sg}[t]$-module. The \textit{chiral horizontal, invariant} and \textit{basic subspaces} of $\mathcal{A}$ are respectively 
\begin{align*}
\mathcal{A}_{hor}&:=\bigl\{a\in \mathcal{A} \bigm| \iota_{\eta, (n)}a=0 \ \text{for all}\ \eta\in\mathfrak{g}, n\ge0 \bigr\}, \\
\mathcal{A}_{inv}&:=\bigl\{a\in \mathcal{A} \bigm| L_{\xi, (n)}a=0\ \text{for all}\ \xi\in\mathfrak{g}, n\ge0 \bigr\}, \ \text{and} \\
\mathcal{A}_{bas}&:=\mathcal{A}_{hor}\cap\mathcal{A}_{inv}.
\end{align*}
Note that if $(\mathcal{A}, d)$ is a differential $\mathfrak{sg}[t]$-module then the subspaces $\mathcal{A}_{hor}$ and $\mathcal{A}_{bas}$ are subcomplexes of $(\mathcal{A}, d)$.

We then recall the definitions of the chiral basic and equivariant cohomologies. 
Let $G$ be a compact connected Lie group. Set $\mathfrak{g}=\mathrm{Lie}(G)^{\mathbb{K}}$.
Let $(\mathcal{A}, d)$ be a differential $\mathfrak{sg}[t]$-module. Its \textit{chiral basic cohomology} $\mathbf{H}_{bas}{(\mathcal{A})}$ is the cohomology of the complex $(\mathcal{A}_{bas}, d|_{\mathcal{A}_{bas}})$. The \textit{chiral equivariant cohomology} $\mathbf{H}_{G}{(\mathcal{A})}$ of $(\mathcal{A}, d)$ is the chiral basic cohomology of the tensor product 
$
(\mathcal{W}(\mathfrak{g})\otimes \mathcal{A}, d_\mathcal{W}\otimes1+1\otimes d_\mathcal{A}).
$

\section{Chiral $W^*$-Modules}\label{section: Chiral $W^*$-Modules}

\subsection{Definition of Chiral $W^*$-Modules and the Chiral  Cartan Model}
Let $\mathfrak{g}$ be a finite-dimensional Lie algebra. 
We denote by $\langle c, \gamma \rangle$ or $\mathcal{W}'$ the subalgebra of the semi-infinite Weil algebra $\mathcal{W}=\mathcal{W}(\mathfrak{g})$ generated by $c^{\xi^*}_0\mathbf{1}$, $\gamma^{\xi^*}_0\mathbf{1}$ with $\xi^*\in\mathfrak{g}^*$.  
Note that $\mathcal{W}'$ is preserved by the differential $d_\mathcal{W}$. Therefore we have a 
subcomplex $(\mathcal{W}', d_{\mathcal{W}'})$, where $d_{\mathcal{W}'}:=d_{\mathcal{W}}|_{\mathcal{W}'}$. 
Note that $(\mathcal{W}', d_{\mathcal{W}'})$ is acyclic. This follows from the same argument as that for the proof of the acyclicity of  $(\mathcal{W}, d_\mathcal{W})$ in \cite[Proposition 5]{Akm93}. (See Section \ref{subsection: Chiral Equivariant Cohomology} for the definition of the semi-infinite Weil complex $(\mathcal{W}(\mathfrak{g}), d_\mathcal{W})$.)

We denote by $\delta(z-w)_-$ the formal distribution $\sum_{n\ge 0}z^{-n-1}w^n$. 

\begin{definition}\label{df: chiral W^*-modules}
A \textbf{chiral} $W^*$\textbf{-module} (with respect to  $\mathfrak{g}$) is a differential $\mathfrak{sg}[t]$-module $(\mathcal{A}, d_{\mathcal{A}})$ given a module structure over the vertex superalgebra $\langle c, \gamma \rangle$
$$
Y^\mathcal{A}(\ , z): \langle c, \gamma \rangle \to (\mathrm{End} \mathcal{A})[[z^{\pm1}]], 
$$
such that 
\begin{enumerate}[$(1)$\ ]
\setlength{\topsep}{1pt}
     \setlength{\partopsep}{0pt}
     \setlength{\itemsep}{1pt}
     \setlength{\parsep}{0pt}
     \setlength{\leftmargin}{35pt}
     \setlength{\rightmargin}{0pt}
     \setlength{\listparindent}{0pt}
     \setlength{\labelsep}{3pt}
     \setlength{\labelwidth}{15pt}
     \setlength{\itemindent}{0pt}
\item $[d_\mathcal{A}, Y^\mathcal{A}(x, z)]= Y^\mathcal{A}(d_{\mathcal{W}'}x, z)$, \quad for all  $x \in \langle c, \gamma \rangle$,
\item $[L_\xi^\mathcal{A}(z)_{-},Y^\mathcal{A}(c^{\xi^*}_0\mathbf{1}, w)]=Y^\mathcal{A}(c^{\mathrm{ad}^*\xi \cdot\xi^*}_0\mathbf{1}, w)\delta(z-w)_{-}$, for all elements $\xi$ of $\mathfrak{g}$ and all elements $\xi^*$ of $\mathfrak{g}^*$,
\item $[\iota_\xi^\mathcal{A}(z)_{-},Y^\mathcal{A}(c^{\xi^*}_0\mathbf{1}, w)]=\langle {\xi^*}, \xi \rangle \delta(z-w)_{-}$, for all $\xi\in \mathfrak{g}$ and all ${\xi^*} \in \mathfrak{g}^*$, 
\end{enumerate}
where 
$L_{\xi}^\mathcal{A}(z)_{-}:=\sum_{n\ge0}L_{\xi, (n)}^\mathcal{A}z^{-n-1}$ and 
$\iota_\xi^\mathcal{A}(z)_{-}:=\sum_{n\ge0}\iota_{\xi, (n)}^\mathcal{A}z^{-n-1}$ 
for an element $\xi$ of $\mathfrak{g}$.
\end{definition}

For a $\langle c, \gamma \rangle$-module $(\mathcal{A},  Y^\mathcal{A})$, we often use the following notation: 
\begin{align*}
Y^\mathcal{A}(c^{\xi^*}_0\mathbf{1}, z)&=c^{\xi^*, \mathcal{A}}(z)=\sum_{n\in\mathbb{Z}}c^{\xi^*, \mathcal{A}}_{(n)}z^{-n-1}, \\
Y^\mathcal{A}(\gamma^{\xi^*}_0\mathbf{1}, z)&=\gamma^{\xi^*, \mathcal{A}}(z)=\sum_{n\in\mathbb{Z}}\gamma^{\xi^*, \mathcal{A}}_{(n)}z^{-n-1}.
\end{align*}
for an element $\xi^*$ of $\mathfrak{g}^*$.

Note that the formula (2) in the preceding definition is equivalent to the commutation relations   
$[L_{\xi, (m)}^\mathcal{A},c_{(n)}^{\xi^*, \mathcal{A}}]=c_{(m+n)}^{\mathrm{ad}^*\xi\cdot\xi^*, \mathcal{A}}$
for all $m\in\mathbb{Z}_{\ge0}$ and $n\in\mathbb{Z}$. Similarly the formula (3) is equivalent to the relations $[\iota_{\xi, (m)}^\mathcal{A},c_{(n)}^{\xi^*, \mathcal{A}}]=\langle {\xi^*}, \xi \rangle\delta_{m+n, -1}$ for all $m\in\mathbb{Z}_{\ge0}$ and $ n\in\mathbb{Z}$.

Let $(\xi _i)_i$ be a basis of $\mathfrak{g}$ and $(\xi^*_i)_i$  the  dual basis for $\mathfrak{g}^*$.
Let $(\mathcal{A}, d_\mathcal{A}, Y^\mathcal{A})$ be a chiral $W^*$-module and $(\mathcal{B}, d_\mathcal{B})$  a differential $\mathfrak{sg}[t]$-module.
Set 
$$
\Phi=\Phi_{\mathcal{A}, \mathcal{B}}:=\mathrm{exp}(\phi(0)_{\ge 0}) \in GL(\mathcal{A}\otimes\mathcal{B}), 
$$
where $\phi(0)_{\ge 0}:= \sum_{i=1}^{\dim \mathfrak{g}}\sum_{n\ge 0}c^{\xi^*_i, \mathcal{A}}_{(-n-1)}\otimes\iota^\mathcal{B}_{\xi_i, (n)}$. 
Set
\begin{gather}
C(\mathcal{A}; \mathcal{B}):=\Phi((\mathcal{A}\otimes\mathcal{B})_{bas}), \\
d=d_{\mathcal{A}, \mathcal{B}}:=\Phi\circ(d_\mathcal{A}\otimes1 +1 \otimes d_\mathcal{B})\circ\Phi^{-1}|_{C(\mathcal{A}; \mathcal{B})}.
\end{gather}
Note that $C(\mathcal{A}; \mathcal{B})$ is degree-weight-graded as a subspace of the degree-weight-graded super vector space  $\mathcal{A}\otimes\mathcal{B}$ since $\Phi$ preserves the degree and weight-gradings. Then we have the following.  

\begin{lemma}\label{lem: CHIRAL CARTAN MODEL}
Let $(\mathcal{A}, d_\mathcal{A}, Y^\mathcal{A})$ be a chiral $W^*$-module and $(\mathcal{B}, d_\mathcal{B})$  a differential $\mathfrak{sg}[t]$-module. 
Then the map $\Phi=\Phi_{\mathcal{A}, \mathcal{B}}$ restricted to the chiral basic subspace is an isomorphism of degree-weight-graded complexes:
$$
\Phi: ((\mathcal{A}\otimes\mathcal{B})_{bas}, (d_\mathcal{A}\otimes1 +1\otimes d_\mathcal{B})|_{(\mathcal{A}\otimes\mathcal{B})_{bas}})\to (C(\mathcal{A}; \mathcal{B}), d_{\mathcal{A}, \mathcal{B}}).
$$
\end{lemma}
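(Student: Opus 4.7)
The plan is to observe that, once the definitions are unfolded, the lemma reduces to one substantive check. By construction
$$
C(\mathcal{A};\mathcal{B})=\Phi\bigl((\mathcal{A}\otimes\mathcal{B})_{bas}\bigr),\qquad d_{\mathcal{A},\mathcal{B}}=\Phi\circ(d_\mathcal{A}\otimes 1+1\otimes d_\mathcal{B})\circ\Phi^{-1}\big|_{C(\mathcal{A};\mathcal{B})},
$$
so the restriction $\Phi|_{(\mathcal{A}\otimes\mathcal{B})_{bas}}$ is tautologically a bijection onto $C(\mathcal{A};\mathcal{B})$ that conjugates the two differentials into one another. The paragraph preceding the lemma has already recorded that $\Phi$ preserves the degree and weight gradings, so the bijection is automatically one of bigraded super vector spaces. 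The only remaining point needed for the target $d_{\mathcal{A},\mathcal{B}}$ to be well-defined, and for the left-hand side to really be a complex, is the stability of $(\mathcal{A}\otimes\mathcal{B})_{bas}$ under $d_\mathcal{A}\otimes 1+1\otimes d_\mathcal{B}$.

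To handle the stability I would equip $\mathcal{A}\otimes\mathcal{B}$ with the diagonal $\mathfrak{sg}[t]$-structure, so that $L^{\otimes}_{\xi,(n)}=L^\mathcal{A}_{\xi,(n)}\otimes 1+1\otimes L^\mathcal{B}_{\xi,(n)}$ and $\iota^{\otimes}_{\eta,(n)}=\iota^\mathcal{A}_{\eta,(n)}\otimes 1+1\otimes\iota^\mathcal{B}_{\eta,(n)}$, and then apply the $\mathfrak{sg}[t]$-module axiom $\rho(dx)=[d,\rho(x)]$ to $x=(\xi,0)t^n$ and $x=(0,\eta)t^n$ on each factor. The outcome is the vertex-algebraic Cartan formulas $[d_\mathcal{A}\otimes 1+1\otimes d_\mathcal{B},L^{\otimes}_{\xi,(n)}]=0$ and $[d_\mathcal{A}\otimes 1+1\otimes d_\mathcal{B},\iota^{\otimes}_{\eta,(n)}]=L^{\otimes}_{\eta,(n)}$ for all $n\ge 0$, which force $(\mathcal{A}\otimes\mathcal{B})_{bas}$ to be preserved by the differential. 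Combined with the three tautological observations above, this proves the lemma.

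The main obstacle in this entire circle of ideas lies not in the lemma itself but in the preceding set-up: making sense of $\Phi=\exp(\phi(0)_{\ge 0})$ as an honest element of $GL(\mathcal{A}\otimes\mathcal{B})$ demands local nilpotence of $\phi(0)_{\ge 0}$ so that the exponential converges pointwise. For a fixed $a\otimes b$, continuity of the $\mathfrak{sg}[t]$-action on $\mathcal{B}$ gives a bound $k$ with $\iota^\mathcal{B}_{\xi_i,(n)}b=0$ for every $i$ and $n\ge k$, so $\phi(0)_{\ge 0}(a\otimes b)$ is a finite sum; taming iterates then uses the mutual anticommutativity of the $c^{\xi^*_i,\mathcal{A}}_{(-n-1)}$ on the $\mathcal{A}$-side and of the $\iota^\mathcal{B}_{\xi_j,(m)}$ on the $\mathcal{B}$-side, which presents $\phi(0)_{\ge 0}^N(a\otimes b)$ as an alternating sum over strictly ordered $N$-tuples of labels $(i,n)$. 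Combining continuity with the bidegree-zero homogeneity of $\phi(0)_{\ge 0}$ (each summand raises the $\mathcal{A}$-degree by $+1$ and lowers the $\mathcal{B}$-degree by $-1$, while its weight contributions $+n$ on $\mathcal{A}$ and $-n$ on $\mathcal{B}$ cancel) then allows one to argue that the sum vanishes once $N$ is large enough. Once this is in place, $\Phi^{-1}=\exp(-\phi(0)_{\ge 0})$ is defined by the same mechanism and the lemma follows.
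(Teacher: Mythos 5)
Your proposal is correct and coincides with the paper's treatment: the paper states this lemma without a separate proof, since it is immediate from the definitions of $C(\mathcal{A};\mathcal{B})$ and $d_{\mathcal{A},\mathcal{B}}$, the remark just before the lemma that $\Phi$ preserves the degree- and weight-gradings, and the fact noted in Section 2 that the basic subspace of a differential $\mathfrak{sg}[t]$-module (here $\mathcal{A}\otimes\mathcal{B}$ with the diagonal action) is a subcomplex, which is exactly your Cartan-relation argument. One small caveat on your supplementary discussion of why $\Phi\in GL(\mathcal{A}\otimes\mathcal{B})$: the vanishing of $\phi(0)_{\ge 0}^N(a\otimes b)$ for large $N$ does not follow from the bidegree-zero homogeneity you invoke, but from anticommutativity (reorder so the largest mode index acts on $b$ first) combined with continuity, which confines all labels to the finite set $\{(i,n)\,:\,1\le i\le\dim\mathfrak{g},\ 0\le n<k_b\}$ and, together with distinctness of labels, bounds $N$ by $\dim\mathfrak{g}\cdot k_b$.
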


We call the complex $(C(\mathcal{A}; \mathcal{B}), d_{\mathcal{A}, \mathcal{B}})$ the \textbf{chiral Cartan model} for the differential $\mathfrak{sg}[t]$-module  
$(\mathcal{B}, d_\mathcal{B})$ with respect to the chiral $W^*$-module $(\mathcal{A}, d_\mathcal{A}, Y^\mathcal{A})$.

The following proposition is a variation of \cite[Theorem 4.6]{LL}. 

\begin{proposition}\label{prop: chiral Cartan model for chiral W^*-modules}
Let $(\mathcal{A}, d_\mathcal{A}, Y^\mathcal{A})$ be a chiral $W^*$-module and $(\mathcal{B}, d_\mathcal{B})$  a differential $\mathfrak{sg}[t]$-module.
Then the following equalities hold in $\mathrm{End} (\mathcal{A}\otimes\mathcal{B})$:
\begin{multline}\label{eq: the transformation of the differential}
\Phi\circ(d_\mathcal{A}\otimes 1+ 1\otimes d_\mathcal{B})\circ \Phi^{-1}\\
=d_\mathcal{A}\otimes 1+ 1\otimes d_\mathcal{B} - \sum_{i=1}^{\dim \mathfrak{g}}\sum_{n\ge0}\gamma^{\xi^*_i, \mathcal{A}}_{(-n-1)}\otimes\iota^\mathcal{B}_{\xi_i, (n)}+\sum_{i=1}^{\dim \mathfrak{g}}\sum_{n\ge0}c^{\xi^*_i, \mathcal{A}}_{(-n-1)}\otimes L^\mathcal{B}_{\xi_i, (n)}\\
+\sum_{i, j=1}^{\dim \mathfrak{g}}\sum_{m,n\ge0}c_{(n)}^{\xi^*_i, \mathcal{A}}c_{(-n-m-2)}^{\xi^*_j, \mathcal{A}}\otimes\iota_{[\xi_i,\xi_j], (m)}^\mathcal{B},
\end{multline}
and
\begin{multline}\label{eq: the transformation of L}
\Phi\circ(L^\mathcal{A}_{\xi, (n)}\otimes 1+1\otimes L^\mathcal{B}_{\xi, (n)})\circ\Phi^{-1} \\ 
\quad =L^\mathcal{A}_{\xi, (n)}\otimes 1+ 1\otimes L^\mathcal{B}_{\xi, (n)} +\sum_{i=1}^{\dim \mathfrak{g}}\sum_{0\le k<n}c^{\xi^*_i, \mathcal{A}}_{(n-k-1)}\otimes \iota^\mathcal{B}_{[\xi,\xi_i], (k)}, 
\end{multline}
\begin{equation}\label{eq: the transformation of iota}
\Phi \circ (\iota^\mathcal{A}_{\xi, (n)}\otimes 1 + 1\otimes \iota^\mathcal{B}_{\xi, (n)})\circ \Phi^{-1}=\iota^\mathcal{A}_{\xi, (n)}\otimes 1,
\end{equation}
for all $n \ge 0$, $\xi\in \mathfrak{g}$.
\end{proposition}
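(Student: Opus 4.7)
The operator $X := \phi(0)_{\ge 0}$ is a sum of tensor products of an odd $c$-operator on $\mathcal{A}$ with an odd $\iota$-operator on $\mathcal{B}$, so $X$ is even; therefore $\Phi = \exp(X)$ and conjugation is given by $\Phi A \Phi^{-1} = e^{\mathrm{ad}\,X}A = \sum_{k\ge 0}\tfrac{1}{k!}(\mathrm{ad}\,X)^k A$ with the ordinary commutator. The plan is to show, for each of the three right-hand sides, that the series terminates after at most two terms by exploiting three anticommutativities: the $c$'s of $\mathcal{W}'$ satisfy $\{c^{\xi^*,\mathcal{A}}_{(j)}, c^{\eta^*,\mathcal{A}}_{(k)}\} = 0$ (trivial $cc$-OPE in the $bc$-factor); the interior products satisfy $\{\iota^\mathcal{B}_{\xi,(m)}, \iota^\mathcal{B}_{\eta,(n)}\} = 0$ (the odd part $\mathfrak{g}_{-1}\subset \mathfrak{sg}$ is abelian); and $c$ and $\gamma$ live in commuting tensor factors of $\mathcal{W}$, so they super-commute.

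Identity \eqref{eq: the transformation of iota} is quickest. Axiom~(3) of Definition~\ref{df: chiral W^*-modules} gives $\{c^{\xi^*_i,\mathcal{A}}_{(-m-1)}, \iota^\mathcal{A}_{\xi,(n)}\} = \langle \xi^*_i,\xi\rangle\,\delta_{m,n}$, hence $[X,\iota^\mathcal{A}_{\xi,(n)}\otimes 1] = -1\otimes \iota^\mathcal{B}_{\xi,(n)}$, while $[X,1\otimes \iota^\mathcal{B}_{\xi,(n)}]=0$ by the $\iota$-anticommutativity; a second $\mathrm{ad}\,X$ vanishes for the same reason, so the series truncates to $\iota^\mathcal{A}_{\xi,(n)}\otimes 1$. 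For \eqref{eq: the transformation of L}, axiom~(2) and the basis identity $\sum_i (\mathrm{ad}^*\xi\cdot \xi^*_i)\otimes \xi_i = -\sum_j \xi^*_j\otimes [\xi,\xi_j]$ yield $[X,L^\mathcal{A}_{\xi,(n)}\otimes 1] = \sum_{j,m\ge 0} c^{\xi^*_j,\mathcal{A}}_{(n-m-1)}\otimes \iota^\mathcal{B}_{[\xi,\xi_j],(m)}$, while the $\mathfrak{sg}[t]$-relation $[L^\mathcal{B}_{\xi,(n)},\iota^\mathcal{B}_{\xi_i,(m)}] = \iota^\mathcal{B}_{[\xi,\xi_i],(n+m)}$ gives, after reindexing $k=n+m$, $[X,1\otimes L^\mathcal{B}_{\xi,(n)}] = -\sum_{i,k\ge n} c^{\xi^*_i,\mathcal{A}}_{(n-k-1)}\otimes \iota^\mathcal{B}_{[\xi,\xi_i],(k)}$; the $m\ge n$ contributions cancel, leaving the stated sum over $0\le k<n$. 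The next bracket vanishes because the result is of shape $c\otimes \iota$ and the $c$'s and $\iota$'s each anticommute.

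The main work is \eqref{eq: the transformation of the differential}. The bracket $[X, d_\mathcal{A}\otimes 1]$ uses axiom~(1) together with the Lian-Linshaw formula $d_{\mathcal{W}'}c^{\xi^*}_0\mathbf{1} = \gamma^{\xi^*}_0\mathbf{1} - \tfrac{1}{2}\sum_j c^{\mathrm{ad}^*\xi_j\cdot\xi^*}_0 c^{\xi^*_j}_0\mathbf{1}$ from Section~\ref{subsection: Chiral Equivariant Cohomology}; expanding the normal-ordered product (collapsed via $\{c,c\}=0$) and applying the basis identity above produces the $-\sum\gamma\otimes\iota$ term plus a first quadratic-$c$ contribution $\tfrac{1}{2}\sum_{i,j,m\ge 0,\,n\in\mathbb{Z}} c^{\xi^*_i,\mathcal{A}}_{(n)}c^{\xi^*_j,\mathcal{A}}_{(-n-m-2)}\otimes \iota^\mathcal{B}_{[\xi_i,\xi_j],(m)}$. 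The bracket $[X, 1\otimes d_\mathcal{B}]$ uses $\{d_\mathcal{B},\iota^\mathcal{B}_{\xi,(n)}\} = L^\mathcal{B}_{\xi,(n)}$ (image under $\rho$ of the $\mathfrak{sg}[t]$-differential) and gives the $+\sum c\otimes L$ term. Of the three pieces of $\tfrac{1}{2}(\mathrm{ad}\,X)^2$ acting on $d_\mathcal{A}\otimes 1 + 1\otimes d_\mathcal{B}$, only $\tfrac{1}{2}[X,\sum c\otimes L]$ survives (the $\gamma\otimes\iota$ piece dies by $[c,\gamma]=0$ and $\{\iota,\iota\}=0$, and the $cc\otimes\iota_{[\xi_i,\xi_j]}$ piece dies because $\{\iota,\iota_{[\xi_i,\xi_j]}\}=0$ forces the cubic-$c$ factor to collapse); using $[L_{\xi_j},\iota_{\xi_i}] = \iota_{[\xi_j,\xi_i]}$ this produces a second quadratic contribution $-\tfrac{1}{2}\sum_{i,j,m,n\ge 0} c^{\xi^*_i,\mathcal{A}}_{(-m-1)}c^{\xi^*_j,\mathcal{A}}_{(-n-1)}\otimes \iota^\mathcal{B}_{[\xi_i,\xi_j],(n+m)}$. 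The main bookkeeping obstacle is to merge these two quadratic pieces: parametrizing each by $(p,q)\in\mathbb{Z}^2$ with $p+q\le -2$, the first covers all such $(p,q)$ and the second only $\{p,q\le -1\}$; the common region cancels and the complement decomposes into $\{p\ge 0\}$ and $\{q\ge 0\}$, which become equal after the anticommutation $c^{\xi^*_i}_{(p)}c^{\xi^*_j}_{(q)} = -c^{\xi^*_j}_{(q)}c^{\xi^*_i}_{(p)}$ and the antisymmetry $\iota^\mathcal{B}_{[\xi_i,\xi_j]} = -\iota^\mathcal{B}_{[\xi_j,\xi_i]}$, so their sum reproduces the stated $\sum_{m,n\ge 0} c^{\xi^*_i,\mathcal{A}}_{(n)}c^{\xi^*_j,\mathcal{A}}_{(-n-m-2)}\otimes \iota^\mathcal{B}_{[\xi_i,\xi_j],(m)}$ with coefficient $1$. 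All higher brackets vanish by the same anticommutativities.
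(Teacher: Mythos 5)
Your proof is correct and is essentially the paper's argument: the paper likewise conjugates by $\exp(\mathrm{ad}\,\phi(0)_{\ge0})$ and notes that $\mathrm{ad}(\phi(0)_{\ge0})^3$ annihilates $d_\mathcal{A}\otimes1+1\otimes d_\mathcal{B}$ while $\mathrm{ad}(\phi(0)_{\ge0})^2$ annihilates the $L$- and $\iota$-operators, which is exactly the truncation you establish. Your write-up merely supplies the details the paper leaves as ``direct computations'' (the use of $d_{\mathcal{W}'}c^{\xi^*}_0\mathbf{1}=\gamma^{\xi^*}_0\mathbf{1}-\tfrac{1}{2}\sum_j c^{\mathrm{ad}^*\xi_j\cdot\xi^*}_0c^{\xi^*_j}_0\mathbf{1}$ and the merging of the two quadratic-in-$c$ contributions), and these details check out.
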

\begin{proof}
The assertion is proved by direct computations of $\mathrm{ad}(\phi(0)_{\ge0})^l$, 
where $\mathrm{ad}(\phi(0)_{\ge0})=[\phi(0)_{\ge0}, \ \ ]$.  
Note that we have $\mathrm{ad}(\phi(0)_{\ge0})^3(d_\mathcal{A}\otimes 1+ 1\otimes d_\mathcal{B})=0$, 
$\mathrm{ad}(\phi(0)_{\ge0})^2 (L^\mathcal{A}_{\xi, (n)}\otimes 1+1\otimes L^\mathcal{B}_{\xi, (n)})=0$
 and $(\mathrm{ad}(\phi(0)_{\ge0})^2(\iota^\mathcal{A}_{\xi, (n)}\otimes 1 + 1\otimes \iota^\mathcal{B}_{\xi, (n)})=0$. 
\end{proof}

By Proposition \ref{prop: chiral Cartan model for chiral W^*-modules}, 
 we have 
$
C(\mathcal{A}; \mathcal{B})=(\mathcal{A}_{hor}\otimes\mathcal{B})^{\Phi\mathfrak{g}[t]\Phi^{-1}},
$
where the right-hand side stands for the invariant subspace under the modified action of $\mathfrak{g}[t]$: 
$$
\mathfrak{g}[t]\to \mathrm{End}(\mathcal{A}\otimes\mathcal{B})\stackrel{\mathrm{Ad}(\Phi)}{\longrightarrow} \mathrm{End}(\mathcal{A}\otimes\mathcal{B}).
$$ 

\subsection{Commutative Cases}
Consider the case when the Lie group $G=T$ is commutative. Set $\mathfrak{t}=\mathrm{Lie}(T)^\mathbb{K}$. The \textit{small chiral  Cartan model} for $O(\mathfrak{st})$-algebras was introduced in \cite{LL}.
By Lemma \ref{lem: CHIRAL CARTAN MODEL}, we can also define the small chiral Cartan model for any differential $\mathfrak{st}[t]$-module. 
Note that the action of $\mathfrak{t}[t]$ on $\mathcal{W}(\mathfrak{t})$ is trivial since $\mathfrak{t}$ is commutative. 

Let $(\mathcal{A}, d_\mathcal{A})$ be a differential $\mathfrak{st}[t]$-module.
Set
$$
\mathcal{C}(\mathcal{A}):=\langle\gamma\rangle\otimes\mathcal{A}_{inv}\subset C(\mathcal{W}(\mathcal{\mathfrak{t}}); \mathcal{A}),
$$
where we denote by $\langle \gamma \rangle$ the subalgebra of $\mathcal{W}'$ generated by $\gamma^{\xi^*}_0\mathbf{1}$ with $\xi^*\in\mathfrak{g}$. 
Since $\mathfrak{t}$ is commutative, $d_\mathcal{W}|_{\langle \gamma \rangle}=0$ and $[\iota^{\mathcal{A}}_\xi(z)_-,L^{\mathcal{A}}_\eta(w)_- ]=0$ for any $\xi, \eta\in\mathfrak{t}$.   
Therefore it follows that $\mathcal{C}(\mathcal{A})$ is preserved by the differential $d_{\mathcal{W}(\mathfrak{t}), \mathcal{A}}$. 
We can prove the following lemma by the same argument as in \cite[Theorem 6.4]{LL}, where the case when $\mathcal{A}$ is an $O(\mathfrak{st})$-algebra is considered.

\begin{proposition}\label{prop: small Cartan model}
The inclusion
$$
(\mathcal{C}(\mathcal{A}), d_{\mathcal{W}(\mathfrak{t}), \mathcal{A}}|_{\mathcal{C}(\mathcal{A})})\to(C(\mathcal{W}(\mathcal{\mathfrak{t}}); \mathcal{A}), d_{\mathcal{W}(\mathfrak{t}), \mathcal{A}}), 
$$
is a quasi-isomorphism.
\end{proposition}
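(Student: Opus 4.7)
The plan is to mimic the proof of \cite[Theorem 6.4]{LL}, adapting it from the $O(\mathfrak{st})$-algebra setting to the case of general differential $\mathfrak{st}[t]$-modules. The key preliminary observation is that the argument in \emph{loc.\ cit.}\ uses only the operations $L^{\mathcal{A}}$, $\iota^{\mathcal{A}}$ and the differential $d_{\mathcal{A}}$, so it carries over once the simplifications coming from Proposition \ref{prop: chiral Cartan model for chiral W^*-modules} are established in the $\mathfrak{st}[t]$-module setting.

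First, I would unwind $C(\mathcal{W}(\mathfrak{t});\mathcal{A})$ explicitly. Since $\mathfrak{t}$ is commutative we have $\Theta_{\mathcal{W}}^{\xi}=0$, hence $L^{\mathcal{W}}_{\xi,(n)}=0$, and $[\iota^{\mathcal{A}}_{\xi_{i},(n)},L^{\mathcal{A}}_{\xi,(m)}]=\iota^{\mathcal{A}}_{[\xi_{i},\xi],(n+m)}=0$; combining these facts with (3.9) yields an isomorphism of degree-weight-graded complexes
\[
C(\mathcal{W}(\mathfrak{t});\mathcal{A}) = \mathcal{W}(\mathfrak{t})_{hor}\otimes\mathcal{A}_{inv},
\]
under which the transformed differential (3.7) simplifies to
\[
d = d_{\mathcal{W}}\otimes 1 + 1\otimes d_{\mathcal{A}} - \sum_{i}\sum_{n\ge 0}\gamma^{\xi^*_i,\mathcal{W}}_{(-n-1)}\otimes\iota^{\mathcal{A}}_{\xi_i,(n)},
\]
the $c\otimes L$-summand vanishing on $\mathcal{A}_{inv}$ and the $cc\otimes\iota$-summand vanishing by commutativity. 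Under this identification, $\mathcal{C}(\mathcal{A}) = \langle\gamma\rangle\otimes\mathcal{A}_{inv}$ is manifestly a subcomplex.

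Next, PBW gives the tensor decomposition
\[
\mathcal{W}(\mathfrak{t})_{hor} = \Lambda[b^{\xi}_{n}:n\le -1]\otimes\mathrm{Sym}[\beta^{\xi}_{n}:n\le -1]\otimes\mathrm{Sym}[\gamma^{\xi^*}_{n}:n\le -1]\otimes\langle\gamma\rangle,
\]
on which $d_{\mathcal{W}}$ is the derivation sending $\beta^{\xi}_{n}\mapsto b^{\xi}_{n}$ and killing the remaining generators. I would then filter $C(\mathcal{W}(\mathfrak{t});\mathcal{A})$ by the conformal weight of the $\mathcal{W}(\mathfrak{t})$-factor: only the $n\ge 1$ summands of $d$ raise this weight, so the filtration is compatible with $d$, and the associated spectral sequence converges after decomposing by total conformal weight (which $d$ preserves), thanks to the $\mathbb{Z}_{\ge 0}$-weight grading of $\mathcal{W}(\mathfrak{t})_{hor}$ with finite-dimensional weight spaces and the continuity assumption on the $\mathfrak{st}[t]$-action, which makes each sum in $d$ finite on any vector. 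The $E_{0}$-differential is $d_{\mathcal{W}}\otimes 1 + 1\otimes d_{\mathcal{A}} - \sum_{i}\gamma^{\xi^*_i,\mathcal{W}}_{(-1)}\otimes\iota^{\mathcal{A}}_{\xi_i,(0)}$, and a K\"unneth argument together with the standard Koszul contracting homotopy $b^{\xi}_{n}\mapsto\beta^{\xi}_{n}$ identifies $E_{1}$ with $\mathrm{Sym}[\gamma^{\xi^*}_{n}:n\le -1]\otimes H^{*}(\mathcal{C}(\mathcal{A}))$. A parallel contracting-homotopy argument on the higher pages, pairing $\gamma^{\xi^*}_{-n}$ against the residual $\iota^{\mathcal{A}}_{\xi,(n)}$-action for $n\ge 1$, collapses the excess $\mathrm{Sym}[\gamma^{\xi^*}_{n}:n\le -1]$-factor, so that $E_{\infty} = H^{*}(\mathcal{C}(\mathcal{A}))$ is realized precisely by the inclusion $\mathcal{C}(\mathcal{A})\hookrightarrow C(\mathcal{W}(\mathfrak{t});\mathcal{A})$.

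The hardest part will be the collapse of the higher pages, i.e., the elimination of the $\mathrm{Sym}[\gamma^{\xi^*}_{n}:n\le -1]$-factor. Constructing the required chiral contracting homotopy demands careful bookkeeping with normal-ordered products, and commutativity of $\mathfrak{t}$ is used to ensure that the homotopy preserves $\mathcal{A}_{inv}$; convergence is again secured by the weight bound on $\mathcal{W}(\mathfrak{t})_{hor}$ together with the continuity assumption on $\mathcal{A}$. This is exactly the step where the argument of \cite[Theorem 6.4]{LL} can be invoked verbatim, as it does not rely on any $O(\mathfrak{st})$-algebra-specific structure of $\mathcal{A}$.
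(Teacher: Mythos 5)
Your high-level strategy is the paper's own: the paper proves this proposition simply by observing that the argument of \cite[Theorem 6.4]{LL} only uses $L^{\mathcal{A}}$, $\iota^{\mathcal{A}}$, $d_{\mathcal{A}}$ and the simplifications of Proposition \ref{prop: chiral Cartan model for chiral W^*-modules}, so it carries over from $O(\mathfrak{st})$-algebras to arbitrary differential $\mathfrak{st}[t]$-modules; and your identification $C(\mathcal{W}(\mathfrak{t});\mathcal{A})=\mathcal{W}(\mathfrak{t})_{hor}\otimes\mathcal{A}_{inv}$ with the simplified differential is correct for commutative $\mathfrak{t}$. However, your description of $\mathcal{W}(\mathfrak{t})_{hor}$, and the final step built on it, contain a genuine error. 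In the paper $\langle\gamma\rangle$ is the \emph{vertex} subalgebra of $\mathcal{W}'$ generated by the $\gamma^{\xi^*}_0\mathbf{1}$; being closed under the translation operator and all products it already contains every $\gamma^{\xi^*}_{-n}\mathbf{1}$ with $n\ge0$, i.e.\ $\langle\gamma\rangle\cong\mathrm{Sym}[\gamma^{\xi^*}_n : n\le 0]$. Hence $\mathcal{W}(\mathfrak{t})_{hor}=\Lambda[b^{\xi}_n : n\le-1]\otimes\mathrm{Sym}[\beta^{\xi}_n : n\le-1]\otimes\langle\gamma\rangle$; there is no leftover $\mathrm{Sym}[\gamma^{\xi^*}_n : n\le-1]$ factor, and the entire content of the quasi-isomorphism is the contraction of the $(b,\beta)$-Koszul factor by $d_{\mathcal{W}}$ (your first homotopy), with convergence controlled by the weight grading.

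Consequently the step you flag as the hardest part --- a contracting homotopy pairing $\gamma^{\xi^*}_{-n}$ against $\iota^{\mathcal{A}}_{\xi,(n)}$ for $n\ge1$ to ``collapse the excess $\gamma$-factor'' --- is a step that would fail. Those modes lie in the small model $\mathcal{C}(\mathcal{A})=\langle\gamma\rangle\otimes\mathcal{A}_{inv}$ itself and must survive in cohomology: for instance, take $\mathcal{A}=\mathbb{K}$ with the trivial $\mathfrak{st}[t]$-action, so that all $\iota^{\mathcal{A}}_{\xi,(n)}=0$ and no such homotopy can exist, while the cohomology of both the Cartan model and the small model is the whole of $\langle\gamma\rangle$, not merely the polynomials in the zero modes. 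The term $-\sum_{i}\sum_{n\ge0}\gamma^{\xi^*_i,\mathcal{W}}_{(-n-1)}\otimes\iota^{\mathcal{A}}_{\xi_i,(n)}$ is part of the small Cartan differential, not something to be contracted away, and no such collapse occurs in \cite[Theorem 6.4]{LL}, so it cannot be ``invoked verbatim'' for that purpose. Once $\langle\gamma\rangle$ is identified correctly and the spurious factor is dropped, your filtration/Koszul-homotopy argument lands directly on $\mathcal{C}(\mathcal{A})$ and the proof closes, in line with the paper's (purely citational) proof.
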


We call $(\mathcal{C}(\mathcal{A}), d_{\mathcal{W}(\mathfrak{t}), \mathcal{A}}|_{\mathcal{C}(\mathcal{A})})$ the \textbf{small chiral  Cartan model} for the differential $\mathfrak{st}[t]$-module $(\mathcal{A},  d_\mathcal{A})$.

The following lemma is proved by an argument similar to that in \cite[Lemma 2.6]{LLS1}, where the case when $\mathcal{A}$ is contained in an $O(\mathfrak{st})$-algebra is considered.

\begin{lemma}\label{lem: small Cartan model}
$$
\Phi_{\mathcal{W}(\mathfrak{t}), \mathcal{A}}^{-1}(\mathcal{C}(\mathcal{A}))=(\langle c, \gamma\rangle\otimes\mathcal{A}_{inv})_{hor}.
$$
\end{lemma}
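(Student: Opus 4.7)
The plan is to characterize $\Phi^{-1}(\mathcal{C}(\mathcal{A}))$ by converting the membership condition $\Phi x \in \langle\gamma\rangle\otimes\mathcal{A}_{inv}$ into the horizontality condition via the conjugation formula \eqref{eq: the transformation of iota} of Proposition~\ref{prop: chiral Cartan model for chiral W^*-modules}. In our situation (with the roles of $\mathcal{A}$ and $\mathcal{B}$ there played respectively by $\mathcal{W}(\mathfrak{t})$ and the present $\mathcal{A}$) the formula reads
$$
\Phi\circ\bigl(\iota^{\mathcal{W}(\mathfrak{t})}_{\xi,(n)}\otimes 1+1\otimes\iota^{\mathcal{A}}_{\xi,(n)}\bigr)\circ\Phi^{-1}=\iota^{\mathcal{W}(\mathfrak{t})}_{\xi,(n)}\otimes 1\qquad(\xi\in\mathfrak{t},\ n\ge 0).
$$
As a first step I would verify that $\Phi$ preserves $\langle c,\gamma\rangle\otimes\mathcal{A}_{inv}$: its exponent $\phi(0)_{\ge 0}$ acts on the first tensor factor via the modes $c^{\xi^*_i}_{(-n-1)}$, which are obtained from $c^{\xi^*_i}_0\mathbf{1}$ by applying $T$-derivatives and so preserve the vertex subalgebra $\langle c,\gamma\rangle$; on the second factor it acts by $\iota^{\mathcal{A}}_{\xi_i,(n)}$, which preserves $\mathcal{A}_{inv}$ because $[L^{\mathcal{A}}_{\eta,(m)},\iota^{\mathcal{A}}_{\xi_i,(n)}]=\iota^{\mathcal{A}}_{[\eta,\xi_i],(m+n)}=0$ in the commutative case.

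Next I would identify $\langle\gamma\rangle$ as the common kernel inside $\langle c,\gamma\rangle$ of the operators $b^{\xi}_{(n)}=\iota^{\mathcal{W}(\mathfrak{t})}_{\xi,(n)}$ with $\xi\in\mathfrak{t}$ and $n\ge 0$. Since the $bc$- and $\beta\gamma$-systems are independent and the OPE between $c$ and $\gamma$ is trivial, the subalgebra decomposes as $\langle c,\gamma\rangle\cong\langle c\rangle\otimes\langle\gamma\rangle$. The $b$-modes act trivially on $\langle\gamma\rangle$, and on $\langle c\rangle$ they act as the contraction operators dual to the generators $c^{\xi^*_i}_{(-n-1)}\mathbf{1}$ under the exterior-algebra identification of $\langle c\rangle$ with $\Lambda^\bullet\!\bigl(\bigoplus_{n\ge 0}\mathfrak{t}^*\,t^{-n-1}\bigr)$; on such an exterior algebra the common kernel of all contractions is $\mathbb{K}\mathbf{1}$. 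Tensoring with $\mathcal{A}_{inv}$ and using that $b^{\xi}_{(n)}$ acts only on the first tensor factor yields the characterization: for $y\in\langle c,\gamma\rangle\otimes\mathcal{A}_{inv}$, one has $y\in\langle\gamma\rangle\otimes\mathcal{A}_{inv}$ if and only if $(\iota^{\mathcal{W}(\mathfrak{t})}_{\xi,(n)}\otimes 1)y=0$ for every $\xi\in\mathfrak{t}$ and $n\ge 0$.

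Finally, combining the two ingredients: for $x\in\langle c,\gamma\rangle\otimes\mathcal{A}_{inv}$ one has $\Phi x\in\langle\gamma\rangle\otimes\mathcal{A}_{inv}$ iff $(\iota^{\mathcal{W}(\mathfrak{t})}_{\xi,(n)}\otimes 1)\Phi x=0$ for all $(\xi,n)$, which by the conjugation formula is equivalent to $\bigl(\iota^{\mathcal{W}(\mathfrak{t})}_{\xi,(n)}\otimes 1+1\otimes\iota^{\mathcal{A}}_{\xi,(n)}\bigr)x=0$ for all $(\xi,n)$, i.e., $x\in(\langle c,\gamma\rangle\otimes\mathcal{A}_{inv})_{hor}$. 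The step I anticipate as the main obstacle is the kernel computation on $\langle c\rangle$: one must carefully justify the vertex-algebraic decomposition $\langle c,\gamma\rangle\cong\langle c\rangle\otimes\langle\gamma\rangle$ and verify, via a ghost-number-graded PBW basis, that the $b$-modes act as honest exterior-algebra contractions whose joint kernel reduces to the scalars.
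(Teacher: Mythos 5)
Your argument is correct and is essentially the proof the paper intends (it delegates to an argument analogous to \cite[Lemma 2.6]{LLS1}): conjugate the horizontality operators through \eqref{eq: the transformation of iota} of Proposition~\ref{prop: chiral Cartan model for chiral W^*-modules}, observe that the exponent $\phi(0)_{\ge0}$ preserves $\langle c, \gamma\rangle\otimes\mathcal{A}_{inv}$ (using $[L^{\mathcal{A}}_{\eta,(m)},\iota^{\mathcal{A}}_{\xi,(n)}]=0$ for commutative $\mathfrak{t}$), and identify $\langle\gamma\rangle$ as the joint kernel of the modes $b^{\xi}_{(n)}$, $n\ge0$, inside $\langle c, \gamma\rangle\cong\langle c\rangle\otimes\langle\gamma\rangle$. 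The only point worth stating explicitly is that the preservation step applies equally to $\Phi_{\mathcal{W}(\mathfrak{t}), \mathcal{A}}^{-1}=\mathrm{exp}(-\phi(0)_{\ge0})$, which is what yields $\Phi_{\mathcal{W}(\mathfrak{t}), \mathcal{A}}^{-1}(\mathcal{C}(\mathcal{A}))\subset\langle c, \gamma\rangle\otimes\mathcal{A}_{inv}$; this is immediate from the same observation.
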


Let $(\mathcal{A}, d_\mathcal{A}, Y^\mathcal{A})$ be a chiral $W^*$-module. 
Set
$$
C'(\mathcal{A}, \mathcal{W}(\mathfrak{t})):=(\Phi_{\mathcal{A}, \mathcal{W}(\mathfrak{t})}\circ\tau\circ\Phi_{\mathcal{W}(\mathfrak{t}), \mathcal{A}}^{-1})(\mathcal{C}(\mathcal{A}))
\subset C(\mathcal{A}, \mathcal{W}(\mathfrak{t})),
$$
where $\tau: \mathcal{W}(\mathfrak{t})\otimes\mathcal{A}\to\mathcal{A}\otimes\mathcal{W}(\mathfrak{t})$ is the switching map. Then the following proposition follows from  Proposition \ref{prop: small Cartan model}.

\begin{proposition}\label{prop: small Cartan model for chiral W^*-modules}
There exists a canonical isomorphism
$$
H(C'(\mathcal{A}; \mathcal{W}(\mathfrak{t})), d'_{\mathcal{A}, \mathcal{W}(\mathfrak{t})})\cong \mathbf{H}_{T}{\mathcal{(A)}},
$$
where $d'_{\mathcal{A}, \mathcal{W}(\mathfrak{t})}:=d_{\mathcal{A}, \mathcal{W}(\mathfrak{t})}|_{C'(\mathcal{A}, \mathcal{W}(\mathfrak{t}))}$.
\end{proposition}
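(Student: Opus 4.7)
The plan is to chain together three isomorphisms of complexes with the quasi\nobreakdash-isomorphism given by Proposition \ref{prop: small Cartan model}, thereby connecting $\mathbf{H}_{T}(\mathcal{A})$ to the cohomology of $C'(\mathcal{A}; \mathcal{W}(\mathfrak{t}))$. By definition of the chiral equivariant cohomology,
$$
\mathbf{H}_{T}(\mathcal{A})=H\bigl((\mathcal{W}(\mathfrak{t})\otimes\mathcal{A})_{bas},\, (d_{\mathcal{W}}\otimes1+1\otimes d_{\mathcal{A}})|_{bas}\bigr).
$$
Lemma \ref{lem: CHIRAL CARTAN MODEL} applied to the pair $(\mathcal{W}(\mathfrak{t}),\mathcal{A})$ gives an isomorphism of complexes from this onto $(C(\mathcal{W}(\mathfrak{t});\mathcal{A}),\, d_{\mathcal{W}(\mathfrak{t}),\mathcal{A}})$, and Proposition \ref{prop: small Cartan model} identifies the cohomology of the latter with that of its subcomplex $(\mathcal{C}(\mathcal{A}),\, d_{\mathcal{W}(\mathfrak{t}),\mathcal{A}}|_{\mathcal{C}(\mathcal{A})})$. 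Consequently $\mathbf{H}_{T}(\mathcal{A})\cong H(\mathcal{C}(\mathcal{A}))$.

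Next I would use the switching map $\tau:\mathcal{W}(\mathfrak{t})\otimes\mathcal{A}\to\mathcal{A}\otimes\mathcal{W}(\mathfrak{t})$. Because the $\mathfrak{sg}[t]$-action and the differential on each side are defined as the sum of the actions, respectively differentials, on the two tensor factors, $\tau$ intertwines both structures; in particular it restricts to an isomorphism of complexes
$$
\tau:\bigl((\mathcal{W}(\mathfrak{t})\otimes\mathcal{A})_{bas},\,d_{\mathcal{W}}\otimes1+1\otimes d_{\mathcal{A}}\bigr)\stackrel{\sim}{\longrightarrow}\bigl((\mathcal{A}\otimes\mathcal{W}(\mathfrak{t}))_{bas},\,d_{\mathcal{A}}\otimes1+1\otimes d_{\mathcal{W}}\bigr).
$$
A second application of Lemma \ref{lem: CHIRAL CARTAN MODEL}, this time to $(\mathcal{A},\mathcal{W}(\mathfrak{t}))$, gives that $\Phi_{\mathcal{A},\mathcal{W}(\mathfrak{t})}$ is an isomorphism of complexes onto $(C(\mathcal{A};\mathcal{W}(\mathfrak{t})),\,d_{\mathcal{A},\mathcal{W}(\mathfrak{t})})$. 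Composing, the map
$$
\Psi:=\Phi_{\mathcal{A},\mathcal{W}(\mathfrak{t})}\circ\tau\circ\Phi_{\mathcal{W}(\mathfrak{t}),\mathcal{A}}^{-1}:\bigl(C(\mathcal{W}(\mathfrak{t});\mathcal{A}),\,d_{\mathcal{W}(\mathfrak{t}),\mathcal{A}}\bigr)\stackrel{\sim}{\longrightarrow}\bigl(C(\mathcal{A};\mathcal{W}(\mathfrak{t})),\,d_{\mathcal{A},\mathcal{W}(\mathfrak{t})}\bigr)
$$
is an isomorphism of complexes.

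By the very definition of $C'(\mathcal{A},\mathcal{W}(\mathfrak{t}))$ as $\Psi(\mathcal{C}(\mathcal{A}))$, the restriction of $\Psi$ to the subcomplex $\mathcal{C}(\mathcal{A})$ is an isomorphism of complexes
$$
(\mathcal{C}(\mathcal{A}),\,d_{\mathcal{W}(\mathfrak{t}),\mathcal{A}}|_{\mathcal{C}(\mathcal{A})})\stackrel{\sim}{\longrightarrow}(C'(\mathcal{A};\mathcal{W}(\mathfrak{t})),\,d'_{\mathcal{A},\mathcal{W}(\mathfrak{t})});
$$
in particular the well\nobreakdash-definedness of $d'_{\mathcal{A},\mathcal{W}(\mathfrak{t})}$ as the restriction of $d_{\mathcal{A},\mathcal{W}(\mathfrak{t})}$ is automatic from the stability of $\mathcal{C}(\mathcal{A})$ under $d_{\mathcal{W}(\mathfrak{t}),\mathcal{A}}$. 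Taking cohomology and concatenating with the isomorphisms of the previous paragraphs yields the desired canonical isomorphism $H(C'(\mathcal{A};\mathcal{W}(\mathfrak{t})),d'_{\mathcal{A},\mathcal{W}(\mathfrak{t})})\cong\mathbf{H}_{T}(\mathcal{A})$.

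The only non\nobreakdash-formal point is the verification that $\tau$ really does preserve the chiral basic subspace and intertwine the total differential; this is what I expect to be the main (but still essentially routine) check, and it reduces to the observation that the $\mathfrak{sg}[t]$\nobreakdash-action on both tensor products is the standard diagonal one and that $\tau$ exchanges the two summands of the total differential. Everything else is a formal assembly of Lemma \ref{lem: CHIRAL CARTAN MODEL} applied twice, the definition of $\mathbf{H}_T$, and the quasi\nobreakdash-isomorphism of Proposition \ref{prop: small Cartan model}.
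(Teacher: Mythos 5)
Your proposal is correct and follows exactly the route the paper intends: the paper derives this proposition directly from Proposition \ref{prop: small Cartan model}, and your chain --- Lemma \ref{lem: CHIRAL CARTAN MODEL} applied to $(\mathcal{W}(\mathfrak{t}),\mathcal{A})$ and to $(\mathcal{A},\mathcal{W}(\mathfrak{t}))$, the switching map $\tau$, and the definition of $C'(\mathcal{A},\mathcal{W}(\mathfrak{t}))$ as the image of $\mathcal{C}(\mathcal{A})$ under $\Phi_{\mathcal{A},\mathcal{W}(\mathfrak{t})}\circ\tau\circ\Phi_{\mathcal{W}(\mathfrak{t}),\mathcal{A}}^{-1}$ --- is precisely the assembly that justifies it. You have simply written out the details the paper leaves implicit, including the routine check that $\tau$ preserves the basic subspace and intertwines the total differentials.
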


The following lemma leads us to the next important proposition.

\begin{lemma}
$$
C'(\mathcal{A}, \mathcal{W}(\mathfrak{t}))=\mathcal{A}_{bas}\otimes\langle c, \gamma\rangle.
$$
\end{lemma}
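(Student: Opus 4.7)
The plan is to start from the chain
\[
C'(\mathcal{A}, \mathcal{W}(\mathfrak{t})) = \Phi_{\mathcal{A}, \mathcal{W}(\mathfrak{t})}\bigl(\tau\bigl((\langle c, \gamma\rangle \otimes \mathcal{A}_{inv})_{hor}\bigr)\bigr),
\]
which follows from the definition of $C'$ together with the preceding small-Cartan-model Lemma. Thus the task reduces to computing the image of $(\langle c, \gamma\rangle \otimes \mathcal{A}_{inv})_{hor}$ under $\Phi_{\mathcal{A}, \mathcal{W}(\mathfrak{t})} \circ \tau$. A short Koszul-sign calculation verifies that $\tau$ intertwines the horizontal-operator $\iota^{\mathcal{W}(\mathfrak{t})}_{\xi,(n)} \otimes 1 + 1 \otimes \iota^{\mathcal{A}}_{\xi,(n)}$ on $\mathcal{W}(\mathfrak{t}) \otimes \mathcal{A}$ with its counterpart on $\mathcal{A} \otimes \mathcal{W}(\mathfrak{t})$, so that $\tau\bigl((\langle c, \gamma\rangle \otimes \mathcal{A}_{inv})_{hor}\bigr) = (\mathcal{A}_{inv} \otimes \langle c, \gamma\rangle)_{hor}$ as subspaces of $\mathcal{A} \otimes \mathcal{W}(\mathfrak{t})$.

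Next I would show that $\Phi_{\mathcal{A}, \mathcal{W}(\mathfrak{t})}$ preserves the subspace $\mathcal{A}_{inv} \otimes \langle c, \gamma \rangle$. Identifying $\iota^{\mathcal{W}(\mathfrak{t})}_{\xi_i,(n)}$ with $b^{\xi_i}_{(n)}$ via the Lian--Linshaw theorem, each summand of $\phi(0)_{\ge 0}$ has the form $c^{\xi^*_i, \mathcal{A}}_{(-n-1)} \otimes b^{\xi_i}_{(n)}$. Since $\mathfrak{t}$ is commutative, the commutator $[L^\mathcal{A}_{\xi,(m)}, c^{\xi^*, \mathcal{A}}_{(n)}] = c^{\mathrm{ad}^*\xi \cdot \xi^*, \mathcal{A}}_{(m+n)}$ vanishes, so $c^{\xi^*_i, \mathcal{A}}_{(-n-1)}$ preserves $\mathcal{A}_{inv}$; and for $n \ge 0$, $b^{\xi_i}_{(n)}$ either produces a scalar when contracting a $c$-generator of $\langle c, \gamma \rangle$ or vanishes on a $\gamma$-generator, so it preserves $\langle c, \gamma \rangle$. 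Consequently $\Phi$ and $\Phi^{-1}$ both preserve $\mathcal{A}_{inv} \otimes \langle c, \gamma\rangle$, and $\Phi$ restricts to a bijection of this subspace onto itself.

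Combining these with the conjugation formula \eqref{eq: the transformation of iota} of Proposition \ref{prop: chiral Cartan model for chiral W^*-modules}, we obtain
\[
\Phi\bigl((\mathcal{A}_{inv} \otimes \langle c, \gamma\rangle)_{hor}\bigr) = \bigl\{v \in \mathcal{A}_{inv} \otimes \langle c, \gamma\rangle : (\iota^\mathcal{A}_{\xi,(n)} \otimes 1)(v) = 0\ \text{for all}\ \xi \in \mathfrak{t},\ n \ge 0\bigr\}.
\]
Choosing a basis of $\langle c, \gamma\rangle$ and decoupling the tensor factors identifies the right-hand side with $\mathcal{A}_{bas} \otimes \langle c, \gamma\rangle$, completing the proof. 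The main obstacle is the preservation step: this is the only place where the commutativity of $\mathfrak{t}$ is used essentially, and one must verify closure under all modes $b^{\xi_i}_{(n)}$ for $n \ge 0$ simultaneously, not merely $n = 0$.
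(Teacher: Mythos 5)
Your proof is correct and follows essentially the same route as the paper: both reduce to $\Phi_{\mathcal{A},\mathcal{W}(\mathfrak{t})}\bigl((\mathcal{A}_{inv}\otimes\langle c,\gamma\rangle)_{hor}\bigr)$ via Lemma \ref{lem: small Cartan model}, use commutativity of $\mathfrak{t}$ (so the $c^{\xi^*,\mathcal{A}}$-modes preserve $\mathcal{A}_{inv}$ and the $b$-modes preserve $\langle c,\gamma\rangle$) to see that $\Phi^{\pm1}$ preserve $\mathcal{A}_{inv}\otimes\langle c,\gamma\rangle$, and then apply the conjugation formula \eqref{eq: the transformation of iota} to identify the image with $\mathcal{A}_{bas}\otimes\langle c,\gamma\rangle$. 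Your packaging of the two inclusions as bijectivity of $\Phi$ on the invariant subspace is only a cosmetic variation of the paper's argument.
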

\begin{proof}
The operators $c^{\xi^*, \mathcal{A}}_{(n)}$ and $L_{\xi, (k)}^\mathcal{A}$ commute with each other since $\mathfrak{t}$ is commutative. Therefore the operators  $c^{\xi^*, \mathcal{A}}_{(n)}$ preserve the subspace $\mathcal{A}^{\mathfrak{t}[t]}$. 
Thus we have $\Phi_{\mathcal{A}, \mathcal{W}(\mathfrak{t})}(\mathcal{A}^{\mathfrak{t}[t]}\otimes \langle c, \gamma\rangle)$ $\subset \mathcal{A}^{\mathfrak{t}[t]}\otimes \langle c, \gamma\rangle$. Therefore  
the subspace
$$
\Phi_{\mathcal{A}, \mathcal{W}(\mathfrak{t})}\bigl(\big( \mathcal{A}^{\mathfrak{t}[t]}\otimes \langle c, \gamma\rangle\big)_{hor}\bigr)=\Bigl(\Phi_{\mathcal{A}, \mathcal{W}(\mathfrak{t})}\bigl(\mathcal{A}^{\mathfrak{t}[t]}\otimes \langle c, \gamma\rangle\bigr)\Bigr)^{\Phi_{\mathcal{A}, \mathcal{W}(\mathfrak{t})} \mathfrak{t}_{-1}[t] \Phi_{\mathcal{A}, \mathcal{W}(\mathfrak{t})}^{-1}}, 
$$
is contained in $\bigl(\mathcal{A}^{\mathfrak{t}[t]}\otimes\langle c, \gamma \rangle \bigr)^{\Phi_{\mathcal{A}, \mathcal{W}(\mathfrak{t})} \mathfrak{t}_{-1}[t] \Phi_{\mathcal{A}, \mathcal{W}(\mathfrak{t})}^{-1}}$. 
By the formula \eqref{eq: the transformation of iota}, we have
\begin{equation}\label{eq: rewrite A_bas c gamma}
\bigl(\mathcal{A}^{\mathfrak{t}[t]}\otimes\langle c, \gamma \rangle \bigr)^{\Phi_{\mathcal{A}, \mathcal{W}(\mathfrak{t})} \mathfrak{t}_{-1}[t] \Phi_{\mathcal{A}, \mathcal{W}(\mathfrak{t})}^{-1}}=\mathcal{A}_{bas}\otimes \langle c, \gamma \rangle. 
\end{equation}
Thus we have 
\begin{equation}\label{eq: C'(A; W) is contained in A_bas c gamma}
\Phi_{\mathcal{A}, \mathcal{W}(\mathfrak{t})}\bigl(\big( \mathcal{A}^{\mathfrak{t}[t]}\otimes \langle c, \gamma\rangle\big)_{hor}\bigr)\subset\mathcal{A}_{bas}\otimes \langle c, \gamma \rangle.
\end{equation}
On the other hand, we have
\begin{align*}
\Phi_{\mathcal{A}, \mathcal{W}(\mathfrak{t})}^{-1}(\mathcal{A}_{bas}\otimes \langle c, \gamma \rangle)&=\Phi_{\mathcal{A}, \mathcal{W}(\mathfrak{t})}^{-1}\Bigl(\bigl(\mathcal{A}^{\mathfrak{t}[t]}\otimes\langle c, \gamma \rangle \bigr)^{\Phi_{\mathcal{A}, \mathcal{W}(\mathfrak{t})} \mathfrak{t}_{-1}[t] \Phi_{\mathcal{A}, \mathcal{W}(\mathfrak{t})}^{-1}}\Bigr) \\
&=\Bigl(\Phi_{\mathcal{A}, \mathcal{W}(\mathfrak{t})}^{-1}\bigl(\mathcal{A}^{\mathfrak{t}[t]}\otimes\langle c, \gamma \rangle \bigr)\Bigr)^{\mathfrak{t}_{-1}[t]} \\
&\subset \bigl(\mathcal{A}^{\mathfrak{t}[t]}\otimes\langle c, \gamma \rangle \bigr)^{\mathfrak{t}_{-1}[t]}=\bigl(\mathcal{A}^{\mathfrak{t}[t]}\otimes\langle c, \gamma \rangle \bigr)_{hor}.
\end{align*}
The first equality follows from \eqref{eq: rewrite A_bas c gamma} and  the  inclusion follows from the formula $\Phi_{\mathcal{A}, \mathcal{W}(\mathfrak{t})}^{-1}=\mathrm{exp}\bigl(-\sum_{i=1}^{\dim \mathfrak{t}}\sum_{n\ge 0}c^{\xi^*_i, \mathcal{A}}_{(-n-1)}\otimes\iota^\mathcal{W}_{\xi_i, (n)}\bigr)$. Together with \eqref{eq: C'(A; W) is contained in A_bas c gamma}, we have 
$$
\Phi_{\mathcal{A}, \mathcal{W}(\mathfrak{t})}\bigl(\big( \mathcal{A}^{\mathfrak{t}[t]}\otimes \langle c, \gamma\rangle\big)_{hor}\bigr)=\mathcal{A}_{bas}\otimes \langle c, \gamma \rangle.
$$
By Lemma \ref{lem: small Cartan model}, the left-hand side is equal to $C'(\mathcal{A}, \mathcal{W}(\mathfrak{t}))$. This completes the proof.
\end{proof}

The following proposition is a chiral analogue of  \cite[Theorem 4.3.1]{GS99}.

%CHIRAL BASIC=CHIRAL EQUIVARIANT
\begin{proposition}
Let $(\mathcal{A}, d_\mathcal{A}, Y^\mathcal{A})$ be a chiral $W^*$-module. 
The inclusion
\begin{equation}\label{eq: basic into basic otimes W'}
(\mathcal{A}_{bas}, d_\mathcal{A})\to (C'(\mathcal{A}; \mathcal{W}(\mathfrak{t})), d'_{\mathcal{A}, \mathcal{W}(\mathfrak{t})})
, \quad a\mapsto a\otimes \mathbf{1}, 
\end{equation}
is a quasi-isomorphism. 
\end{proposition}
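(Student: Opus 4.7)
The plan is to exploit the commutativity of $\mathfrak{t}$ to simplify the Cartan-transformed differential on $C' := C'(\mathcal{A}; \mathcal{W}(\mathfrak{t}))$ (equal to $\mathcal{A}_{bas}\otimes\langle c, \gamma\rangle$ by the preceding lemma), and then run a length-filtration spectral sequence on the $\langle c, \gamma\rangle$-factor.

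First, applying Proposition \ref{prop: chiral Cartan model for chiral W^*-modules} with $\mathcal{B} = \mathcal{W}(\mathfrak{t})$ and using $\Theta^{\xi}_{\mathcal{W}} = 0$ (so every $L^{\mathcal{W}}_{\xi,(n)}$ vanishes) together with $[\xi_i,\xi_j] = 0$, the induced differential on $C'$ collapses to
\begin{equation*}
d'_{\mathcal{A},\mathcal{W}(\mathfrak{t})} \,=\, d_\mathcal{A} \otimes 1 + 1 \otimes d_{\mathcal{W}'} - \sum_{i=1}^{\dim\mathfrak{t}}\sum_{n\ge 0} \gamma^{\xi^*_i,\mathcal{A}}_{(-n-1)} \otimes b^{\xi_i}_{(n)}.
\end{equation*}
A short super-Jacobi computation starting from $[d_\mathcal{A}, c^{\xi^*,\mathcal{A}}(z)] = \gamma^{\xi^*,\mathcal{A}}(z)$ (axiom (1) of Definition \ref{df: chiral W^*-modules}, since $d_{\mathcal{W}'}(c^{\xi^*}_0\mathbf{1}) = \gamma^{\xi^*}_0\mathbf{1}$ in the commutative case) together with axioms (2)--(3) yields $[L^\mathcal{A}_\eta, \gamma^{\xi^*,\mathcal{A}}] = [\iota^\mathcal{A}_\eta, \gamma^{\xi^*,\mathcal{A}}] = 0$. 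Hence each $\gamma^{\xi^*,\mathcal{A}}_{(-n-1)}$ preserves $\mathcal{A}_{bas}$, the correction term preserves $C'$, and $a \mapsto a\otimes\mathbf{1}$ is a chain map landing in $C'$.

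I then filter $C'$ by the PBW length $N$ in the $\langle c, \gamma\rangle$-factor, treating $\langle c, \gamma\rangle$ as $\Lambda\bigl[c^{\xi^*_i}_{(n)}\bigr] \otimes S\bigl[\gamma^{\xi^*_i}_{(n)}\bigr]$ ($n<0$). Both $d_\mathcal{A}\otimes 1$ and $1\otimes d_{\mathcal{W}'}$ preserve $N$ (the latter turns one $c$-mode into the matching $\gamma$-mode), while the correction term strictly lowers $N$ by one (each $b^{\xi_i}_{(n)}$ contracts a single $c^{\xi^*_j}_{(-n-1)}$ into a scalar). The associated graded differential is therefore the tensor differential $d_\mathcal{A}\otimes 1 + 1\otimes d_{\mathcal{W}'}$, and Künneth over $\mathbb{K}$ gives
\begin{equation*}
E^1_p \,=\, H(\mathcal{A}_{bas}, d_\mathcal{A}) \otimes H(\langle c, \gamma\rangle_{N=p}, d_{\mathcal{W}'}).
\end{equation*}
Since $d_{\mathcal{W}'}$ also preserves $N$ and the acyclicity $H(\mathcal{W}') = \mathbb{K}\cdot[\mathbf{1}]$ stated at the start of this section is concentrated at $N=0$, we conclude $H(\langle c, \gamma\rangle_{N=p}) = 0$ for $p>0$. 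Thus $E^1$ is concentrated at $p=0$ and equals $H(\mathcal{A}_{bas})$, and all higher differentials (which can only lower $p$) vanish automatically.

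For convergence, note that $c$ has degree $+1$ and $\gamma$ has degree $+2$ in the degree-grading on $\mathcal{W}$, so a length-$N$ monomial with $p$ factors of $c$ and $q$ of $\gamma$ has total degree $p+2q \ge N$; hence the filtration is bounded above in each degree, and the spectral sequence converges strongly. The edge map $H(\mathcal{A}_{bas}) = E^\infty_0 \xrightarrow{\sim} H(C')$ is exactly the map induced by the inclusion $a \mapsto a\otimes\mathbf{1}$. I expect the only real obstacle to be the super-Jacobi verification that $\gamma^{\xi^*,\mathcal{A}}_{(-n-1)}$ preserves $\mathcal{A}_{bas}$; once that is in place, the rest is a standard Koszul-type spectral sequence degenerating at the $E^1$-page.
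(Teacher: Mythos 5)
Your argument is correct, but it takes a genuinely different route from the paper's. You run a single spectral-sequence argument for the filtration of $C'(\mathcal{A};\mathcal{W}(\mathfrak{t}))=\mathcal{A}_{bas}\otimes\langle c,\gamma\rangle$ by the monomial length in the $\langle c,\gamma\rangle$-factor: since in the commutative case $1\otimes d_{\mathcal{W}'}$ preserves length while the correction term $\sum\gamma^{\xi^*_i,\mathcal{A}}_{(-n-1)}\otimes b^{\xi_i,\mathcal{W}}_{(n)}$ strictly lowers it, the associated graded differential is $d_\mathcal{A}\otimes 1+1\otimes d_{\mathcal{W}'}$, and K\"unneth plus the acyclicity of $(\mathcal{W}',d_{\mathcal{W}'})$ (graded by length) collapses $E^1$ onto $H(\mathcal{A}_{bas})$ in filtration $0$, handling surjectivity and injectivity simultaneously. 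The paper instead works with the grading of $\mathcal{W}'$ by degree (so $d_1=1\otimes d_{\mathcal{W}'}$ \emph{raises} the auxiliary degree rather than preserving a filtration) and argues by hand: surjectivity by a zig-zag induction on the top degree component using acyclicity of $d_1$, and injectivity by a separate, more delicate computation using the bigrading by the $c$- and $\gamma$-counts and the explicit identity $d_3=[d_\mathcal{A}\otimes 1, S]$ with $S=\sum c^{\xi^*_i,\mathcal{A}}_{(-n-1)}\otimes b^{\xi_i,\mathcal{W}}_{(n)}$, resumming to exhibit an explicit preimage $b\in\mathcal{A}_{bas}$. Your packaging is cleaner and avoids the explicit homotopy; the paper's argument is more elementary (no spectral sequence machinery) and produces explicit representatives, which is in the spirit of the rest of the section.

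One point in your write-up needs repair: the convergence justification. You bound the length by the $\mathcal{W}'$-degree and claim the filtration is bounded above in each total degree, but the total degree also involves the degree of the $\mathcal{A}$-factor, and nothing in the definition of a chiral $W^*$-module (or of an $\mathfrak{sg}[t]$-module) forces the degree-grading of $\mathcal{A}$ to be bounded below. This is harmless: your filtration is increasing, exhaustive, with $F_{-1}=0$ and $d'F_p\subset F_p$, so the classical convergence theorem for bounded-below exhaustive filtrations applies directly (equivalently, every cycle lies in some finite $F_p$ and one can run the same element-by-element zig-zag the paper uses); with $E^1$ concentrated in filtration $0$ the edge map is the inclusion-induced map, as you say. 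Also note that your preliminary verification that $\gamma^{\xi^*,\mathcal{A}}$ supercommutes with $\iota^\mathcal{A}_\eta(z)_-$ and $L^\mathcal{A}_\eta(z)_-$ (so that $d'$ indeed preserves $\mathcal{A}_{bas}\otimes\langle c,\gamma\rangle$ and $a\mapsto a\otimes\mathbf{1}$ is a chain map) is correct but is already implicit in the paper's earlier lemmas identifying $C'(\mathcal{A},\mathcal{W}(\mathfrak{t}))$ with $\mathcal{A}_{bas}\otimes\langle c,\gamma\rangle$ as a subcomplex.
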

\begin{proof}
Set 
\begin{gather*}
d:=d'_{\mathcal{A}, \mathcal{W}(\mathfrak{t})}=d_1+d_2, \\
d_1:=1\otimes d_{\mathcal{W'}}=\sum_{i=1}^{\dim \mathfrak{t}}\sum_{n\ge 0}1\otimes\gamma^{\xi^*_i, \mathcal{W}}_{(-n-1)}b^{\xi_i, \mathcal{W}}_{(n)}, \\
d_2:=d_\mathcal{A}\otimes1 - \sum_{i=1}^{\dim \mathfrak{t}}\sum_{n\ge 0}\gamma^{\xi^*_i, \mathcal{A}}_{(-n-1)}\otimes b^{\xi_i, \mathcal{W}}_{(n)}.
\end{gather*}
Since $(\mathcal{W}', d_{\mathcal{W}'})$ is acyclic, we have 
\begin{equation}\label{eq: the cohomology of (A_bas otimes W', d_1)}
H^i(\mathcal{A}_{bas}\otimes \langle c, \gamma \rangle, d_1)=
\begin{cases}
\mathcal{A}_{bas}\otimes\mathbb{K}\mathbf{1}, & \text{when}\ i=0,\\
0, & \text{otherwise}.
\end{cases}
\end{equation}
For $i, j \ge 0$, we set 
$
C^i:=\mathcal{A}_{bas}\otimes {\mathcal{W}'}^i, 
C_j:=\bigoplus_{0\le i \le j}C^i,
$
and $C_{-1}:=0$.
Note that $d_1$ has degree $1$ with respect to this grading $C'(\mathcal{A}; \mathcal{W}(t))=\bigoplus_{i\ge 0}C^i$ and $d_2$ preserves that filtration $C'(\mathcal{A}; \mathcal{W}(t))=\bigcup_{j\ge 0}C_j$.
First we prove that for any $j\ge0$ if $\mu\in C_j$ and $d\mu=0$ then there exist an element $\nu\in C_{j-1}$ and $a\in \mathcal{A}_{bas}$ such that $\mu=d\nu+a\otimes \mathbf{1}$ and $d_\mathcal{A}a=0$. 
This implies that the map induced by the inclusion \eqref{eq: basic into basic otimes W'} on the cohomology is surjective. We use induction on $j$. Assume $j=0$. Let $\mu\in C_0$ with $d\mu=0$. Since $C_0=C^0=\mathcal{A}_{bas}\otimes\mathbb{K}\mathbf{1}$, we have an element $a\in\mathcal{A}_{bas}$ such that $\mu=a\otimes\mathbf{1}$. 
Considering the degree in the formula $0=d\mu=d_1\mu+d_2\mu$, we have $d_1\mu=0$. Therefore $d_2\mu=0$. From   $d_2\mu=d_\mathcal{A}a\otimes\mathbf{1}$, we see $d_\mathcal{A}a=0$. Thus the proof for $j=0$ is completed. Next we assume $j>0$. Let $\mu\in C_j$ with $d\mu=0$. We can write $\mu$ as 
$
\mu=\mu_j+\mu_{j-1}+\dots+\mu_0
$
for some $\mu_i\in C^i$ with $i=1, \dots, j$. 
Since $d_1\mu_j$ is the component of $d\mu$ with the maximum degree, we have $d_1\mu_j=0$. By \eqref{eq: the cohomology of (A_bas otimes W', d_1)} and $j\neq0$, we have an element $\nu_{j-1}\in C^{j-1}$ such that $\mu_j=d_1\nu_{j-1}$. Therefore we have 
\begin{align*}
\mu&=d_1\nu_{j-1}+ (\text{terms in}\ C_{j-1}) \\
&=(d\nu_{j-1}-d_2\nu_{j-1})+(\text{terms in}\ C_{j-1}) \\
&=d\nu_{j-1}+\mu',
\end{align*}
where $\mu'$ is some element of $C_{j-1}$. From $d\mu=0$, we have $d\mu'=0$. By the induction hypothesis, we have an element $\nu'\in C_{j-2}$ and $a'\in \mathcal{A}_{bas}$ such that $\mu'=d\nu'+a'\otimes\mathbf{1}$ and $d_\mathcal{A}a'=0$. Therefore we have 
$\mu=d\nu_{j-1}+\mu'=d(\nu_{j-1}+\nu')+a'\otimes\mathbf{1}$.

It remains to show that the map induced by \eqref{eq: basic into basic otimes W'} on the cohomology is injective. It suffices to show that if $a$ is an element of $\mathcal{A}_{bas}$ such that  $d_\mathcal{A}a=0$ and $a\otimes\mathbf{1}=d\nu$ for some $\nu\in \mathcal{A}_{bas}\otimes\mathcal{W}'$ then there exists an element $b$ of $\mathcal{A}_{bas}$ such that $a=d_\mathcal{A}b$.
Denote by $\mathcal{W}'^{(i, j)}$ the subspace of $\mathcal{W}'$ of degree $i$ and $j$ with respect to the operators ${j_{bc}}(0)_{\ge0}:=\sum_{i=1}^{\dim \mathfrak{t}}\sum_{n\ge 0}c^{\xi^*_i}_{(-n-1)}b^{\xi_i}_{(n)}$ and ${j_{\beta\gamma}}(0)_{\ge0}:=\sum_{i=1}^{\dim \mathfrak{t}}\sum_{n\ge 0}\gamma^{\xi^*_i}_{(-n-1)}\beta^{\xi_i}_{(n)}$, respectively. Note that  $\mathcal{W}'{(0, 0)}=\mathbb{K}\mathbf{1}$ and  $\mathcal{W}'=\bigoplus_{i, j \in \mathbb{N}}\mathcal{W}'^{(i, j)}$. Set 
$
D^{(i, j)}:= \mathcal{A}_{bas}\otimes \mathcal{W}'^{(i, j)}.
$
Then we have the following homogeneous operators: 
\begin{equation}\label{eq: degree of d_1, d_A, d_3}
\begin{aligned}
d_1: D^{(i, j)}\to D^{(i-1, j+1)}, \\
d_\mathcal{A}\otimes1: D^{(i, j)}\to D^{(i, j)}, \\
d_3: D^{(i, j)}\to D^{(i-1, j)},
\end{aligned}
\end{equation}
where $d_3:=-\sum_{i=1}^{\dim \mathfrak{t}}\sum_{n\ge 0}\gamma^{\xi^*_i, \mathcal{A}}_{(-n-1)}\otimes b^{\xi_i, \mathcal{W}}_{(n)}$. 

Let $a$ be an element of $\mathcal{A}_{bas}$ such that $d_\mathcal{A}a=0$ and $a\otimes \mathbf{1}=d\nu$, where $\nu$ is an element of $\mathcal{A}_{bas}\otimes\mathcal{W}'$. We can write $\nu$ as 
$
\nu=\sum_{i, j\ge0}\nu^{(i, j)},
$
where $\nu^{(i, j)}$ is an element of $D^{(i, j)}$ and the elements $\nu^{(i, j)}$ are $0$ for all but finitely many $(i, j)$. From $a\otimes\mathbf{1} =d\nu$ and \eqref{eq: degree of d_1, d_A, d_3}, we have 
\begin{equation}\label{eq: D^(0, 0) term}
a\otimes\mathbf{1}=(d_\mathcal{A}\otimes1)\nu^{(0, 0)}+d_3\nu^{(1, 0)},
\end{equation}
and 
\begin{equation}\label{eq: D^(i, j) term}
0=(d_\mathcal{A}\otimes1)\nu^{(i, j)}+d_3\nu^{(i+1, j)}+d_1\nu^{(i+1, j-1)},
\end{equation}
for all $i, j\ge0$ with $i>0$ or $j>0$, where we set $\nu^{(i, -1)}:=0$. 
From \eqref{eq: D^(i, j) term} with $j=0$, we have 
\begin{equation}\label{eq: relations in D^(i, 0)}
0=(d_\mathcal{A}\otimes1)\nu^{(i, 0)}+d_3\nu^{(i+1, 0)},
\end{equation}
for all $i>0$.
Note that we have
\begin{equation}\label{eq: rewrite d_3}
d_3=\Bigl[d_\mathcal{A}\otimes1,\sum_{i=1}^{\dim \mathfrak{t}}\sum_{n\ge0}c^{\xi^*_i, \mathcal{A}}_{(-n-1)}\otimes b^{\xi_i, \mathcal{W}}_{(n)}\Bigr].
\end{equation}
Indeed by the definition of chiral $W^*$-modules and the commutativity of $\mathfrak{t}$, we have   
$
\gamma^{\xi^*_i, \mathcal{A}}_{(-n-1)}=[d_\mathcal{A},c^{\xi^*_i, \mathcal{A}}_{(-n-1)}]
$
for any $n\ge0$ and $i=1, \dots, \dim \mathfrak{t}$. The formula \eqref{eq: rewrite d_3} follows from this relation and the definition of $d_3$. We set $S:=\sum_{i=1}^{\dim \mathfrak{t}}\sum_{n\ge0}c^{\xi^*_i, \mathcal{A}}_{(-n-1)}\otimes b^{\xi_i, \mathcal{W}}_{(n)}$. Note that we have $[S, [d_\mathcal{A}\otimes1,S]]=0$. Then we have 
\begin{equation}\label{eq: d_3v^(1, 0) is exact}
d_3\nu^{(1, 0)}=(d_\mathcal{A}\otimes1)\sum_{i=1}^N S^{[i]}\nu^{(i, 0)}-S^{[N]}(d_\mathcal{A}\otimes1)\nu^{(N, 0)}, 
\end{equation}
for any $N>0$. 
This is proved by induction on $N$ with formulae \eqref{eq: relations in D^(i, 0)}, \eqref{eq: rewrite d_3} and $[S, [d_\mathcal{A}\otimes1,S]]=0$. We have a natural number $N(>0)$ such that $\nu^{(N, 0)}=0$. Therefore  from the formulae \eqref{eq: D^(0, 0) term} and \eqref{eq: d_3v^(1, 0) is exact}, we have 
$
a\otimes\mathbf{1}=(d_\mathcal{A}\otimes1)\Bigl(\nu^{(0, 0)}+\sum_{i=1}^N S^{[i]}\nu^{(i, 0)}\Bigr).
$
Notice that $S^{[l]}\nu^{(l, 0)}$ belongs to $D^{(0, 0)}$ since $S$ maps $D^{(i, j)}$ into $D^{(i-1, j)}$.
Therefore we have 
$
\nu^{(0, 0)}+\sum_{i=1}^N S^{[i]}\nu^{(i, 0)}=b\otimes\mathbf{1},
$ 
for some $b\in \mathcal{A}_{bas}$. Thus we have $a=d_\mathcal{A}b$. This completes the proof. 
\end{proof}

By the preceding proposition and Proposition \ref{prop: small Cartan model for chiral W^*-modules}, we have the following theorem.

\begin{theorem}\label{thm: CHIRAL BASIC=CHIRAL EQUIVARIANT}
Let $G$ be a compact connected Lie group with the Lie algebra $\mathfrak{g}=\mathrm{Lie}(G)^\mathbb{K}$. 
Let $(\mathcal{A}, d_\mathcal{A}, Y^\mathcal{A})$ be a chiral $W^*$-module with respect to $\mathfrak{g}$. Assume that $G$ is commutative.  Then there exists a canonical isomorphism 
\begin{equation}\label{eq: ch basic = ch equiv}
\mathbf{H}_{bas}{(\mathcal{A})}\cong\mathbf{H}_{G}{(\mathcal{A})}.
\end{equation}
\end{theorem}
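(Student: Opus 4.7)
The plan is to obtain the isomorphism \eqref{eq: ch basic = ch equiv} simply by composing the two quasi-isomorphisms established in the two preceding results, applied with $\mathfrak{t} = \mathfrak{g}$ (which is legitimate because $G$ is commutative, so the maximal torus is $G$ itself and $\mathrm{Lie}(G)^{\mathbb{K}} = \mathfrak{t}$).

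First, I would invoke the proposition just proved, which asserts that the inclusion
\[
(\mathcal{A}_{bas}, d_\mathcal{A}) \longrightarrow (C'(\mathcal{A}; \mathcal{W}(\mathfrak{t})), d'_{\mathcal{A}, \mathcal{W}(\mathfrak{t})}), \qquad a \mapsto a \otimes \mathbf{1},
\]
is a quasi-isomorphism. This gives a canonical isomorphism
\[
\mathbf{H}_{bas}(\mathcal{A}) = H(\mathcal{A}_{bas}, d_\mathcal{A}) \;\xrightarrow{\;\cong\;}\; H(C'(\mathcal{A}; \mathcal{W}(\mathfrak{t})), d'_{\mathcal{A}, \mathcal{W}(\mathfrak{t})}).
\]

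Next, I would compose with the canonical isomorphism of Proposition \ref{prop: small Cartan model for chiral W^*-modules},
\[
H(C'(\mathcal{A}; \mathcal{W}(\mathfrak{t})), d'_{\mathcal{A}, \mathcal{W}(\mathfrak{t})}) \;\xrightarrow{\;\cong\;}\; \mathbf{H}_T(\mathcal{A}),
\]
and use the identification $T = G$, so $\mathbf{H}_T(\mathcal{A}) = \mathbf{H}_G(\mathcal{A})$. Composing these two isomorphisms yields the desired canonical isomorphism $\mathbf{H}_{bas}(\mathcal{A}) \cong \mathbf{H}_G(\mathcal{A})$.

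There is essentially no obstacle here: the substantial work has already been done in constructing $C'(\mathcal{A}; \mathcal{W}(\mathfrak{t}))$, identifying it with $\mathcal{A}_{bas} \otimes \langle c, \gamma \rangle$, and running the spectral-sequence/filtration argument in the previous proposition using acyclicity of $(\mathcal{W}', d_{\mathcal{W}'})$. The only thing worth double-checking is the naturality/canonicity of the resulting isomorphism, which follows because each of the two intermediate isomorphisms is induced by an explicit chain-level map (an inclusion, respectively the map $\Phi_{\mathcal{A}, \mathcal{W}(\mathfrak{t})} \circ \tau \circ \Phi_{\mathcal{W}(\mathfrak{t}), \mathcal{A}}^{-1}$ restricted to the small chiral Cartan model).
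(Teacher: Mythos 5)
Your proposal is correct and follows exactly the paper's own argument: the theorem is deduced by composing the quasi-isomorphism $(\mathcal{A}_{bas}, d_\mathcal{A}) \to (C'(\mathcal{A}; \mathcal{W}(\mathfrak{t})), d'_{\mathcal{A}, \mathcal{W}(\mathfrak{t})})$ from the immediately preceding proposition with the canonical isomorphism of Proposition \ref{prop: small Cartan model for chiral W^*-modules}, taking $\mathfrak{t} = \mathfrak{g}$ and $T = G$ since $G$ is commutative. No gap; this matches the paper's one-line derivation.
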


\subsection{More on Chiral $W^*$-Modules}
Let $\mathfrak{g}$ be a finite-dimensional Lie algebra. Let $(\xi_i)_i$ be a basis of $\mathfrak{g}$ and $(\xi^*_i)_i$ the  dual basis for $\mathfrak{g}^*$. 

Denote by $\langle c \rangle$ the subalgebra of $\mathcal{W}'$ generated by $c^{\xi^*}_0\mathbf{1}$ with $\xi^*\in\mathfrak{g}$.

\begin{lemma}\label{lem: sufficient condition for d-compatibility}
Let $(A, d_\mathcal{A})$ be a differential $\mathfrak{sg}[t]$-module and $Y^\mathcal{A}$ a $\mathcal{W}'$-module structure on $\mathcal{A}$. Assume 
\begin{equation}\label{eq: d-compatibility for c}
Y^\mathcal{A}(d_{\mathcal{W}'}c^{\xi^*}_0\mathbf{1}, z)=[d_\mathcal{A},c^{\xi^*, \mathcal{A}}(z)],
\end{equation}
for all $\xi^*\in\mathfrak{g}$. Then the following holds:
\begin{equation}\label{eq: d-compatibility for x}
Y^\mathcal{A}(d_{\mathcal{W}'}x, z)=[d_\mathcal{A},Y^\mathcal{A}(x, z)],
\end{equation}
for any $x\in \mathcal{W}'$.
\end{lemma}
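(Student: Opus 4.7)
The plan is to define the subspace
$$K := \bigl\{ x \in \mathcal{W}' \bigm| Y^\mathcal{A}(d_{\mathcal{W}'}x, z) = [d_\mathcal{A}, Y^\mathcal{A}(x, z)] \bigr\}$$
of $\mathcal{W}'$ and show $K = \mathcal{W}'$. By hypothesis, each $c^{\xi^*}_0\mathbf{1}$ belongs to $K$, and trivially $\mathbf{1} \in K$. Since $\mathcal{W}'$ is by definition the vertex subalgebra of $\mathcal{W}(\mathfrak{g})$ generated by the $c^{\xi^*}_0\mathbf{1}$ and the $\gamma^{\xi^*}_0\mathbf{1}$, the task reduces to showing that $K$ is closed under all $(n)$-products and that each $\gamma^{\xi^*}_0\mathbf{1}$ lies in $K$.

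First, I would establish closure of $K$ under the $(n)$-products, which is the main obstacle. For $x, y \in K$, the module version of the Borcherds identity expresses $Y^\mathcal{A}(x_{(n)}y, w)$ as a residue $\mathrm{Res}_z$ of a combination of $Y^\mathcal{A}(x, z)Y^\mathcal{A}(y, w)$ and $Y^\mathcal{A}(y, w)Y^\mathcal{A}(x, z)$ weighted by $(z-w)^n$ and $(-w+z)^n$, with expansions on the appropriate side of $|z|=|w|$. Applying the odd commutator $[d_\mathcal{A}, \cdot]$ term by term inside the residue and using the hypothesis to replace $[d_\mathcal{A}, Y^\mathcal{A}(x, z)]$ by $Y^\mathcal{A}(d_{\mathcal{W}'}x, z)$ (and similarly for $y$), the same Borcherds identity then yields
$$[d_\mathcal{A}, Y^\mathcal{A}(x_{(n)}y, w)] = Y^\mathcal{A}\bigl( (d_{\mathcal{W}'}x)_{(n)}y + (-1)^{|x|} x_{(n)}(d_{\mathcal{W}'}y),\ w \bigr) = Y^\mathcal{A}(d_{\mathcal{W}'}(x_{(n)}y), w),$$
where the last equality uses that $d_{\mathcal{W}'}$ is an odd derivation on $\mathcal{W}'$. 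This is the only step of substance, and the bookkeeping of signs and expansion domains is the potential source of trouble.

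To handle the $\gamma^{\xi^*}_0\mathbf{1}$, I would observe that $K$ is automatically closed under $d_{\mathcal{W}'}$: if $x \in K$, then $Y^\mathcal{A}(d_{\mathcal{W}'}^2 x, z) = 0$ because $d_{\mathcal{W}'}^2 = 0$, while $[d_\mathcal{A}, Y^\mathcal{A}(d_{\mathcal{W}'}x, z)] = [d_\mathcal{A}, [d_\mathcal{A}, Y^\mathcal{A}(x, z)]] = 0$ since $d_\mathcal{A}$ is odd and $d_\mathcal{A}^2 = 0$. Applying this to $x = c^{\xi^*}_0\mathbf{1}$ and rearranging the Lian-Linshaw relation recalled just before the lemma gives
$$\gamma^{\xi^*}_0\mathbf{1} = d_{\mathcal{W}'} c^{\xi^*}_0\mathbf{1} + \frac{1}{2}\sum_{i=1}^{\dim\mathfrak{g}} c^{\mathrm{ad}^*\xi_i\cdot\xi^*}_0 c^{\xi^*_i}_0\mathbf{1},$$
whose right-hand side lies in $K$ by the preceding closure properties. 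Hence $\gamma^{\xi^*}_0\mathbf{1} \in K$, so $K$ contains every vertex-algebra generator of $\mathcal{W}'$ and is closed under the operations, giving $K = \mathcal{W}'$ and hence \eqref{eq: d-compatibility for x}.
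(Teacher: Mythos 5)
Your proposal is correct and follows essentially the same route as the paper: reduce to the generators by showing the compatibility relation is stable under the vertex-algebra operations (the paper does this by an induction on generated elements, you via the module iterate/Borcherds formula), and then obtain the $\gamma^{\xi^*}_0\mathbf{1}$ case from the identity $\gamma^{\xi^*}_0\mathbf{1}=d_{\mathcal{W}'}c^{\xi^*}_0\mathbf{1}+\tfrac{1}{2}\sum_i c^{\mathrm{ad}^*\xi_i\cdot\xi^*}_0c^{\xi^*_i}_0\mathbf{1}$ together with $d_{\mathcal{W}'}^2=0$ and $[d_\mathcal{A},[d_\mathcal{A},\cdot]]=0$. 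Your packaging of the latter as ``$K$ is closed under $d_{\mathcal{W}'}$'' is just a slightly slicker rewording of the paper's final computation, so no substantive difference remains.
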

\begin{proof}
Let $S\subset \mathcal{W}'$ be a subset. Using the fact that $d_{{\mathcal{W}'}}$ commutes with the translation operator, we can check by induction that if \eqref{eq: d-compatibility for x} holds for all $x\in S$ then \eqref{eq: d-compatibility for x} holds for all $x\in \langle S \rangle$. 
Therefore it suffices to show that \eqref{eq: d-compatibility for x} holds for $x=\gamma^{\xi^*}_0\mathbf{1}$ with $\xi^*\in\mathfrak{g}^*$. 
Note that \eqref{eq: d-compatibility for x} holds for all $x\in \langle c \rangle$ by the assumption \eqref{eq: d-compatibility for c}. Let $\xi^*\in\mathfrak{g}^*$. From the formula 
\begin{equation}\label{eq: formula for d_W c}
\gamma^{\xi^*}_0\mathbf{1}=d_{\mathcal{W}'}c^{\xi^*}_0\mathbf{1}+\frac{1}{2}\sum_{i=1}^{\dim \mathfrak{g}}c^{\mathrm{ad}^*\xi_i\cdot\xi^*}_0c^{\xi^*_i}_0\mathbf{1},
\end{equation}
we have 
$$
Y^\mathcal{A}(d_{\mathcal{W}'}\gamma^{\xi^*}_0\mathbf{1}, z)=\frac{1}{2}\sum_{i=1}^{\dim \mathfrak{g}}Y^\mathcal{A}(d_{\mathcal{W}'}c^{\mathrm{ad}^*\xi_i\cdot\xi^*}_0c^{\xi^*_i}_0\mathbf{1}, z).
$$
Since \eqref{eq: d-compatibility for x} holds for all $x\in \langle c \rangle$, the right-hand side equals 
$$
\Bigl[d_\mathcal{A},\frac{1}{2}\sum_{i=1}^{\dim \mathfrak{g}}Y^\mathcal{A}(c^{\mathrm{ad}^*\xi_i\cdot\xi^*}_0c^{\xi^*_i}_0\mathbf{1}, z)\Bigr].
$$ 
From \eqref{eq: d-compatibility for c} and \eqref{eq: formula for d_W c}, we can see this is equal to $[d_\mathcal{A},\gamma^{\xi^*, \mathcal{A}}(z)]$. 
\end{proof}

The following proposition is useful for checking that a differential $\mathfrak{sg}[t]$-module with a $\mathcal{W}'$-module structure is  a chiral $W^*$-module.

\begin{proposition}\label{prop: sufficient condition for chiral W^*-modules}
Let $(A, d_\mathcal{A})$ be a differential $\mathfrak{sg}[t]$-module and $Y^\mathcal{A}$ a $\mathcal{W}'$-module structure on $\mathcal{A}$. Assume the following:
\begin{gather}
\label{assump: [iota,gamma]=0}
[\iota^\mathcal{A}_\xi(z)_{-}, \gamma^{\xi^*, \mathcal{A}}(w)]=0, \\
\label{assump: Y(dc)=[d,Y(c)]}
Y^\mathcal{A}(d_{\mathcal{W}'}c^{\xi^*}_0\mathbf{1}, z)=[d_\mathcal{A},c^{\xi^*, \mathcal{A}}(z)], \\
\label{assump: [iota,c]=delta}
[\iota^\mathcal{A}_\xi(z)_{-}, c^{\xi^*, \mathcal{A}}(w)]=\langle \xi^*, \xi \rangle \delta(z-w)_{-},
\end{gather}
for all $\xi\in\mathfrak{g}$ and $\xi^*\in\mathfrak{g}^*$. Then the triple $(\mathcal{A}, d_\mathcal{A}, Y^\mathcal{A})$ is a chiral $W^*$-module. 
\end{proposition}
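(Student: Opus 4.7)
The plan is to verify the three conditions in Definition \ref{df: chiral W^*-modules} in turn. Condition (3) is literally assumption \eqref{assump: [iota,c]=delta}, so nothing is needed. Condition (1), namely $[d_\mathcal{A}, Y^\mathcal{A}(x,z)] = Y^\mathcal{A}(d_{\mathcal{W}'}x, z)$ for all $x \in \mathcal{W}' = \langle c, \gamma\rangle$, follows immediately from assumption \eqref{assump: Y(dc)=[d,Y(c)]} together with Lemma \ref{lem: sufficient condition for d-compatibility}, which propagates the compatibility from the generators $c^{\xi^*}_0\mathbf{1}$ to all of $\mathcal{W}'$. The substantial content is therefore condition (2).

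For condition (2), I would start from the Cartan-type identity $L^\mathcal{A}_{\xi,(n)} = [d_\mathcal{A}, \iota^\mathcal{A}_{\xi,(n)}]$, which is an immediate consequence of the defining relation $\rho(dx) = [d_\mathcal{A}, \rho(x)]$ of a differential $\mathfrak{sg}[t]$-module applied to $x = (0,\xi)t^n$. The graded Jacobi identity then gives
\[
[L^\mathcal{A}_{\xi,(m)}, c^{\xi^*, \mathcal{A}}_{(n)}] = [d_\mathcal{A}, [\iota^\mathcal{A}_{\xi,(m)}, c^{\xi^*, \mathcal{A}}_{(n)}]] + [\iota^\mathcal{A}_{\xi,(m)}, [d_\mathcal{A}, c^{\xi^*, \mathcal{A}}_{(n)}]].
\]
The first term vanishes because by \eqref{assump: [iota,c]=delta} its inner bracket is the scalar $\langle \xi^*, \xi\rangle \delta_{m+n,-1}$. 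For the second term, I would substitute the already-established $d$-compatibility together with the Lian--Linshaw identity
\[
d_{\mathcal{W}'}\,c^{\xi^*}_0\mathbf{1} = \gamma^{\xi^*}_0\mathbf{1} - \tfrac{1}{2}\textstyle\sum_i c^{\mathrm{ad}^*\xi_i\cdot\xi^*}_0 c^{\xi^*_i}_0\mathbf{1}.
\]
Assumption \eqref{assump: [iota,gamma]=0} kills the contribution of $\gamma^{\xi^*,\mathcal{A}}(w)$, so the remaining work is to compute $[\iota^\mathcal{A}_{\xi,(m)},\, Y^\mathcal{A}(c^{\eta}_0 c^{\mu}_0\mathbf{1}, w)] = [\iota^\mathcal{A}_{\xi,(m)},\, {:}c^{\eta, \mathcal{A}}(w)\,c^{\mu, \mathcal{A}}(w){:}\,]$ for $\eta = \mathrm{ad}^*\xi_i\cdot\xi^*$, $\mu = \xi^*_i$.

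This is where the actual calculation sits. Expanding the normal ordered product in modes and applying the graded Leibniz rule, together with the fact that by \eqref{assump: [iota,c]=delta} each single commutator $[\iota^\mathcal{A}_{\xi,(m)}, c^{\sigma,\mathcal{A}}_{(k)}]$ is a scalar, I expect to obtain
\[
[\iota^\mathcal{A}_{\xi,(m)},\, ({:}c^{\eta, \mathcal{A}}\,c^{\mu, \mathcal{A}}{:})_{(n)}] = \langle \eta, \xi\rangle\, c^{\mu, \mathcal{A}}_{(m+n)} - \langle \mu, \xi\rangle\, c^{\eta, \mathcal{A}}_{(m+n)}.
\]
Summing over $i$ and using the dual-basis identities $\sum_i \langle \mathrm{ad}^*\xi_i\cdot\xi^*,\xi\rangle\, \xi^*_i = -\,\mathrm{ad}^*\xi\cdot\xi^*$ and $\sum_i \langle \xi^*_i,\xi\rangle\, \mathrm{ad}^*\xi_i\cdot\xi^* = \mathrm{ad}^*\xi\cdot\xi^*$, the two $c$-terms combine to $-2\,c^{\mathrm{ad}^*\xi\cdot\xi^*,\mathcal{A}}_{(m+n)}$, and the prefactor $-\tfrac{1}{2}$ from the formula above converts this into the desired $c^{\mathrm{ad}^*\xi\cdot\xi^*,\mathcal{A}}_{(m+n)}$. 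Reassembling over $m \ge 0$ and $n \in \mathbb{Z}$ recovers condition (2) as an equality of formal distributions in $z$ and $w$.

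The main obstacle I foresee is purely computational: the super-sign bookkeeping in the normal-ordered commutator involving three odd operators, and keeping the coadjoint-action sign convention consistent with the Lian--Linshaw identity for $d_{\mathcal{W}'}c^{\xi^*}_0\mathbf{1}$. Neither is conceptually deep, but a single misplaced sign would destroy the precise cancellation that produces condition (2).
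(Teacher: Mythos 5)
Your proposal is correct and follows essentially the same route as the paper: Lemma \ref{lem: sufficient condition for d-compatibility} disposes of condition (1), condition (3) is the hypothesis, and condition (2) is obtained by applying $d_\mathcal{A}$ to \eqref{assump: [iota,c]=delta} (your Jacobi-identity step), substituting $d_{\mathcal{W}'}c^{\xi^*}_0\mathbf{1}=\gamma^{\xi^*}_0\mathbf{1}-\tfrac12\sum_i c^{\mathrm{ad}^*\xi_i\cdot\xi^*}_0c^{\xi^*_i}_0\mathbf{1}$, killing the $\gamma$-term by \eqref{assump: [iota,gamma]=0}, and evaluating the bracket with the $cc$-term via the scalar commutators and the dual-basis identities. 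The only difference is cosmetic: the paper works with the formal distributions $\iota^\mathcal{A}_\xi(z)_-$, $c^{\xi^*,\mathcal{A}}(w)$ directly, while you phrase the same cancellation in modes; your signs and the final factor $-\tfrac12\cdot(-2)=1$ agree with the paper's computation.
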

\begin{proof}
By Lemma \ref{lem: sufficient condition for d-compatibility}, it suffices to check 
$$
[L_\xi^\mathcal{A}(z)_{-},c^{\xi^*, \mathcal{A}}(w)]=c^{\mathrm{ad}^*\xi\cdot\xi^*, \mathcal{A}}(w)\delta(z-w)_{-},
$$
for all $\xi\in\mathfrak{g}$ and $\xi^*\in\mathfrak{g}^*$. 
Let $\xi\in\mathfrak{g}$ and $\xi^*\in\mathfrak{g}^*$. Applying $\mathrm{ad}(d_\mathcal{A})$ to the both sides of \eqref{assump: [iota,c]=delta}, we have 
$$
[d_\mathcal{A},[\iota^\mathcal{A}_\xi(z)_{-},c^{\xi^*, \mathcal{A}}(w)]]=0.
$$
Therefore from \eqref{assump: Y(dc)=[d,Y(c)]} and $[d_\mathcal{A},\iota^\mathcal{A}_\xi(z)_{-}]=L_\xi^\mathcal{A}(z)_{-}$, we have 
$$
[L_\xi^\mathcal{A}(z)_{-},c^{\xi^*, \mathcal{A}}(w)]=[\iota_\xi^\mathcal{A}(z)_{-},Y^\mathcal{A}(d_{\mathcal{W}'}c^{\xi^*}_0\mathbf{1}, w)].
$$
By \eqref{assump: [iota,c]=delta}, \eqref{assump: [iota,gamma]=0} and the formula $d_{\mathcal{W}'}c^{\xi^*}_0\mathbf{1}=\gamma^{\xi^*}_0\mathbf{1}-1/2\sum_{i=1}^{\dim \mathfrak{g}}c^{\mathrm{ad}^*\xi_i\cdot\xi^*}_0c^{\xi^*_i}_0\mathbf{1}$, we can see the right-hand side equals  
$$
-\frac{1}{2}\sum_{i=1}^{\dim \mathfrak{g}}\langle \mathrm{ad}^* \xi_i \cdot\xi^*, \xi \rangle \delta(z-w)_{-}c^{\xi^*_i}(w)+ 
\frac{1}{2}\sum_{i=1}^{\dim \mathfrak{g}}c^{\mathrm{ad}^*\xi_i\cdot\xi^*}(w)\langle  \xi^*_i, \xi\rangle \delta(z-w)_{-}.
$$
This is just equal to $c^{\mathrm{ad}^*\xi\cdot\xi^*, \mathcal{A}}(w)\delta(z-w)_{-}$.
\end{proof}

The following is useful when we equip a differential $\mathfrak{sg}[t]$-module with a chiral $W^*$-module structure. 

\begin{proposition}\label{prop: construction of chiral W^*-modules}
Let $(\mathcal{A}, d_\mathcal{A})$ be a differential $\mathfrak{sg}[t]$-module. Suppose given a module structure of the vertex superalgebra $\langle  c \rangle$ on $\mathcal{A}$
$$
Y^\mathcal{A}_0: \langle  c \rangle \to (\mathrm{End} \mathcal{A})[[z^{\pm1}]],
$$
such that 
\begin{equation}\label{eq: [c,[d,c]]=0}
[ c^{\xi^*, \mathcal{A}}(z),[d_\mathcal{A}, c^{\eta^*, \mathcal{A}}(w)]]=0,
\end{equation}
for all $\xi^*, \eta^*\in \mathfrak{g}^*$.
Then there exists a unique $\langle  c, \gamma \rangle$-module structure on $\mathcal{A}$,
$$
Y^\mathcal{A}: \langle  c, \gamma \rangle \to (\mathrm{End} \mathcal{A})[[z^{\pm1}]],
$$
such that $Y^\mathcal{A}|_{\langle  c \rangle}=Y^\mathcal{A}_0$ and $[d_\mathcal{A}, Y^\mathcal{A}(x, z)]=Y^\mathcal{A}(d_{\mathcal{W}'}x, z)$ for all $x\in \mathcal{W}'$.
Moreover if the operation $Y^\mathcal{A}$ satisfies 
\begin{align}
[\iota^\mathcal{A}_\xi(z)_{-}, c^{\xi^*, \mathcal{A}}(w)]&=\langle \xi^*, \xi \rangle \delta(z-w)_{-},\\
\label{eq: curvature is horizontal in prop: construction of chiral W^*-modules}
[\iota^\mathcal{A}_\xi(z)_{-},\gamma^{\xi^*, \mathcal{A}}(w)]&=0, 
\end{align}
for all $\xi \in \mathfrak{g}$ and $\xi^*\in\mathfrak{g}^*$, then the triple 
$(\mathcal{A}, d_\mathcal{A}, Y^\mathcal{A})$ is a chiral $W^*$-module.
\end{proposition}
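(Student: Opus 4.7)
The plan is to define $Y^\mathcal{A}$ on each generator $\gamma^{\xi^*}_0 \mathbf{1}$ by the formula forced by the required compatibility with $d_{\mathcal{W}'}$, then extend to all of $\langle c, \gamma \rangle$ by a reconstruction-type argument. Using formula \eqref{eq: formula for d_W c}, namely $\gamma^{\xi^*}_0 \mathbf{1} = d_{\mathcal{W}'} c^{\xi^*}_0 \mathbf{1} + \tfrac{1}{2}\sum_i c^{\mathrm{ad}^*\xi_i \cdot \xi^*}_0 c^{\xi^*_i}_0 \mathbf{1}$, the desired relation $[d_\mathcal{A}, c^{\xi^*, \mathcal{A}}(z)] = Y^\mathcal{A}(d_{\mathcal{W}'} c^{\xi^*}_0 \mathbf{1}, z)$ forces
$$
\gamma^{\xi^*, \mathcal{A}}(z) := [d_\mathcal{A}, c^{\xi^*, \mathcal{A}}(z)] + \tfrac{1}{2}\sum_{i=1}^{\dim \mathfrak{g}} Y^\mathcal{A}_0\bigl(c^{\mathrm{ad}^*\xi_i \cdot \xi^*}_0 c^{\xi^*_i}_0 \mathbf{1}, z\bigr).
$$
Uniqueness is then evident, since $\langle c, \gamma \rangle$ is generated by the $c$'s and $\gamma$'s and any $d_\mathcal{A}$-compatible extension must assign this value to $\gamma^{\xi^*, \mathcal{A}}(z)$. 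Existence amounts to showing that the fields $\{c^{\xi^*, \mathcal{A}}(z), \gamma^{\xi^*, \mathcal{A}}(z)\}$ on $\mathcal{A}$ are pairwise mutually local with the same OPEs as their counterparts in $\mathcal{W}'$; then a reconstruction theorem in the style of \cite{FBZ04,Kac01} produces the desired $\mathcal{W}'$-module structure $Y^\mathcal{A}$ extending $Y^\mathcal{A}_0$.

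The key OPE verifications are as follows. The $cc$-OPE is trivial since $Y^\mathcal{A}_0$ is already a module structure. For the $c\gamma$-OPE, expanding $[c^{\xi^*, \mathcal{A}}(z), \gamma^{\eta^*, \mathcal{A}}(w)]$ via the definition of $\gamma^{\eta^*, \mathcal{A}}$ yields two pieces: the bracket $[c^{\xi^*, \mathcal{A}}(z), [d_\mathcal{A}, c^{\eta^*, \mathcal{A}}(w)]]$ vanishes by hypothesis \eqref{eq: [c,[d,c]]=0}, while the bracket with the quadratic $c$-term is computed inside the $\langle c \rangle$-module structure and matches the corresponding OPE in $\mathcal{W}'$ (which is the trivial one). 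The $\gamma\gamma$-OPE is the main obstacle: unwinding the definition produces several terms involving $d_\mathcal{A}$ and the structure constants of $\mathfrak{g}$, and one must apply the super Jacobi identity, repeatedly invoke \eqref{eq: [c,[d,c]]=0}, and verify that all contributions cancel so that the OPE matches the trivial $\gamma\gamma$-OPE in $\mathcal{W}'$. I expect this bookkeeping of signs and structure constants to be the most delicate step.

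Once the $\mathcal{W}'$-module structure $Y^\mathcal{A}$ has been constructed, the compatibility $[d_\mathcal{A}, Y^\mathcal{A}(x, z)] = Y^\mathcal{A}(d_{\mathcal{W}'} x, z)$ for all $x \in \mathcal{W}'$ follows at once from Lemma \ref{lem: sufficient condition for d-compatibility}, since the definition of $\gamma^{\xi^*, \mathcal{A}}(z)$ was arranged precisely so that this relation holds on the generators $c^{\xi^*}_0 \mathbf{1}$. For the final assertion, it suffices to verify the hypotheses of Proposition \ref{prop: sufficient condition for chiral W^*-modules}: \eqref{assump: Y(dc)=[d,Y(c)]} has just been established, while \eqref{assump: [iota,gamma]=0} and \eqref{assump: [iota,c]=delta} are exactly the additional assumptions imposed on $(\mathcal{A}, d_\mathcal{A}, Y^\mathcal{A})$ in the "moreover" clause.
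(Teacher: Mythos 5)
Your proposal follows essentially the same route as the paper's proof: define $\gamma^{\xi^*, \mathcal{A}}(z)$ by the formula forced by \eqref{eq: formula for d_W c}, check mutual locality and the trivial OPEs among the $c$- and $\gamma$-fields to obtain the extended $\mathcal{W}'$-module structure, then conclude via Lemma \ref{lem: sufficient condition for d-compatibility} and Proposition \ref{prop: sufficient condition for chiral W^*-modules}. The only difference is that the $\gamma\gamma$-check you anticipate as delicate bookkeeping is immediate in the paper: applying $\mathrm{ad}(d_\mathcal{A})$ to \eqref{eq: [c,[d,c]]=0} gives $\bigl[[d_\mathcal{A},c^{\xi^*, \mathcal{A}}(z)],[d_\mathcal{A},c^{\eta^*, \mathcal{A}}(w)]\bigr]=0$, so every term in $[\gamma^{\xi^*, \mathcal{A}}(z),\gamma^{\eta^*, \mathcal{A}}(w)]$ vanishes separately and no cancellation of structure-constant terms is needed.
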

\begin{proof}
The uniqueness of the operation $Y^\mathcal{A}$ follows from the formula $d_{\mathcal{W}'} c^j_0\mathbf{1}=\gamma^j_0\mathbf{1}-1/2\sum_{i, k=1}^{\dim{g}}\Gamma_{i k}^j  c^{i}_0c^{k}_0\mathbf{1}$, where $\Gamma_{i j}^k$ is the structure constants of the Lie algebra $\mathfrak{g}$,  that is, $[\xi_i, \xi_j]=\sum_{k=1}^{\dim \mathfrak{g}}\Gamma_{i j}^k\xi_k$ for $i, j=1, \dots, \dim \mathfrak{g}$.
We check the existence of such a $\mathcal{W}'$-module structure $Y^\mathcal{A}$.
We set
\begin{equation}\label{eq: df of gamma(z)}
\gamma^{\xi^*_j, \mathcal{A}}(z):=[d_\mathcal{A},  c^{\xi^*_j, \mathcal{A}}(z)]+\frac{1}{2}\sum_{i, k=1}^{\dim \mathfrak{g}}\Gamma_{i k}^j \,{\baselineskip0pt\lineskip0.3pt\vcenter{\hbox{$\cdot$}\hbox{$\cdot$}}\,c^{\xi^*_i, \mathcal{A}}(z) c^{\xi^*_k, \mathcal{A}}(z)\,\vcenter{\hbox{$\cdot$}\hbox{$\cdot$}}},
\end{equation}
for $j=1, \dots, \dim \mathfrak{g}$. 
Note that these operators have even parity. We check $[c^{\xi^*_i, \mathcal{A}}(z), \gamma^{\xi^*_j, \mathcal{A}}(w)]=0$ and $[\gamma^{\xi^*_i, \mathcal{A}}(z),\gamma^{\xi^*_j, \mathcal{A}}(w)]=0$ for $i, j=1, \dots, \dim \mathfrak{g}$. This implies the existence of a $\mathcal{W}'$-module structure $Y^\mathcal{A}$  such that $Y^\mathcal{A}|_{\langle c \rangle}=Y^\mathcal{A}_0$. Applying $\mathrm{ad}( d_\mathcal{A})$ to the both sides of \eqref{eq: [c,[d,c]]=0}, we have 
\begin{equation}\label{eq: [[d,c],[d,c]]=0}
\bigl[[d_\mathcal{A},c^{\xi^*, \mathcal{A}}(z)],[d_\mathcal{A},c^{\eta^*, \mathcal{A}}(w)]\bigr]=0,
\end{equation}
for all $\xi^*, \eta^*\in\mathfrak{g}^*$. 
We have
%\begin{align*}
$
[\gamma^{\xi^*_j, \mathcal{A}}(z),\gamma^{\xi^*_{\Tilde{j}}, \mathcal{A}}(w)]=0
$
by \eqref{eq: [c,[d,c]]=0} and \eqref{eq: [[d,c],[d,c]]=0}.
The formula $[\gamma^{\xi^*_i, \mathcal{A}}(z),\gamma^{\xi^*_j, \mathcal{A}}(w)]=0$ follows directly from \eqref{eq: [c,[d,c]]=0}. Thus we have a $\mathcal{W}'$-module structure $Y^\mathcal{A}$ such that $Y^\mathcal{A}|_{\langle c \rangle}=Y^\mathcal{A}_0$. It remains to check $[d_\mathcal{A},Y^\mathcal{A}(x, z)]=Y^\mathcal{A}(d_{\mathcal{W}'} x, z)$ for all $x\in \mathcal{W}'$. By the construction of $Y^\mathcal{A}$, this holds for $x=c^{\xi^*}_0\mathbf{1}$ with $\xi^*\in\mathfrak{g}^*$. Therefore by Lemma \ref{lem: sufficient condition for d-compatibility}, it holds for all $x\in\mathcal{W}'$.  Thus we proved the existence part. The latter half of our assertion follows from Proposition \ref{prop: sufficient condition for chiral W^*-modules}. 
\end{proof}

\begin{remark}
The condition \eqref{eq: curvature is horizontal in prop: construction of chiral W^*-modules} in Proposition \ref{prop: construction of chiral W^*-modules} can be replaced by the following condition:
\begin{equation}
[L^\mathcal{A}_\xi(z)_-, c^{\xi^*, \mathcal{A}}(w)]= c^{\mathrm{ad}^*\xi\cdot\xi^*}(w)\delta(z-w)_{-},
\end{equation}
for all $\xi\in\mathfrak{g}$ and $\xi^*\in\mathfrak{g}^*$.
\end{remark}

\section{VSA-Inductive Sheaves}\label{section: VSA-inductive Sheaves}
In this section, we introduce the VSA-inductive sheaves. In the next section, we will construct a vertex-algebraic analogue of the Lie algebroid complex as a VSA-inductive sheaf. 

\subsection{Ind-Objects}
Let $\mathcal{C}$ be a category. 
Recall the category $\mathrm{Ind} (\mathcal{C})$ of ind-objects of $C$ introduced by Grothendieck (\cite{AGV72}). 
An \textit{inductive system} of $\mathcal{C}$ is a functor 
$$
X: A \to \mathcal{C}, \quad \mathrm{Ob}(A)\ni \alpha \mapsto X(\alpha)=X_\alpha \in \mathrm{Ob}(\mathcal{C}),
$$
from a small filtered category $A$ to $\mathcal{C}$.
An inductive system $X: A \to \mathcal{C}$ is also written as $(X_\alpha)_{\alpha\in A}$. 
An \textit{ind-object} associated to an inductive system $(X_\alpha)_{\alpha\in A}$ is a symbol $``\displaystyle\varinjlim_{\alpha\in A}" X_\alpha$.  The objects of the category $\mathrm{Ind}(\mathcal{C})$ are the ind-objects of $\mathcal{C}$.
We often express $``\displaystyle\varinjlim_{\alpha\in A}" X_\alpha$ by the corresponding functor $X$ like $X=``\displaystyle\varinjlim_{\alpha\in A}" X_\alpha$.
The morphisms of $\mathrm{Ind}(\mathcal{C})$ are defined by 
$$
\mathrm{Hom}_{\mathrm{Ind}({\mathcal{C}})}(``\varinjlim_{\alpha\in A}" X_\alpha, ``\varinjlim_{\beta\in B}" Y_\beta):=\varprojlim_{\alpha\in A}\varinjlim_{\beta\in B}\mathrm{Hom}_{\mathcal{C}}(X_\alpha, Y_\beta),
$$
where the limits in the right-hand side stand for those in the category of sets. 
For a morphisms of ind-objects $F: (X_\alpha)_{\alpha\in A}\to (Y_\beta)_{\beta\in B}$, $F$ is written as 
$
F=\bigl([F_\alpha^{j(\alpha)}]\bigr)_{\alpha\in A},
$
where $j: \mathrm
{Ob}(A) \to \mathrm{Ob}(B)$ is a map and $[F_\alpha^{j(\alpha)}]$ is an equivalence class of a morphism $F_\alpha^{j(\alpha)}: X_\alpha \to Y_{j(\alpha)}$ in $\displaystyle\varinjlim_{\beta\in B} \mathrm{Hom}_{\mathcal{C}}(X_\alpha, Y_\beta)$. The composition is defined by 
$
F\circ G := \bigl([F_{j_G(\alpha)}^{j_F(j_G(\alpha))}\circ G_\alpha^{j_G(\alpha)}]\bigr)_{\alpha\in A}
$
for morphisms of ind-objects $F=\bigl([F_\beta^{j_F(\beta)}]\bigr)_{\beta\in B}: (Y_\beta)_{\beta \in B}\to (Z_\gamma)_{\gamma\in \Gamma}$ and $G=\bigl([G_\alpha^{j_G(\alpha)}]\bigr)_{\alpha\in A}: (X_\alpha)_{\alpha\in A}\to (Y_\beta)_{\beta\in B}$. 
For a small filtered category $A$, we will use the notation $f_{\alpha' \alpha}$ to express a morphism $f\in \mathrm{Hom}_A(\alpha, \alpha')$, emphasizing the source and the target.

Let $\mathit{Presh}_X(\mathit{Vec}^{\mathrm{super}}_{\mathbb{K}})$ be the category of presheaves on a topological space $X$ of super vector spaces over  $\mathbb{K}$. Consider the category of ind-objects  of the category $\mathit{Presh}_X(\mathit{Vec}^{\mathrm{super}}_{\mathbb{K}})$, $\mathrm{Ind}(\mathit{Presh}_X(\mathit{Vec}^{\mathrm{super}}_{\mathbb{K}}))$. 
There exists a functor 
%functor IndPresh to Presh
\begin{equation}\label{eq: functor IndPresh to Presh}
\underrightarrow{\mathrm{Lim}}\, : \mathrm{Ind}(\mathit{Presh}_X(\mathit{Vec}^{\mathrm{super}}_\mathbb{K}))\to \mathit{Presh}_X(\mathit{Vec}^{\mathrm{super}}_{\mathbb{K}}),
\end{equation}
sending an object $\mathcal{F}=``\displaystyle\varinjlim_{\alpha\in A}"\mathcal{F}_\alpha$ to the inductive limit presheaf $\underrightarrow{\mathrm{Lim}}\, \mathcal{F}:=\varinjlim_{\alpha\in A}\mathcal{F}_\alpha$, but not its sheafification, and sending a morphism $F=( [F_\alpha^{j(\alpha)}])_{\alpha\in A}:  ``\displaystyle\varinjlim_{\alpha\in A}"\mathcal{F}_\alpha \to ``\displaystyle\varinjlim_{\beta\in B}"\mathcal{G}_\beta$ to the morphism of presheaves $\underrightarrow{\mathrm{Lim}}\,  F$ defined by  
%DEFINITION OF injlimF
\begin{equation}\label{eq: DEFINITION OF injlimF}
\underrightarrow{\mathrm{Lim}}\,  F(U): \varinjlim_{\alpha\in A}\mathcal{F}_\alpha(U) \to \varinjlim_{\beta \in B}\mathcal{G}_\beta(U), \quad [x_{\alpha}] \mapsto [F_{\alpha}^{j(\alpha)}x_{\alpha}],
\end{equation}
for each open subset $U \subset X$.
Set
\begin{multline}\label{df: bilinear morphisms of ind-objects}
\mathrm{Bilin}_{\mathrm{Ind}(\mathit{Presh}_X(\mathit{Vec}^{\mathrm{super}}_\mathbb{K}))}\bigl(``\varinjlim_{\alpha\in A}"\mathcal{F}_\alpha,  ``\varinjlim_{\beta\in B}"\mathcal{G}_\beta; ``\varinjlim_{\gamma\in\Gamma}"\mathcal{H}_\gamma\bigr) \\ 
:=\varprojlim_{(\alpha, \beta)\in A\times B} \Biggl(\varinjlim_{\gamma\in\Gamma}\mathrm{Bilin}_{\mathit{Presh}_X(\mathit{Vec}^{\mathrm{super}}_{\mathbb{K}})}(\mathcal{F}_\alpha, \mathcal{G}_\beta; \mathcal{H}_\gamma)\Biggr),
\end{multline}
for  ind-objects $``\displaystyle\varinjlim_{\alpha\in A}"\mathcal{F}_\alpha,  ``\varinjlim_{\beta\in B}"\mathcal{G}_\beta,  ``\varinjlim_{\gamma\in\Gamma}"\mathcal{H}_\gamma$ of the category $\mathit{Presh}_X(\mathit{Vec}^{\mathrm{super}}_{\mathbb{K}})$,  where   $\mathrm{Bilin}_{\mathit{Presh}_X(\mathit{Vec}^{\mathrm{super}}_{\mathbb{K}})}(\mathcal{F}_\alpha, \mathcal{G}_\beta; \mathcal{H}_\gamma)$ is the set of all bilinear morphisms of presheaves from $\mathcal{F}_\alpha\times \mathcal{G}_\beta$ to $\mathcal{H}_\gamma$.
We call an element $F$ of the set \eqref{df: bilinear morphisms of ind-objects} a \textit{bilinear morphism} of ind-objects and write it as
$$
F: ``\varinjlim_{\alpha\in A}"\mathcal{F}_\alpha\times  ``\varinjlim_{\beta\in B}"\mathcal{G}_\beta\to ``\varinjlim_{\gamma\in\Gamma}"\mathcal{H}_\gamma.
$$ 
Each ind-object $``\displaystyle\varinjlim_{\gamma\in\Gamma}"\mathcal{H}_\gamma$ gives rise to a canonical contravariant functor  
\begin{align*}
&\mathrm{Ind}(\mathit{Presh}_X(\mathit{Vec}^{\mathrm{super}}_{\mathbb{K}})\bigr)\times \mathrm{Ind}(\mathit{Presh}_X(\mathit{Vec}^{\mathrm{super}}_{\mathbb{K}}))\to \mathit{Set}, \\ 
&\Bigl(``\varinjlim_{\alpha\in A}"\mathcal{F}_\alpha, ``\varinjlim_{\beta\in B}"\mathcal{G}_\beta \bigr)\mapsto \mathrm{Bilin}_{\mathrm{Ind}(C)}\bigl(``\varinjlim_{\alpha\in A}"\mathcal{F}_\alpha,  ``\varinjlim_{\beta\in B}"\mathcal{G}_\beta;  ``\varinjlim_{\gamma\in\Gamma}"\mathcal{H}_\gamma\bigr), 
\end{align*}
where $\mathit{Set}$ is the category of sets. Similarly, 
each pair of ind-objects $\Bigl(``\displaystyle\varinjlim_{\alpha\in A}"\mathcal{F}_\alpha,$ $ ``\displaystyle\varinjlim_{\beta\in B}"\mathcal{G}_\beta \bigr)$ induces a canonical covariant functor 
\begin{align*}
\mathrm{Ind}(\mathit{Presh}_X(\mathit{Vec}^{\mathrm{super}}_{\mathbb{K}})) &\to \mathit{Set}, \\
 ``\varinjlim_{\gamma\in\Gamma}"\mathcal{H}_\gamma &\mapsto \mathrm{Bilin}_{\mathrm{Ind}(C)}\bigl(``\varinjlim_{\alpha\in A}"\mathcal{F}_\alpha,  ``\varinjlim_{\beta\in B}"\mathcal{G}_\beta; ``\varinjlim_{\gamma\in\Gamma}"\mathcal{H}_\gamma\bigr).
\end{align*}
Moreover,  
a bilinear morphism of ind-objects 
$$
F=\bigl([F_{(\alpha, \gamma)}^{j(\alpha, \gamma)}]\bigr)_{(\alpha, \gamma)\in A\times B}: ``\varinjlim_{\alpha\in A}"\mathcal{F}_\alpha\times  ``\varinjlim_{\beta\in B}"\mathcal{G}_\beta\to ``\varinjlim_{\gamma\in\Gamma}"\mathcal{H}_\gamma,
$$
induces a bilinear morphism of presheaves 
$$
\underrightarrow{\mathrm{Lim}}\,  F: \varinjlim_{\alpha\in A}\mathcal{F}_\alpha\times  \varinjlim_{\beta\in B}\mathcal{G}_\beta\to \varinjlim_{\gamma\in\Gamma}\mathcal{H}_\gamma,
$$
in the same way as in \eqref{eq: DEFINITION OF injlimF}. 

Let $\mathit{Sh}_X(\mathit{Vec}^{\mathrm{super}}_{\mathbb{K}})$ be the full subcategory of $\mathit{Presh}_X(\mathit{Vec}^{\mathrm{super}}_{\mathbb{K}})$ consisting of sheaves on $X$ of super vector spaces over $\mathbb{K}$. Then the corresponding category of ind-objects  $\mathrm{Ind}(\mathit{Sh}_X(\mathit{Vec}^{\mathrm{super}}_{\mathbb{K}}))$ is a full subcategory of $\mathrm{Ind}(\mathit{Presh}_X(\mathit{Vec}^{\mathrm{super}}_{\mathbb{K}}))$.
Note that the category $\mathrm{Ind}(\mathit{Sh}_X(\mathit{Vec}^{\mathrm{super}}_{\mathbb{K}}))$ is bigger than the category of \textit{ind-sheaves} introduced by Kashiwara and Schapira (\cite{KS99}). The latter is the category of ind-objects of the category of sheaves with  compact supports.

\subsection{Definition of VSA-Inductive Sheaves}

We denote by $\mathbb{K}_X$ the presheaf on a topological space $X$ of constant $\mathbb{K}$-valued functions. 
We regard $\mathbb{K}_X$ as an inductive system indexed by a set with one element and denote by $``\displaystyle\varinjlim"\mathbb{K}_X$ the corresponding ind-object.

%DF: VAERTEX SUPERALGEBRA INDUCTIVE SHEAVES
\begin{definition}
A \textbf{vertex superalgebra inductive sheaf (VSA-inductive sheaf)} on a topological space $X$ is a quadruple $\bigl( \mathcal{F}, \underline
{\mathbf{1}}, \underline{T}, \underline{(n)}; n\in \mathbb{Z}\bigr)$ consisting of 
\begin{enumerate}[$\bullet$]
     \setlength{\topsep}{1pt}
     \setlength{\partopsep}{0pt}
     \setlength{\itemsep}{1pt}
     \setlength{\parsep}{0pt}
     \setlength{\leftmargin}{20pt}
     \setlength{\rightmargin}{0pt}
     \setlength{\listparindent}{0pt}
     \setlength{\labelsep}{3pt}
     \setlength{\labelwidth}{30pt}
     \setlength{\itemindent}{5pt}
\item an ind-object of  $\mathit{Sh}_X(\mathit{Vec}^{\mathrm{super}}_{\mathbb{K}})$, 
$
\mathcal{F}=``\displaystyle\varinjlim_{\alpha\in A}"\mathcal{F}_\alpha,
$
\item an even morphism of ind-objects $\underline{\mathbf{1}}: ``\varinjlim"\mathbb{K}_X\to \mathcal{F}$, 
\item an even morphism of ind-objects $\underline{T}: \mathcal{F}\to\mathcal{F}$,
\item an even bilinear morphisms of ind-objects $\underline{(n)}: \mathcal{F}\times \mathcal{F} \to \mathcal{F}$,
\end{enumerate}
such that 
the map $\mathcal{F}(f)$ is even and injective for any morphism $f$ in $A$ and the quadruples  
$
\bigl(\varinjlim_{\alpha\in A}\mathcal{F}_\alpha(U), \mathbf{1}, T, (n); n\in\mathbb{Z}\bigr)
$
are vertex superalgebras for all open subsets $U\subset X$, 
where $\mathbf{1}=\mathbf{1}(U):=\bigl(\displaystyle\underrightarrow{\mathrm{Lim}}\, {\underline{\mathbf{1}}}(U)\bigr)(1)$, $T=T(U):=\displaystyle\underrightarrow{\mathrm{Lim}}\, \underline{T}(U)$, $(n)=(n)(U):=\displaystyle\underrightarrow{\mathrm{Lim}}\, \underline{(n)}(U)$.
\end{definition}

%BASE-PRESERVING MORPHISM OF Vsa-IndSh
Let $\mathcal{V}_1=\bigl( \mathcal{F}_1, \underline
{\mathbf{1}}_1, \underline{T}_1, \underline{(n)}_1; n\in \mathbb{Z}\bigr)$ and $\mathcal{V}_2=\bigl( \mathcal{F}_2, \underline
{\mathbf{1}}_2, \underline{T}_2, \underline{(n)}_2; n\in \mathbb{Z}\bigr)$ be  VSA-inductive sheaves on the same topological space $X$.
We call a morphism of ind-objects $\Phi=([\Phi_\alpha^{j(\alpha)}])_{\alpha\in \mathrm{Ob}(A)}: \mathcal{F}_1\to \mathcal{F}_2$ a \textbf{base-preserving morphism} of VSA-inductive sheaves from $\mathcal{V}_1$ to $\mathcal{V}_2$ if $\Phi$ satisfies $\Phi\circ\underline{\mathbf{1}}_1=\underline{\mathbf{1}}_2$, $\Phi\circ\underline{T}_1=\underline{T}_2\circ\Phi$ and $\Phi\circ\underline{(n)}_1=\underline{(n)}_2\circ(\Phi\times\Phi)$ for all $n\in\mathbb{Z}$, where $\Phi\times\Phi$ is the morphism of ind-objects given by $\Phi\times\Phi:=\bigl([\Phi_\alpha^{j(\alpha)}\times\Phi_{\alpha'}^{j(\alpha')}]\bigr)_{(\alpha, \alpha')\in\mathrm{Ob}(A)\times\mathrm{Ob}(A)}$.

\begin{remark}\label{rem: VSA-inductive sheaves form a category}
Let $X$ be a topological space. VSA-inductive sheaves on $X$ form a category with base-preserving morphisms of VSA-inductive sheaves.  This category is a subcategory of $\mathrm{Ind}(\mathit{Sh}_{X}(\mathit{Vec}^{\mathrm{super}}_{\mathbb{K}}))$ and hence of $\mathrm{Ind}(\mathit{Presh}_{X}(\mathit{Vec}^{\mathrm{super}}_{\mathbb{K}}))$.
\end{remark}

\begin{notation}
Denote by $\mathit{VSA_{\mathbb{K}}}\textit{-IndSh}_X$ the category of VSA-inductive sheaves on a topological space $X$ obtained in Remark \ref{rem: VSA-inductive sheaves form a category}.
\end{notation}

%PRESHEAF associated with A VSA-inductive sheaf
\begin{lemma}\label{lem: PRESHEAF associated with A VSA-inductive sheaf}
Let $\mathcal{V}=\bigl( \mathcal{F}, \underline
{\mathbf{1}}, \underline{T}, \underline{(n)}; n\in \mathbb{Z}\bigr)$ be a VSA-inductive sheaf on a topological space $X$. Then the assignment 
$$
U\to \bigl(\underrightarrow{\mathrm{Lim}}\, \mathcal{F}(U), \mathbf{1}, T, (n); n\in\mathbb{Z}\bigr),
$$
with restriction maps of the presheaf $\underrightarrow{\mathrm{Lim}}\, \mathcal{F}$ defines a presheaf on $X$ of vertex superalgebras.  
\end{lemma}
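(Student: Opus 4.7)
The plan is to unwind the definitions and verify two things for the assignment $U \mapsto \underrightarrow{\mathrm{Lim}}\, \mathcal{F}(U)$: first, that on each open set the claimed quadruple really is a vertex superalgebra, and second, that the restriction maps inherited from $\underrightarrow{\mathrm{Lim}}\, \mathcal{F}$ intertwine the vacuum, translation operator, and all $n$-th products. The first point is automatic from the last clause in the definition of a VSA-inductive sheaf, so essentially all of the work is in the second point.

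First I would make the restriction maps explicit. For opens $V\subset U$ with restriction $\rho^U_{V,\alpha}: \mathcal{F}_\alpha(U) \to \mathcal{F}_\alpha(V)$, the presheaf $\underrightarrow{\mathrm{Lim}}\, \mathcal{F}$ has restriction $\rho^U_V\colon [x_\alpha] \mapsto [\rho^U_{V,\alpha}(x_\alpha)]$, by the very construction of the functor in \eqref{eq: DEFINITION OF injlimF}. I would then note that because $\underline{\mathbf{1}}$, $\underline{T}$, and each $\underline{(n)}$ are morphisms (resp.\ bilinear morphisms) of ind-objects in $\mathit{Sh}_X(\mathit{Vec}^{\mathrm{super}}_{\mathbb{K}})$, their images under $\underrightarrow{\mathrm{Lim}}\,$ are morphisms (resp.\ bilinear morphisms) of presheaves. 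Naturality in the open set $U$ is precisely the statement that the square
\begin{equation*}
\begin{array}{ccc}
\underrightarrow{\mathrm{Lim}}\, \mathcal{F}(U) & \xrightarrow{\ T(U)\ } & \underrightarrow{\mathrm{Lim}}\, \mathcal{F}(U) \\[2pt]
{\scriptstyle \rho^U_V}\downarrow & & \downarrow{\scriptstyle \rho^U_V} \\[2pt]
\underrightarrow{\mathrm{Lim}}\, \mathcal{F}(V) & \xrightarrow{\ T(V)\ } & \underrightarrow{\mathrm{Lim}}\, \mathcal{F}(V)
\end{array}
\end{equation*}
commutes, and similarly for $\mathbf{1}$ (taking the image of $1\in\mathbb{K}_X(U)$) and for each $(n)$ (using the bilinear naturality square on $\mathcal{F}\times\mathcal{F}$). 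Hence $\rho^U_V(\mathbf{1}(U))=\mathbf{1}(V)$, $\rho^U_V\circ T(U) = T(V)\circ \rho^U_V$, and $\rho^U_V(A_{(n)(U)}B) = \rho^U_V(A)_{(n)(V)}\rho^U_V(B)$, so $\rho^U_V$ is a morphism of vertex superalgebras for every inclusion $V\subset U$.

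Finally, the presheaf axioms $\rho^U_U = \mathrm{id}$ and $\rho^V_W\circ \rho^U_V = \rho^U_W$ hold because $\underrightarrow{\mathrm{Lim}}\, \mathcal{F}$ is already a presheaf of super vector spaces (this is built into the functor \eqref{eq: functor IndPresh to Presh}). Combining these facts yields a presheaf of vertex superalgebras, as claimed. The only mild subtlety I anticipate is keeping track of the transition maps in the filtered colimit when handling the bilinear operation $\underline{(n)}$: one must check that the representative $F_{(\alpha,\alpha')}^{j(\alpha,\alpha')}$ used to define $(n)(U)$ and the one used to define $(n)(V)$ can be chosen compatibly under restriction, but this is exactly the content of $\underline{(n)}$ being a \emph{bilinear morphism of ind-objects} in the sense of \eqref{df: bilinear morphisms of ind-objects}, so no obstruction actually arises.
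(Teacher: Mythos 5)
Your proof is correct and follows essentially the same route as the paper: the vertex superalgebra structure on each open set is built into the definition of a VSA-inductive sheaf, and the restriction maps are vertex superalgebra morphisms precisely because $\underrightarrow{\mathrm{Lim}}\,\underline{\mathbf{1}}$, $\underrightarrow{\mathrm{Lim}}\,\underline{T}$ and $\underrightarrow{\mathrm{Lim}}\,\underline{(n)}$ are morphisms (resp.\ bilinear morphisms) of presheaves. Your version merely spells out the naturality squares and the compatibility of representatives in the filtered colimit, which the paper leaves implicit.
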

\begin{proof}
We must check the restriction maps are vertex superalgebra morphisms. We can see this since  $\displaystyle\underrightarrow{\mathrm{Lim}}\, {\underline{\mathbf{1}}}$, $\displaystyle\underrightarrow{\mathrm{Lim}}\, \underline{T}$, $\displaystyle\underrightarrow{\mathrm{Lim}}\, \underline{(n)}$ are morphisms of presheaves.
\end{proof}

Let $\mathcal{V}$ and $\mathcal{V}'$ be VSA-inductive sheaves.
We write a morphism $\Phi$ as $\Phi: \mathcal{V}\to\mathcal{V}'$ even when $\Phi$ is not a morphism of VSA-inductive sheaves but simply a morphism of the underlying ind-objects of sheaves.  When we say that $\Phi: \mathcal{V}\to\mathcal{V}'$ is a morphism of ind-objects, we mean $\Phi$ is a morphism between the underlying ind-objects of sheaves.

\begin{lemma}
Let $\Phi: \mathcal{V}_1\to \mathcal{V}_2$ be a base-preserving morphism of VSA-inductive sheaves on the same topological space. Then the map $\underrightarrow{\mathrm{Lim}}\, \Phi: \underrightarrow{\mathrm{Lim}}\, {\mathcal{V}_1} \to \underrightarrow{\mathrm{Lim}}\, {\mathcal{V}_2}$ is a morphism of presheaves of vertex superalgebras.
\end{lemma}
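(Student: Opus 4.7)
The plan is to verify the three defining conditions of a vertex superalgebra morphism (preservation of vacuum, commutation with translation, preservation of all $n$-th products) pointwise on each open set $U \subset X$, by applying the functor $\underrightarrow{\mathrm{Lim}}\,$ (together with evaluation at $U$) to the three identities that express that $\Phi$ is base-preserving. By the preceding lemma, both $\underrightarrow{\mathrm{Lim}}\, \mathcal{V}_1(U)$ and $\underrightarrow{\mathrm{Lim}}\, \mathcal{V}_2(U)$ already carry vertex superalgebra structures, so the only content is that $\underrightarrow{\mathrm{Lim}}\, \Phi(U)$ is compatible with this structure.

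First, I would note that the assignment $F \mapsto \underrightarrow{\mathrm{Lim}}\, F$ defined in \eqref{eq: DEFINITION OF injlimF} is functorial, and that the analogous assignment on bilinear morphisms of ind-objects (described just after \eqref{df: bilinear morphisms of ind-objects}) is compatible with composition with morphisms of ind-objects in each slot. In particular, for morphisms of ind-objects $\Psi, \Psi'$ and a bilinear morphism $B$, one has $\underrightarrow{\mathrm{Lim}}\,(\Psi \circ \Psi') = \underrightarrow{\mathrm{Lim}}\, \Psi \circ \underrightarrow{\mathrm{Lim}}\, \Psi'$ and $\underrightarrow{\mathrm{Lim}}\,(B \circ (\Psi \times \Psi')) = \underrightarrow{\mathrm{Lim}}\, B \circ (\underrightarrow{\mathrm{Lim}}\, \Psi \times \underrightarrow{\mathrm{Lim}}\, \Psi')$. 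These are direct computations from the definitions of composition of morphisms of ind-objects and from \eqref{eq: DEFINITION OF injlimF}.

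Given this, applying $\underrightarrow{\mathrm{Lim}}\,$ to the three base-preserving identities $\Phi \circ \underline{\mathbf{1}}_1 = \underline{\mathbf{1}}_2$, $\Phi \circ \underline{T}_1 = \underline{T}_2 \circ \Phi$, and $\Phi \circ \underline{(n)}_1 = \underline{(n)}_2 \circ (\Phi \times \Phi)$, and then evaluating at an open subset $U \subset X$, yields exactly
\[
\underrightarrow{\mathrm{Lim}}\, \Phi(U)(\mathbf{1}_1(U)) = \mathbf{1}_2(U), \quad \underrightarrow{\mathrm{Lim}}\, \Phi(U) \circ T_1(U) = T_2(U) \circ \underrightarrow{\mathrm{Lim}}\, \Phi(U),
\]
\[
\underrightarrow{\mathrm{Lim}}\, \Phi(U)(a \,{(n)}_1\, b) = \underrightarrow{\mathrm{Lim}}\, \Phi(U)(a) \,{(n)}_2\, \underrightarrow{\mathrm{Lim}}\, \Phi(U)(b),
\]
for all $n \in \mathbb{Z}$ and all $a, b \in \underrightarrow{\mathrm{Lim}}\, \mathcal{V}_1(U)$. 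This is precisely the statement that $\underrightarrow{\mathrm{Lim}}\, \Phi(U)$ is a morphism of vertex superalgebras. Since $\underrightarrow{\mathrm{Lim}}\, \Phi$ is already a morphism of presheaves of super vector spaces (as the image of a morphism of ind-objects under the functor in \eqref{eq: functor IndPresh to Presh}), this upgrades it to a morphism of presheaves of vertex superalgebras.

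There is no substantial obstacle; the proof is essentially a bookkeeping exercise confirming that the functor $\underrightarrow{\mathrm{Lim}}\,$ respects composition and products of morphisms of ind-objects and hence carries the defining identities for a base-preserving morphism over to the defining identities for a vertex superalgebra morphism on each $U$. The only mildly delicate point is the bilinear case, where one must check that the natural assignment on bilinear morphisms defined via \eqref{df: bilinear morphisms of ind-objects} is genuinely compatible with pre-composition by morphisms of ind-objects in each variable; this is unwound directly from the definitions of composition in $\mathrm{Ind}(\mathit{Presh}_X(\mathit{Vec}^{\mathrm{super}}_\mathbb{K}))$.
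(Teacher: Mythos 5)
Your proposal is correct and follows essentially the same route as the paper, whose proof simply states that the claim follows directly from the definition of the morphisms; you have just unwound that definitional check explicitly, including the compatibility of $\underrightarrow{\mathrm{Lim}}\,$ with compositions and with bilinear morphisms.
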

\begin{proof}
This follows directly from the definition of the morphisms.
\end{proof}

\begin{remark}
When we restrict the functor given in \eqref{eq: functor IndPresh to Presh}
$$
\underrightarrow{\mathrm{Lim}}\, : \mathrm{Ind}(\mathit{Presh}_{X}(\mathit{Vec}^{\mathrm{super}}_{\mathbb{K}})) \to \mathit{Presh}_X(\mathit{Vec}_{\mathbb{K}}^{\mathrm{super}}), 
$$
to the subcategory $\mathit{VSA_{\mathbb{K}}}\textit{-IndSh}_X$, we have a functor
\begin{equation}\label{eq: functor from VSA-ISh}
\mathit{VSA_{\mathbb{K}}}\textit{-IndSh}_X \to \mathit{Presh}_X(\mathit{VSA_{\mathbb{K}}}),
\end{equation}
where $\mathit{Presh}_X(\mathit{VSA_{\mathbb{K}}})$ is the category of presheaves on $X$ of vertex superalgebras over $\mathbb{K}$. 
\end{remark}
We also denote by $\underrightarrow{\mathrm{Lim}}\, $ the functor \eqref{eq: functor from VSA-ISh}. 

%LOCAL CALCULATION OF THE PRESHEAF associated with A Vsa IndSh 
\begin{remark}\label{rem: LOCAL CALCULATION OF THE PRESHEAF associated with A Vsa IndSh}
Let $\mathcal{V}=\bigl( \mathcal{F}, \underline
{\mathbf{1}}, \underline{T}, \underline{(n)}; n\in \mathbb{Z}\bigr)$ be a VSA-inductive sheaf. Let $U\subset X$ be an open subset and $U=\bigcup_{\lambda\in\Lambda}U_{\lambda}$ an open covering. Then the map induced by restriction maps
$$
\underrightarrow{\mathrm{Lim}}\, {\mathcal{V}}(U)\to \prod_{\lambda \in\Lambda}\underrightarrow{\mathrm{Lim}}\, {\mathcal{V}}(U_\lambda), 
$$
is injective since $\mathcal{F}_\alpha$ are sheaves, where $\mathcal{F}= ``\displaystyle\varinjlim_{\alpha\in A}"\mathcal{F}_\alpha$,  and the map $\mathcal{F}(f)$ is injective for any morphism $f$ in $A$.   
\end{remark}

%%%%%%%%% Hamiltonian, degree-grading operator  %%%%%%%%%%%%
\begin{definition}\label{df: grading operator on a VSA-inductive sheaf}
Let $\mathcal{V}=\bigl( \mathcal{F}=``\displaystyle\varinjlim_{\alpha\in A}"\mathcal{F}_{\alpha}, \underline
{\mathbf{1}}, \underline{T}, \underline{(n)}; n\in \mathbb{Z}\bigr)$ be a VSA-inductive sheaf on a topological space $X$. 
\begin{enumerate}[(i)]
 \setlength{\topsep}{1pt}
     \setlength{\partopsep}{0pt}
     \setlength{\itemsep}{1pt}
     \setlength{\parsep}{0pt}
     \setlength{\leftmargin}{20pt}
     \setlength{\rightmargin}{0pt}
     \setlength{\listparindent}{0pt}
     \setlength{\labelsep}{3pt}
     \setlength{\labelwidth}{15pt}
     \setlength{\itemindent}{0pt}
     \renewcommand{\makelabel}{\upshape}
\item A \textbf{Hamiltonian} or a \textbf{weight-grading operator} on $\mathcal{V}$ is an even morphism $\underline{H}: \mathcal{F}\to\mathcal{F}$ of ind-objects such that there exists a family $(H^\alpha_\alpha: \mathcal{F}_\alpha\to\mathcal{F}_\alpha)_{\alpha\in \mathrm{Ob}(A)}$ of even morphisms of sheaves satisfying the following conditions: 
\begin{enumerate}[$(1)$]
     \setlength{\topsep}{1pt}
     \setlength{\partopsep}{0pt}
     \setlength{\itemsep}{1pt}
     \setlength{\parsep}{0pt}
     \setlength{\leftmargin}{35pt}
     \setlength{\rightmargin}{0pt}
     \setlength{\listparindent}{0pt}
     \setlength{\labelsep}{3pt}
     \setlength{\labelwidth}{15pt}
     \setlength{\itemindent}{0pt} 
\item $\underline{H}=([H_\alpha^\alpha])_{\alpha\in\mathrm{Ob}(A)}$.
\item Each $H_\alpha^\alpha$ is a diagonalizable operator on $\mathcal{F}_\alpha^\alpha$, namely, the operator  $H_\alpha^\alpha(U): \mathcal{F}_\alpha(U)\to\mathcal{F}_\alpha(U)$ is diagonalizable for any open subset $U\subset X$.
\item For all $n\in \mathbb{Z}$, 
\begin{equation}\label{eq: Hamiltonian}
\underline{(n)}\circ(\mathrm{id}\times\underline{H}+\underline{H}\times\mathrm{id})=(\underline{H}-(-n-1))\circ\underline{(n)}.
\end{equation}
\end{enumerate}
\item  A \textbf{degree-grading operator} on $\mathcal{V}$ is an even morphism $\underline{J}: \mathcal{F}\to\mathcal{F}$ of ind-objects such that there exists a family $(J^\alpha_\alpha: \mathcal{F}_\alpha\to\mathcal{F}_\alpha)_{\alpha\in \mathrm{Ob}(A)}$ of even morphisms of sheaves satisfying the following conditions: 
\begin{enumerate}[$(1)$]
     \setlength{\topsep}{1pt}
     \setlength{\partopsep}{0pt}
     \setlength{\itemsep}{1pt}
     \setlength{\parsep}{0pt}
     \setlength{\leftmargin}{35pt}
     \setlength{\rightmargin}{0pt}
     \setlength{\listparindent}{0pt}
     \setlength{\labelsep}{3pt}
     \setlength{\labelwidth}{15pt}
     \setlength{\itemindent}{0pt}
\item $\underline{J}=([J_\alpha^\alpha])_{\alpha\in\mathrm{Ob}(A)}$.
\item Each $J_\alpha^\alpha$ is a diagonalizable operator on $\mathcal{F}_\alpha^\alpha$. 
\item For all $n\in \mathbb{Z}$, 
\begin{equation}\label{eq: degree-grading operator}
\underline{(n)}\circ(\mathrm{id}\times\underline{J}+\underline{J}\times\mathrm{id})=\underline{J}\circ\underline{(n)}.
\end{equation}
\end{enumerate}
\end{enumerate}
\end{definition}

\begin{remark}
In the above definition, the family $(H^\alpha_\alpha)_{\alpha\in\mathrm{Ob}(A)}$ and $(J^\alpha_\alpha)_{\alpha\in\mathrm{Ob}(A)}$ are unique by the relation $(1)$ and the injectivity of the morphisms in the inductive system $\mathcal{F}$.  
\end{remark}

%%%%%%%%%%% graded VSA indeuctive sheaf %%%%%%%%%
\begin{definition}
\begin{enumerate}[(i)]
 \setlength{\topsep}{1pt}
     \setlength{\partopsep}{0pt}
     \setlength{\itemsep}{1pt}
     \setlength{\parsep}{0pt}
     \setlength{\leftmargin}{24pt}
     \setlength{\rightmargin}{0pt}
     \setlength{\listparindent}{0pt}
     \setlength{\labelsep}{3pt}
     \setlength{\labelwidth}{15pt}
     \setlength{\itemindent}{0pt}
     \renewcommand{\makelabel}{\upshape}
\item A $\mathbb{Z}$-\textbf{graded VSA-inductive sheaf} is a VSA-inductive sheaf given a Hamiltonian with only integral eigenvalues. 
\item A \textbf{degree-graded VSA-inductive sheaf} is a VSA-inductive sheaf given a degree-grading operator with only integral eigenvalues. 
\item A VSA-inductive sheaf $\mathcal{V}=\bigl( \mathcal{F}=``\displaystyle\varinjlim_{\alpha\in A}"\mathcal{F}_{\alpha}, \underline
{\mathbf{1}}, \underline{T}, \underline{(n)}; n\in \mathbb{Z}\bigr)$ is said to be \textbf{degree-weight-graded} if $\mathcal{V}$ is given a Hamiltonian $\underline{H}=([H_\alpha^\alpha])_{\alpha\in\mathrm{Ob}(A)}$ and a degree-grading operator $\underline{J}=([J_\alpha^\alpha])_{\alpha\in\mathrm{Ob}(A)}$ 
such that for each $\alpha\in\mathrm{Ob}(A)$,  $\mathcal{F}_\alpha$ $=\bigoplus_{n, l\in\mathbb{Z}} \mathcal{F}_\alpha^l[n]$, where $\mathcal{F}_\alpha^l[n]:=\mathcal{F}_\alpha[n]\cap\mathcal{F}_\alpha^l$. Here $\mathcal{F}_\alpha[n]$ and $\mathcal{F}_\alpha^l$ are the subsheaves $\mathrm{Ker}(H_\alpha^\alpha-n)$ and $\mathrm{Ker}(J_\alpha^\alpha-l)$, respectively. 
\end{enumerate}
\end{definition}

\begin{lemma}
If $(\mathcal{V}, \underline{H})$ is a $\mathbb{Z}$-graded VSA-inductive sheaf, then $(\varinjlim\mathcal{V}, \underrightarrow{\mathrm{Lim}}\, \underline{H})$ is a presheaf of  $\mathbb{Z}$-graded vertex superalgebras. The same type of assertion holds in the  degree-graded case and in the degree-weight-graded case. 
\end{lemma}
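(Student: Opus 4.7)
The plan is to apply Lemma~\ref{lem: PRESHEAF associated with A VSA-inductive sheaf} to obtain the underlying presheaf of vertex superalgebras $\underrightarrow{\mathrm{Lim}}\, \mathcal{V}$, and then show that $\underrightarrow{\mathrm{Lim}}\, \underline{H}$ endows each section $\underrightarrow{\mathrm{Lim}}\, \mathcal{V}(U)$ with a weight-grading in the sense of Section~\ref{section: Preliminaries}, the resulting gradings being preserved by restriction. The task thus reduces to transferring the local family $(H_\alpha^\alpha)_{\alpha\in\mathrm{Ob}(A)}$ to a single operator on the colimit that remains diagonalizable with integer spectrum.

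The main technical step is an injectivity argument. The requirement that $\underline{H}=([H_\alpha^\alpha])_{\alpha\in\mathrm{Ob}(A)}$ be a morphism of ind-objects says only that for any $f_{\alpha'\alpha}:\alpha\to\alpha'$ in $A$ the morphisms $H_{\alpha'}^{\alpha'}\circ\mathcal{F}(f)$ and $\mathcal{F}(f)\circ H_{\alpha}^{\alpha}$ become equal after composition with some further transition map $\mathcal{F}(g)$, $g:\alpha'\to\alpha''$. Because every $\mathcal{F}(g)$ is assumed injective in the definition of a VSA-inductive sheaf, one may cancel $\mathcal{F}(g)$ to get the strict identity $H_{\alpha'}^{\alpha'}\circ\mathcal{F}(f)=\mathcal{F}(f)\circ H_\alpha^\alpha$. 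Consequently the operators $H_\alpha^\alpha(U)$ assemble into a well-defined linear map $H_U:=\underrightarrow{\mathrm{Lim}}\, \underline{H}(U)$ on $\varinjlim_\alpha \mathcal{F}_\alpha(U)$.

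Since each $H_\alpha^\alpha(U)$ is diagonalizable with integer eigenvalues, the commutation just established shows that the eigenspace decompositions $\mathcal{F}_\alpha(U)=\bigoplus_{n\in\mathbb{Z}}\mathcal{F}_\alpha[n](U)$ are preserved by all transition maps, so the colimit splits as $\bigoplus_{n\in\mathbb{Z}}\varinjlim_\alpha\mathcal{F}_\alpha[n](U)$ and $H_U$ is diagonalizable with integer spectrum. Applying $\underrightarrow{\mathrm{Lim}}\, $ to the identity \eqref{eq: Hamiltonian} yields, on representatives, the relation $[H_U,A_{(n)}]=(H_U A)_{(n)}-(n+1)A_{(n)}$, which is exactly the Hamiltonian axiom for $\underrightarrow{\mathrm{Lim}}\, \mathcal{V}(U)$. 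Naturality of each $H_\alpha^\alpha$ with respect to the sheaf restriction maps then implies that the restriction maps of $\underrightarrow{\mathrm{Lim}}\, \mathcal{V}$ intertwine with $\underrightarrow{\mathrm{Lim}}\, \underline{H}$, giving a presheaf of $\mathbb{Z}$-graded vertex superalgebras.

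The degree-graded case is strictly parallel, with $\underline{J}$ in place of $\underline{H}$ and \eqref{eq: degree-grading operator} in place of \eqref{eq: Hamiltonian}. For the degree-weight-graded case, apply the preceding analysis to both operators; the hypothesis $\mathcal{F}_\alpha=\bigoplus_{n,l\in\mathbb{Z}}\mathcal{F}_\alpha^l[n]$ passes to the colimit by the same eigenspace-preservation argument, so $H_U$ and $J_U$ are simultaneously diagonalizable on each section. I expect the only delicate point to be the injectivity trick of the second paragraph: without it, the ind-object condition would only guarantee compatibility up to a further inclusion, which would be insufficient for the eigenspace decomposition to survive passage to the colimit.
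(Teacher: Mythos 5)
Your proposal is correct and follows essentially the same route as the paper: use the injectivity of the transition maps $\mathcal{F}(f)$ to upgrade the ind-object compatibility $H_{\alpha'}^{\alpha'}\circ\mathcal{F}(f)\sim\mathcal{F}(f)\circ H_{\alpha}^{\alpha}$ to a strict equality, deduce that the transition maps preserve the eigenspace decompositions so that $\varinjlim_{\alpha}\mathcal{F}_\alpha=\bigoplus_{n}\varinjlim_{\alpha}\mathcal{F}_\alpha[n]$, and then transport the relation \eqref{eq: Hamiltonian} (resp.\ \eqref{eq: degree-grading operator}) through $\underrightarrow{\mathrm{Lim}}\,$ to get the grading axiom on each space of sections, the degree- and degree-weight-graded cases being parallel. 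This matches the paper's proof, including the identification of the injectivity cancellation as the key step.
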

\begin{proof}
We will prove the assertion in the weight-graded case. The others are proved similarly. Let $\mathcal{V}=\bigl( \mathcal{F}=``\displaystyle\varinjlim_{\alpha\in A}"\mathcal{F}_{\alpha}, \underline
{\mathbf{1}}, \underline{T}, \underline{(n)}; n\in \mathbb{Z}\bigr)$ be a $\mathbb{Z}$-graded VSA-inductive sheaf with a Hamiltonian $\underline{H}=([H_\alpha^\alpha])_{\alpha\in\mathrm{Ob}(A)}$. Let $f_{\alpha, \alpha'}: \alpha'\to\alpha$ be a morphism in $A$. Then the corresponding morphism $\mathcal{F}(f_{\alpha, \alpha'}): \mathcal{F}_{\alpha'}\to\mathcal{F}_\alpha$ preserves the grading. Indeed, from the injectivity of the morphisms in the inductive system $\mathcal{F}$ and the relation $H_\alpha^\alpha\circ\mathcal{F}(f_{\alpha, \alpha'})\sim H_{\alpha'}^{\alpha'}$ in $\varinjlim_{\beta\in A}\mathrm{Hom}(\mathcal{F}_{\alpha'}, \mathcal{F}_\beta)$, we have $H_\alpha^\alpha\circ\mathcal{F}(f_{\alpha, \alpha'})=\mathcal{F}(f_{\alpha, \alpha'})\circ H_{\alpha'}^{\alpha'}$. Thus we have $\varinjlim_{\alpha\in A}\mathcal{F}_\alpha$ $=\varinjlim_{\alpha\in A}(\bigoplus_{n\in\mathbb{Z}}\mathcal{F}_\alpha[n])$ $=\bigoplus_{n\in\mathbb{Z}}(\varinjlim_{\alpha\in A}\mathcal{F}_\alpha[n])$. Therefore $\underrightarrow{\mathrm{Lim}}\,  \underline{H}$ is a diagonalizable operator with only integral eigenvalues on the presheaf $\underrightarrow{\mathrm{Lim}}\,  \mathcal{F}=\varinjlim_{\alpha\in A}\mathcal{F}_\alpha$. From the relation \eqref{eq: Hamiltonian}, the operator $\underrightarrow{\mathrm{Lim}}\,  \underline{H}$ is a  Hamiltonian on $\underrightarrow{\mathrm{Lim}}\,  \mathcal{V}$. 
\end{proof}

\begin{notation}
Denote by $\textit{DegWt-}\mathit{VSA_{\mathbb{K}}}\textit{-IndSh}_X$ the category of degree-weight-graded VSA-inductive sheaves on a topological space $X$. Its morphisms are morphisms of VSA-inductive sheaves on $X$ commuting with the Hamiltonians and the degree-grading operators. 
\end{notation}

\subsection{Gluing Inductive Sheaves}
Let $X$ be a topological space and $A$ a small filtered category. 
We consider 
the  subcategory of $\textit{DegWt-}\mathit{VSA_{\mathbb{K}}}\textit{-IndSh}_X$ whose objects are degree-weight-graded VSA-inductive sheaves $\mathcal{V}=\bigl( \mathcal{F}, \underline{\mathbf{1}}, \underline{T}, \underline{(n)}; n\in \mathbb{Z}\bigr)$ such that $\mathcal{F}$ is a functor  from $A$ and whose morphisms are morphisms $\Phi: \mathcal{F} \to \mathcal{G}$ of degree-weight-graded VSA-inductive sheaves such that there exist a family of morphisms of sheaves $(\Phi_\alpha^\alpha: \mathcal{F}_\alpha \to \mathcal{G}_\alpha)_{\alpha\in A}$ satisfying $F=\bigl([\Phi_\alpha^\alpha]\bigr)_{\alpha\in A}$.
We denote this category by $\textit{DegWt-}\mathit{VSA_{\mathbb{K}}}\textit{-IndSh}_X^A$. We will often call a  morphism of this category a \textbf{strict morphism}. If two degree-weight-graded VSA-inductive sheaves are isomorphic via a strict isomorphism, by which we mean a isomorphism in $\textit{DegWt-}\mathit{VSA_{\mathbb{K}}}\textit{-IndSh}_X^A$, then we say they are \textbf{strictly isomorphic}.  
We also call a morphism of ind-objects $\Phi: \mathcal{F} \to \mathcal{G}$, not necessarily a morphism of VSA-inductive sheaves, \textit{strict} if the same condition above is satisfied.

%RESTRICTION OF VSA-inductive SHEAF
\begin{remark}\label{rem: restriction of VSA-inductive sheaves}
Let $\mathcal{V}=\bigl(\mathcal{F},  \underline{\mathbf{1}}, \underline{T}, \underline{(n)}; n\in \mathbb{Z} \bigr)$ be a VSA-inductive sheaf on 
$X$. Let $U\subset X$ be an open subset. Consider the inductive system 
\begin{align*}
\mathcal{F}|_U: A &\to \mathit{Sh}_U(\mathit{Vec}^{\mathrm{super}}_{\mathbb{K}}), \\
\text{objects}:\quad \alpha &\mapsto \mathcal{F}_\alpha|_U, \\
\text{morphisms}:\quad f &\mapsto \mathcal{F}(f)|_U, 
\end{align*}
obtained by restricting $\mathcal{F}$ to $U$. The corresponding ind-object $``\displaystyle\varinjlim_{\alpha\in A}"(\mathcal{F}|_U)_\alpha$ is a VSA-inductive sheaf on $U$ with $\underline{\mathbf{1}}, \underline{T}, \underline{(n)}$ restricted to $U$. 
\end{remark}
For a VSA-inductive sheaf $\mathcal{V}$ on 
$X$ and an open subset $U$ of $X$, we denote by $\mathcal{V}|_U$ the VSA-inductive sheaf on $U$ given in Remark \ref{rem: restriction of VSA-inductive sheaves} and call it the \textbf{restriction} of the VSA-inductive sheaf $\mathcal{V}$. 

Let us glue VSA-inductive sheaves. 
Let $X=\bigcup_{\lambda \in \Lambda}U_\lambda$ be an open covering of $X$ and $(\mathcal{V}^\lambda)_{\lambda\in\Lambda}$  a family of degree-weight-graded VSA-inductive sheaves, where $\mathcal{V}^\lambda=\bigl(\mathcal{F}^\lambda, \underline{\mathbf{1}}^\lambda, \underline{T}^\lambda, \underline{(n)}^\lambda; n\in\mathbb{Z} \bigr)$ is an object of $\textit{DegWt-}\mathit{VSA_{\mathbb{K}}}\textit{-IndSh}_{U_\lambda}^A$. Let  $\underline{H}^\lambda$ and $\underline{J}^\lambda$ be the Hamiltonian and the degree-grading operator on $\mathcal{V}^\lambda$, respectively. Suppose given a family of strict isomorphisms of degree-weight-graded VSA-inductive sheaves $(\vartheta_{\lambda \mu}: \mathcal{V}^\mu|_{U_\mu \cap U_\lambda} \to \mathcal{V}^\lambda|_{U_\lambda \cap U_\mu})_{\lambda, \mu \in \Lambda}$ satisfying the following condition: 
\begin{align*}
(0)\quad \vartheta_{\lambda \lambda}=\mathrm{id},\quad \text{and}\quad (\vartheta_{\lambda \mu}|_{U_\lambda\cap U_\mu\cap U_\nu})\circ(\vartheta_{\mu \nu}|_{U_\mu\cap U_\nu\cap U_\lambda})=(\vartheta_{\lambda \nu}|_{U_\nu\cap U_\lambda\cap U_\mu}), \quad\\ \text{for all}\quad \lambda, \mu, \nu \in \Lambda.\quad\quad\quad\quad
\end{align*}
We will often omit the subscripts such as $|_{U_\lambda\cap U_\mu\cap U_\nu}$ in the sequel.

In addition to the condition $(0)$,  we assume the following conditions: 
\begin{enumerate}[$(1)$]
     \setlength{\topsep}{1pt}
     \setlength{\partopsep}{0pt}
     \setlength{\itemsep}{1pt}
     \setlength{\parsep}{0pt}
     \setlength{\leftmargin}{35pt}
     \setlength{\rightmargin}{0pt}
     \setlength{\listparindent}{0pt}
     \setlength{\labelsep}{3pt}
     \setlength{\labelwidth}{15pt}
     \setlength{\itemindent}{0pt}
\item For any $\alpha, \alpha' \in \mathrm{Ob}(A)$, we have $\mathrm{Hom}_A(\alpha, \alpha')\neq\emptyset$ or $\mathrm{Hom}_A(\alpha', \alpha)\neq\emptyset$,
\item There exist a $\alpha_0\in \mathrm{Ob}(A)$ and sheaf morphisms $\underline{\mathbf{1}}^{\lambda, \alpha_0}: \mathbb{K}_X\to \mathcal{F}^\lambda_{\alpha_0}$ with $\lambda\in\Lambda$ such that
$$
\underline{\mathbf{1}}^\lambda=[\underline{\mathbf{1}}^{\lambda, \alpha_0}],
$$
for all $\lambda\in\Lambda$. (Note that the element $\alpha_0$ can be taken independently of $\lambda\in\Lambda$.)
\item There exist a map $j_T: \mathrm{Ob}(A)\to \mathrm{Ob}(A)$ and sheaf morphisms $\underline{T}^{\lambda, j_T(\alpha)}_\alpha: \mathcal{F}^\lambda_\alpha \to \mathcal{F}^\lambda_{j_T(\alpha)}$ with $\alpha\in \mathrm{Ob}(A)$ and $\lambda \in \Lambda$ such that 
$$
\underline{T}^\lambda=\bigl([\underline{T}^{\lambda, j_T(\alpha)}_\alpha]\bigr)_{\alpha\in \mathrm{Ob}(A)},
$$
for all $\lambda \in \Lambda$. (Note that the map $j_T$ can be taken independently of $\lambda\in \Lambda$.)
\item For each $n\in \mathbb{Z}$, there exist a map $j_{(n)}: \mathrm{Ob}(A)\times\mathrm{Ob}(A)\to\mathrm{Ob}(A)$ and bilinear sheaf morphisms $\underline{(n)}_{(\alpha, \alpha')}^{\lambda,  j_{(n)}(\alpha, \alpha')}: \mathcal{F}^\lambda_\alpha\times \mathcal{F}^\lambda_{\alpha'}\to \mathcal{F}^\lambda_{j_{(n)}(\alpha, \alpha')}$ with $(\alpha, \alpha')\in \mathrm{Ob}(A)\times \mathrm{Ob}(A)$ and $\lambda\in\Lambda$ such that 
$$
\underline{(n)}^\lambda=\bigl([\underline{(n)}_{(\alpha, \alpha')}^{\lambda,  j_{(n)}(\alpha, \alpha')}]\bigr)_{(\alpha, \alpha')\in \mathrm{Ob}(A)\times\mathrm{Ob}(A)},
$$
for all $\lambda\in\Lambda$. (Note that the map $j_{(n)}$ can be taken independently of $\lambda \in \Lambda$.)
\item The degree and weight-grading on each  $\mathcal{F}_\alpha^\lambda$ are bounded from the above and the below  uniformly with respect to $\lambda$. Moreover the weight-grading on $\mathcal{F}_\alpha^\lambda$ is bounded from the below uniformly with respect to $\alpha$ as well as $\lambda$. In other words, there exist an integer $N$ and natural numbers $N_\alpha, L_\alpha$ with $\alpha\in\mathrm{Ob}(A)$ such that $\mathcal{F}_\alpha^\lambda=\bigoplus_{N\le n \le N_\alpha}\bigoplus_{|l|\le L_\alpha}(\mathcal{F}_\alpha^\lambda)^l[n]$, where $(\mathcal{F}_\alpha^\lambda)^l[n]$ is the subsheaf of degree $l$ and weight $n$. 
\end{enumerate}

The uniqueness in the following proposition means that if $\bigl(\mathcal{V}, (\Phi^\lambda: \mathcal{V}|_{U_\lambda}\to\mathcal{V}^\lambda)_{\lambda\in\Lambda}\bigr)$ and  $\bigl(\mathcal{V}', (\Phi'^\lambda: \mathcal{V}'|_{U_\lambda}\to\mathcal{V}^\lambda)_{\lambda\in\Lambda}\bigr)$ are the pairs as in the proposition below then there exists a strict isomorphism of degree-weight-graded VSA-inductive sheaves $F: \mathcal{V}\to \mathcal{V}'$ such that $\Phi'^\lambda\circ F|_{U_\lambda}=\Phi^\lambda$ for all $\lambda\in \Lambda$. 

%GLUING VsaIndShs
\begin{proposition}\label{prop: GLUING VsaIndShs}
Under the above assumptions, 
there exists an object $\mathcal{V}$ of $\textit{DegWt-}\mathit{VSA_{\mathbb{K}}}\textit{-IndSh}_X^A$ and strict isomorphisms $(\Phi^\lambda: \mathcal{V}|_{U_\lambda}\to \mathcal{V}^\lambda)_{\lambda\in\Lambda}$ of degree-weight-graded VSA-inductive sheaves such that $(\Phi^\lambda|_{U_\lambda\cap U_\mu}) \circ (\Phi^\mu|_{U_\mu\cap U_\lambda})^{-1}=\vartheta_{\lambda \mu}$ for all $\lambda, \mu \in \Lambda$. Moreover such a pair is unique up to strict isomorphisms. 
\end{proposition}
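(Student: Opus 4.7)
The plan is to build $\mathcal{V}$ one level at a time: first glue the individual sheaves $\mathcal{F}_\alpha$ for each $\alpha\in\mathrm{Ob}(A)$, next assemble them into an $A$-indexed inductive system, and finally transport the vacuum, translation, and $n$-th product morphisms using the uniformity conditions (2)--(4). Throughout, the main technical device is the injectivity of all transition morphisms $\mathcal{F}(f)$ built into the definition of a VSA-inductive sheaf: this lets us systematically upgrade identities that only hold up to the equivalence in $\varinjlim_\beta\mathrm{Hom}(-,-)$ into identities of actual sheaf morphisms.

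First, fix $\alpha\in\mathrm{Ob}(A)$. Since each $\vartheta_{\lambda\mu}$ is strict, choose representatives $\vartheta_{\lambda\mu,\alpha}^\alpha:\mathcal{F}^\mu_\alpha|_{U_\lambda\cap U_\mu}\to\mathcal{F}^\lambda_\alpha|_{U_\lambda\cap U_\mu}$. The cocycle relation $(0)$ holds a priori only as an equality of equivalence classes in $\varinjlim_\beta\mathrm{Hom}(\mathcal{F}^\nu_\alpha,\mathcal{F}^\lambda_\beta)$, so initially one only obtains $\mathcal{F}^\lambda(f)\circ\vartheta_{\lambda\mu,\alpha}^\alpha\circ\vartheta_{\mu\nu,\alpha}^\alpha=\mathcal{F}^\lambda(f)\circ\vartheta_{\lambda\nu,\alpha}^\alpha$ for some morphism $f:\alpha\to\beta$ of $A$; cancelling the injective map $\mathcal{F}^\lambda(f)$ yields the strict cocycle identity $\vartheta_{\lambda\mu,\alpha}^\alpha\circ\vartheta_{\mu\nu,\alpha}^\alpha=\vartheta_{\lambda\nu,\alpha}^\alpha$ on $U_\lambda\cap U_\mu\cap U_\nu$. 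Standard gluing of sheaves then produces a sheaf $\mathcal{F}_\alpha$ on $X$ together with isomorphisms $\phi^\lambda_\alpha:\mathcal{F}_\alpha|_{U_\lambda}\xrightarrow{\sim}\mathcal{F}^\lambda_\alpha$ satisfying $\phi^\lambda_\alpha\circ(\phi^\mu_\alpha)^{-1}=\vartheta_{\lambda\mu,\alpha}^\alpha$. For functoriality in $\alpha$, a morphism $f:\alpha\to\alpha'$ of $A$ gives maps $\mathcal{F}^\lambda(f)$ which intertwine $\vartheta_{\lambda\mu,\alpha}^\alpha$ and $\vartheta_{\lambda\mu,\alpha'}^{\alpha'}$ (again strictly, by the same injectivity argument applied to the strictness of $\vartheta_{\lambda\mu}$ as a morphism of ind-objects). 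Hence they glue to $\mathcal{F}(f):\mathcal{F}_\alpha\to\mathcal{F}_{\alpha'}$, producing a functor $\mathcal{F}:A\to\mathit{Sh}_X(\mathit{Vec}^{\mathrm{super}}_\mathbb{K})$, and we set $\mathcal{F}:=``\varinjlim_{\alpha\in A}"\mathcal{F}_\alpha$.

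Next, I would install the algebraic structure using conditions (2)--(4). Condition (2) provides local vacuum maps $\underline{\mathbf{1}}^{\lambda,\alpha_0}$; their compatibility with $\vartheta_{\lambda\mu}$ becomes the strict identity $\vartheta_{\lambda\mu,\alpha_0}^{\alpha_0}\circ\underline{\mathbf{1}}^{\mu,\alpha_0}=\underline{\mathbf{1}}^{\lambda,\alpha_0}$ on overlaps (by the same strictification argument), so they glue to $\underline{\mathbf{1}}^{\alpha_0}:\mathbb{K}_X\to\mathcal{F}_{\alpha_0}$ and we put $\underline{\mathbf{1}}:=[\underline{\mathbf{1}}^{\alpha_0}]$. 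The same scheme works for $\underline{T}$ using the map $j_T$ of condition (3), and for each $\underline{(n)}$ using $j_{(n)}$ of condition (4). Condition (1) on $A$ is what permits the single target $\beta$ above to be chosen while comparing representatives with different sources. For each open $U\subset X$, the axioms making $\bigl(\varinjlim_\alpha\mathcal{F}_\alpha(U),\mathbf{1},T,(n)\bigr)$ a vertex superalgebra are equations that can be tested on an open cover of $U$ by sets of the form $U\cap U_\lambda$; on each such piece the isomorphisms $\phi^\lambda_\alpha$ identify the constructed operations with those of $\mathcal{V}^\lambda$, where the axioms are assumed to hold. Injectivity plus Remark~\ref{rem: LOCAL CALCULATION OF THE PRESHEAF associated with A Vsa IndSh} ensures no information is lost when passing from local to global sections.

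Finally, the Hamiltonian $\underline{H}$ and degree-grading operator $\underline{J}$ glue termwise along the same scheme because each $\vartheta_{\lambda\mu}$ preserves them; condition (5) guarantees that the eigenspace decomposition on $\mathcal{F}_\alpha$ inherited from the $(\mathcal{F}^\lambda_\alpha)$ is uniform in $\lambda$ and matches the definition of a degree-weight-graded VSA-inductive sheaf. Uniqueness is formal: given another pair $\bigl(\mathcal{V}',(\Phi'^\lambda)\bigr)$, the ind-object isomorphisms $\Phi'^\lambda\circ(\Phi^\lambda)^{-1}:\mathcal{V}|_{U_\lambda}\to\mathcal{V}'|_{U_\lambda}$ coincide on $U_\lambda\cap U_\mu$ because both factor as $\vartheta_{\lambda\mu}$ composed with its inverse, so they glue (via the same strictification procedure applied to their representatives) to a strict isomorphism $F:\mathcal{V}\to\mathcal{V}'$ satisfying $\Phi'^\lambda\circ F|_{U_\lambda}=\Phi^\lambda$. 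The principal obstacle is bookkeeping: every step requires translating equalities that hold only after a postcomposition with some $\mathcal{F}(f)$ into bona fide sheaf equalities, and this strictification must be carried out consistently for the gluing data, the structure morphisms, and their mutual compatibilities, drawing on conditions (1)--(5) at the appropriate junctures.
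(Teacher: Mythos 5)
Your proposal is correct and follows essentially the same route as the paper's proof: strictify the cocycle and intertwining relations using the injectivity of the transition morphisms, glue the sheaves $\mathcal{F}^\lambda_\alpha$ and the morphisms $\mathcal{F}^\lambda(f)$ levelwise, glue $\underline{\mathbf{1}}$, $\underline{T}$, $\underline{(n)}$, $\underline{H}$, $\underline{J}$ via the uniformity conditions $(2)$--$(4)$ with condition $(1)$ supplying the $\lambda$-independent comparison morphisms needed to show the glued families are genuine morphisms of ind-objects, and handle the gradings and uniqueness as you indicate. The one point worth making explicit is that the field (truncation) axiom for $\varinjlim_\alpha\mathcal{F}_\alpha(V)$ is a finiteness condition rather than an identity testable on an open cover, and it is precisely the uniform lower bound on weights from condition $(5)$ (uniform in $\lambda$ and $\alpha$), applied after embedding into $\prod_\lambda\varinjlim_\alpha\mathcal{F}^\lambda_\alpha(V\cap U_\lambda)$, that secures it --- exactly as in the paper.
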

\begin{proof}
First we see the existence part. Since $\vartheta_{\lambda \mu}$ is strict, we can write $\vartheta_{\lambda \mu}$ as
$
\vartheta_{\lambda \mu}=([\vartheta_{\lambda\mu, \alpha}^\alpha])_{\alpha\in\mathrm{Ob}(A)},
$
where $\vartheta_{\lambda\mu, \alpha}^\alpha$ is a sheaf morphism from $\mathcal{F}^\mu_\alpha|_{U_\mu\cap U_\lambda}$ to $\mathcal{F}^\lambda_\alpha|_{U_\lambda\cap U_\mu}$.
For each $\lambda\in \Lambda$ we have
$
\mathrm{id}=\vartheta_{\lambda\lambda}=([\vartheta_{\lambda\lambda, \alpha}^\alpha])_{\alpha\in\mathrm{Ob}(A)}
$
and therefore
$
\mathrm{id}\sim\vartheta_{\lambda\lambda, \alpha}^\alpha
$
for all $\alpha\in\mathrm{Ob}(A)$, where $\sim$ means that the two sheaf morphisms are equivalent in $\displaystyle\varinjlim_{\alpha'}\mathrm{Hom}(\mathcal{F}^\lambda_{\alpha}, \mathcal{F}^\lambda_{\alpha'})$. 
By the injectivity of the morphisms of the inductive system $\mathcal{F}^\lambda$, 
we have 
\begin{equation}\label{eq: id=theta_lambda-lambda}
\mathrm{id}=\vartheta_{\lambda\lambda, \alpha}^\alpha.
\end{equation}
Similarly we have 
$
\vartheta_{\lambda\mu, \alpha}^\alpha\circ\vartheta_{\mu\nu, \alpha}^\alpha=\vartheta_{\lambda\nu, \alpha}^\alpha
$
for all $\alpha\in\mathrm{Ob}(A)$ and $\lambda, \mu, \nu\in\Lambda$ by the assumption. 
Therefore for each $\alpha\in \mathrm{Ob}(A)$, we can glue the sheaves $(\mathcal{F}^\lambda_\alpha)_{\lambda\in\Lambda}$ with the  sheaf morphisms $(\vartheta_{\lambda\mu, \alpha}^\alpha)_{\lambda, \mu\in\Lambda}$. Denote the resulting sheaf on $X$ by $\mathcal{F}_\alpha$. 
For each $f_{\alpha\alpha'}: \alpha'\to \alpha$, we will glue the morphisms $(\mathcal{F}^\lambda(f_{\alpha \alpha'}))_{\lambda\in\Lambda}$ to obtain a sheaf morphism $\mathcal{F}_{\alpha'}\to\mathcal{F}_\alpha$. By the definition of morphisms of ind-objects, we have 
$$
\vartheta_{\lambda\mu, \alpha}^\alpha\circ\mathcal{F}^\mu(f_{\alpha \alpha'})\sim\vartheta_{\lambda\mu, \alpha'}^{\alpha'},
$$
where $\sim$ means that the two sheaf morphisms are equivalent in the inductive limit  $\displaystyle\varinjlim_{\alpha\in A}\mathrm{Hom}(\mathcal{F}^\mu_{\alpha'}|_{U_\mu\cap U_\lambda}, \mathcal{F}^\lambda_\alpha|_{U_\lambda\cap U_\mu})$. 
Therefore 
we have $\vartheta_{\lambda\mu, \alpha}^\alpha\circ\mathcal{F}^\mu(f_{\alpha \alpha'})=\mathcal{F}^\lambda(f_{\alpha\alpha'})\circ\vartheta_{\lambda\mu, \alpha'}^{\alpha'}$ by the injectivity of the morphisms in the inductive systems. 
Thus we can glue the morphisms $(\mathcal{F}^\lambda(f_{\alpha \alpha'}))_{\lambda\in\Lambda}$ to obtain the sheaf morphism $\mathcal{F}_{\alpha'}\to\mathcal{F}_\alpha$, which we denote by $\mathcal{F}(f_{\alpha \alpha'})$. Note that $\mathcal{F}(f_{\alpha \alpha'})$ is injective since $\mathcal{F}^\lambda(f_{\alpha \alpha'})$ are injective for all $\lambda\in\Lambda$. 
By the construction, the assignment 
\begin{align*}
\mathcal{F}&: A\to \mathit{Sh}_X(Vec_\mathbb{K}^\mathrm{super}), \\
\text{objects}&:  \alpha\mapsto \mathcal{F}_\alpha, \\
\text{morphisms}&:  f_{\alpha \alpha'}\mapsto \mathcal{F}(f_{\alpha \alpha'}),
\end{align*}
defines a functor. Thus we have an inductive system $\mathcal{F}$ in  $\mathit{Sh}_X(Vec_\mathbb{K}^{\mathrm{super}})$, therefore the corresponding ind-object $``\displaystyle\varinjlim_{\alpha\in A}"\mathcal{F}_\alpha$. 
By the assumption, for each $n\in \mathbb{Z}$, we have a map $j_{(n)}: \mathrm{Ob}(A)\times\mathrm{Ob}(A)\to\mathrm{Ob}(A)$ and bilinear sheaf morphisms $\underline{(n)}_{(\alpha, \alpha')}^{\lambda,  j_{(n)}(\alpha, \alpha')}: \mathcal{F}^\lambda_\alpha\times \mathcal{F}^\lambda_{\alpha'}\to \mathcal{F}^\lambda_{j_{(n)}(\alpha, \alpha')}$ with $(\alpha, \alpha')\in \mathrm{Ob}(A)\times \mathrm{Ob}(A)$ and $\lambda\in\Lambda$ such that 
$$
\underline{(n)}^\lambda=\bigl([\underline{(n)}_{(\alpha, \alpha')}^{\lambda,  j_{(n)}(\alpha, \alpha')}]\bigr)_{(\alpha, \alpha')\in \mathrm{Ob}(A)\times\mathrm{Ob}(A)},
$$
for all $\lambda\in\Lambda$. 
Fix $n\in\mathbb{Z}$. For each $(\alpha, \alpha')\in\mathrm{Ob}(A)\times\mathrm{Ob}(A)$, we will glue morphisms $\bigl(\underline{(n)}_{(\alpha, \alpha')}^{\lambda,  j_{(n)}(\alpha, \alpha')}\bigr)_{\lambda\in\Lambda}$ to get a bilinear sheaf morphism $\mathcal{F}_\alpha\times\mathcal{F}_{\alpha'}\to\mathcal{F}_{j_{(n)}(\alpha, \alpha')}$. It suffices to check that the morphisms $\bigl(\underline{(n)}_{(\alpha, \alpha')}^{\lambda,  j_{(n)}(\alpha, \alpha')}\bigr)_{\lambda\in\Lambda}$ commute with the gluing maps. Since each $\vartheta_{\lambda\mu}$ is a morphism of  VSA-inductive sheaves, we have 
$$
\underline{(n)}^\lambda \circ (\vartheta_{\lambda\mu}\times\vartheta_{\lambda\mu})=\vartheta_{\lambda\mu}\circ\underline{(n)}^\mu,
$$
and therefore
$$
\underline{(n)}^{\lambda, j_{(n)}(\alpha, \alpha')}_{(\alpha, \alpha')}\circ(\vartheta_{\lambda\mu, \alpha}^\alpha\times \vartheta_{\lambda\mu, \alpha'}^{\alpha'})\sim \vartheta_{\lambda\mu, j_{(n)}(\alpha, \alpha')}^{j_{(n)}(\alpha, \alpha')}\circ\underline{(n)}^{\mu, j_{(n)}(\alpha, \alpha')}_{(\alpha, \alpha')},
$$
for all $(\alpha, \alpha')\in \mathrm{Ob}(A)\times\mathrm{Ob}(A)$. 
By the same argument for proving \eqref{eq: id=theta_lambda-lambda}, we have 
$$
\underline{(n)}^{\lambda, j_{(n)}(\alpha, \alpha')}_{(\alpha, \alpha')}\circ(\vartheta_{\lambda\mu, \alpha}^\alpha\times \vartheta_{\lambda\mu, \alpha'}^{\alpha'})= \vartheta_{\lambda\mu, j_{(n)}(\alpha, \alpha')}^{j_{(n)}(\alpha, \alpha')}\circ\underline{(n)}^{\mu, j_{(n)}(\alpha, \alpha')}_{(\alpha, \alpha')},
$$
for all $(\alpha, \alpha')\in \mathrm{Ob}(A)\times\mathrm{Ob}(A)$. 
Thus we can glue the morphisms $\underline{(n)}_{(\alpha, \alpha')}^{\lambda,  j_{(n)}(\alpha, \alpha')}$ with ${\lambda\in\Lambda}$. We denote by $\underline{(n)}_{(\alpha, \alpha')}^{j_{(n)}(\alpha, \alpha')}$ the resulting bilinear morphism of sheaves. 
We claim that  $\underline{(n)}:=\bigl([\underline{(n)}_{(\alpha, \alpha')}^{j_{(n)}(\alpha, \alpha')}]\bigr)_{(\alpha, \alpha')\in \mathrm{Ob}(A)\times\mathrm{Ob}(A)}$ is a bilinear morphism of ind-objects. We must check  
\begin{equation}\label{eq: (n) f*f sim (n)}
\underline{(n)}_{(\alpha, \alpha')}^{j_{(n)}(\alpha, \alpha')}\circ(\mathcal{F}(f_{\alpha \Tilde{\alpha}})\times \mathcal{F}(f_{\alpha' \Tilde{\alpha}'})) \sim \underline{(n)}_{(\Tilde{\alpha}, \Tilde{\alpha}')}^{j_{(n)}(\Tilde{\alpha}, \Tilde{\alpha}')},
\end{equation}
for any object $(\Tilde{\alpha}, \Tilde{\alpha}')\in \mathrm{Ob}(A)\times\mathrm{Ob}(A)$ and morphism $f_{\alpha \Tilde{\alpha}}\times f_{\alpha' \Tilde{\alpha}'}$. 
Let $(\Tilde{\alpha}, \Tilde{\alpha}')\in \mathrm{Ob}(A)\times\mathrm{Ob}(A)$ be an arbitrary object  and $f_{\alpha \Tilde{\alpha}}\times f_{\alpha' \Tilde{\alpha}'}$  an morphism. For each $\lambda\in\Lambda$, we have 
$$
\underline{(n)}_{(\alpha, \alpha')}^{\lambda, j_{(n)}(\alpha, \alpha')}\circ(\mathcal{F}^\lambda(f_{\alpha \Tilde{\alpha}})\times \mathcal{F}^\lambda(f_{\alpha' \Tilde{\alpha}'})) \sim \underline{(n)}_{(\Tilde{\alpha}, \Tilde{\alpha}')}^{\lambda, j_{(n)}(\Tilde{\alpha}, \Tilde{\alpha}')}.
$$
Therefore for each $\lambda\in\Lambda$, we have an object $\alpha''(\lambda)\in\mathrm{Ob}(A)$ and morphisms $f_{\alpha''(\lambda)\, j_{(n)}(\alpha, \alpha')}: j_{(n)}(\alpha, \alpha') \to \alpha''(\lambda)$, $f_{\alpha''(\lambda)\, j_{(n)}(\Tilde{\alpha}, \Tilde{\alpha}')}: j_{(n)}(\Tilde{\alpha}, \Tilde{\alpha}')  \to \alpha''(\lambda)$ in $A$ such that 
\begin{multline}\label{eq: f (n) f*f = f (n) in lambda}
\mathcal{F}^\lambda(f_{\alpha''(\lambda)\, j_{(n)}(\alpha, \alpha')})\circ\underline{(n)}_{(\alpha, \alpha')}^{\lambda, j_{(n)}(\alpha, \alpha')}\circ(\mathcal{F}^\lambda(f_{\alpha \Tilde{\alpha}})\times \mathcal{F}^\lambda(f_{\alpha' \Tilde{\alpha}'}))\\
= \mathcal{F}^\lambda(f_{\alpha''(\lambda)\, j_{(n)}(\Tilde{\alpha}, \Tilde{\alpha}')})\circ\underline{(n)}_{(\Tilde{\alpha}, \Tilde{\alpha}')}^{\lambda, j_{(n)}(\Tilde{\alpha}, \Tilde{\alpha}')}.
\end{multline}
By the assumption of this proposition, we have
\begin{equation*}
\mathrm{Hom}_A(j_{(n)}(\alpha, \alpha'), j_{(n)}(\Tilde{\alpha}, \Tilde{\alpha}'))\neq\emptyset\quad \text{or} \quad \mathrm{Hom}_A(j_{(n)}(\Tilde{\alpha}, \Tilde{\alpha}'), j_{(n)}(\alpha, \alpha'))\neq\emptyset.
\end{equation*}
When $\mathrm{Hom}_A(j_{(n)}(\alpha, \alpha'), j_{(n)}(\Tilde{\alpha}, \Tilde{\alpha}'))\neq\emptyset$, we have a morphism $f_{j_{(n)}(\alpha, \alpha')\, j_{(n)}(\Tilde{\alpha}, \Tilde{\alpha}')}$ in this set. The right-hand side of \eqref{eq: f (n) f*f = f (n) in lambda} is equivalent to  
$$
\mathcal{F}^\lambda(f_{\alpha''(\lambda)\, j_{(n)}(\alpha, \alpha')})\circ\mathcal{F}^\lambda(f_{j_{(n)}(\alpha, \alpha')\, j_{(n)}(\Tilde{\alpha}, \Tilde{\alpha}')})\circ\underline{(n)}_{(\Tilde{\alpha}, \Tilde{\alpha}')}^{\lambda, j_{(n)}(\Tilde{\alpha}, \Tilde{\alpha}')},
$$
in  $\varinjlim_{\beta\in A}\mathrm{Bilin}_{\mathit{Presh}_{U_\lambda}(\mathit{Vec}^{\mathrm{super}}_{\mathbb{K}})}(\mathcal{F}_{\Tilde{\alpha}}^\lambda, \mathcal{F}_{\Tilde{\alpha}'}^\lambda; \mathcal{F}_\beta^\lambda)$.  
By the injectivity of the morphism of the inductive system $\mathcal{F}^\lambda$, we have 
$$
\underline{(n)}_{(\alpha, \alpha')}^{\lambda, j_{(n)}(\alpha, \alpha')}\circ(\mathcal{F}^\lambda(f_{\alpha \Tilde{\alpha}})\times \mathcal{F}^\lambda(f_{\alpha' \Tilde{\alpha}'}))
=\mathcal{F}^\lambda(f_{j_{(n)}(\alpha, \alpha')\, j_{(n)}(\Tilde{\alpha}, \Tilde{\alpha}')})\circ\underline{(n)}_{(\Tilde{\alpha}, \Tilde{\alpha}')}^{\lambda, j_{(n)}(\Tilde{\alpha}, \Tilde{\alpha}')}.
$$
We have this equality for any $\lambda\in\Lambda$. Note that $f_{j_{(n)}(\alpha, \alpha')\, j_{(n)}(\Tilde{\alpha}, \Tilde{\alpha}')}$ does  not depend on $\lambda\in\Lambda$. Therefore we have the following relation for glued morphisms: 
$$
\underline{(n)}_{(\alpha, \alpha')}^{j_{(n)}(\alpha, \alpha')}\circ(\mathcal{F}(f_{\alpha \Tilde{\alpha}})\times \mathcal{F}(f_{\alpha' \Tilde{\alpha}'}))
=\mathcal{F}(f_{j_{(n)}(\alpha, \alpha')\, j_{(n)}(\Tilde{\alpha}, \Tilde{\alpha}')})\circ\underline{(n)}_{(\Tilde{\alpha}, \Tilde{\alpha}')}^{j_{(n)}(\Tilde{\alpha}, \Tilde{\alpha}')}.
$$
This means \eqref{eq: (n) f*f sim (n)}.
When $\mathrm{Hom}_A(j_{(n)}(\Tilde{\alpha}, \Tilde{\alpha}'), j_{(n)}(\alpha, \alpha'))\neq\emptyset$, we can also obtain \eqref{eq: (n) f*f sim (n)} in a similar way. Thus we have a bilinear morphism of ind-objects $\underline{(n)}=\bigl([\underline{(n)}_{(\alpha, \alpha')}^{j_{(n)}(\alpha, \alpha')}]\bigr)_{(\alpha, \alpha')\in \mathrm{Ob}(A)\times\mathrm{Ob}(A)}: \mathcal{F}\times\mathcal{F}\to\mathcal{F}$. 

In a similar way, we obtain morphisms of ind-objects 
$
\underline{T}=\bigl([\underline{T}_\alpha^{j_T(\alpha)}]\bigr)_{\alpha\in\mathrm{Ob}(A)}: \mathcal{F}\to\mathcal{F},
$ 
$\underline{\mathbf{1}}=[\underline{\mathbf{1}}^{\alpha_0}]: ``\displaystyle\varinjlim"\mathbb{K}_X\to\mathcal{F},
$
$\underline{H}=([H_\alpha^{\alpha}])_{\alpha\in\mathrm{Ob}(A)}: \mathcal{F}\to\mathcal{F}$ and $\underline{J}=([J_\alpha^{\alpha}])_{\alpha\in\mathrm{Ob}(A)}: \mathcal{F}\to\mathcal{F}$  
from the morphisms $\underline{T}^\lambda=\bigl([\underline{T}_\alpha^{\lambda, j_T(\alpha)}]\bigr)_{\alpha\in\mathrm{Ob}(A)}$,  $\underline{\mathbf{1}}^\lambda=[\underline{\mathbf{1}}^{\lambda, \alpha_0}]$, $\underline{H}^\lambda=([H_\alpha^{\lambda, \alpha}])_{\alpha\in\mathrm{Ob}(A)}$ and $\underline{J}^\lambda=([J_\alpha^{\lambda, \alpha}])_{\alpha\in\mathrm{Ob}(A)}$ with $\lambda\in\Lambda$, respectively. 
Since the gluing maps commutes with the Hamiltonians and the degree-grading operators, we can glue the sheaves $(\mathcal{F}_\alpha^\lambda)^l[n]$ with $\lambda\in\Lambda$. Denote the resulting sheaf on $X$ by $\mathcal{F}_\alpha^l[n]$. By the assumption (5), $(\mathcal{F}_\alpha)^l[n]=0$ for all but finitely many $l$ and $n$. Therefore the presheaf $\bigoplus_{n, l\in\mathbb{Z}}(\mathcal{F}_\alpha)^l[n]$ is a sheaf. Thus the sheaf $\mathcal{F}_\alpha$ is canonically isomorphic to the sheaf $\bigoplus_{n, l\in\mathbb{Z}}(\mathcal{F}_\alpha)^l[n]$ since they are locally isomorphic.  This grading comes from the operators $H_\alpha^\alpha$ and $J_\alpha^\alpha$. In other words, the operators $H_\alpha^\alpha$ and $J_\alpha^\alpha$ are diagonalizable. The relation \eqref{eq: Hamiltonian} for $\underline{H}$ and the relation \eqref{eq: degree-grading operator} for $\underline{J}$ follow from the fact that the corresponding relations hold locally.  
 
We must check the quadruple $\mathcal{V}:=(\mathcal{F}, \underline{\mathbf{1}}, \underline{T}, \underline{(n)}; n\in\mathbb{Z})$ is a VSA-inductive sheaf on $X$. 
Let $V$ be an arbitrary open subset of $X$. 
It suffices to show that the quadruple $(\varinjlim_{\alpha\in A}\mathcal{F}_\alpha(V), \mathbf{1}, T, (n); n\in\mathbb{Z})$ is a vertex  superalgebra, where $\mathbf{1}:=\underrightarrow{\mathrm{Lim}}\, \underline{\mathbf{1}}(V)(1)$, $T:=\underrightarrow{\mathrm{Lim}}\, \underline{T}(V)$ and $(n):=\underrightarrow{\mathrm{Lim}}\, \underline{(n)}(V)$. 
The map induced by the restriction maps and the isomorphisms $\mathcal{F}_\alpha|_{U_\lambda}\cong\mathcal{F}^\lambda_\alpha$,
\begin{equation}\label{eq: inclusion into the product VSA}
\varinjlim_{\alpha\in A}\mathcal{F}_\alpha(V)\to\prod_{\lambda\in\Lambda}\varinjlim_{\alpha\in A}\mathcal{F}_\alpha(V\cap U_\lambda)\cong\prod_{\lambda\in\Lambda}\varinjlim_{\alpha\in A}\mathcal{F}_\alpha^\lambda(V\cap U_\lambda),
\end{equation}
is injective since the  morphisms $\mathcal{F}^\lambda(f_{\alpha\alpha'})$ are all injective. Via this map, 
we regard $\varinjlim_{\alpha\in A}\mathcal{F}_\alpha(V)$ as a subspace of $\prod_{\lambda\in\Lambda}\varinjlim_{\alpha\in A}\mathcal{F}_\alpha^\lambda(V\cap U_\lambda)$. Then by the construction, $\varinjlim_{\alpha\in A}\mathcal{F}_\alpha(V)$ is preserved by $\prod_{\lambda\in\Lambda}T^\lambda$ and  $\prod_{\lambda\in\Lambda}(n)^\lambda$ with $n\in\mathbb{Z}$, where $T^\lambda$ and $(n)^\lambda$ are the translation operator and the $n$-th product of $\varinjlim_{\alpha\in A}\mathcal{F}_\alpha^\lambda(V\cap U_\lambda)$, respectively. Moreover $(\mathbf{1}^\lambda)_{\lambda\in\Lambda}\in \varinjlim_{\alpha\in A}\mathcal{F}_\alpha(V)$, where $\mathbf{1}^\lambda$ is the vacuum vector of $\varinjlim_{\alpha\in A}\mathcal{F}_\alpha^\lambda(V\cap U_\lambda)$. Note that  $T=(\prod_{\lambda\in\Lambda}T^\lambda)|_{\varinjlim_{\alpha\in A}\mathcal{F}_\alpha(V)}$, $(n)=(\prod_{\lambda\in\Lambda}(n)^\lambda)|_{\varinjlim_{\alpha\in A}\mathcal{F}_\alpha(V)}$ and $\mathbf{1}=(\mathbf{1}^\lambda)_{\lambda\in\Lambda}$. Moreover by the assumption (5),  the weight-grading on $\varinjlim_{\alpha\in A}\mathcal{F}_\alpha(V)$ is bounded from the below. Therefore the formal distribution $\sum_{n\in\mathbb{Z}}A_{(n)}z^{-n-1}$ is a field for any $A\in\varinjlim_{\alpha\in A}\mathcal{F}_\alpha(V)$. Thus $(\varinjlim_{\alpha\in A}\mathcal{F}_\alpha(V), \mathbf{1}, T, (n); n\in \mathbb{Z})$ is a vertex superalgebra. 
Therefore the  quadruple $\mathcal{V}=(\mathcal{F}, \underline{\mathbf{1}}, \underline{T}, \underline{(n)}; n\in\mathbb{Z})$ with $\underline{H}$ and $\underline{J}$ is an object of the category  $\textit{DegWt-}\mathit{VSA_{\mathbb{K}}}\textit{-IndSh}_X^A$. It remains to construct a strict isomorphism $\mathcal{V}|_{U_\lambda}\cong\mathcal{V}^\lambda$ for each $\lambda\in \Lambda$.
Let $\lambda\in\Lambda$. We set  
$$
\Phi^\lambda:=\bigl([\Phi^{\lambda, \alpha}_\alpha]\bigr)_{\alpha\in \mathrm{Ob}(A)},
$$
where $\Phi^{\lambda, \alpha}_\alpha$ is the usual sheaf isomorphism from $\mathcal{F}_\alpha|_{U_\lambda}$ to $\mathcal{F}_\alpha^\lambda$, which preserves the degree-weight-grading.  
This defines a morphism of ind-objects, or equivalently, 
$$
\Phi_\alpha^{\lambda, \alpha}\circ\mathcal{F}(f_{\alpha \alpha'})\sim\Phi_{\alpha'}^{\lambda, \alpha'},
$$
for any object $\alpha, \alpha'$ of $A$ and morphism $f_{\alpha \alpha'}: \alpha'\to \alpha$ in $A$. Indeed by the construction of $\mathcal{F}(f_{\alpha \alpha'})$, we have 
$$
\Phi_\alpha^{\lambda, \alpha}\circ (\mathcal{F}(f_{\alpha \alpha'})|_{U_\lambda})=\mathcal{F}^\lambda(f_{\alpha \alpha'})\circ\Phi_{\alpha'}^{\lambda, \alpha'},
$$
for each object $\alpha, \alpha'$ of $A$ and morphism $f_{\alpha \alpha'}: \alpha'\to \alpha$ in $A$. 
The relation $(\Phi^\lambda|_{U_\lambda\cap U_\mu}) \circ (\Phi^\mu|_{U_\mu\cap U_\lambda})^{-1}=\vartheta_{\lambda \mu}$ holds since $(\Phi^{\lambda, \alpha}_\alpha|_{U_\lambda\cap U_\mu}) \circ (\Phi^{\mu, \alpha}_\alpha|_{U_\mu\cap U_\lambda})^{-1}=\vartheta_{\lambda \mu, \alpha}^\alpha$ for all $\alpha\in \mathrm{Ob}(A)$. Moreover $\Phi^\lambda$ is a strict isomorphism of VSA-inductive sheaves.  In other words, $\Phi^\lambda$ commutes with $\underline{(n)}$, $\underline{T}$ and $\underline{\mathbf{1}}$ and in addition the strict inverse morphism $(\Phi^\lambda)^{-1}$ exists. This follows from the construction of the operators $\underline{(n)}$, $\underline{T}$, $\underline{\mathbf{1}}$ and from the definition of  $\Phi^\lambda$. 
Moreover $\Phi^\lambda$ commutes with the Hamiltonians and the degree-grading operators since each $\Phi_\alpha^{\lambda, \alpha}$ does. 
Thus the existence part is proved. The uniqueness part is proved by the argument as in usual sheaf cases as well as the arguments used above with the fact that the morphisms $\mathcal{F}^\lambda(f_{\alpha'\alpha})$ are injective for all $f_{\alpha'\alpha}\in\mathrm{Hom}_A(\alpha, \alpha')$.
\end{proof}

We can also glue morphisms.
Let $X=\bigcup_{\lambda \in \Lambda}U_\lambda$ 
be as before.  
Let $\bigl((\mathcal{V}^\lambda)_{\lambda\in\Lambda},$ $ (\vartheta_{\lambda \mu})_{\lambda, \mu \in \Lambda}\bigr)$, $ \bigl((\mathcal{V}'^\lambda)_{\lambda\in\Lambda},$  $(\vartheta'_{\lambda \mu})_{\lambda, \mu \in \Lambda}\bigr)$ be families of degree-weight-graded VSA-inductive sheaves with strict isomorphisms as in Proposition \ref{prop: GLUING VsaIndShs}. In other words, 
$\mathcal{V}^\lambda=\bigl(\mathcal{F}^\lambda, \underline{\mathbf{1}}^\lambda, \underline{T}^\lambda,$ $ \underline{(n)}^\lambda; n\in\mathbb{Z} \bigr)$ and  $\mathcal{V}'^\lambda=\bigl(\mathcal{F}'^\lambda, \underline{\mathbf{1}'}^\lambda, \underline{T}'^\lambda, \underline{(n)}'^\lambda; n\in\mathbb{Z} \bigr)$ are objects of $\textit{DegWt-}\mathit{VSA_{\mathbb{K}}}\textit{-IndSh}_{U_\lambda}^A$, and  $\vartheta_{\lambda \mu}: \mathcal{F}^\mu|_{U_\mu \cap U_\lambda} \to \mathcal{F}^\lambda|_{U_\lambda \cap U_\mu}$ and  $\vartheta'_{\lambda \mu}: \mathcal{F}'^\mu|_{U_\mu \cap U_\lambda} \to \mathcal{F}'^\lambda|_{U_\lambda \cap U_\mu}$ are strict isomorphisms of degree-weight-graded VSA-inductive sheaves on $U_\lambda \cap U_\mu$ such that the conditions 
$(0)$-$(5)$ hold.
Let $\mathcal{V}$ with $(\Phi^\lambda)_{\lambda \in \Lambda}$ and $\mathcal{V}'$ with  $(\Phi'^\lambda)_{\lambda\in\Lambda}$ be objects of $\textit{DegWt-}\mathit{VSA_{\mathbb{K}}}\textit{-IndSh}_X^A$ with strict isomorphisms obtained by gluing $\bigl((\mathcal{V}^\lambda)_{\lambda\in\Lambda}, (\vartheta_{\lambda \mu})_{\lambda, \mu \in \Lambda}\bigr)$ and $\bigl((\mathcal{V}'^\lambda)_{\lambda\in\Lambda},$ $ (\vartheta'_{\lambda \mu})_{\lambda, \mu \in \Lambda}\bigr)$, respectively. 
Suppose given a family of morphisms of ind-objects of sheaves $(F^\lambda: \mathcal{V}^\lambda \to \mathcal{V}'^\lambda)_{\lambda\in\Lambda}$ such that $\vartheta'_{\lambda \mu}\circ (F^\mu|_{U_\mu\cap U_\lambda}) = (F^\lambda|_{U_\lambda\cap U_\mu})\circ \vartheta_{\lambda \mu}$
for all $\lambda, \mu \in \Lambda$. 
We assume that there exist a map $j_F: \mathrm{Ob}(A)\to\mathrm{Ob}(A)$ and sheaf morphisms $F^{\lambda, j_F(\alpha)}_\alpha$ with $\alpha\in \mathrm{Ob}(A)$ and $\lambda\in \Lambda$ such that 
$
F^\lambda=\bigl( [F^{\lambda, j_F(\alpha)}_\alpha] \bigr)_{\alpha\in \mathrm{Ob}(A)}
$
for all $\lambda \in \Lambda$.

%GLUING MORPHISMS OF VsaIndSh
\begin{proposition}\label{prop: GLUING MORPHISMS OF VsaIndSh}
In the above situation, there exists a unique strict morphism $F: \mathcal{V}\to \mathcal{V}'$ of ind-objects of sheaves on $X$
such that 
$\Phi'^\lambda\circ F|_{U_\lambda}=F^\lambda\circ \Phi^\lambda$ 
for any $\lambda\in\Lambda$. 
Moreover if the morphisms $F^\lambda$ are all morphisms of VSA-inductive sheaves, the resulting morphism of ind-objects $F$ is also a morphism of VSA-inductive sheaves.  
\end{proposition}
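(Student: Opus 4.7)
The plan is to mimic the gluing construction of the preceding proposition, now at the level of morphisms rather than objects. Write $F^\lambda = ([F_\alpha^{\lambda,j_F(\alpha)}])_{\alpha\in\mathrm{Ob}(A)}$ and, in the underlying ind-systems, write each gluing isomorphism strictly as $\vartheta_{\lambda\mu} = ([\vartheta_{\lambda\mu,\alpha}^\alpha])_{\alpha}$ and $\vartheta'_{\lambda\mu} = ([\vartheta_{\lambda\mu,\alpha}'^\alpha])_{\alpha}$. First, I would fix $\alpha\in\mathrm{Ob}(A)$ and produce a global sheaf morphism $F_\alpha^{j_F(\alpha)}:\mathcal{F}_\alpha\to\mathcal{F}'_{j_F(\alpha)}$ by gluing the local $F_\alpha^{\lambda,j_F(\alpha)}$'s. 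From the compatibility hypothesis $\vartheta'_{\lambda\mu}\circ (F^\mu|_{U_\mu\cap U_\lambda}) = (F^\lambda|_{U_\lambda\cap U_\mu})\circ\vartheta_{\lambda\mu}$ one gets, at the level of representatives, the relation
\[
\vartheta_{\lambda\mu,j_F(\alpha)}'^{j_F(\alpha)}\circ F_\alpha^{\mu,j_F(\alpha)} \;\sim\; F_\alpha^{\lambda,j_F(\alpha)}\circ\vartheta_{\lambda\mu,\alpha}^\alpha
\]
in the inductive limit of $\mathrm{Hom}$-sets, and the injectivity of the transition morphisms of $\mathcal{F}'^\lambda$ (combined with the same reasoning used to obtain \eqref{eq: id=theta_lambda-lambda} in the preceding proof) upgrades this to an honest equality. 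This is exactly the compatibility required to glue into the desired sheaf morphism $F_\alpha^{j_F(\alpha)}$.

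Second, I would check that $F:=\bigl([F_\alpha^{j_F(\alpha)}]\bigr)_{\alpha\in\mathrm{Ob}(A)}$ genuinely defines a morphism of ind-objects $\mathcal{F}\to\mathcal{F}'$, i.e.\ that for any $f_{\alpha\alpha'}:\alpha'\to\alpha$ in $A$ one has $F_\alpha^{j_F(\alpha)}\circ\mathcal{F}(f_{\alpha\alpha'})\sim F_{\alpha'}^{j_F(\alpha')}$. For each $\lambda\in\Lambda$ this is true because $F^\lambda$ is a morphism of ind-objects on $U_\lambda$, so there exist $\beta(\lambda)\in\mathrm{Ob}(A)$ and morphisms $g_\lambda:j_F(\alpha)\to\beta(\lambda)$, $g'_\lambda:j_F(\alpha')\to\beta(\lambda)$ with
\[
\mathcal{F}'^\lambda(g_\lambda)\circ F_\alpha^{\lambda,j_F(\alpha)}\circ\mathcal{F}^\lambda(f_{\alpha\alpha'})
= \mathcal{F}'^\lambda(g'_\lambda)\circ F_{\alpha'}^{\lambda,j_F(\alpha')}.
\]
Here I would invoke assumption $(1)$ of Proposition \ref{prop: GLUING VsaIndShs} on $A$ to choose $g,g'$ independent of $\lambda$ (comparing $j_F(\alpha)$ and $j_F(\alpha')$ directly yields a morphism between them in one direction, exactly as in the previous proof), and again use injectivity of the inductive-system transition morphisms of $\mathcal{F}'^\lambda$ to remove $\mathcal{F}'^\lambda(g_\lambda)$ and reduce to an equality that glues over $\lambda$.

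Third, the compatibility $\Phi'^\lambda\circ F|_{U_\lambda} = F^\lambda\circ\Phi^\lambda$ is immediate from the construction: at each level $\alpha$ the sheaf morphism $F_\alpha^{j_F(\alpha)}$ restricts on $U_\lambda$ to $F_\alpha^{\lambda,j_F(\alpha)}$ through the canonical identifications $\Phi_\alpha^{\lambda,\alpha}$ and $\Phi_\alpha'^{\lambda,\alpha}$. Uniqueness of $F$ follows because any other strict $\Tilde{F}$ satisfying the relation would agree with $F$ locally at each level $\alpha$, and the gluing of sheaf morphisms at level $\alpha$ is unique; combined with the injectivity of the morphisms $\mathcal{F}'(f_{\alpha'\alpha})$, this forces $\Tilde{F}=F$ as morphisms of ind-objects.

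Finally, if each $F^\lambda$ commutes with $\underline{\mathbf{1}}^\lambda,\underline{T}^\lambda,\underline{(n)}^\lambda$ (resp.\ with the Hamiltonian and degree-grading operators), the corresponding identities for $F$ hold after restriction to every $U_\lambda$; since the relevant morphisms of ind-objects on $X$ are themselves obtained by gluing at each level $\alpha$, and the sheaf-morphism identities can be checked locally on $X$, they hold globally. The only real obstacle is step two: producing the common target $\beta$ and killing the auxiliary transition morphisms uniformly in $\lambda$; this is where both the filteredness condition $(1)$ on $A$ and the injectivity hypothesis on the inductive systems are essential, exactly as in the gluing of VSA-inductive sheaves themselves.
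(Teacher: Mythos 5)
Your proposal is correct and follows essentially the same route as the paper: the paper's own proof simply says to repeat the argument used for constructing $\underline{(n)}$ in Proposition \ref{prop: GLUING VsaIndShs} (gluing the level-$\alpha$ representatives via the strictness and injectivity of the inductive systems, using condition $(1)$ on $A$ to make the comparison morphisms independent of $\lambda$), and your write-up is a faithful expansion of exactly that argument, including the local-to-global checks for uniqueness and for the VSA-compatibility in the last part.
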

\begin{proof}
We can construct $F: \mathcal{V}\to \mathcal{V}'$, following the same argument as for the construction of $\underline{(n)}$ in Proposition \ref{prop: GLUING VsaIndShs}. The uniqueness part is proved by the same argument as in usual sheaf cases as well as the same arguments as in the proof of Proposition \ref{prop: GLUING VsaIndShs}. 
The latter half of this proposition  is also checked by  arguments similar to those above.
\end{proof}

Consider three families of degree-weight-graded VSA-inductive sheaves with strict isomorphisms as in Proposition \ref{prop: GLUING VsaIndShs},  $\bigl((\mathcal{V}^\lambda)_{\lambda\in\Lambda}, (\vartheta_{\lambda \mu})_{\lambda, \mu \in \Lambda}\bigr)$, $\bigl((\mathcal{V}'^\lambda)_{\lambda\in\Lambda},$ $ (\vartheta'_{\lambda \mu})_{\lambda, \mu \in \Lambda}\bigr)$ and $\bigl((\mathcal{V}''^\lambda)_{\lambda\in\Lambda},$ $ (\vartheta''_{\lambda \mu})_{\lambda, \mu \in \Lambda}\bigr)$.  
Suppose given families of morphisms of ind-objects of  sheaves $(F^\lambda: \mathcal{V}^\lambda \to \mathcal{V}'^\lambda)_{\lambda\in\Lambda}$ and $(F'^\lambda: \mathcal{V}'^\lambda \to \mathcal{V}''^\lambda)_{\lambda\in\Lambda}$ as in Proposition  \ref{prop: GLUING MORPHISMS OF VsaIndSh}. In other words, $\vartheta'_{\lambda \mu}\circ (F^\mu|_{U_\mu\cap U_\lambda}) = (F^\lambda|_{U_\lambda\cap U_\mu})\circ \vartheta_{\lambda \mu}$ and $\vartheta''_{\lambda \mu}\circ (F'^\mu|_{U_\mu\cap U_\lambda}) = (F'^\lambda|_{U_\lambda\cap U_\mu})\circ \vartheta'_{\lambda \mu}$ hold for all $\lambda, \mu \in \Lambda$, and moreover, there exist maps $j_F: \mathrm{Ob}(A)\to\mathrm{Ob}(A)$, $j_{F'}: \mathrm{Ob}(A)\to\mathrm{Ob}(A)$ and sheaf morphisms $F^{\lambda, j_F(\alpha)}_\alpha$, $F'^{\lambda, j_{F'}(\alpha)}_\alpha$ with $\alpha\in \mathrm{Ob}(A)$ and $\lambda\in \Lambda$ such that 
$
F^\lambda=\bigl( [F^{\lambda, j_F(\alpha)}_\alpha] \bigr)_{\alpha\in \mathrm{Ob}(A)}
$
and
$
F'^\lambda=\bigl( [F'^{\lambda, j_{F'}(\alpha)}_\alpha] \bigr)_{\alpha\in \mathrm{Ob}(A)}
$
for all $\lambda \in \Lambda$. 
Let $F: \mathcal{V}\to \mathcal{V}'$, $F': \mathcal{V}'\to\mathcal{V}''$ and $G: \mathcal{V}\to\mathcal{V}''$ be the morphisms obtained by gluing $(F^\lambda)_{\lambda\in\Lambda}$, $(F'^\lambda)_{\lambda\in\Lambda}$ and $(F'^\lambda\circ F^\lambda)_{\lambda\in\Lambda}$, respectively. 

%FUNCTORIALITY OF GLUING 
\begin{proposition}\label{prop: FUNCTORIALITY OF GLUING}
In the above situation, the composite $F'\circ F$  agrees with the morphism $G$.
\end{proposition}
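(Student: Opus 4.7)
The plan is to exploit the uniqueness clause of Proposition \ref{prop: GLUING MORPHISMS OF VsaIndSh}, which characterises $G$ as the unique strict morphism $\mathcal{V}\to\mathcal{V}''$ of ind-objects of sheaves on $X$ satisfying
\begin{equation*}
\Phi''^\lambda \circ G|_{U_\lambda} = (F'^\lambda\circ F^\lambda)\circ \Phi^\lambda \quad \text{for every } \lambda\in\Lambda.
\end{equation*}
It therefore suffices to check that $F'\circ F$ fulfils the same two conditions: strictness and this compatibility identity.

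For the hypotheses under which Proposition \ref{prop: GLUING MORPHISMS OF VsaIndSh} is being applied to the family $(F'^\lambda\circ F^\lambda)_{\lambda\in\Lambda}$, I would note first that the required compatibility with the gluing isomorphisms,
\begin{equation*}
\vartheta''_{\lambda\mu}\circ (F'^\mu\circ F^\mu)|_{U_\mu\cap U_\lambda} = (F'^\lambda\circ F^\lambda)|_{U_\lambda\cap U_\mu}\circ \vartheta_{\lambda\mu},
\end{equation*}
follows by composing the analogous identities for $F^\lambda$ and $F'^\lambda$ through $\vartheta'_{\lambda\mu}$, and that one can take $j_{F'}\circ j_F$ as the uniform map $\mathrm{Ob}(A)\to\mathrm{Ob}(A)$, with representative sheaf morphisms $F'^{\lambda, j_{F'}(j_F(\alpha))}_{j_F(\alpha)}\circ F^{\lambda, j_F(\alpha)}_\alpha$. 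Hence $G$ truly exists and enjoys the stated uniqueness property.

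Strictness of $F'\circ F$ is immediate: since $F$ and $F'$ are themselves strict, by Proposition \ref{prop: GLUING MORPHISMS OF VsaIndSh} they admit representatives $(F_\alpha^\alpha : \mathcal{F}_\alpha\to\mathcal{F}'_\alpha)_{\alpha\in\mathrm{Ob}(A)}$ and $(F'^\alpha_\alpha : \mathcal{F}'_\alpha\to\mathcal{F}''_\alpha)_{\alpha\in\mathrm{Ob}(A)}$ at the sheaf level, so the componentwise composite $(F'^\alpha_\alpha\circ F_\alpha^\alpha)_{\alpha}$ is a strict sheaf-level representative of $F'\circ F$.

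Finally, for the compatibility identity I would simply compose the defining identities $\Phi'^\lambda\circ F|_{U_\lambda} = F^\lambda\circ \Phi^\lambda$ and $\Phi''^\lambda\circ F'|_{U_\lambda} = F'^\lambda\circ \Phi'^\lambda$ furnished by Proposition \ref{prop: GLUING MORPHISMS OF VsaIndSh}, obtaining
\begin{equation*}
\Phi''^\lambda\circ (F'\circ F)|_{U_\lambda}
= (\Phi''^\lambda\circ F'|_{U_\lambda})\circ F|_{U_\lambda}
= F'^\lambda\circ \Phi'^\lambda\circ F|_{U_\lambda}
= F'^\lambda\circ F^\lambda\circ \Phi^\lambda,
\end{equation*}
for every $\lambda\in\Lambda$. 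Invoking the uniqueness clause of Proposition \ref{prop: GLUING MORPHISMS OF VsaIndSh} then forces $F'\circ F = G$. I do not expect a genuine obstacle here; the argument is entirely formal, and the only point to be careful about is verifying that the hypotheses of the preceding proposition are satisfied by the composite family, as sketched above.
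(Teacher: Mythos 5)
Your proof is correct and follows essentially the same route as the paper, which treats the statement as a direct corollary of Proposition \ref{prop: GLUING MORPHISMS OF VsaIndSh}: you verify that $F'\circ F$ is strict and satisfies $\Phi''^\lambda\circ (F'\circ F)|_{U_\lambda}=(F'^\lambda\circ F^\lambda)\circ\Phi^\lambda$, and then invoke the uniqueness clause characterizing $G$. Your explicit check that the composite family $(F'^\lambda\circ F^\lambda)_{\lambda}$ meets the hypotheses of that proposition (via $j_{F'}\circ j_F$ and the composed representatives) is exactly what the paper leaves implicit.
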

\begin{proof}
This proposition is a direct corollary of Proposition \ref{prop: GLUING MORPHISMS OF VsaIndSh}.  
\end{proof}

%rem: Isomorphic as VSA-inductive sheaves if locally isomorphic
\begin{remark}\label{rem: Isomorphic as VSA-inductive sheaves if locally isomorphic}
Two VSA-inductive sheaves are strictly isomorphic if they are strictly isomorphic locally via strict isomorphisms which coincide on the overlaps of their domains.   
\end{remark}

\begin{remark}\label{rem: morphisms coincides if so locally}
Two morphisms of ind-objects between VSA-inductive sheaves  coincide with each other if they coincide locally.
\end{remark}

\subsection{From Presheaves to VSA-Inductive Sheaves}\label{subsection: From Presheaves to VSA-Inductive Sheaves}
We construct VSA-inductive sheaves from presheaves of vertex superalgebras with some properties. 

We denote by $\mathit{Presh}_X(\textit{DegWt-VSA}_\mathbb{K})_\mathrm{bdw, sh}$  the full subcategory of the category of presheaves  on $X$ of degree-weight-graded vertex superalgebras over $\mathbb{K}$ whose objects are presheaves $\Tilde{\mathcal{V}}$ of degree-weight-graded vertex superalgebras on $X$ such that the weight-grading on $\Tilde{\mathcal{V}}(U)$ is bounded from the below uniformly with respect to open subsets $U\subset X$ and the subpresheaf $\Tilde{\mathcal{V}}[n]$ defined by the assignment
$
U\mapsto \Tilde{\mathcal{V}}(U)[n]
$
is a sheaf of super vector spaces for any $n \in\mathbb{Z}$.

Let $\Tilde{\mathcal{V}}$ be an object of $\mathit{Presh}_X(\textit{DegWt-VSA}_\mathbb{K})_\mathrm{bdw, sh}$.
We set
$$
\Tilde{\mathcal{V}}[\le N]:=\bigoplus_{n\le N}\Tilde{\mathcal{V}}[n].
$$
for $N\in \mathbb{N}$. 
These are sheaves by the assumptions.  Consider the canonical inductive system of sheaves $(\Tilde{\mathcal{V}}[\le N])_{N\in \mathbb{N}}$. Then  the corresponding ind-object $``\displaystyle\varinjlim_{n\in \mathbb{N}}"\Tilde{\mathcal{V}}[\le N]$ has a canonical VSA-inductive sheaf structure induced by the  morphisms $\underline{\mathbf{1}}^0: \mathbb{K}_X\to \Tilde{\mathcal{V}}[\le 0]$ defined by $\mathbb{K}\to \Gamma(\Tilde{\mathcal{V}}[\le 0]), 1\mapsto \mathbf{1}$, $T_N: \Tilde{\mathcal{V}}[\le N]\to\Tilde{\mathcal{V}}[\le N+1]$, and $(n)_{N, M}: \Tilde{\mathcal{V}}[\le N]\times \Tilde{\mathcal{V}}[\le M]\to \Tilde{\mathcal{V}}[\le N+M-n-1]$, where $\mathbf{1}$, $T_N$, $(n)_{N, M}$ come from the vertex superalgebra structure on $\Tilde{\mathcal{V}}$. Note that the VSA-inductive sheaf  $``\displaystyle\varinjlim_{n\in \mathbb{N}}"\Tilde{\mathcal{V}}[\le N]$ has a canonical degree-weight-graded structure. We refer to this degree-weight-graded VSA-inductive sheaf as the degree-weight-graded VSA-inductive sheaf associated with $\Tilde{\mathcal{V}}$.

%lem: MAKE VsaIndSh from PRESHEAVES OF Z-GRADED Vsa
\begin{lemma}\label{lem: MAKE VsaIndSh from PRESHEAVES OF Z-GRADED Vsa}
There exists a canonical functor 
$$
\mathit{Presh}_X(\textit{DegWt-VSA}_\mathbb{K})_\mathrm{bdw, sh} \to \textit{DegWt-}\mathit{VSA_{\mathbb{K}}}\textit{-IndSh}_X^\mathbb{N},
$$
sending an object $\Tilde{\mathcal{V}}$ of $\mathit{Presh}_X(\textit{DegWt-VSA}_\mathbb{K})_\mathrm{bdw, sh}$ to the degree-weight-graded VSA-inductive sheaf associated with $\Tilde{\mathcal{V}}$.
\end{lemma}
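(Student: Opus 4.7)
The plan is to split the verification into three pieces: well-definedness of the object assignment, definition of the morphism assignment, and checking the functorial axioms. On objects, $\Tilde{\mathcal{V}}[\le N]$ is a sheaf because it is a finite direct sum of the sheaves $\Tilde{\mathcal{V}}[n]$ (using that the weight-grading is uniformly bounded from below, so only finitely many $n\le N$ contribute). The transition maps $\Tilde{\mathcal{V}}[\le N]\hookrightarrow \Tilde{\mathcal{V}}[\le N+1]$ are plainly injective even sheaf morphisms, so the inductive system is valid and the ind-object belongs to the allowed subcategory indexed by $\mathbb{N}$. The structure morphisms $\underline{\mathbf{1}}^0$, $T_N$, and $(n)_{N,M}$ land in $\Tilde{\mathcal{V}}[\le N+1]$ and $\Tilde{\mathcal{V}}[\le N+M-n-1]$ respectively because $\mathbf{1}$ has weight $0$, $T$ has weight $1$, and $A_{(n)}B\in \Tilde{\mathcal{V}}[p+q-n-1]$ for $A\in \Tilde{\mathcal{V}}[p]$, $B\in \Tilde{\mathcal{V}}[q]$, which also provides the required weight-shift identities for the induced morphisms of ind-objects. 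Taking sections over any open $U$, one recovers $\varinjlim_N \Tilde{\mathcal{V}}(U)[\le N]=\Tilde{\mathcal{V}}(U)$ with its vertex superalgebra structure, so the quadruple is a VSA-inductive sheaf. The Hamiltonian and degree-grading operator are defined levelwise from the weight-eigenspace projections and the degree-eigenspace projections of $\Tilde{\mathcal{V}}$, which are morphisms of sheaves because each $\Tilde{\mathcal{V}}[n]$ and each degree component are sheaves; these give diagonalizable operators satisfying \eqref{eq: Hamiltonian} and \eqref{eq: degree-grading operator}.

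On morphisms, given $f:\Tilde{\mathcal{V}}\to\Tilde{\mathcal{V}}'$ in $\mathit{Presh}_X(\textit{DegWt-VSA}_\mathbb{K})_\mathrm{bdw,sh}$, the fact that $f$ preserves both gradings means it restricts to sheaf morphisms $f_N:\Tilde{\mathcal{V}}[\le N]\to\Tilde{\mathcal{V}}'[\le N]$ which commute with the canonical injections. We assign to $f$ the strict morphism of ind-objects $\underline{f}:=([f_N])_{N\in\mathbb{N}}$. Because $f$ is a morphism of presheaves of vertex superalgebras, each $f_N$ is compatible with $\mathbf{1}$, $T$, the $n$-th products, and the two gradings; translating this compatibility into the equivalence-class formalism yields the identities $\underline{f}\circ\underline{\mathbf{1}}=\underline{\mathbf{1}}'$, $\underline{f}\circ\underline{T}=\underline{T}'\circ\underline{f}$, $\underline{f}\circ\underline{(n)}=\underline{(n)}'\circ(\underline{f}\times\underline{f})$, and the corresponding relations for $\underline{H}$ and $\underline{J}$. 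Hence $\underline{f}$ is a morphism in $\textit{DegWt-}\mathit{VSA_{\mathbb{K}}}\textit{-IndSh}_X^{\mathbb{N}}$.

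Functoriality is then almost automatic: $\underline{\mathrm{id}}=([\mathrm{id}_N])_N$ is the identity in the ind-category, and $\underline{g\circ f}=([\,(g\circ f)_N\,])_N = ([g_N\circ f_N])_N = \underline{g}\circ\underline{f}$ because the index map is the identity on $\mathbb{N}$, so no reindexing ambiguity arises. The main obstacle is purely bookkeeping: one must verify carefully that the bilinear morphisms of ind-objects $\underline{(n)}=\bigl([(n)_{N,M}]\bigr)_{(N,M)}$ really define elements of the projective limit in \eqref{df: bilinear morphisms of ind-objects}, i.e.\ that for any pair $(N,M)\le(N',M')$ the two compositions agree up to further embedding; this follows from the commutativity of the diagram
\[
\begin{array}{ccc}
\Tilde{\mathcal{V}}[\le N]\times\Tilde{\mathcal{V}}[\le M] & \longrightarrow & \Tilde{\mathcal{V}}[\le N+M-n-1] \\
\downarrow & & \downarrow \\
\Tilde{\mathcal{V}}[\le N']\times\Tilde{\mathcal{V}}[\le M'] & \longrightarrow & \Tilde{\mathcal{V}}[\le N'+M'-n-1],
\end{array}
\]
which holds on sections because the $n$-th product of $\Tilde{\mathcal{V}}(U)$ is a single fixed bilinear map. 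The analogous check for $\underline{T}$ is immediate, and the compatibility of the assigned morphism with $\underline{H}$, $\underline{J}$ reduces to the fact that $f$ preserves gradings.
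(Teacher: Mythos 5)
Your proposal is correct and takes essentially the same route as the paper: the object assignment via the filtration $\Tilde{\mathcal{V}}[\le N]$ is exactly the construction given just before the lemma, and the paper's proof likewise sends a morphism $\Tilde{F}$ to the strict morphism $\bigl([\Tilde{F}|_{\Tilde{\mathcal{V}}[\le N]}]\bigr)_{N\ge 0}$, with functoriality noted as immediate. The only quibble is your appeal to the degree components being sheaves (which is not among the hypotheses of $\mathit{Presh}_X(\textit{DegWt-VSA}_\mathbb{K})_\mathrm{bdw, sh}$); it is also unnecessary, since the degree projections restricted to the sheaf $\Tilde{\mathcal{V}}[\le N]$ are presheaf endomorphisms of a sheaf and hence automatically sheaf morphisms.
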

\begin{proof}
For a morphism $\Tilde{F}: \Tilde{\mathcal{V}}\to\Tilde{\mathcal{W}}$ in $\mathit{Presh}_X(\textit{DegWt-VSA}_\mathbb{K})_\mathrm{bdw, sh}$, we assign the morphism $F:=\Bigl([\Tilde{F}|_{\Tilde{\mathcal{V}}[\le N]}\bigr]\Bigr)_{N\ge 0}$ in $\textit{DegWt-}\mathit{VSA_{\mathbb{K}}}\textit{-IndSh}_X^\mathbb{N}$. 
The functoriality follows  from the definition directly.
\end{proof}

\begin{remark}
The composite of the functor $\underrightarrow{\mathrm{Lim}}\, $ given in \eqref{eq: functor from VSA-ISh} and the one given in Lemma \ref{lem: MAKE VsaIndSh from PRESHEAVES OF Z-GRADED Vsa} is the identity functor.
\end{remark}

\begin{remark}\label{rem: homogeneous morphism of presheaves induces that of ind-objects}
Let $\Tilde{\mathcal{V}}, \Tilde{\mathcal{W}}$ be objects of the category $\mathit{Presh}_X(\textit{DegWt-VSA}_\mathbb{K})_\mathrm{bdw, sh}$ and $\mathcal{V}, \mathcal{W}$  the VSA-inductive sheaves associated with $\Tilde{\mathcal{V}}, \Tilde{\mathcal{W}}$, respectively. Let $\Tilde{F}: \Tilde{\mathcal{V}}\to\Tilde{\mathcal{W}}$ be a homogeneous linear  morphism of degree $d$. In other words, $\Tilde{F}(U): \Tilde{\mathcal{V}}(U)\to \Tilde{\mathcal{W}}(U)$ is a homogeneous linear map of degree $d$ for any open subset $U\subset X$. Then $\Tilde{F}$ induces a morphism $F$ of ind-objects:  $F:=\bigl([\Tilde{F}|_{\Tilde{\mathcal{V}}[\le N]}]\bigr)_{N\ge 0}: \mathcal{V}\to \mathcal{W}$. Moreover the corresponding morphism $\underrightarrow{\mathrm{Lim}}\,  F$  of presheaves is nothing but the morphism $\Tilde{F}: \Tilde{\mathcal{V}}=\underrightarrow{\mathrm{Lim}}\, \mathcal{V}\to\Tilde{\mathcal{W}}=\underrightarrow{\mathrm{Lim}}\, \mathcal{W}$.
\end{remark}

\begin{remark}
The functor given in Lemma \ref{lem: MAKE VsaIndSh from PRESHEAVES OF Z-GRADED Vsa} commutes with the restriction. More precisely, if $U\subset X$ is an open subset and 
$\mathcal{V}$ is the degree-weight-graded VSA-inductive sheaf  associated with an object $\Tilde{\mathcal{V}}$ of  $\mathit{Presh}_X(\textit{DegWt-VSA}_\mathbb{K})_\mathrm{bdw, sh}$ then we have $\mathcal{V}|_U=\mathcal{V}_U$, where $\mathcal{V}_U$ stands for the VSA-inductive sheaf associated with the presheaf  $\Tilde{\mathcal{V}}|_U$. Here we denote by $\Tilde{\mathcal{V}}|_U$ the presheaf, not its sheafification, obtained by restricting $\Tilde{\mathcal{V}}$ to $U$. 
\end{remark}

\subsection{More on VSA-Inductive Sheaves}
Let $\varphi: X\to Y$ be a continuous map between topological spaces.
Consider the functor induced by the push-forward functor $\varphi_*$ of presheaves: 
\begin{align}
\label{eq: push-forward functor of ind-objects}
\varphi_*&: \mathrm{Ind}(\mathit{Presh}_X(Vec_\mathbb{K}^\mathrm{super}))\longrightarrow \mathrm{Ind}(\mathit{Presh}_Y(Vec_\mathbb{K}^\mathrm{super})),  \\
\text{objects}&:\quad\quad\quad  \mathcal{F}=``\varinjlim_{\alpha\in A}\mathcal{F}"\longmapsto \varphi_*\mathcal{F}:=``\varinjlim_{\alpha\in A}"\varphi_*\mathcal{F},   \\
\label{df: push-forward of morphism of ind-objects} 
\text{morphisms}&:  F=\bigl([F_\alpha^{j(\alpha)}]\bigr)_{\alpha\in \mathrm{Ob}(A)}\longmapsto \varphi_*F:=\bigl([\varphi_*F_\alpha^{j(\alpha)}]\bigr)_{\alpha\in \mathrm{Ob}(A)}.
\end{align}
The push-forward of bilinear morphism of ind-objects  is given in a  way  similar to that  in \eqref{df: push-forward of morphism of ind-objects}.

\begin{lemma}
Let $\varphi: X\to Y$ be a continuous map between topological spaces and $\mathcal{V}=\bigl( \mathcal{F}, \underline
{\mathbf{1}}, \underline{T}, \underline{(n)}; n\in \mathbb{Z}\bigr)$  a VSA-inductive sheaf on $X$. The quadruple $\varphi_*\mathcal{V}:=\bigl( \varphi_*\mathcal{F}, \varphi_*\underline
{\mathbf{1}}, \varphi_*\underline{T}, \varphi_*\underline{(n)}; n\in \mathbb{Z}\bigr)$
is a VSA-inductive sheaf on $Y$. Moreover if $F$ is a morphism in $\mathit{VSA_{\mathbb{K}}}\textit{-IndSh}_X$ then $\varphi_*F$ is a morphism in $\mathit{VSA_{\mathbb{K}}}\textit{-IndSh}_Y$.
\end{lemma}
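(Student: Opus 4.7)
The plan is to reduce every axiom for $\varphi_*\mathcal{V}$ at an open subset $V \subset Y$ to the corresponding axiom for $\mathcal{V}$ at $\varphi^{-1}(V) \subset X$, using the tautological identity $(\varphi_*\mathcal{F}_\alpha)(V) = \mathcal{F}_\alpha(\varphi^{-1}(V))$. Taking the inductive limit in $\alpha$ commutes with evaluation, so there is a canonical identification
$$
\underrightarrow{\mathrm{Lim}}\, (\varphi_*\mathcal{F})(V) = \underrightarrow{\mathrm{Lim}}\, \mathcal{F}(\varphi^{-1}(V)),
$$
and under this identification the operators $\mathbf{1}, T, (n)$ coming from $\varphi_*\underline{\mathbf{1}}, \varphi_*\underline{T}, \varphi_*\underline{(n)}$ coincide with those from the vertex superalgebra structure on $\underrightarrow{\mathrm{Lim}}\, \mathcal{F}(\varphi^{-1}(V))$ provided by the hypothesis on $\mathcal{V}$. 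The vertex superalgebra axioms therefore transfer verbatim, with no further computation.

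The remaining items are bookkeeping. First I would verify that the morphisms of ind-objects $\varphi_*\underline{T}$ and the bilinear morphisms $\varphi_*\underline{(n)}$ are well-defined; this is immediate because $\varphi_*$ is a functor on presheaves of super vector spaces and sends bilinear morphisms of presheaves to bilinear morphisms of presheaves, so the formulas in \eqref{df: push-forward of morphism of ind-objects} extend to the bilinear case componentwise. For the vacuum, one composes $\varphi_*\underline{\mathbf{1}}: \varphi_*\mathbb{K}_X \to \varphi_*\mathcal{F}$ with the canonical presheaf morphism $\mathbb{K}_Y \to \varphi_*\mathbb{K}_X$ induced by pullback of constant functions along $\varphi$; this composition is understood in the notation $\varphi_*\underline{\mathbf{1}}$ and gives the required morphism from $``\varinjlim"\mathbb{K}_Y$. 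Next, the injectivity requirement on the transition morphisms: if $\mathcal{F}(f): \mathcal{F}_{\alpha'} \to \mathcal{F}_\alpha$ is injective on every open $U \subset X$, then $(\varphi_*\mathcal{F}(f))(V) = \mathcal{F}(f)(\varphi^{-1}(V))$ is injective for every $V \subset Y$.

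For the second assertion, a base-preserving morphism $F: \mathcal{V}_1 \to \mathcal{V}_2$ in $\mathit{VSA_{\mathbb{K}}}\textit{-IndSh}_X$ satisfies $F \circ \underline{\mathbf{1}}_1 = \underline{\mathbf{1}}_2$, $F \circ \underline{T}_1 = \underline{T}_2 \circ F$, and $F \circ \underline{(n)}_1 = \underline{(n)}_2 \circ (F \times F)$. Applying $\varphi_*$, which is a functor compatible with the product in the sense that $\varphi_*(F \times F) = (\varphi_*F) \times (\varphi_*F)$, yields the analogous identities for $\varphi_*F$, whence $\varphi_*F$ is base-preserving.

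No substantive obstacle arises; the proof is pure definition-chasing. The only mild subtlety worth mentioning is the implicit use of the canonical map $\mathbb{K}_Y \to \varphi_*\mathbb{K}_X$ when transporting the vacuum morphism, and the observation that pushforward preserves bilinear morphisms of presheaves so as to make $\varphi_*\underline{(n)}$ well-defined as a bilinear morphism of ind-objects.
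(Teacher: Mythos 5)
Your proof is correct and follows the same route as the paper, which simply observes that the verification is immediate from the identification $(\varphi_*\mathcal{F}_\alpha)(V)=\mathcal{F}_\alpha(\varphi^{-1}(V))$ and that the second assertion follows from functoriality of the presheaf push-forward. Your explicit remarks about the canonical map $\mathbb{K}_Y\to\varphi_*\mathbb{K}_X$ for the vacuum and about push-forward preserving bilinear morphisms are exactly the bookkeeping the paper leaves implicit.
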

\begin{proof}
We can immediately see that $\varphi_*\mathcal{V}$ is a VSA-inductive sheaf. The latter half of this lemma follows from the functoriality of the push-forward functor of presheaves.
\end{proof}

By the above lemma, we can restrict the functor \eqref{eq: push-forward functor of ind-objects} to obtain a functor 
$\varphi_*: \mathit{VSA_{\mathbb{K}}}\textit{-IndSh}_X\to\mathit{VSA_{\mathbb{K}}}\textit{-IndSh}_Y$ sending an object $\mathcal{V}$ to $\varphi_*\mathcal{V}$ and a morphism $F$ to $\varphi_*F$. We call the VSA-inductive sheaf $\varphi_*\mathcal{V}$ the \textbf{push-forward} of $\mathcal{V}$.

\begin{remark}
The push-forward functor  commutes with the functor $\underrightarrow{\mathrm{Lim}}\, $ as well as the one given in Lemma \ref{lem: MAKE VsaIndSh from PRESHEAVES OF Z-GRADED Vsa}.
\end{remark}

%MORPHISM OF VsaIndSh
Let $\mathcal{V}_1=\bigl( \mathcal{F}_1, \underline
{\mathbf{1}}_1, \underline{T}_1, \underline{(n)}_1; n\in \mathbb{Z}\bigr)$ and $\mathcal{V}_2=\bigl( \mathcal{F}_2, \underline
{\mathbf{1}}_2, \underline{T}_2, \underline{(n)}_2;$ $n\in \mathbb{Z}\bigr)$ be  VSA-inductive sheaves on topological spaces $X_1$ and $X_2$, respectively. A \textbf{morphism} of vertex superalgebra  inductive sheaves from $\mathcal{V}_1$ to $\mathcal{V}_2$ is by definition a pair $(\varphi, \Phi)$ of a continuous map $\varphi: X_2 \to X_1$ and a base-preserving morphism $\Phi: \mathcal{V}_1 \to \varphi_*\mathcal{V}_2$ of VSA-inductive sheaves on $Y$.

\begin{remark}\label{rem: ALL VSA-inductive sheaves form a category}
VSA-inductive sheaves form a category with morphisms defined  above, whose composition is defined by 
$
(\varphi', \Phi')\circ(\varphi, \Phi):=(\varphi'\circ\varphi, \Phi'\circ\varphi'_*\Phi)
$
for morphisms of VSA-inductive sheaves $(\varphi, \Phi): \mathcal{V}_1\to \mathcal{V}_2$ and $(\varphi', \Phi'): \mathcal{V}_2\to \mathcal{V}_3$. 
\end{remark}

\begin{notation}
Denote by $\mathit{VSA_{\mathbb{K}}}\textit{-IndSh}$ the category of VSA-inductive sheaves obtained in Remark \ref{rem: ALL VSA-inductive sheaves form a category}.
\end{notation}

\begin{remark}
If $\varphi: X\to Y$ is a continuous map and $(\mathcal{V},\underline{H} ,\underline{J})$ is a degree-weight-graded VSA-inductive sheaf on $X$, then $(\varphi_*\mathcal{V}, \varphi_*\underline{H}, \varphi_*\underline{J})$ is a degree-weight-graded VSA-inductive sheaf on $Y$. In a similar way above, the category of degree-weight-graded VSA-inductive sheaves is defined. 
\end{remark}

\begin{notation}
Let us denote by $\textit{DegWt-}\mathit{VSA_{\mathbb{K}}}\textit{-IndSh}$ the category of degree-weight-graded VSA-inductive sheaves. 
\end{notation}

Let $\mathcal{V}$ be a VSA-inductive sheaf and  $\mathcal{F}=``\displaystyle\varinjlim_{\alpha\in A}"\mathcal{F}_{\alpha}$ the underlying ind-object. Then we have two canonical ind-objects of sheaves,  $\mathcal{F}_{\bar{0}}:=``\displaystyle\varinjlim_{\alpha\in A}"(\mathcal{F}_\alpha)_{\bar{0}}$ and $\mathcal{F}_{\bar{1}}:=``\displaystyle\varinjlim_{\alpha\in A}"(\mathcal{F}_\alpha)_{\bar{1}}$.
In addition, suppose that $\mathcal{V}$ is degree-graded. Then we have ind-objects of sheaves, $\mathcal{F}^l:=``\displaystyle\varinjlim_{\alpha\in A}"\mathcal{F}_\alpha^l$, where $\mathcal{F}_\alpha^l$ is the subsheaf of degree $l\in\mathbb{Z}$.

\begin{definition}
Let $\mathcal{V}=\bigl( \mathcal{F}, \underline
{\mathbf{1}}, \underline{T}, \underline{(n)}; n\in \mathbb{Z}\bigr)$ be a degree-weight-graded VSA-inductive sheaf.  
A \textbf{differential} on $\mathcal{V}$ is an odd  morphism of ind-objects, $D: \mathcal{F}\to\mathcal{F}$ such that 
\begin{equation}\label{eq: deg 1 wt 0 condition of differential on VSA-inductive sheaf}
[\underline{H}, D]=0, \quad [\underline{J}, D]=D, 
\end{equation}
\begin{equation}\label{eq: square 0 condition of differential on VSA-inductive sheaf}
D^2=0, 
\end{equation}
\begin{equation}\label{eq: derivation condition of differential on VSA-inductive sheaf}
D\circ\underline{(n)}-(-1)^{i}\underline{(n)}\circ(\mathrm{id}\times D)=\underline{(n)}\circ (D\times\mathrm{id}), 
\end{equation}
on $\mathcal{F}_{\bar{i}}\times\mathcal{F}$ for all $n\in\mathbb{Z}$ and $i=0, 1$, and 
\begin{equation}\label{eq: degree derivation condition of differential on VSA-inductive sheaf}
D\circ\underline{(n)}-(-1)^l\underline{(n)}\circ(\mathrm{id}\times D)=\underline{(n)}\circ (D\times\mathrm{id}), 
\end{equation}
on $\mathcal{F}^l\times\mathcal{F}$ for all $l\in\mathbb{Z}$. Here $\underline{H}$ is the Hamiltonian of $\mathcal{V}$ and  $\underline{J}$ is the degree-grading operator of $\mathcal{V}$.  
\end{definition}

By a \textbf{differential degree-weight-graded VSA-inductive sheaf}, we mean a degree-weight-graded VSA-inductive sheaf given a differential.

\begin{remark}\label{rem: presheaf  of differential VSA-inductive sheaf}
Let $(\mathcal{V}, D)$ be a differential degree-weight-graded VSA-inductive sheaf. Then $(\underrightarrow{\mathrm{Lim}}\, \mathcal{V},$ $\underrightarrow{\mathrm{Lim}}\,  D)$ is a presheaf of differential degree-weight-graded vertex superalgebras. 
\end{remark}

\begin{notation}
We denote by $\textit{Diff-}\textit{DegWt-}\mathit{VSA_{\mathbb{K}}}\textit{-IndSh}_X$ the category of differential degree-weight-graded VSA-inductive sheaves on a topological space $X$, whose morphisms are morphisms of degree-weight-graded VSA-inductive sheaves on $X$ commuting with the differentials.   
\end{notation}

\begin{remark}
Let $\varphi: X\to Y$ be a continuous map between topological spaces and $(\mathcal{V}, D)$ a differential degree-weight-graded  VSA-inductive sheaf on $X$. Then the pair $(\varphi_*\mathcal{V}, \varphi_*D)$ is a differential degree-weight-graded VSA-inductive sheaf on $Y$. 
\end{remark}

\begin{notation}
We denote by $\textit{Diff-}\textit{DegWt-}\mathit{VSA_{\mathbb{K}}}\textit{-IndSh}$ the category of differential degree-weight-graded VSA-inductive sheaves, whose morphisms are morphisms of degree-weight-graded VSA-inductive sheaves which commute with the differentials. 
\end{notation}

\section{Chiral Lie Algebroid Cohomology}\label{section: Chiral Lie Algebroid Cohomology}
In this section, we will construct VSA-inductive sheaves associated with vector bundles. For that purpose, we will construct VSA-inductive sheaves on  an affine space. Then we will glue them to obtain a VSA-inductive sheaves on a manifold, using the facts proved in the preceding section.

By a manifold, we will mean a $C^\infty$-manifold.
Let $M$ be a manifold. We simply denote by $TM$ the tangent bundle tensored by $\mathbb{K}$, $TM\otimes_{\mathbb{R}}\mathbb{K}$.  We use a similar notation for  the cotangent bundle $T^*M$, the sheaf of functions $C^\infty=C^\infty_M$ and the sheaf of vector fields $\mathscr{X}=\mathscr{X}_M$. 
We will mean  a real or complex vector bundle simply by a vector bundle,  following $\mathbb{K}$ is $\mathbb{R}$ or $\mathbb{C}$.
 $TM\otimes_{\mathbb{R}}\mathbb{K}$.

\subsection{Lie Algebroids}\label{subsection: Lie Algebroids}
In this subsection, we recall the notion of Lie algebroids. We refer the reader to \cite{DZ05,HM90,Mac05,Pra67,Vai97,Vai91} for more details.

Let $M$ be a manifold. A \textit{Lie algebroid} on $M$ is a vector bundle $A$ together with a vector bundle map $a: A \to TM$ over $M$, called the anchor map of $A$, and a $\mathbb{K}$-linear Lie bracket $[\,,]$ on $\Gamma(A)$ such that
$
[X,fY]=f[X,Y]+a(X)(f)Y 
$
for all $X, Y\in \Gamma(A), f\in C^\infty(M).$
When $\mathbb{K}$ is $\mathbb{R}$, the corresponding Lie algebroids are called \textit{real Lie algebroids}. Similarly when $\mathbb{K}=\mathbb{C}$, the corresponding Lie algebroids are called \textit{complex Lie algebroids}.

\begin{example}[tangent bundles]\label{ex: tangent bundle Lie algebroid}%tangent bundle Lie algebroid
The tangent bundle $TM$ of a manifold $M$ with bracket the Lie bracket of vector fields and with anchor the identity of $TM$ is a Lie algebroid on $M.$
\end{example}%tangent bundle Lie algebroid

\begin{example}[transformation Lie algebroids]\label{ex: transformation Lie algebroid}%ex: transformation Lie algebroid
Let 
$
\rho: \mathfrak{g} \to \mathscr{X}(M)
$
be an infinitesimal action of a Lie algebra $\mathfrak{g}$ on a manifold $M.$ Then there is a natural Lie algebroid structure on the trivial vector bundle $M \times \mathfrak{g}$ with the anchor map 
$
a_\rho(m, \xi):=\rho(\xi)(m)
$
for $(m, \xi)\in M\times \mathfrak{g}$ and the bracket on $\Gamma(M\times \mathfrak{g})\cong C^\infty(M, \mathfrak{g})$
$$
[X,Y]_\rho:=[X,Y]_{\mathfrak{g}}+a_\rho(X)Y-a_\rho(Y)X,
$$
for $X, Y\in C^\infty(M, \mathfrak{g}).$
This Lie algebroid $(M\times \mathfrak{g}, a_\rho, [\,,]_\rho)$ is called the \textit{transformation Lie algebroid} associated with $\rho$.
\end{example}%ex: transformation Lie algebroid

\begin{example}[cotangent Lie algebroids]\label{ex: cotangent Lie algebroid}%ex: cotangent Lie algebroid
Let $(M, \Pi)$ be a Poisson manifold with the Poisson bivector field $\Pi.$ Consider the map 
$$
\Pi^\sharp: T^*M \to TM,\ \Pi^\sharp(\alpha)(\beta):=\Pi(\alpha, \beta)\ \text{for}\ \alpha, \beta \in \Gamma(T^*M).
$$ 
Then, with $\Pi^\sharp$ as the anchor map, the cotangent bundle $T^*M$ becomes a Lie algebroid on $M$, where the Lie bracket on $\Gamma(T^*M)$ is given by
$$
[\alpha,\beta]:=d(\Pi(\alpha, \beta))+i_{\Pi^\sharp\alpha}d\beta-i_{\Pi^\sharp\beta}d\alpha,
$$
for $\alpha, \beta \in \Gamma(T^*M).$
The Lie algebroid $(T^*M, \Pi^\sharp, [\,,])$ is called the \textit{cotangent Lie algebroid} of $(M, \Pi).$ 
\end{example}%ex: cotangent Lie algebroid

Let us recall the notion of the Lie algebroid representation. 
Let $(A, a, [\,,])$ be a Lie algebroid on a manifold $M$ and $E$ a vector bundle on $M$. 
An $A$-\textit{connection} on $E$ is a map 
$
\nabla: \Gamma(A) \times \Gamma(E) \to \Gamma(E)
$
such that
\begin{enumerate}[$\bullet$]
     \setlength{\topsep}{1pt}
     \setlength{\partopsep}{0pt}
     \setlength{\itemsep}{1pt}
     \setlength{\parsep}{0pt}
     \setlength{\leftmargin}{20pt}
     \setlength{\rightmargin}{0pt}
     \setlength{\listparindent}{0pt}
     \setlength{\labelsep}{3pt}
     \setlength{\labelwidth}{30pt}
     \setlength{\itemindent}{10pt}
\item $\nabla_{X+Y}s=\nabla_X s+\nabla_Y s,$
\item $\nabla_X(s+s')=\nabla_X s+\nabla_X s',$
\item $\nabla_{fX}s=f\nabla_X s,$
\item $\nabla_X(fs)=f\nabla_X s +a(X)(f)s,$
\end{enumerate}
for all $X, Y\in \Gamma(A),\ s, s'\in \Gamma(E),$ and $f\in C^\infty(M).$
An $A$-connection $\nabla$ on $E$ is said to be \textit{flat} if 
$
\nabla_{[X,Y]}s=\nabla_X(\nabla_Y s)-\nabla_Y(\nabla_X s)
$
for all $X, Y\in \Gamma(A)$ and $s\in \Gamma(E).$ 
A flat $A$-connection on $E$ is also called a \textit{representation} of $A$ on $E$.

\begin{example}[trivial representations]\label{ex: trivial representation of Lie algebroid}%ex: trivial representation of Lie algebroid
Let $A$ be a Lie algebroid on $M$ and $V$  a vector space. The \textit{trivial representation} of $A$ on $M \times V$ is given by
$
\nabla_X f:=a(X)(f)
$
for $X\in \Gamma(A)$ and $f: M \to V.$
\end{example}%ex: trivial representation of Lie algebroid

Let us now recall the definition of the Lie algebroid cohomology. 
%Lie algebroid cohomology
For a Lie algebroid $(A, a, [\,,])$ on a manifold $M$ and a representation $(E, \nabla)$ of $A$ on a vector bundle $E$, consider a complex $\Omega^\bullet(A; E):=\Gamma((\wedge^\bullet A^*)\otimes E)$ and a differential $d_{\text{Lie}}^E:  \Omega^\bullet(A; E) \to \Omega^{\bullet+1}(A; E)$ defined by
\begin{multline*}
(d_{\text{Lie}}^E \omega)(X_1, \dots, X_{n+1}) = \sum_{i=1}^{n+1}(-1)^{i+1}\nabla_{X_i}(\omega(X_1, \dots, \check{X_i}, \dots, X_{n+1}))  \\
+\sum_{1\le i<j\le n+1}(-1)^{i+j}\omega([X_i,X_j], X_1, \dots, \check{X_i}, \dots, \check{X_j}, \dots, X_{n+1}),
\end{multline*}
for $\omega \in \Omega^n(A; E)$ and $X_1, \dots, X_{n+1}\in \Gamma(A).$
The cohomology space $H^\bullet(A; E)$ of the complex $(\Omega^\bullet(A; E), d_{\text{Lie}}^E)$ is called the \textit{Lie algebroid cohomology} with coefficients in $E$.
 
Any $X\in \Gamma(A)$ induces the \textit{Lie derivative} $L_X: \Omega^\bullet(A; E) \to \Omega^\bullet(A; E)$
and the \textit{interior product} $\iota_X: \Omega^\bullet(A; E) \to \Omega^{\bullet-1}(A; E):$
\begin{gather*}
(L_X\omega)(X_1, \dots, X_n)=\nabla_X(\omega(X_1, \dots, X_n))
-\sum_{i=1}^{n}\omega(X_1, \dots, [X,X_i], \dots, X_n), \\
(\iota_X\omega)(X_1, \dots, X_{n-1})=\omega(X, X_1, \dots, X_{n-1}),
\end{gather*}
where $\omega \in \Omega^n(A; E)$ and $X_1, \dots, X_n \in \Gamma(A).$
The Lie derivatives are derivations of degree $0$ and the interior products are derivations of degree $-1$. They satisfy the Cartan relations:
\begin{gather*}
[d_{\text{Lie}}^E,\iota_X]=L_X, \\
[L_X,L_Y]=L_{[X,Y]}, \\
[L_X,\iota_Y]=\iota_{[X,Y]}, \\
[\iota_X,\iota_Y]=0,
\end{gather*}
for all $X, Y\in \Gamma(A).$

When $(E, \nabla)$ is the trivial representation on the trivial line bundle, we simply denote by $(\Omega^\bullet(A), d_{\text{Lie}})$ and $H^\bullet(A)$ the corresponding complex and cohomology, respectively.

\subsection{VSA-Inductive Sheaves on $\mathbb{R}^m$}
In this subsection, we define an important object of the category  $\textit{DegWt-}\mathit{VSA_{\mathbb{K}}}\textit{-IndSh}_{\mathbb{R}^m}^\mathbb{N}$, which we will denote by  $\Omega_\mathrm{ch}(\mathbb{R}^{m|r})$.  
To this end,  we first construct an object, denoted by $\underrightarrow{\Omega_{\mathrm{ch}}}(\mathbb{R}^{m|r})$, of the category $\mathit{Presh}_{\mathbb{R}^m}(\textit{DegWt-VSA}_\mathbb{K})_\mathrm{bdw, sh}$, following the argument in \cite{LL}.

Fix natural numbers $m$ and $r$.
We consider the supermanifold $\mathbb{R}^{m|r}=\bigl(\mathbb{R}^m,$ $ C^\infty_{\mathbb{R}^m}\otimes_{\mathbb{K}} \bigwedge_{\mathbb{K}}(\theta^1, \dots, \theta^r)\bigr)$. 
We denote  the structure sheaf  $C^\infty_{\mathbb{R}^m}\otimes_\mathbb{K} \bigwedge_\mathbb{K}(\theta^1, \dots, \theta^r)$ by  $C^\infty_{\mathbb{R}^{m|r}}$.
Let $U$ be an open subset of $\mathbb{R}^m$. 
Consider the supercommutative superalgebra of functions on $U\subset \mathbb{R}^{m|r}$,
$
C^{\infty}_{\mathbb{R}^{m|r}}(U)=C^\infty_{\mathbb{R}^m}(U)\otimes_{\mathbb{K}}\bigwedge\nolimits_{\mathbb{K}}(\theta^1, \dots, \theta^r),
$
even derivations $\partial/\partial x^1, \dots, \partial/\partial x^m$
and odd derivations $\partial/\partial \theta^1, \dots, \partial/\partial \theta^r$ on it. Here $(x^1, \dots, x^m, \theta^1, \dots, \theta^r)$ is a standard supercoordinate on $U$. 
We consider the supercommutative Lie superalgebra 
$$
D^{m|r}(U):=\mathrm{Span}_\mathbb{K}\{ \partial/\partial x^i, \partial/\partial \theta^j | i=1, \dots, m,\ j=1, \dots, r\}, 
$$
acting on $C^{\infty}_{\mathbb{R}^{m|r}}(U)$ naturally, and set
$$
\Lambda^{m|r}(U):= D^{m|r}(U) \ltimes C^{\infty}_{\mathbb{R}^{m|r}}(U).
$$
We put 
$$
\underrightarrow{\Omega_{\mathrm{ch}}}(\mathbb{R}^{m|r})(U):=N\bigl(\Lambda^{m|r}(U), 0\bigr)/\mathcal{I}^{m|r}(U),
$$
where $\mathcal{I}^{m|r}(U)$ is the ideal of the affine vertex superalgebra  $N(\Lambda^{m|r}(U), 0) \cong O(\Lambda^{m|r}(U), 0)$  (see Example \ref{ex:affine va}) 
generated by 
\begin{gather*}
\frac{d}{dz}f(z)-\sum_{i=1}^m\,{\baselineskip0pt\lineskip0.3pt\vcenter{\hbox{$\cdot$}\hbox{$\cdot$}}\,{\frac{d}{dz}x^i(z) \frac{\partial f}{\partial x^i}(z)}\,\vcenter{\hbox{$\cdot$}\hbox{$\cdot$}}}\,-\sum_{j=1}^r\baselineskip0pt\lineskip0.3pt\vcenter{\hbox{$\cdot$}\hbox{$\cdot$}}\,\frac{d}{dz}\theta^j(z) \frac{\partial f}{\partial \theta^j}(z)\,\vcenter{\hbox{$\cdot$}\hbox{$\cdot$}}, \\
(fg)(z)-\baselineskip0pt\lineskip0.3pt\vcenter{\hbox{$\cdot$}\hbox{$\cdot$}}\,f(z)g(z)\,\vcenter{\hbox{$\cdot$}\hbox{$\cdot$}}, \ 
1(z)-\mathrm{id},
\end{gather*}
where $f, g\in C^\infty_{\mathbb{R}^{m|r}}(U)\subset\Lambda^{m|r}(U)$.
For $A\in N\bigl(\Lambda^{m|r}(U), 0\bigr)$, we denote by $\overline{A}$ the corresponding element in $\underrightarrow{\Omega_{\mathrm{ch}}}(\mathbb{R}^{m|r})(U)$. 

The Lie superalgebra $\Lambda^{m|r}(U)$ has compatible two gradings when we give $D^{m|r}(U)$, $C^{\infty}_{\mathbb{R}^{m|r}}(U)$ weight $0, -1$, respectively, and if we give $\partial/\partial x^i$, $\partial/\partial\theta^j$ and $C^\infty_{\mathbb{R}^m}(U)\otimes\bigwedge^l(\theta^1, \dots, \theta^r)$ degree $0$, $-1$ and $l$, respectively. This induces a degree-weight-grading on the vertex superalgebra $N\bigl(\Lambda^{m|r}(U), 0\bigr)$ (see Example  \ref{ex: another grading on N(g, 0)}). Then the ideal $\mathcal{I}^{m|r}(U)$ is a homogeneous ideal. Therefore the vertex superalgebra $\underrightarrow{\Omega_{\mathrm{ch}}}(\mathbb{R}^{m|r})(U)$ becomes a degree-weight-graded vertex superalgebra.

We set
$$
\Gamma_{m}(U):=N\bigl(D_m(U)\ltimes C^\infty_{\mathbb{R}^m}(U), 0\bigr)/\mathcal{I}_m(U),
$$
where $D_m(U)$ is the commutative Lie algebra $\mathrm{Span}_{\mathbb{K}}\{ \partial/\partial x^i | i=1, \dots, m\}$
 and $\mathcal{I}_m(U)$ is the ideal of the affine vertex algebra $N\bigl(D_m(U)\ltimes C^\infty_{\mathbb{R}^m}(U), 0\bigr)$ generated by 
\begin{gather}\label{eq: relations for Gamma_m(U)}
\frac{d}{dz}f(z)-\sum_{i=1}^m\baselineskip0pt\lineskip0.3pt\vcenter{\hbox{$\cdot$}\hbox{$\cdot$}}\,\frac{d}{dz}x^i(z)\frac{\partial f}{\partial x^i}(z)
\,\vcenter{\hbox{$\cdot$}\hbox{$\cdot$}}\,, \\
(fg)(z)-\baselineskip0pt\lineskip0.3pt\vcenter{\hbox{$\cdot$}\hbox{$\cdot$}}\,f(z)g(z)\,\vcenter{\hbox{$\cdot$}\hbox{$\cdot$}}, \ 
1(z)-\mathrm{id},
\end{gather}
where $f, g\in C^\infty_{\mathbb{R}^m}(U)$.
Regard $D_m(U)\ltimes C^\infty_{\mathbb{R}^m}$ as a degree-weight-graded Lie subalgebra of $\Lambda^{m|r}(U)$. Then $\Gamma_m(U)$ becomes a degree-weight-graded vertex algebra (the degree-grading is trivial).
For $A\in N\bigl(D_m(U)\ltimes C^\infty_{\mathbb{R}^m}(U), 0\bigr)$, we denote by $\overline{A}$ the corresponding element in $\Gamma_m(U)$.

%REM: GENERATORS OF GAMMA_M(U) 
\begin{remark}\label{rem: generators of Gamma_m(U)}
$\Gamma_m(U)$ is spanned by the vectors of the form
$$
\overline{\partial/\partial x^{i_1}}_{(n_1)}\dots\overline{\partial/\partial x^{i_k}}_{(n_k)}\overline{x^{i'_1}}_{(n'_1)}\dots\overline{x^{i'_{k'}}}_{(n'_{k'})}\overline{f}_{(-1)}\mathbf{1}, 
$$
with $n_1, \dots, n_k\le-1,\  n'_1, \dots, n'_{k'}\le-2,$ $\ i_1, \dots, i_k, i'_1, \dots,  i'_{k'}\in \{ 1, \dots, m\}$ and $f\in C^\infty_{\mathbb{R}^m}(U)$.
This follows from the relations \eqref{eq: relations for Gamma_m(U)} by induction.
\end{remark}

We can rewrite the vertex superalgebra $\underrightarrow{\Omega_{\mathrm{ch}}}(\mathbb{R}^{m|r})(U)$, using the vertex superalgebras $\Gamma_m(U)$ and $\mathcal{E}(W_r)$.  Here $\mathcal{E}(W_r)$ is the $bc$-system associated with  $W_r=\mathrm{Span}_{\mathbb{K}}\{ \theta^j |\ j=1, \dots, r\}$  regarded as an even vector space (see Example \ref{ex:bc-systems} for the definition of $bc$-systems and the notation used in the following proof). 
The $\mathbb{Z}$-graded vertex superalgebra $ \Gamma_m(U)\otimes\mathcal{E}(W_r)$ is degree-weight-graded when the degree-grading is given by the operator $j_{bc, (0)}$, where $j_{bc}=-\sum_{j=1}^r b^{\partial/\partial\theta^j}_{-1}c^{\theta^j}_{0}\mathbf{1}$. 

%OMEGA_CHIRAL=BETA-GAMMA OTIMES BC
\begin{lemma}\label{lem: omega_ch=beta-gamma_bc}
There exists a canonical isomorphism of degree-weight-graded vertex superalgebras
$$
\underrightarrow{\Omega_{\mathrm{ch}}}(\mathbb{R}^{m|r})(U)\cong \Gamma_m(U)\otimes \mathcal{E}(W_r).
$$
\end{lemma}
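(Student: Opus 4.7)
The strategy is to exhibit the isomorphism in both directions and check they are mutually inverse. First I would construct a vertex superalgebra morphism $\Phi\colon \Gamma_m(U)\otimes \mathcal{E}(W_r) \to \underrightarrow{\Omega_\mathrm{ch}}(\mathbb{R}^{m|r})(U)$ by combining two pieces. The inclusion $D_m(U)\ltimes C^\infty_{\mathbb{R}^m}(U) \hookrightarrow \Lambda^{m|r}(U)$ of Lie superalgebras induces a morphism $N(D_m(U)\ltimes C^\infty_{\mathbb{R}^m}(U),0) \to N(\Lambda^{m|r}(U),0)$ which manifestly descends to $\Gamma_m(U) \to \underrightarrow{\Omega_\mathrm{ch}}(\mathbb{R}^{m|r})(U)$, since the generators of $\mathcal{I}_m(U)$ are mapped into $\mathcal{I}^{m|r}(U)$. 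For the second factor, observe that in $\underrightarrow{\Omega_\mathrm{ch}}(\mathbb{R}^{m|r})(U)$ the fields $\overline{\theta^j}(z), \overline{\partial/\partial\theta^k}(z)$ are odd and, since the invariant form is $0$ and $[\partial/\partial\theta^j,\theta^k]=\delta_{jk}$ in $\Lambda^{m|r}(U)$, satisfy $\overline{\partial/\partial\theta^j}(z)\overline{\theta^k}(w)\sim \delta_{jk}(z-w)^{-1}$ with trivial $\theta\theta$ and $\partial_\theta\partial_\theta$ OPEs; this yields a morphism $\mathcal{E}(W_r)\to \underrightarrow{\Omega_\mathrm{ch}}(\mathbb{R}^{m|r})(U)$. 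The two images commute in the vertex-algebra sense (i.e.\ their mixed OPEs vanish) because the brackets $[\partial/\partial x^i,\theta^j], [x^i,\theta^j], [\partial/\partial x^i,\partial/\partial\theta^j]$, etc.\ are all zero in $\Lambda^{m|r}(U)$, so the universal property of the tensor product of vertex superalgebras provides $\Phi$.

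For the inverse, I would use the universal property of $N(\Lambda^{m|r}(U),0)$: assign to each $a\in \Lambda^{m|r}(U)$ a field in $\Gamma_m(U)\otimes \mathcal{E}(W_r)$ by $\partial/\partial x^i\mapsto \overline{\partial/\partial x^i}(z)\otimes 1$, $\partial/\partial\theta^j\mapsto 1\otimes b^{\partial/\partial\theta^j}(z)$, and, on functions, $f\theta^{j_1}\cdots\theta^{j_k}\mapsto \overline{f}(z)\otimes \colon c^{\theta^{j_1}}(z)\cdots c^{\theta^{j_k}}(z)\colon$ for $f\in C^\infty_{\mathbb{R}^m}(U)$, extended linearly in $f$ and in the $\theta$-degree. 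All OPEs coming from brackets in $\Lambda^{m|r}(U)$ match in the target: brackets of derivations with odd monomials in the $\theta$'s are reproduced by Wick's theorem applied to $b^{\partial/\partial\theta^j}$ acting on the normally ordered string of $c$'s, and brackets involving $\partial/\partial x^i$ applied to $f\theta^J$ reduce to the corresponding bracket in the $\Gamma_m(U)$-factor. This furnishes a morphism from $N(\Lambda^{m|r}(U),0)$; one then checks that the generators of $\mathcal{I}^{m|r}(U)$ are sent to zero, yielding a morphism $\Psi$. Mutual inverseness of $\Phi$ and $\Psi$ is then verified on the generators $\overline{x^i}, \overline{\partial/\partial x^i}, \overline{f}, \overline{\theta^j}, \overline{\partial/\partial\theta^j}$, and compatibility with the degree-weight-grading is automatic from the grading on $\Lambda^{m|r}(U)$ and from the standard gradings on $\Gamma_m(U)$ and $\mathcal{E}(W_r)$.

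The main obstacle will be showing that $\Psi$ annihilates the chain-rule generator $\partial_z f(z) - \sum_i \colon \partial_z x^i(z)\,\partial f/\partial x^i(z)\colon - \sum_j \colon \partial_z\theta^j(z)\,\partial f/\partial\theta^j(z)\colon$ when $f = h\,\theta^{j_1}\cdots\theta^{j_k}$ mixes $x$- and $\theta$-variables. The verification proceeds by applying the Leibniz rule for $\partial_z$ to $\Psi(\overline{f}) = \overline{h}\otimes\colon c^{\theta^{j_1}}\cdots c^{\theta^{j_k}}\colon$: the $x$-derivative pieces collapse to zero by the chain-rule relation already satisfied in $\Gamma_m(U)$ (since $\partial h/\partial x^i$ is again a smooth function on $U$), while the remaining $\theta$-derivative pieces cancel because $\partial_z$ acting on $\colon c^{\theta^{j_1}}\cdots c^{\theta^{j_k}}\colon$ distributes via Leibniz to $\sum_l (-1)^{l-1}\colon c^{\theta^{j_1}}\cdots \partial c^{\theta^{j_l}}\cdots c^{\theta^{j_k}}\colon$, matching $\Psi$ of the $\sum_j \colon\partial_z\theta^j\,\partial f/\partial \theta^j\colon$ term. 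The remaining ideal generators $(fg)(z)-\colon f(z)g(z)\colon$ and $1(z)-\mathrm{id}$ are handled similarly by reducing to the corresponding relations inside $\Gamma_m(U)$ and $\mathcal{E}(W_r)$ together with the anticommutation of distinct $c^{\theta^j}(z)$.
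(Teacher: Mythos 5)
Your proposal is correct and follows essentially the same route as the paper: the map you call $\Psi$ is exactly the linear assignment $\alpha\colon \Lambda^{m|r}(U)\to\Gamma_m(U)\otimes\mathcal{E}(W_r)$ (sending $\partial/\partial x^i$, $\partial/\partial\theta^j$, $f\theta^{j_1}\cdots\theta^{j_l}$ to $\overline{\partial/\partial x^i}_{(-1)}\mathbf{1}\otimes\mathbf{1}$, $\mathbf{1}\otimes b^{\partial/\partial\theta^j}_{-1}\mathbf{1}$, $\overline{f}_{(-1)}\mathbf{1}\otimes c^{\theta^{j_1}}_0\cdots c^{\theta^{j_l}}_0\mathbf{1}$) by which the paper induces the isomorphism, and your OPE/ideal-annihilation checks spell out the Lian--Linshaw argument that the paper cites. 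Your additional construction of the inverse $\Phi$ via the tensor-product universal property is just an explicit packaging of the bijectivity step and does not change the substance of the argument.
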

\begin{proof}
The assertion follows by the same argument as in \cite{LL}, where the case when $m=r$ is considered. The canonical isomorphism is induced by the linear map $\alpha: \Lambda^{m|r}(U) \to \Gamma_m(U)\otimes \mathcal{E}(W_r)$ defined by
\begin{align*}
\alpha(\partial/\partial x^i)&:= \overline{\partial/\partial x^i}_{(-1)}\mathbf{1} \otimes \mathbf{1},\ i=1, \dots, m, \\
\alpha(\partial/\partial \theta^j)&:= \mathbf{1} \otimes b^{\partial/\partial\theta^{j}}_{-1}\mathbf{1},\ j=1, \dots, r, \\
\alpha(f\otimes \theta^{j_1}\dotsm\theta^{j_l})&:=\overline{f}_{(-1)}\mathbf{1} \otimes c^{\theta^{j_1}}_0\dotsm c^{\theta^{j_l}}_0\mathbf{1}, \\ 
&\text{for}\ f\otimes \theta^{j_1}\cdots\theta^{j_l}\in C^\infty_{\mathbb{R}^m}(U)\otimes \bigwedge(\theta^1, \dots, \theta^r),
\end{align*}
where we regard $(\partial/\partial\theta^{1}, \dots, \partial/\partial\theta^{r})$ as the basis dual to $(\theta^{1}, \dots, \theta^{r})$ for $W_r^*$.
\end{proof}

We identify $\underrightarrow{\Omega_{\mathrm{ch}}}(\mathbb{R}^{m|r})(U)$ with $\Gamma_m(U)\otimes \mathcal{E}(W_r)$ via this isomorphism.

We denote by $b^j_n$ and $c^j_n$ the element $b^{\partial/\partial\theta^{j}}_{n}=b^{\partial/\partial\theta^{j}}\otimes t^n$ and $c^{\theta^{j}}_n=\theta^{j}\otimes t^{n-1}dt$ in $\mathfrak{j}(W_r)$ (see Example \ref{ex:bc-systems} for the definition of the Lie superalgebra $\mathfrak{j}(W_r)$).

Consider the vector space
$V_m(U):=\oplus_{i=1}^m\mathbb{K}x^i\subset C^\infty(U)$. 
We regard $(\partial/\partial x^1,$ $\dots,$ $\partial/\partial x^m)$ as the basis dual to $(x^1, \dots, x^m)$ and identify the Lie algebra  $D_m(U)=\mathrm{Span}_\mathbb{K}\{  \partial/\partial x^1, \dots,  \partial/\partial x^m \}$ as the dual vector space $V_m(U)^*$.
Consider the Heisenberg Lie algebra associated with $D_m(U)$:
$$
\mathfrak{h}_m(U):=\mathfrak{h}(D_m(U))=(D_m(U)[t^{\pm1}]\oplus V_m(U)[t^{\pm1}]dt)\oplus \mathbb{K}\mathbf{\tau},
$$
with commutation relations 
$
[\beta^{\partial/\partial x^i}_p,\gamma^{x^{i'}}_q]=\delta_{p, -q}\frac{\partial}{\partial x^i}(x^{i'})\mathbf{\tau},
$
where $\beta^{\partial/\partial x^i}_p$ and $\gamma^{x^{i'}}_q$ stand for, respectively, $\partial/\partial x^i\otimes t^p$ and $x^{i'}\otimes t^{q-1}dt$ as in Example \ref{ex:beta_gamma-systems}.

Consider the Heisenberg Lie algebra associated with the vector space $\mathbb{K}^m$:
$$
\mathfrak{h}_m:=\mathfrak{h}(\mathbb{K}^m)=\mathbb{K}^m[t^{\pm1}]\oplus (\mathbb{K}^m)^*[t^{\pm1}]dt\oplus\mathbb{K}\mathbf{\tau}.
$$
Let $(e_1, \dots, e_m)$ be the standard basis of $\mathbb{K}^m$ and $(\phi_1, \dots, \phi_m)$ the dual basis.
We denote by $\beta^i_p$ and $\gamma^j_q$ the elements $e_1\otimes t^p$ and $\phi_i \otimes t^{q-1}dt$.
We identify $\mathfrak{h}_m(U)$ with $\mathfrak{h}_m$ through the isomorphism induced by 
$
V_m(U)\to \mathbb{K}^m,\ x^i\mapsto e^i,\ i=1, \dots, m.
$
We also denote $\beta^{\partial/\partial x^i}_p$ and $\gamma^{x^{i'}}_q$ by $\beta^i_p$ and $\gamma^{i'}_q$, respectively. 
We emphasize that the Lie algebra $\mathfrak{h}_m$ does not depend on the open subset $U$.

Let ${\mathfrak{h}_m(U)}_{\ge0}\subset \mathfrak{h}_m(U)$ be the Lie subalgebra generated by $\mathbf{\tau}$ and $\beta^i_p, \gamma^i_p$ with $i=1, \dots, m$ and $p, q \ge0$. 
We make $C^\infty_{\mathbb{R}^m}(U)$ a ${\mathfrak{h}_m(U)}_{\ge0}$-module by setting 
\begin{gather*}
\beta^i_p\cdot f=\gamma^i_p\cdot f=0,\  p>0, \\
\beta^i_0\cdot f=\frac{\partial}{\partial x^i}(f),\ \gamma^i_0\cdot f=x^i f,\ \mathbf{\tau} \cdot  f=f,
\end{gather*}
where $i=1, \dots, m$ and $f\in C^\infty_{\mathbb{R}^m}(U)$. 
As in \cite{LL}, we consider the $\mathfrak{h}_m(U)$-module
$$
\Tilde{\Gamma}_m(U):=U(\mathfrak{h}_m(U))\otimes_{U({\mathfrak{h}_m(U)}_{\ge0})}C^\infty_{\mathbb{R}^m}(U).
$$
Following \cite{LL}, we define the action of the commutative Lie algebra $C^\infty_{\mathbb{R}^m}(U)[t^{\pm1}]$ on $\Tilde{\Gamma}_m(U)\cong U(\mathfrak{h}_m(U)_{< 0})\otimes C^\infty_{\mathbb{R}^m}(U)$ as follows, where $\mathfrak{h}_m(U)_{<0}$ stands for the Lie subalgebra of $\mathfrak{h}_m(U)$ generated by $\beta^i_p, \gamma^i_p$ with $p<0$ and $i=1, \dots, m$. 
We define the action of $ft^k\in C^\infty_{\mathbb{R}^m}(U)[t^{\pm1}]$, denoted by $f_{(k)}$, using the PBW monomial basis of $U(\mathfrak{h}_m(U)_{< 0})$ for the basis $\beta^i_p, \gamma^i_p,\ p<0, i=1, \dots, m$ with an order such that $\beta^i_p>\gamma^{i'}_{q}$ for any $i, i'=1, \dots, m$ and $p, q<0$.
First, for $ft^k\in C^\infty_{\mathbb{R}^m}(U)[t^{\pm1}]$ with $k\ge -1$ we set 
$$
f_{(k)}(1\otimes g):=\delta_{k, -1}(1\otimes fg), \quad g\in C^\infty_{\mathbb{R}^m}(U),
$$
and for $ft^k\in C^\infty_{\mathbb{R}^m}(U)[t^{\pm1}]$ with $k < -1$ we inductively define $f_{(k)}$ on $\mathbb{K}1\otimes C^\infty_{\mathbb{R}^m}(U)$ by setting 
$$
f_{(k)}(1\otimes g):=\frac{1}{k+1}\sum_{i=1}^m\sum_{q<0}q\gamma^i_q\Bigl(\frac{\partial f}{\partial x^i}\Bigr)_{(k-q)}(1\otimes g), \quad g\in  C^\infty_{\mathbb{R}^m}(U).
$$
Next we define $f_{(k)}(P\otimes g)$ for $ft^k\in C^\infty_{\mathbb{R}^m}(U)[t^{\pm1}]$, a PBW monomial $P$ of  positive length and $g\in C^\infty_{\mathbb{R}^m}(U)$. The PBW monomial $P$ is of the form $\gamma^i_p P'$ or $\beta^i_p P'$ with a PBW monomial $P'$ of length less than that of $P$.
We set 
$$
f_{(k)}(P\otimes g):=
\begin{cases}
\gamma^i_pf_{(k)}(P'\otimes g), & \text{if}\ P=\gamma^i_p P', \\
\beta^i_pf_{(k)}(P'\otimes g)-\bigl(\frac{\partial f}{\partial x^i}\bigl)_{(k+p)}(P'\otimes g), & \text{if}\ P=\beta^i_p P'.
\end{cases}
$$
Thus we get operators $f_{(k)}$ on $\Tilde{\Gamma}_m(U)$.

We put 
$$
\gamma^i(z):=\sum_{n\in\mathbb{Z}}\gamma^i_n z^{-n}, \quad
\beta^i(z):=\sum_{n\in\mathbb{Z}}\beta^i_n z^{-n-1},
$$
for $i=1, \dots, m$ and 
$$
f(z):=\sum_{k\in\mathbb{Z}}f_{(k)}z^{-k-1}, 
$$
for $f \in C^\infty_{\mathbb{R}^m}(U)$.
Note that when $f=x^i$, we have $x^i(z)=\gamma^i(z)$, or equivalently, $x^i_{(k)}=\gamma^i_{k+1}$ for all $k\in \mathbb{Z}$.
For $f\in C^\infty_{\mathbb{R}^m}(U)$,  we also use the notation 
$$
f(z)=\sum_{n\in\mathbb{Z}}f_n z^{-n},
$$
or equivalently, $f_{(k)}=f_{k+1}$ for $k\in \mathbb{Z}$ and 
simply denote by $f$ the element $1\otimes f\in \Tilde{\Gamma}(U)$. 
Since $\beta^i(z)$ and $\gamma^i(z)$ come from the action of the
Heisenberg Lie algebra $\mathfrak{h}_m(U)$, we have 
\begin{gather}
\label{eq: [beta(z),beta(w)]=0}
[\beta^i(z),\beta^{i'}(w)]=0, \\
[\gamma^i(z),\gamma^{i'}(w)]=0, \quad [\beta^i(z),\gamma^{i'}(w)]=\delta_{i, i'}\delta(z-w),  \notag 
\end{gather}
for $i, i'=1, \dots, m$.
The following lemmas are proved in \cite[Section 2.5]{LL}. 

\begin{lemma}[Lian-Linshaw] \label{lem: [beta(z),f(w)]=df/dx(w)delta(z-w)}
The following hold:
\begin{equation}\label{eq: [beta(z),f(w)]=df/dx(w)delta(z-w)}
[\gamma^i(z),f(w)]=0, \quad [\beta^i(z),f(w)]=\frac{\partial f}{\partial x^i}(w)\delta(z-w).
\end{equation}
for all $f \in C^\infty_{\mathbb{R}^m}(U)$ and $i=1, \dots, m$.
\end{lemma}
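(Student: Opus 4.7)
The plan is to translate both distribution identities into mode-by-mode commutation relations,
\[
[\gamma^i_p, f_{(k)}]=0, \qquad [\beta^i_p, f_{(k)}]=(\partial f/\partial x^i)_{(k+p)},
\]
for all $p,k\in\mathbb{Z}$, $i\in\{1,\ldots,m\}$ and $f\in C^\infty_{\mathbb{R}^m}(U)$, and then to prove both simultaneously by induction on the PBW-filtration degree of vectors in $\Tilde\Gamma_m(U)\cong U(\mathfrak{h}_m(U)_{<0})\otimes C^\infty_{\mathbb{R}^m}(U)$, with an inner induction on the mode index $k$ at the base of the outer induction.

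For the base case the vector is $1\otimes g$. When $k\ge-1$ the definition gives $f_{(k)}(1\otimes g)=\delta_{k,-1}(1\otimes fg)$, and the identities reduce to the Leibniz rule $\partial(fg)/\partial x^i=(\partial f/\partial x^i)g+f(\partial g/\partial x^i)$ (which yields $[\beta^i_0,f_{(-1)}]=(\partial f/\partial x^i)_{(-1)}$ after unpacking $\beta^i_0\cdot g=\partial g/\partial x^i$) together with the trivial identities $[\gamma^i_p,f_{(k)}](1\otimes g)=0$ coming from $\gamma^i_p\cdot g=0$ for $p>0$ and $\gamma^i_0\cdot g=x^ig$. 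For $k<-1$, I would substitute the recursive definition of $f_{(k)}(1\otimes g)$ and appeal to an inner induction on $k$: the right-hand side only involves $(\partial f/\partial x^{i'})_{(k-q)}$ with $q<0$ (hence $k-q>k$), and the $\gamma^{i'}_q$'s in the formula commute with $\beta^i_p$ up to the scalar delta-function contribution and with $\gamma^i_p$ identically. For the inductive step, I pass from a PBW monomial $P'$ to either $P=\gamma^{i'}_{p'}P'$ or $P=\beta^{i'}_{p'}P'$ with $p'<0$. In the $\gamma$ case the definition reads $f_{(k)}(P\otimes g)=\gamma^{i'}_{p'}f_{(k)}(P'\otimes g)$, so the commutators with $\gamma^i_p,\beta^i_p$ reduce to the inductive hypothesis applied to $P'\otimes g$. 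In the $\beta$ case the correction term $-(\partial f/\partial x^{i'})_{(k+p')}(P'\otimes g)$ appearing in the defining recursion is precisely what implements the second commutation relation; the first then follows by induction since $\gamma^i_p$ commutes with $\beta^{i'}_{p'}$ and with $(\partial f/\partial x^{i'})_{(k+p')}$ by hypothesis.

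The main obstacle I anticipate is bookkeeping, not deep insight. The recursion defining $f_{(k)}$ is phrased against a fixed PBW-monomial basis with the order $\beta>\gamma$, so the induction must also verify that swapping two adjacent $\beta$'s or two adjacent $\gamma$'s in the prefix $P$ produces the same value of $f_{(k)}(P\otimes g)$; in effect, well-definedness of the operators $f_{(k)}$ and the commutation identities have to be established in tandem, since the latter are exactly what justifies rewriting monomials in normal form. Once this compatibility is verified, both OPEs follow essentially by construction, matching the argument of \cite[Section~2.5]{LL}.
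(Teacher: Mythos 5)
The paper itself gives no proof of this lemma: it is quoted from Lian--Linshaw, with the text referring to \cite{LL}, Section 2.5. Your plan --- translating the two OPEs into the mode identities $[\gamma^i_p,f_{(k)}]=0$ and $[\beta^i_p,f_{(k)}]=(\partial f/\partial x^i)_{(k+p)}$ and proving them by induction on the PBW length of the vector acted upon, with an inner downward induction on $k$ at the base case --- is essentially the argument of that reference, and it is sound; for instance, at the base case with $k<-1$ the commutator term $[\beta^i_p,\gamma^{i'}_q]=\delta_{i,i'}\delta_{p,-q}$ contributes $\tfrac{-p}{k+1}(\partial_i f)_{(k+p)}$, which combines with the inner inductive hypothesis to give exactly $(\partial_i f)_{(k+p)}$. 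One small correction to your closing remark: the operators $f_{(k)}$ are well defined outright, since they are defined on the fixed PBW basis; what must be checked in tandem with the induction is rather that the defining recursion can be applied with \emph{any} negative-mode prefix, not only the leading factor in the chosen normal order --- and since $\mathfrak{h}_m(U)_{<0}$ is commutative this reordering costs nothing beyond the equality of mixed partials $\partial_i\partial_j f=\partial_j\partial_i f$, which is precisely what makes the two correction terms cancel in the $\beta$--$\beta$ reordering step.
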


\begin{lemma}[Lian-Linshaw]\label{lem: relations for f(z)}
The following hold:
\begin{gather}
\label{eq: [f(z),g(z)]=0}
[f(z),g(w)]=0, \\
\label{eq: df(z)=df/dx(z)dx(z)}
\frac{d}{dz}f(z)=\sum_{i=1}^m\frac{d}{dz}\gamma^i(z)\frac{\partial f}{\partial x^i}(z),
\end{gather}
for $f, g \in C^\infty_{\mathbb{R}^m}(U)$ and
\begin{equation}
\label{eq: 1(z)=1}
1(z)=\mathrm{id}.
\end{equation}
\end{lemma}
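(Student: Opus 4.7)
The plan is to prove each of the three identities separately by induction on the length of PBW monomials in $U(\mathfrak{h}_m(U)_{<0})$, working directly from the recursive construction of the operators $f_{(k)}$ on $\tilde{\Gamma}_m(U)$. The pattern follows \cite[Section 2.5]{LL}.

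First I would dispatch \eqref{eq: 1(z)=1}. On a vector $1\otimes g$ the base definition gives $1_{(k)}(1\otimes g)=\delta_{k,-1}(1\otimes g)$ for $k\ge -1$, and for $k<-1$ the recursive formula yields $0$ because $\partial 1/\partial x^i\equiv 0$. The extension rules propagate this to arbitrary $P\otimes g$: pushing $1_{(k)}$ past a $\gamma^i_p$ is free, while pushing it past a $\beta^i_p$ produces the correction $-(\partial 1/\partial x^i)_{(k+p)}$, which vanishes. Hence $1_{(k)}=\delta_{k,-1}\,\mathrm{id}$, i.e.\ $1(z)=\mathrm{id}$.

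Next I would establish \eqref{eq: [f(z),g(z)]=0} by showing $[f_{(k)},g_{(l)}]=0$ for all $k,l$. The base case on $1\otimes h$ with $k,l\ge -1$ is immediate from the commutativity of $C^\infty_{\mathbb{R}^m}(U)$; when $k<-1$ one applies the recursive formula and uses that each $\gamma^i_q$ with $q<0$ commutes with $g_{(l)}$ (from the extension rule for $\gamma$), together with the inductive hypothesis applied to $(\partial f/\partial x^i)_{(k-q)}$. The extension to $P\otimes h$ with $P$ of positive length is where the bookkeeping becomes delicate: whenever one commutes $f_{(k)}$ or $g_{(l)}$ past a factor $\beta^j_p$, the extension rule produces a correction $-(\partial f/\partial x^j)_{(k+p)}$ (respectively $-(\partial g/\partial x^j)_{(l+p)}$). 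These corrections appear symmetrically in the two orderings $f_{(k)}g_{(l)}$ and $g_{(l)}f_{(k)}$, and the inductive hypothesis applied to both $\partial f/\partial x^j$ and $\partial g/\partial x^j$ (together with $f$ and $g$ at shorter $P$) forces their difference to vanish.

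For the derivative identity \eqref{eq: df(z)=df/dx(z)dx(z)}, the coefficient form is
\begin{equation*}
(k+1)f_{(k)}=\sum_{i=1}^m\sum_{p\in\mathbb{Z}}p\,\gamma^i_p\,(\partial f/\partial x^i)_{(k-p)},
\end{equation*}
where the products on the right are unambiguous because $[\gamma^i(z),(\partial f/\partial x^i)(w)]=0$ by Lemma \ref{lem: [beta(z),f(w)]=df/dx(w)delta(z-w)} combined with \eqref{eq: [f(z),g(z)]=0}. On $1\otimes g$, the $p=0$ term is killed by the factor $p$, and for $p>0$ the operator $\gamma^i_p$ annihilates $1\otimes g$ after commuting freely through all the $\gamma$-factors produced by the recursive definition of $(\partial f/\partial x^i)_{(k-p)}$; what remains is exactly the defining recursion for $f_{(k)}$ when $k<-1$, and both sides vanish for $k\ge -1$. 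The extension to $P\otimes h$ is by induction on the length of $P$. The nontrivial step occurs when $P=\beta^j_q P'$: commuting the $\gamma^i_p$'s and $(\partial f/\partial x^i)_{(k-p)}$'s through $\beta^j_q$ via $[\beta^j_q,\gamma^i_p]=\delta_{i,j}\delta_{p,-q}$ and via the $\beta$-extension rule for $(\partial f/\partial x^i)_{(k-p)}$ produces two correction terms; summing them and invoking the inductive hypothesis for $\partial f/\partial x^j$ at level $k+q$ collapses them to exactly $-(k+1)(\partial f/\partial x^j)_{(k+q)}(P'\otimes h)$, which matches the correction produced by the $\beta$-extension rule for $f_{(k)}(P\otimes h)$ itself.

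The main obstacle is this last extension step, where several correction terms (one from each $\beta$-commutator) must be combined using the inductive hypothesis for derivatives to precisely reproduce the correction dictated by the $\beta$-extension rule for $f_{(k)}$.
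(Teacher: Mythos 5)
This lemma is not proved in the paper at all: it is attributed to Lian--Linshaw and the text simply refers to \cite[Section 2.5]{LL}, so there is no in-paper argument to compare with. Your plan is essentially a reconstruction of that cited argument: verify the identities coefficientwise on the PBW basis of $U(\mathfrak{h}_{m,<0})\otimes C^\infty_{\mathbb{R}^m}(U)$, using the recursive definition of the operators $f_{(k)}$ together with the operator relations $[\gamma^i_p,f_{(k)}]=0$ and $[\beta^i_p,f_{(k)}]=(\partial f/\partial x^i)_{(k+p)}$ coming from Lemma \ref{lem: [beta(z),f(w)]=df/dx(w)delta(z-w)}, and it is sound. In particular your description of the $\beta$-step for \eqref{eq: df(z)=df/dx(z)dx(z)} is exactly right: the contraction $[\beta^j_q,\gamma^i_p]=\delta_{ij}\delta_{q,-p}$ contributes $q(\partial f/\partial x^j)_{(k+q)}(P'\otimes h)$, the inductive hypothesis applied to $\partial f/\partial x^j$ at mode $k+q$ turns the second correction into $-(k+q+1)(\partial f/\partial x^j)_{(k+q)}(P'\otimes h)$, and the two combine to $-(k+1)(\partial f/\partial x^j)_{(k+q)}(P'\otimes h)$, matching the extension rule for $f_{(k)}$; similarly, in \eqref{eq: [f(z),g(z)]=0} the corrections on $\beta^j_q P'$ reduce the commutator to $[(\partial f/\partial x^j)_{(k+q)},g_{(l)}]$ and $[f_{(k)},(\partial g/\partial x^j)_{(l+q)}]$ on the shorter monomial, which the hypothesis kills. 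Two points should be tightened in a full write-up. First, induction on PBW length alone does not suffice: the length-zero case of \eqref{eq: [f(z),g(z)]=0} and \eqref{eq: df(z)=df/dx(z)dx(z)} with $k<-1$ requires a nested (strong) induction on the mode index, since the recursion expresses $f_{(k)}$ through $(\partial f/\partial x^i)_{(k-q)}$ with $k-q>k$; accordingly the inductive statement must be quantified over all smooth functions and all modes, which your appeals to the hypothesis ``for $\partial f/\partial x^j$ at level $k+q$'' already presuppose but the stated induction scheme does not. Second, the coefficientwise form of \eqref{eq: df(z)=df/dx(z)dx(z)} is a sum over all $p\in\mathbb{Z}$, and its well-definedness on a fixed vector is not a consequence of the two fields commuting; it follows from the field property (modes of sufficiently large positive index annihilate any fixed vector, because the $\beta$-corrections lower the mode only by the finitely many weights present in $P$) together with $\gamma^i_p(1\otimes g)=0$ for $p>0$. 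With these adjustments the argument goes through as you describe.
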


As a corollary of the above two lemmas, we have the following proposition, which corresponds to \cite[Corollary 2.21]{LL}.  

%VERTEX ALGEBRA STRUCTURE ON GAMMA-TILDE
\begin{proposition}\label{prop: VERTEX ALGEBRA STRUCTURE ON GAMMA-TILDE}
$\Tilde{\Gamma}_m(U)$ has a unique vertex algebra structure such that
\begin{enumerate}[$\bullet$]
     \setlength{\topsep}{1pt}
     \setlength{\partopsep}{0pt}
     \setlength{\itemsep}{1pt}
     \setlength{\parsep}{0pt}
     \setlength{\leftmargin}{20pt}
     \setlength{\rightmargin}{0pt}
     \setlength{\listparindent}{0pt}
     \setlength{\labelsep}{3pt}
     \setlength{\labelwidth}{30pt}
     \setlength{\itemindent}{0pt}
\item the vacuum vector is $\mathbf{1} :=1\otimes 1,$
\item the translation operator is $T:=\mathrm{Res}_{z=0}\sum_{i=1}^m\,{\baselineskip0pt\lineskip0.3pt\vcenter{\hbox{$\cdot$}\hbox{$\cdot$}}\,\beta^i(z)\partial_z\gamma^i(z)\,\vcenter{\hbox{$\cdot$}\hbox{$\cdot$}}}$, 
\item the vertex operators satisfy $Y(1\otimes f, z)=f(z)$ for $f\in C^\infty_{\mathbb{R}^m}(U)$ and $Y(\beta^i_{-1}\otimes1, z)=\beta^i(z)$ for $i=1, \dots, m.$
\end{enumerate}
\end{proposition}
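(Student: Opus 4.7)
My plan is to invoke the Reconstruction Theorem for vertex algebras (see \cite[Theorem 4.4.1]{FBZ04}) applied to the generating set $\mathcal{S}=\{\beta^i_{-1}\otimes\mathbf{1},\ \gamma^i_{-1}\otimes\mathbf{1}=1\otimes x^i,\ 1\otimes f\mid i=1,\dots,m,\ f\in C^\infty_{\mathbb{R}^m}(U)\}$ together with the corresponding fields $\beta^i(z)$, $\gamma^i(z)$, and $f(z)$ already constructed on $\Tilde{\Gamma}_m(U)$.

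\textbf{Step 1: Mutual locality of the generating fields.} This is already established: the pairwise commutation relations $[\beta^i(z),\beta^{i'}(w)]=0$, $[\gamma^i(z),\gamma^{i'}(w)]=0$, $[\beta^i(z),\gamma^{i'}(w)]=\delta_{ii'}\delta(z-w)$ follow from the Heisenberg relations (equations before \eqref{eq: [beta(z),beta(w)]=0}), while $[\beta^i(z),f(w)]=(\partial f/\partial x^i)(w)\delta(z-w)$, $[\gamma^i(z),f(w)]=0$, and $[f(z),g(w)]=0$ come from Lemma \ref{lem: [beta(z),f(w)]=df/dx(w)delta(z-w)} and \eqref{eq: [f(z),g(z)]=0}. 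All of these are local commutation relations of order at most $1$, hence the fields are mutually local.

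\textbf{Step 2: Translation operator.} Define $T$ by the displayed formula. One checks directly that $T\mathbf{1}=0$ since every summand $\beta^i_n\gamma^i_{-1-n}\mathbf{1}$ vanishes on $\mathbf{1}$. The commutation relations $[T,\beta^i(z)]=\partial_z\beta^i(z)$ and $[T,\gamma^i(z)]=\partial_z\gamma^i(z)$ follow by a standard Heisenberg OPE computation using the explicit formula for $T$ as a residue of a normally ordered product. The relation $[T,f(z)]=\partial_z f(z)$ then follows from \eqref{eq: df(z)=df/dx(z)dx(z)}: using the Leibniz rule for $[T,\cdot]$ applied to normally ordered products of $\gamma^i(z)$ and the already-known action of $T$ on $\gamma^i(z)$ and $(\partial f/\partial x^i)(z)$ (by induction on the length of differentiation chains for polynomial combinations, extended by density/limits of formal power series considerations — here it follows essentially by unwinding the inductive definition of $f_{(k)}$).

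\textbf{Step 3: Vacuum property on generators.} We must verify $A(z)\mathbf{1}\in \Tilde{\Gamma}_m(U)[[z]]$ with $A(z)\mathbf{1}|_{z=0}=A$ for $A\in\mathcal{S}$. For $A=\beta^i_{-1}\mathbf{1}$ this is the standard statement about $\beta^i(z)\mathbf{1}$. For $A=1\otimes x^i=\gamma^i_{-1}\mathbf{1}$, note $x^i(z)=\gamma^i(z)$. For $A=1\otimes f$, this is the statement $f(z)\mathbf{1}|_{z=0}=f$, which follows from the inductive construction: $f_{(-1)}\mathbf{1}=1\otimes f$ and $f_{(k)}\mathbf{1}=0$ for $k\ge 0$ by direct inspection of the definitions.

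\textbf{Step 4: Strong generation.} The Reconstruction Theorem requires that vectors of the form $a^1_{(n_1)}\cdots a^k_{(n_k)}\mathbf{1}$ with $a^j\in\mathcal{S}$ and $n_j<0$ span $\Tilde{\Gamma}_m(U)$. Any element of $\Tilde{\Gamma}_m(U)\cong U(\mathfrak{h}_m(U)_{<0})\otimes C^\infty_{\mathbb{R}^m}(U)$ is a sum of vectors of the form $\beta^{i_1}_{p_1}\cdots\beta^{i_k}_{p_k}\gamma^{j_1}_{q_1}\cdots\gamma^{j_\ell}_{q_\ell}(1\otimes f)$ with all $p_s,q_t<0$, and since $1\otimes f=f_{(-1)}\mathbf{1}$ with $f_{(-1)}$ a mode of one of our generating fields, this is exactly of the required form. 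Hence $\mathcal{S}$ strongly generates $\Tilde{\Gamma}_m(U)$.

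\textbf{Conclusion and main obstacle.} By the Reconstruction Theorem, Steps 1--4 yield a unique vertex algebra structure on $\Tilde{\Gamma}_m(U)$ with $\mathbf{1}$, $T$, and the prescribed vertex operators on $\mathcal{S}$. The step I expect to be the most delicate is Step 2 for the general $f(z)$: the formula for $T$ is given in terms of $\beta^i(z)$ and $\gamma^i(z)$ only, and to verify $[T,f(z)]=\partial_z f(z)$ one must reconcile this with the inductive definition of $f_{(k)}$ for $k<-1$. The cleanest route is to note that both sides are fields whose commutators with $\beta^j(w)$ and $\gamma^j(w)$ agree (using Lemma \ref{lem: [beta(z),f(w)]=df/dx(w)delta(z-w)}), both annihilate $\mathbf{1}$ appropriately when evaluated at $z=0$, and then to conclude by an argument in the spirit of Lemma \ref{lem: sufficient condition for B-linearilty} applied to the Heisenberg generators that they coincide on all of $\Tilde{\Gamma}_m(U)$.
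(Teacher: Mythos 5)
Your proposal is correct and follows essentially the same route as the paper: the paper's proof likewise invokes the Frenkel--Kac--Radul--Wang existence theorem of \cite{FKRW95} (the same statement as the reconstruction theorem in \cite{FBZ04} that you cite), with locality, the vacuum property and spanning supplied by the Heisenberg relations, the lemmas on $f(z)$ and the PBW description, and with $[T,\beta^i(z)]=\partial_z\beta^i(z)$ and $[T,f(z)]=\partial_z f(z)$ checked by direct computation from $[\beta^i(z),f(w)]=\frac{\partial f}{\partial x^i}(w)\delta(z-w)$, i.e.\ your first (``unwinding the inductive definition of $f_{(k)}$'') option rather than the commutant-style argument, which as stated would still require agreement on all of $1\otimes C^\infty_{\mathbb{R}^m}(U)$ rather than only on $\mathbf{1}$. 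The only slip is notational: in the paper's mode convention the state attached to the field $x^i(z)=\gamma^i(z)$ is $\gamma^i_0\mathbf{1}=1\otimes x^i$, not $\gamma^i_{-1}\mathbf{1}$, which does not affect the argument.
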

\begin{proof}
We will apply the existence theorem of Frenkel-Kac-Radul-Wang (see \cite[Proposition 3.1]{FKRW95}). The relations $[T,f(z)]=\partial_zf(z)$ and $[T,\beta^i(z)]$ $=\partial_z\beta^i(z)$ are checked by direct computations with \eqref{eq: [beta(z),f(w)]=df/dx(w)delta(z-w)}. By Lemmas \ref{lem: [beta(z),f(w)]=df/dx(w)delta(z-w)} and \ref{lem: relations for f(z)}, the other conditions in the existence theorem are satisfied.
\end{proof}

By Proposition \ref{prop: VERTEX ALGEBRA STRUCTURE ON GAMMA-TILDE} and $f_{(-1)}g=fg$, we have
\begin{equation}\label{eq: (fg)(z)=f(z)g(z)}
(fg)(z)=f(z)g(z),
\end{equation}
for $f, g\in C^\infty_{\mathbb{R}^m}(U)$. 
By OPEs  \eqref{eq: [beta(z),beta(w)]=0}, \eqref{eq: [f(z),g(z)]=0} and \eqref{eq: (fg)(z)=f(z)g(z)}, we have a vertex algebra morphism
$
N\bigl(D_m(U)\ltimes C^\infty_{\mathbb{R}^m}(U), 0\bigr)\to \Tilde{\Gamma}_m(U)
$
sending $f$ and $\partial/\partial x^i$ to $f=1\otimes f$ and $\beta^i_{-1}\mathbf{1}=\beta^i_{-1}\otimes1$, respectively. 
Moreover by the relations \eqref{eq: df(z)=df/dx(z)dx(z)}, \eqref{eq: 1(z)=1} and \eqref{eq: (fg)(z)=f(z)g(z)}, this morphism factors through the ideal $\mathcal{I}_m(U)$, and hence we have a morphism from $\Gamma_m(U)$. We can see this map is bijective, by taking into consideration the form of the basis of $\Tilde{\Gamma}_m(U)$ and Lemma \ref{rem: generators of Gamma_m(U)}. Thus we have the following.

%GAMMA=GAMMA-TILDE
\begin{proposition}\label{prop: GAMMA=GAMMA-TILDE}
The linear map 
\begin{align*}
D_m(U)\ltimes C^\infty_{\mathbb{R}^m}(U)&\to \Tilde{\Gamma}_m(U), \\
 f &\to 1\otimes f, \\
\partial/\partial x^i &\to \beta^i_{-1}\otimes1,
\end{align*}
induces an isomorphism of vertex algebras
$$
\Gamma_m(U) \xrightarrow{\cong} \Tilde{\Gamma}_m(U).
$$
\end{proposition}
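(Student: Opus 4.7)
The plan is to build the isomorphism in three steps: produce a vertex algebra morphism out of the free affine vertex algebra, show it descends through the ideal $\mathcal{I}_m(U)$, and then verify bijectivity by matching the generators of Remark \ref{rem: generators of Gamma_m(U)} against the natural PBW basis of $\Tilde{\Gamma}_m(U)$.

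First I would invoke the universal property of the affine vertex algebra: the OPE $[\beta^i(z),\beta^{i'}(w)]=0$ from \eqref{eq: [beta(z),beta(w)]=0}, together with $[\beta^i(z),f(w)]=(\partial f/\partial x^i)(w)\delta(z-w)$ and $[f(z),g(w)]=0$ from Lemmas \ref{lem: [beta(z),f(w)]=df/dx(w)delta(z-w)} and \ref{lem: relations for f(z)}, encode exactly the commutation relations of the level-zero affinization of the Lie algebra $D_m(U)\ltimes C^\infty_{\mathbb{R}^m}(U)$. Hence the assignment $\partial/\partial x^i\mapsto \beta^i_{-1}\otimes\mathbf{1}$, $f\mapsto 1\otimes f$ extends uniquely to a vertex algebra morphism $\pi:N(D_m(U)\ltimes C^\infty_{\mathbb{R}^m}(U),0)\to\Tilde{\Gamma}_m(U)$. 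Next I would check that $\pi$ kills the three families of generators of $\mathcal{I}_m(U)$: $1(z)-\mathrm{id}$ is handled by \eqref{eq: 1(z)=1}; $(fg)(z)-{}\vcenter{\hbox{$\cdot$}\hbox{$\cdot$}}f(z)g(z)\vcenter{\hbox{$\cdot$}\hbox{$\cdot$}}$ by \eqref{eq: (fg)(z)=f(z)g(z)} (the normal ordering is automatic because of \eqref{eq: [f(z),g(z)]=0}); and $\frac{d}{dz}f(z)-\sum_i{}\vcenter{\hbox{$\cdot$}\hbox{$\cdot$}}\frac{d}{dz}x^i(z)\,\frac{\partial f}{\partial x^i}(z)\vcenter{\hbox{$\cdot$}\hbox{$\cdot$}}$ by \eqref{eq: df(z)=df/dx(z)dx(z)} together with the identity $x^i(z)=\gamma^i(z)$. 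This produces the desired map $\bar\pi:\Gamma_m(U)\to\Tilde{\Gamma}_m(U)$.

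Finally, for bijectivity I would match spanning sets. Under $\bar\pi$, the spanning vector of $\Gamma_m(U)$ described in Remark \ref{rem: generators of Gamma_m(U)} is sent to
$$
\beta^{i_1}_{n_1}\cdots\beta^{i_k}_{n_k}\,\gamma^{i'_1}_{n'_1+1}\cdots\gamma^{i'_{k'}}_{n'_{k'}+1}\,(1\otimes f),
$$
where every mode index is $\le -1$ (since $x^i_{(n)}=\gamma^i_{n+1}$). In this negative-mode regime the commutator $[\beta^i_p,\gamma^{i'}_q]=\delta_{i,i'}\delta_{p,-q}\mathbf{\tau}$ vanishes, so these creators mutually commute. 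Consequently, as the indices vary, these images run precisely through (reorderings of) the full PBW basis of $U(\mathfrak{h}_m(U)_{<0})$ acting on $1\otimes f$, and hence span all of $\Tilde{\Gamma}_m(U)\cong U(\mathfrak{h}_m(U)_{<0})\otimes C^\infty_{\mathbb{R}^m}(U)$. Surjectivity follows at once, while injectivity follows because a null relation in $\Gamma_m(U)$ among the spanning vectors would yield a null relation among distinct PBW monomials, which is impossible.

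The main obstacle is the injectivity step, because Remark \ref{rem: generators of Gamma_m(U)} only supplies a spanning set, not a basis, of $\Gamma_m(U)$; there is no a priori bound on linear dependencies coming from the ideal $\mathcal{I}_m(U)$. The argument just outlined circumvents this by proving linear independence of the images in $\Tilde{\Gamma}_m(U)$, which retroactively promotes the spanning set to a basis of $\Gamma_m(U)$. The subtle point to verify along the way is that the passage from ordered-tuple parametrisation of the spanning monomials to the unordered PBW parametrisation is legitimate, and this uses crucially that all creators involved are even and that the negative-mode brackets among $\beta$'s and $\gamma$'s vanish.
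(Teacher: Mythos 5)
Your proposal is correct and follows essentially the same route as the paper: construct the morphism out of $N\bigl(D_m(U)\ltimes C^\infty_{\mathbb{R}^m}(U),0\bigr)$ from the OPEs of Lemmas \ref{lem: [beta(z),f(w)]=df/dx(w)delta(z-w)} and \ref{lem: relations for f(z)}, check it annihilates the generators of $\mathcal{I}_m(U)$ via \eqref{eq: df(z)=df/dx(z)dx(z)}, \eqref{eq: 1(z)=1}, \eqref{eq: (fg)(z)=f(z)g(z)}, and deduce bijectivity by comparing the spanning set of Remark \ref{rem: generators of Gamma_m(U)} with the PBW decomposition $\Tilde{\Gamma}_m(U)\cong U(\mathfrak{h}_{m,<0})\otimes C^\infty_{\mathbb{R}^m}(U)$. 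Your write-up merely makes explicit the bijectivity step (commuting negative modes, grouping by PBW monomial, linearity in $f$) that the paper leaves as a brief remark.
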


We identify $\Gamma_m(U)$ with $\Tilde{\Gamma}_m(U)$ through this isomorphism.

Let 
$\mathfrak{h}_{m, \ge0}$ and $\mathfrak{h}_{m, <0}$ be the Lie subalgebras of $\mathfrak{h}_m$ generated by $\beta^i_p, \gamma^i_p$ with $p\ge 0,\ i=1, \dots, m$, and $\beta^i_p, \gamma^i_p$ with $p<0,\ i=1, \dots, m$, respectively.
By Poincar\'{e}-Birkhoff-Witt theorem, the decomposition $\mathfrak{h}_m=\mathfrak{h}_{m, <0}\oplus \mathfrak{h}_{m, \ge0}$ induces an isomorphism as vector spaces 
$$
\Tilde{\Gamma}_m(U)\cong U(\mathfrak{h}_{m, <0})\otimes C^\infty_{\mathbb{R}^m}(U).
$$
Using the isomorphism in Lemma \ref{prop: GAMMA=GAMMA-TILDE}, we have an isomorphism
%BASIS OF GAMMA
\begin{equation}\label{eq: basis of Gamma}
\Gamma_m(U)\cong U(\mathfrak{h}_{m, <0})\otimes C^\infty_{\mathbb{R}^m}(U).
\end{equation}
Therefore, by Lemma \ref{lem: omega_ch=beta-gamma_bc} we have 
%OMEGA_CHIRAL=BETA-GAMMA_BC_C-INFTY
\begin{equation}\label{eq: omega_ch=beta-gamma_bc-C_infty}
\underrightarrow{\Omega_{\mathrm{ch}}}(\mathbb{R}^{m|r})(U)\cong U(\mathfrak{h}_{m, <0})\otimes C^\infty_{\mathbb{R}^m}(U)\otimes \mathcal{E}(W_r).
\end{equation}
By restricting this isomorphism to the space of  weight $n\in \mathbb{N}$, $\underrightarrow{\Omega_{\mathrm{ch}}}(\mathbb{R}^{m|r})(U)[n]$, we get an isomorphism
%OMEGA_CHIRAL[n]=BETA-GAMMA_BC_C-INFTY[n]
\begin{equation}\label{eq: omega_ch=beta-gamma_bc-C_infty[n]}
\underrightarrow{\Omega_{\mathrm{ch}}}(\mathbb{R}^{m|r})(U)[n]\cong (U(\mathfrak{h}_{m, <0})\otimes \mathcal{E}(W_r))[n]\otimes C^\infty_{\mathbb{R}^m}(U),
\end{equation}
where $(U(\mathfrak{h}_{m, <0})\otimes \mathcal{E}(W_r))[n]$ stands for the weight $n$ space of $U(\mathfrak{h}_{m, <0})\otimes \mathcal{E}(W_r)$ with respect to the grading induced by the $\mathbb{Z}_{\ge0}$-grading on $\mathcal{E}(W_r)$ and the grading on $U(\mathfrak{h}_{m, <0})$ given by $\mathrm{wt}\,\beta^i_p=\mathrm{wt}\,\gamma^i_p=-p$. Note that $(U(\mathfrak{h}_{m, <0})\otimes \mathcal{E}(W_r))[n]$ is finite-dimensional.

For two open subsets $V\subset U\subset \mathbb{R}^m$, we define the restriction map 
$
\mathrm{Res}^{m|r}_{V, U}: \underrightarrow{\Omega_{\mathrm{ch}}}(\mathbb{R}^{m|r})(U)\to \underrightarrow{\Omega_{\mathrm{ch}}}(\mathbb{R}^{m|r})(V)
$ 
as follows. 
The restriction map $C^\infty_{\mathbb{R}^{m|r}}(U)\to C^\infty_{\mathbb{R}^{m|r}}(V)$ induces a Lie superalgebra morphism
$
\Lambda^{m|r}(U)\to\Lambda^{m|r}(V)
$
and this morphism induces a  morphism
$
N(\Lambda^{m|r}(U), 0) \to N(\Lambda^{m|r}(V), 0)
$ of vertex superalgebras. 
Since this vertex superalgebra morphism is induced by an algebra morphism, namely, a morphism preserving the product and the unit, the generators of the ideal $\mathcal{I}^{m|r}(U)$ is mapped into $\mathcal{I}^{m|r}(V)$.
Therefore we have a vertex superalgebra morphism 
from $\underrightarrow{\Omega_{\mathrm{ch}}}(\mathbb{R}^{m|r})(U)$ to $\underrightarrow{\Omega_{\mathrm{ch}}}(\mathbb{R}^{m|r})(V)$.
We define the restriction map
$$
\mathrm{Res}^{m|r}_{V, U}: \underrightarrow{\Omega_{\mathrm{ch}}}(\mathbb{R}^{m|r})(U)\to \underrightarrow{\Omega_{\mathrm{ch}}}(\mathbb{R}^{m|r})(V),
$$
as this vertex superalgebra morphism. Note that this restriction map  preserves the degree-weight-grading.

The assignment 
$$
U\mapsto\underrightarrow{\Omega_{\mathrm{ch}}}(\mathbb{R}^{m|r})(U),
$$
and the maps $\mathrm{Res}^{m|r}_{V, U}$ define a presheaf of degree-weight-graded vertex superalgebras. 
We denote by $\underrightarrow{\Omega_{\mathrm{ch}}}(\mathbb{R}^{m|r})$ the presheaf.
We claim that this presheaf is an object of the category $\mathit{Presh}_{\mathbb{R}^m}(\textit{DegWt-VSA}_\mathbb{K})_\mathrm{bdw, sh}$ (see Section \ref{subsection: From Presheaves to VSA-Inductive Sheaves} for the definition of this category). It suffices to check the presheaves $\underrightarrow{\Omega_{\mathrm{ch}}}(\mathbb{R}^{m|r})[n]$ are sheaves for all $n\in \mathbb{N}$, where
the presheaf $\underrightarrow{\Omega_{\mathrm{ch}}}(\mathbb{R}^{m|r})[n]$ is defined by the assignment 
$$
U\mapsto \Gamma(U, \underrightarrow{\Omega_{\mathrm{ch}}}(\mathbb{R}^{m|r}))[n],
$$
and the maps $\mathrm{Res}^{m|r}_{V, U}\big|_{\Gamma(U, \underrightarrow{\Omega_{\mathrm{ch}}}(\mathbb{R}^{m|r}))[n]}$.

The restriction map 
$
\mathrm{Res}^{m|r}_{V, U}\big|_{\Gamma(U, \underrightarrow{\Omega_{\mathrm{ch}}}(\mathbb{R}^{m|r}))[n]}
$
becomes
$$
\mathrm{id}\otimes\mathrm{Res}_{V, U}\!\!: \!(U(\mathfrak{h}_{m, <0})\otimes \mathcal{E}(W_r))[n]\,\otimes C^\infty_{\mathbb{R}^m}(U)\!\to\! (U(\mathfrak{h}_{m, <0})\otimes \mathcal{E}(W_r))[n]\,\otimes C^\infty_{\mathbb{R}^m}(V),
$$
through the isomorphism \eqref{eq: omega_ch=beta-gamma_bc-C_infty[n]}, where $\mathrm{Res}_{V, U}$ stands for the restriction map of $C^\infty_{\mathbb{R}^m}$. 
Therefore we have an isomorphism of presheaves
$$
\underrightarrow{\Omega_{\mathrm{ch}}}(\mathbb{R}^{m|r})[n] \cong (U(\mathfrak{h}_{m, <0})\otimes \mathcal{E}(W_r))[n]\,\otimes_\mathbb{K} C^\infty_{\mathbb{R}^m}.
$$
Since $C^\infty_{\mathbb{R}^m}$ is a sheaf and $(U(\mathfrak{h}_{m, <0})\otimes \mathcal{E}(W_r))[n]$ is finite-dimensional, the presheaf $(U(\mathfrak{h}_{m, <0})\otimes \mathcal{E}(W_r))[n]\,\otimes C^\infty_{\mathbb{R}^m}$ is a sheaf of super vector spaces, and therefore $\underrightarrow{\Omega_{\mathrm{ch}}}(\mathbb{R}^{m|r})[n]$ is also a sheaf.
Thus $\underrightarrow{\Omega_{\mathrm{ch}}}(\mathbb{R}^{m|r})$ is an object of the category $\mathit{Presh}_{\mathbb{R}^m}(\textit{DegWt-VSA}_\mathbb{K})_\mathrm{bdw, sh}$.

\begin{notation}
We denote by $\Omega_\mathrm{ch}(\mathbb{R}^{m|r})$ the VSA-inductive sheaf  associated with the object $\underrightarrow{\Omega_{\mathrm{ch}}}(\mathbb{R}^{m|r})$ of $\mathit{Presh}_{\mathbb{R}^m}(\textit{DegWt-VSA}_\mathbb{K})_\mathrm{bdw, sh}$. (See Lemma  \ref{lem: MAKE VsaIndSh from PRESHEAVES OF Z-GRADED Vsa}.)
\end{notation}

%SUPERFUNCTIONS INCLUDED IN OMEGA_CHIRAL
\begin{remark}\label{rem: SUPERFUNCTIONS INCLUDED IN OMEGA_CHIRAL}
By the isomorphism \eqref{eq: omega_ch=beta-gamma_bc-C_infty[n]} with $n=0$, there exists an isomorphism, 
$$
\Gamma(U, \underrightarrow{\Omega_{\mathrm{ch}}}(\mathbb{R}^{m|r}))[0]\cong C^\infty_{\mathbb{R}^m}(U)\otimes \bigwedge(c^1_0, \dots, c^r_0),
$$
for any open subset $U\subset \mathbb{R}^m$.
Therefore the following isomorphism of sheaves exists:
%WEIGHT ZERO SPACE IS CLASSICAL
\begin{equation}\label{eq: WEIGHT ZERO SPACE IS CLASSICAL}
\underrightarrow{\Omega_{\mathrm{ch}}}(\mathbb{R}^{m|r})[0] \cong C^\infty_{\mathbb{R}^m}\otimes\bigwedge(\theta^1, \dots, \theta^r).
\end{equation}
Here $\theta^j$ is identified with $c^j_0=c^{\theta^j}_0
$ for $j=1, \dots, r$. 
\end{remark}
We identify $\underrightarrow{\Omega_{\mathrm{ch}}}(\mathbb{R}^{m|r})[0]$ with $C^\infty_{\mathbb{R}^m}\otimes\bigwedge(\theta^1, \dots, \theta^r)$ via the isomorphism \eqref{eq: WEIGHT ZERO SPACE IS CLASSICAL}.

\subsection{VSA-Inductive Sheaves for Vector Bundles}
Let $E$ be a vector bundle of rank $r$ on a manifold $M$ of dimension $m$. 

Let $\mathcal{U}=\bigl(U_\lambda \bigr)_{\lambda \in  \Lambda}$ be an arbitrary family of 
open subsets $U_\lambda$ in $M$ with a chart $\mathbf{x}_\lambda=(x_\lambda^1, \dots, x_\lambda^m)$ of $U_\lambda$, and a frame  $\mathbf{e}_\lambda=(e^1_\lambda, \dots, e^r_\lambda)$ of $E|_{U_\lambda}$ such that $\bigl\{ (U_\lambda, \mathbf{x}_\lambda)\bigr\}_{\lambda \in \Lambda}$ is an altas on $M$ and  is contained in the $C^\infty$-structure of $M$. We call such a family $\mathcal{U}$ a \textbf{framed covering} of $E$.
Let $\bigl(f^E_{\lambda \mu}=(f^{E, j j'}_{\lambda \mu})_{1\le j, j'\le r}\bigr)_{\lambda, \mu\in\Lambda}$ be the transition functions of $E$ associated with $(\mathbf{e}_\lambda)_{\lambda\in\Lambda}$. In other words, we have $e^j_\mu=\sum_{j'=1}^r f^{E, j' j}_{\lambda \mu}e^{j'}_\lambda$.
We denote by $\mathbf{c_\lambda}=(c_\lambda^1, \dots, c_\lambda^r)$ the frame dual to $\mathbf{e}_\lambda=(e^1_\lambda, \dots, e^ r_\lambda)$ for $E^*|_{U_\lambda}$.

We set
$$
\Omega_\mathrm{ch}(E; \mathcal{U})_\lambda:=(\mathbf{x_\lambda}^{-1})_*(\Omega_\mathrm{ch}(\mathbb{R}^{m|r})|_{\mathbf{x_\lambda}(U_\lambda)}).
$$
Note that   $\Omega_\mathrm{ch}(E; \mathcal{U})_\lambda$ is nothing but the VSA-inductive sheaf  associated with the object $(\mathbf{x_\lambda}^{-1})_*(\underrightarrow{\Omega_{\mathrm{ch}}}(\mathbb{R}^{m|r})|_{\mathbf{x_\lambda}(U_\lambda)})$ of  $\mathit{Presh}_{U_\lambda}(\textit{DegWt-VSA}_\mathbb{K})_\mathrm{bdw, sh}$, where  $\underrightarrow{\Omega_{\mathrm{ch}}}(\mathbb{R}^{m|r})|_{\mathbf{x_\lambda}(U_\lambda)}$ is the presheaf  restricted to $\mathbf{x_\lambda}(U_\lambda)$ but not its sheafification. Notice that $\underrightarrow{\mathrm{Lim}}\,\Omega_\mathrm{ch}(E; \mathcal{U})_\lambda=(\mathbf{x_\lambda}^{-1})_*(\underrightarrow{\Omega_{\mathrm{ch}}}(\mathbb{R}^{m|r})|_{\mathbf{x_\lambda}(U_\lambda)})$.

We consider the following two subpresheaves of $\underrightarrow{\Omega_{\mathrm{ch}}}(\mathbb{R}^{m|r})$:
%SUBSHAVES OF OMEGA_CHIRAL
\begin{align}
\underrightarrow{\Omega^{\gamma c}_{\mathrm{ch}}}(\mathbb{R}^{m|r})&: U \mapsto \Gamma(U, \underrightarrow{\Omega^{\gamma c}_{\mathrm{ch}}}(\mathbb{R}^{m|r})):=\bigl\langle C^\infty_{\mathbb{R}^m}(U)\bigr\rangle \otimes \langle c^1_0\mathbf{1}, \dots, c^r_0\mathbf{1}\rangle, \\
\underrightarrow{\Omega^{\gamma bc}_{\mathrm{ch}}}(\mathbb{R}^{m|r})&: U \mapsto \Gamma(U, \underrightarrow{\Omega^{\gamma bc}_{\mathrm{ch}}}(\mathbb{R}^{m|r})):=\bigl\langle C^\infty_{\mathbb{R}^m}(U)\bigr\rangle \otimes \mathcal{E}(W_r),
\end{align}
where $\bigl\langle C^\infty_{\mathbb{R}^m}(U)\bigr\rangle \subset\Gamma_m(U)$ and $\langle c^1_0\mathbf{1}, \dots, c^r_0\mathbf{1}\rangle\subset\mathcal{E}(W_r)$ stand for the subalgebra generated by $C^\infty_{\mathbb{R}^m}(U)$ and $\{c^1_0\mathbf{1}, \dots, c^r_0\mathbf{1}\}$, respectively. The presheaves $\underrightarrow{\Omega^{\gamma c}_{\mathrm{ch}}}(\mathbb{R}^{m|r})[n]$ and $\underrightarrow{\Omega^{\gamma bc}_{\mathrm{ch}}}(\mathbb{R}^{m|r})[n]$ are sheaves for all $n\in\mathbb{N}$ since we have an isomorphism $\bigl\langle C^\infty_{\mathbb{R}^m}(U)\bigr\rangle\cong U\bigl(\langle\gamma^i_p\ |\ p<0, i=1, \dots, m\rangle\bigr) \otimes C^\infty_{\mathbb{R}^m}(U)$ from the isomorphism in Proposition \ref{prop: GAMMA=GAMMA-TILDE}, where $ 
U\bigl(\langle\gamma^i_p\ |\ p<0, i=1, \dots, m\rangle\bigr)$ is the universal enveloping algebra of the commutative Lie subalgebra of $\mathfrak{h}_m$ generated by $\gamma^i_p$ with $p<0, i=1, \dots, m$. Therefore the presheaves $\underrightarrow{\Omega^{\gamma c}_{\mathrm{ch}}}(\mathbb{R}^{m|r})$ and  $\underrightarrow{\Omega^{\gamma bc}_{\mathrm{ch}}}(\mathbb{R}^{m|r})$ are objects of the category $\mathit{Presh}_{\mathbb{R}^m}(\textit{DegWt-VSA}_\mathbb{K})_\mathrm{bdw, sh}$.
Thus we have VSA-inductive sheaves  $\Omega^{\gamma c}_\mathrm{ch}(\mathbb{R}^{m|r})$ and $\Omega^{\gamma bc}_{\mathrm{ch}}(\mathbb{R}^{m|r})$, where $\Omega^{\gamma c}_\mathrm{ch}(\mathbb{R}^{m|r})$ and $\Omega^{\gamma bc}_{\mathrm{ch}}(\mathbb{R}^{m|r})$ are the VSA-inductive sheaves associated with $\underrightarrow{\Omega^{\gamma c}_{\mathrm{ch}}}(\mathbb{R}^{m|r})$ and $\underrightarrow{\Omega^{\gamma bc}_{\mathrm{ch}}}(\mathbb{R}^{m|r})$, respectively. Note that the vertex superalgebra $\Gamma(U, \underrightarrow{\mathrm{Lim}}\,  \Omega^{\gamma c}_\mathrm{ch}(\mathbb{R}^{m|r}))=\Gamma(U, \underrightarrow{\Omega^{\gamma c}_{\mathrm{ch}}}(\mathbb{R}^{m|r}))$  is  generated by the weight $0$ space for any open subset $U\subset \mathbb{R}^m$.

%STRUCTURE OF <C^infty(U)>\bigl\langle C^\infty_{\mathbb{R}^m}(U)\bigr\rangle

Set 
$$
\Gamma_{m}'(U):=N\bigl(C^\infty_{\mathbb{R}^m}(U), 0\bigr)/\mathcal{I}'_m(U),
$$
where $\mathcal{I}'_m(U)$ is the ideal of the affine vertex algebra $N\bigl(C^\infty_{\mathbb{R}^m}(U), 0\bigr)$ generated by
\begin{gather*}
\frac{d}{dz}f(z)-\sum_{i=1}^m\,{\baselineskip0pt\lineskip0.3pt\vcenter{\hbox{$\cdot$}\hbox{$\cdot$}}\,\frac{d}{dz}x^i(z)\frac{\partial f}{\partial x^i}(z)\,\vcenter{\hbox{$\cdot$}\hbox{$\cdot$}}}, \\
(fg)(z)-\,{\baselineskip0pt\lineskip0.3pt\vcenter{\hbox{$\cdot$}\hbox{$\cdot$}}\,f(z)g(z)\,\vcenter{\hbox{$\cdot$}\hbox{$\cdot$}}}, \quad
1(z)-\mathrm{id},
\end{gather*}
with $f, g\in C^\infty_{\mathbb{R}^m}(U)$.

\begin{remark}
The canonical morphism, 
$
\Gamma_m'(U)\to\Gamma_m(U),
$
induced by the inclusion of Lie algebras $C^\infty_{\mathbb{R}^m}(U)\to D_m(U)\ltimes C^\infty_{\mathbb{R}^m}(U)$ is injective. This follows from the form of the basis of $\Gamma_m(U)$. Therefore there exists an isomorphism of degree-weight-graded vertex algebras from $\Gamma_m'(U)$ to $\bigl\langle C^\infty_{\mathbb{R}^m}(U)\bigr\rangle \subset \Gamma_m(U)$.  
\end{remark}

We set
\begin{align}
\Omega^{\gamma c}_\mathrm{ch}(E; \mathcal{U})_\lambda&:=(\mathbf{x_\lambda}^{-1})_*(\Omega^{\gamma c}_\mathrm{ch}(\mathbb{R}^{m|r})|_{\mathbf{x_\lambda}(U_\lambda)}), \\
\Omega^{\gamma bc}_{\mathrm{ch}}(E; \mathcal{U})_\lambda&:=(\mathbf{x_\lambda}^{-1})_*(\Omega^{\gamma bc}_{\mathrm{ch}}(\mathbb{R}^{m|r})|_{\mathbf{x_\lambda}(U_\lambda)}).
\end{align}
We will glue $\bigl(\Omega^{\gamma c}_\mathrm{ch}(E; \mathcal{U})_\lambda\bigr)_{\lambda\in\Lambda}$  and $\bigl(\Omega^{\gamma bc}_{\mathrm{ch}}(E; \mathcal{U})_\lambda\bigr)_{\lambda\in\Lambda}$.

Fix $U_\lambda$ with $\mathbf{x}_\lambda, \mathbf{e}_\lambda$ and $U_{\Tilde{\lambda}}$ with $\mathbf{x}_{\Tilde{\lambda}}, \mathbf{e}_{\Tilde{\lambda}}$ such that $U_\lambda\cap U_{\Tilde{\lambda}}\neq \emptyset$. 
Set $U:=\mathbf{x}_\lambda(U_\lambda\cap U_{\Tilde{\lambda}})$,  $\Tilde{U}:=\mathbf{x}_{\Tilde{\lambda}}(U_{\Tilde{\lambda}}\cap U_\lambda)$ and 
$\varphi=\varphi_{\Tilde{\lambda} \lambda}:= (\mathbf{x}_{\Tilde{\lambda}}|_{U_{\Tilde{\lambda}}\cap U_\lambda})\circ(\mathbf{x}_\lambda|_{U_\lambda\cap U_{\Tilde{\lambda}}})^{-1}: U\to \Tilde{U}$. 
We construct strict isomorphisms of VSA-inductive sheaves
\begin{gather*}
\Omega^{\gamma c}_\mathrm{ch}(E; \mathcal{U})_{\Tilde{\lambda}}\big|_{U_{\Tilde{\lambda}}\cap U_\lambda} \to \Omega^{\gamma c}_\mathrm{ch}(E; \mathcal{U})_\lambda\big|_{U_\lambda\cap U_{\Tilde{\lambda}}}, \\
\Omega^{\gamma bc}_{\mathrm{ch}}(E; \mathcal{U})_{\Tilde{\lambda}}\big|_{U_{\Tilde{\lambda}}\cap U_\lambda} \to \Omega^{\gamma bc}_{\mathrm{ch}}(E; \mathcal{U})_\lambda\big|_{U_\lambda\cap U_{\Tilde{\lambda}}},
\end{gather*}
motivated by gluing vector fields on the supermanifold $\Pi E=\bigr(M, \bigwedge(E^*)\bigr)$.

Let $V\subset U_\lambda\cap U_{\Tilde{\lambda}}$ be an open subset. 
We denote by $\Tilde{\beta}^i_n$, $\Tilde{\gamma}^i_n$, $\Tilde{b}^j_n$ and $\Tilde{c}^j_n$ the operators $\beta^i_n$, $\gamma^i_n$, $b^j_n$ and $c^j_n$ on $\Gamma\Bigl(\mathbf{x}_{\Tilde{\lambda}}(V), \underrightarrow{\Omega_{\mathrm{ch}}}(\mathbb{R}^{m|r})\Bigr)$, respectively.
We define elements of the vertex superalgebra 
$\Gamma\Bigl(\mathbf{x}_\lambda(V), \underrightarrow{\Omega_{\mathrm{ch}}}(\mathbb{R}^{m|r})\Bigr)$ as follows:

\begin{gather}
\label{df: phi^*f}
\varphi^*\Tilde{f}:=\Tilde{f}, \\
\label{df: phi^*b}
\varphi^*\Tilde{b}^j:=\sum_{1\le j'\le r}f^{E, j' j}_{\lambda \Tilde{\lambda}}b^{j'}_{-1}\mathbf{1}, \\
\label{df: phi^*c}
\varphi^*\Tilde{c}^j:=\sum_{1\le j'\le r}f^{E, j j'}_{\Tilde{\lambda} \lambda}c^{j'}_0\mathbf{1},
\end{gather}
for $\Tilde{f}\in C^\infty(V)$, $i=1, \dots, m$ and $j=1, \dots, r$. Here we use a usual notation for functions, identifying    $C^\infty_{\mathbb{R}^m}\bigl(\mathbf{x}_\lambda(V)\bigr)$ and $C^\infty_{\mathbb{R}^m}\bigl(\mathbf{x}_{\Tilde{\lambda}}(V)\bigr)$ with $C^\infty(V)$ via $\mathbf{x}_\lambda$ and $\mathbf{x}_{\Tilde{\lambda}}$, respectively.

%OPEs of OMEGA_CHIRAL
\begin{lemma}\label{lem: OPEs of OMEGA_CHIRAL}
The following OPEs hold:
\begin{gather}
(\varphi^*\Tilde{f})(z)(\varphi^*\Tilde{g})(w)\sim 0, \\
(\varphi^*\Tilde{b}^j)(z)(\varphi^*\Tilde{b}^{j'})(w)\sim 0, \\
(\varphi^*\Tilde{c}^j)(z)(\varphi^*\Tilde{c}^{j'})(w)\sim 0, \\
(\varphi^*\Tilde{b}^j)(z)(\varphi^*\Tilde{c}^{j'})(w)\sim \frac{\delta_{j, j'}}{z-w}, \\
(\varphi^*\Tilde{f})(z)(\varphi^*\Tilde{b}^j)(w)\sim0, \\
(\varphi^*\Tilde{f})(z)(\varphi^*\Tilde{c}^j)(w)\sim0.
\end{gather}
\end{lemma}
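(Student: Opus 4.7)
The plan is to compute each OPE directly using the defining formulas \eqref{df: phi^*f}--\eqref{df: phi^*c} together with the elementary OPEs of the generators of $\underrightarrow{\Omega_{\mathrm{ch}}}(\mathbb{R}^{m|r})$ and a Wick-type expansion. The four basic OPEs I will invoke are: $f(z)g(w)\sim 0$ from \eqref{eq: [f(z),g(z)]=0}; $[b^j(z),f(w)]=0$ and $[c^j(z),f(w)]=0$, which follow from the fact that the $bc$-system acts on the $\beta\gamma$-factor trivially in $\underrightarrow{\Omega_{\mathrm{ch}}}(\mathbb{R}^{m|r})\cong \Gamma_m(U)\otimes\mathcal{E}(W_r)$ (Lemma \ref{lem: omega_ch=beta-gamma_bc}); $b^j(z)b^{j'}(w)\sim 0$, $c^j(z)c^{j'}(w)\sim 0$ and $b^j(z)c^{j'}(w)\sim \delta_{j,j'}(z-w)^{-1}$ from the $bc$-system.

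The first, fifth and sixth OPEs are immediate: $\varphi^*\tilde f=\tilde f\in C^\infty$, and both $\varphi^*\tilde b^j$ and $\varphi^*\tilde c^j$ are $C^\infty(V)$-linear combinations of the generators $b^{j'}_{-1}\mathbf{1}$ and $c^{j'}_0\mathbf{1}$; since $C^\infty$-fields commute with each other and with the $b,c$-generators, Dong's lemma (or a direct check using quasi-associativity of normally ordered products) gives the vanishing of the corresponding OPEs. Similarly, for the second and third OPE, I would expand
\[
(\varphi^*\tilde b^j)(z)(\varphi^*\tilde b^{j'})(w)=\sum_{k,k'}(f^{E,kj}_{\lambda\tilde\lambda})(z)\,(f^{E,k'j'}_{\lambda\tilde\lambda})(w)\,b^k(z)b^{k'}(w),
\]
and analogously for the $c$'s, and observe that each of the four pairs of factors has trivial OPE (functions among themselves, functions with $b/c$, and the $b$'s or $c$'s with each other), so the total OPE is regular.

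The fourth OPE is the only substantive one. The key identity is $\sum_{k=1}^r f^{E,kj}_{\lambda\tilde\lambda}\,f^{E,j'k}_{\tilde\lambda\lambda}=\delta_{j,j'}$ on $U_\lambda\cap U_{\tilde\lambda}$, which comes from the fact that the transition matrices $(f^{E,\cdot\cdot}_{\lambda\tilde\lambda})$ and $(f^{E,\cdot\cdot}_{\tilde\lambda\lambda})$ are inverse to one another. Using this,
\[
(\varphi^*\tilde b^j)(z)(\varphi^*\tilde c^{j'})(w)=\sum_{k,k'}(f^{E,kj}_{\lambda\tilde\lambda})(z)\,(f^{E,j'k'}_{\tilde\lambda\lambda})(w)\,b^k(z)c^{k'}(w),
\]
and all cross-OPEs among the four factors are regular except $b^k(z)c^{k'}(w)\sim \delta_{k,k'}(z-w)^{-1}$. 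Extracting the singular part gives $\sum_k (f^{E,kj}_{\lambda\tilde\lambda}\,f^{E,j'k}_{\tilde\lambda\lambda})(w)(z-w)^{-1}=\delta_{j,j'}(z-w)^{-1}$, as desired.

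The main obstacle I anticipate is not conceptual but bookkeeping: one must justify that the na\"ive expansion of products of fields into Wick-type sums is legitimate inside the vertex superalgebra $\Gamma\bigl(\mathbf{x}_\lambda(V),\underrightarrow{\Omega_{\mathrm{ch}}}(\mathbb{R}^{m|r})\bigr)$. I would handle this by writing each $(\varphi^*\tilde b^j)(z)$ as the field $Y(\sum_{j'} f^{E,j'j}_{\lambda\tilde\lambda}{}_{(-1)}b^{j'}_{-1}\mathbf{1},z)$, computing the relevant iterated $(n)$-products in the state space using \eqref{eq: [beta(z),f(w)]=df/dx(w)delta(z-w)} and the vanishing OPEs recalled above, and then reading off the OPE from the state--field correspondence. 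Once this is set up, the four required identities follow by direct substitution.
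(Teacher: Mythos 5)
Your proposal is correct: expanding $\varphi^*\Tilde{b}^j$ and $\varphi^*\Tilde{c}^{j'}$ in terms of the commuting function fields and the $b$, $c$ generators, noting that the only nontrivial contraction is $b^k(z)c^{k'}(w)\sim\delta_{k,k'}(z-w)^{-1}$, and using that the transition matrices $f^E_{\lambda\Tilde{\lambda}}$ and $f^E_{\Tilde{\lambda}\lambda}$ are mutually inverse is exactly the direct computation the paper invokes (its proof is the one-line ``checked by direct computations''). The bookkeeping concern you raise is handled just as you suggest, e.g.\ by computing the non-negative $(n)$-products in the tensor factorization $\Gamma_m\otimes\mathcal{E}(W_r)$ of Lemma \ref{lem: omega_ch=beta-gamma_bc}.
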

\begin{proof}
These OPEs are checked by direct computations.
\end{proof}

\begin{remark}
When we consider the transformation rules of vector fields, it is natural to set 
$$
\varphi^*\Tilde{\beta}^i:=\sum_{1\le i'\le m}\beta^{i'}_{-1}\frac{\partial x^{i'}}{\partial \Tilde{x}^i}+\sum_{1\le j, k, l\le r}\frac{\partial f^{E, j k}_{\lambda \Tilde{\lambda}}}{\partial \Tilde{x}^i}f^{E, k l}_{\Tilde{\lambda} \lambda}c^l_0b^j_{-1}\mathbf{1}. 
$$
But the required relation $(\varphi^*\Tilde{\beta}^i)(z)(\varphi^*\Tilde{\beta}^{i'})(w) \sim 0$ does not hold in general. 
\end{remark}

By Lemma \ref{lem: OPEs of OMEGA_CHIRAL}, we have morphisms of degree-weight-graded vertex superalgebras
\begin{align}
\underrightarrow{\vartheta'_{\lambda \Tilde{\lambda}}}(V): \Gamma\bigl(\mathbf{x}_{\Tilde{\lambda}}(V), \underrightarrow{\Omega^{\gamma c}_{\mathrm{ch}}}(\mathbb{R}^{m|r})\bigr) &\to \Gamma\bigl(\mathbf{x}_\lambda(V), \underrightarrow{\Omega^{\gamma c}_{\mathrm{ch}}}(\mathbb{R}^{m|r})\bigr),  \\ 
\Tilde{f} &\mapsto \varphi^*\Tilde{f}, \notag  \\ 
\Tilde{c}^j_0\mathbf{1} &\mapsto \varphi^*\Tilde{c}^j, \notag  \\ 
\intertext{and,}
\underrightarrow{\vartheta''_{\lambda \Tilde{\lambda}}}(V): \Gamma\bigl(\mathbf{x}_{\Tilde{\lambda}}(V), \underrightarrow{\Omega^{\gamma bc}_{\mathrm{ch}}}(\mathbb{R}^{m|r})\bigr) &\to \Gamma\bigl(\mathbf{x}_\lambda(V), \underrightarrow{\Omega^{\gamma bc}_{\mathrm{ch}}}(\mathbb{R}^{m|r})\bigr), \\
\Tilde{f} &\mapsto \varphi^*\Tilde{f}, \notag  \\ 
\Tilde{c}^j_0\mathbf{1} &\mapsto \varphi^*\Tilde{c}^j, \notag  \\ 
\Tilde{b}^j_0\mathbf{1} &\mapsto \varphi^*\Tilde{b}^j. \notag
\end{align}
These morphisms $\Bigl(\underrightarrow{\vartheta'_{\lambda \Tilde{\lambda}}}(V)\Bigr)_V$ and $\Bigl(\underrightarrow{\vartheta''_{\lambda \Tilde{\lambda}}}(V)\Bigr)_V$ form morphisms of presheaves of degree-weight-graded vertex superalgebras $\underrightarrow{\vartheta'_{\lambda \Tilde{\lambda}}}$ and  $\underrightarrow{\vartheta''_{\lambda \Tilde{\lambda}}}$, respectively.  Therefore by Lemma \ref{lem: MAKE VsaIndSh from PRESHEAVES OF Z-GRADED Vsa} we have strict morphisms of VSA-inductive sheaves
\begin{gather}
\vartheta'_{\lambda \Tilde{\lambda}}: \Omega^{\gamma c}_\mathrm{ch}(E; \mathcal{U})_{\Tilde{\lambda}}\big|_{U_{\Tilde{\lambda}}\cap U_\lambda} \to \Omega^{\gamma c}_\mathrm{ch}(E; \mathcal{U})_\lambda\big|_{ U_\lambda\cap U_{\Tilde{\lambda}}}, \\
\vartheta''_{\lambda \Tilde{\lambda}}: \Omega^{\gamma bc}_{\mathrm{ch}}(E; \mathcal{U})_{\Tilde{\lambda}}\big|_{U_{\Tilde{\lambda}}\cap U_\lambda} \to \Omega^{\gamma bc}_{\mathrm{ch}}(E; \mathcal{U})_\lambda\big|_{U_\lambda\cap U_{\Tilde{\lambda}}}.
\end{gather}
Thus we have families of strict morphisms $(\vartheta'_{\lambda \mu})_{\lambda, \mu \in \Lambda}$ and $(\vartheta''_{\lambda \mu})_{\lambda, \mu \in \Lambda}$. These morphisms satisfy the following.

\begin{lemma}
The following hold:
\begin{gather*}
\vartheta'_{\lambda \lambda}= \mathrm{id}, \quad \vartheta'_{\lambda \mu}\circ\vartheta'_{\mu \nu}=\vartheta'_{\lambda \nu}, \\
\vartheta''_{\lambda \lambda}=\mathrm{id}, \quad \vartheta''_{\lambda \mu}\circ\vartheta''_{\mu \nu}=\vartheta''_{\lambda \nu}, 
\end{gather*}
for all $\lambda, \mu, \nu\in \Lambda$.
\end{lemma}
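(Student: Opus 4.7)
The plan is to reduce the assertion to equalities between morphisms of presheaves of vertex superalgebras, and then to verify those equalities by checking them on generators. First, observe that by construction $\vartheta'_{\lambda \mu}$ and $\vartheta''_{\lambda \mu}$ are the VSA-inductive-sheaf morphisms obtained from presheaf-of-VSA morphisms $\underrightarrow{\vartheta'_{\lambda \mu}}$ and $\underrightarrow{\vartheta''_{\lambda \mu}}$ via the functor of Lemma \ref{lem: MAKE VsaIndSh from PRESHEAVES OF Z-GRADED Vsa}, which is functorial. Hence it suffices to establish the corresponding cocycle identities
\[
\underrightarrow{\vartheta'_{\lambda\lambda}}=\mathrm{id},\qquad \underrightarrow{\vartheta'_{\lambda\mu}}\circ\underrightarrow{\vartheta'_{\mu\nu}}=\underrightarrow{\vartheta'_{\lambda\nu}},
\]
and similarly for $\underrightarrow{\vartheta''}$, at the level of presheaves of vertex superalgebras on $U_\lambda\cap U_\mu\cap U_\nu$ (with appropriate restrictions to the triple overlap understood).

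Fix an open $V\subset U_\lambda\cap U_\mu\cap U_\nu$. Both sides of each asserted equality are morphisms of vertex superalgebras with the same source and target, so they coincide once they agree on a generating subset (by a standard induction using locality and the derivation property of the translation operator, cf.\ Lemma \ref{lem: sufficient condition for B-linearilty}). The vertex superalgebra $\Gamma\bigl(\mathbf{x}_\nu(V),\underrightarrow{\Omega^{\gamma c}_{\mathrm{ch}}}(\mathbb{R}^{m|r})\bigr)$ is generated by $C^\infty(V)$ together with the elements $\widetilde c^j_0\mathbf{1}$, $j=1,\dots,r$; for $\underrightarrow{\Omega^{\gamma bc}_{\mathrm{ch}}}(\mathbb{R}^{m|r})$ one additionally needs the elements $\widetilde b^j_{-1}\mathbf{1}$. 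Thus the entire proof reduces to verifying the cocycle identities on these distinguished generators.

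The identity $\underrightarrow{\vartheta'_{\lambda\lambda}}=\mathrm{id}$ (and likewise for $\vartheta''$) is immediate: when $\lambda=\widetilde\lambda$ the coordinate change $\varphi_{\lambda\lambda}$ is the identity and the transition matrix satisfies $f^{E,jj'}_{\lambda\lambda}=\delta_{jj'}$, so formulas \eqref{df: phi^*f}, \eqref{df: phi^*b}, \eqref{df: phi^*c} give the identity on each generator. For the cocycle equality, the generator $\widetilde f\in C^\infty(V)$ is untouched by every $\varphi^*$ (it is simply the same function viewed in different charts), so the identity on functions is automatic. For $c$-generators, compute
\[
\bigl(\underrightarrow{\vartheta'_{\lambda\mu}}\circ\underrightarrow{\vartheta'_{\mu\nu}}\bigr)(\widetilde c^{\,j}_0\mathbf{1})
=\sum_{j'}f^{E,jj'}_{\nu\mu}\underrightarrow{\vartheta'_{\lambda\mu}}(c^{j'}_0\mathbf{1})
=\sum_{j',j''}f^{E,jj'}_{\nu\mu}f^{E,j'j''}_{\mu\lambda}c^{j''}_0\mathbf{1},
\]
and the sum telescopes to $\sum_{j''}f^{E,jj''}_{\nu\lambda}c^{j''}_0\mathbf{1}=\underrightarrow{\vartheta'_{\lambda\nu}}(\widetilde c^{\,j}_0\mathbf{1})$ by the cocycle condition $f^E_{\nu\lambda}=f^E_{\nu\mu}\cdot f^E_{\mu\lambda}$ on the transition matrices of $E$ (which encodes the consistency of the frames $\mathbf{e}_\lambda$). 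A completely parallel computation, using the cocycle relation in the opposite direction $f^E_{\lambda\nu}=f^E_{\lambda\mu}\cdot f^E_{\mu\nu}$, handles the $b$-generators for $\vartheta''$.

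The calculation itself is routine; the only point requiring care is tracking the two opposite index conventions used in \eqref{df: phi^*b} versus \eqref{df: phi^*c} (the $b^j$'s transform by $f^E_{\lambda\widetilde\lambda}$ while the $c^j$'s transform by the inverse $f^E_{\widetilde\lambda\lambda}$), and correspondingly invoking the cocycle condition in the correct order in each of the two cases. No further analysis is needed: once both generators are handled, extension to all of $\underrightarrow{\Omega^{\gamma c}_{\mathrm{ch}}}(\mathbb{R}^{m|r})$ and $\underrightarrow{\Omega^{\gamma bc}_{\mathrm{ch}}}(\mathbb{R}^{m|r})$ follows from the vertex superalgebra morphism property, and the passage back to the VSA-inductive sheaves $\vartheta'_{\lambda\mu}$, $\vartheta''_{\lambda\mu}$ is automatic from functoriality.
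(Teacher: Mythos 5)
Your proposal is correct and follows essentially the same route as the paper: reduce to the presheaf-level identities for $\underrightarrow{\vartheta'}$ and $\underrightarrow{\vartheta''}$, then verify them on the generators $\tilde f$, $\tilde c^j_0\mathbf{1}$ (and $\tilde b^j_{-1}\mathbf{1}$) using \eqref{df: phi^*f}--\eqref{df: phi^*c} and the cocycle relation for the transition matrices; the index bookkeeping and the use of $f^E_{\nu\lambda}=f^E_{\nu\mu}f^E_{\mu\lambda}$ (resp.\ the opposite order for the $b$'s) are exactly what the paper's terse "check on generators" amounts to.
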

\begin{proof}
It suffices to check
\begin{gather*}
\underrightarrow{\vartheta'_{\lambda \lambda}}= \mathrm{id}, \quad \underrightarrow{\vartheta'_{\lambda \mu}}\circ\underrightarrow{\vartheta'_{\mu \nu}}=\underrightarrow{\vartheta'_{\lambda \nu}}, \\
\underrightarrow{\vartheta''_{\lambda \lambda}}=\mathrm{id}, \quad \underrightarrow{\vartheta''_{\lambda \mu}}\circ\underrightarrow{\vartheta''_{\mu \nu}}=\underrightarrow{\vartheta''_{\lambda \nu}}, 
\end{gather*}
for all $\lambda, \mu, \nu\in \Lambda$. We can see that these relations hold on the generators from \eqref{df: phi^*f}, \eqref{df: phi^*b} and \eqref{df: phi^*c}. Thus we get the above relations.
\end{proof}

By the construction, $\bigl((\Omega^{\gamma c}_\mathrm{ch}(E; \mathcal{U})_\lambda)_{\lambda\in\Lambda}, (\vartheta'_{\lambda \mu})_{\lambda, \mu \in \Lambda}\bigr)$ satisfies the assumptions (1)-(5) in Proposition \ref{prop: GLUING VsaIndShs}. Therefore 
by Proposition \ref{prop: GLUING VsaIndShs}, we can glue the degree-weight-graded VSA-inductive sheaves  $\bigl((\Omega^{\gamma c}_\mathrm{ch}(E; \mathcal{U})_\lambda)_{\lambda\in\Lambda}, (\vartheta'_{\lambda \mu})_{\lambda, \mu \in \Lambda}\bigr)$ to a degree-weight-graded VSA-inductive sheaf on $M$. We denote by $\Omega^{\gamma c}_\mathrm{ch}(E; \mathcal{U})$ the resulting one. In the same way, we have a weight-degree-graded VSA-inductive sheaf on $M$, which we denote by $\Omega^{\gamma bc}_{\mathrm{ch}}(E; \mathcal{U})$, by gluing $\bigl((\Omega^{\gamma bc}_{\mathrm{ch}}(E; \mathcal{U})_\lambda)_{\lambda\in\Lambda},$ $(\vartheta''_{\lambda \mu})_{\lambda, \mu \in \Lambda}\bigr)$.

\begin{remark}\label{rem: we can take any framed covering}
By Remark \ref{rem: Isomorphic as VSA-inductive sheaves if locally isomorphic}, the objects $\Omega^{\gamma c}_\mathrm{ch}(E; \mathcal{U})$ and $\Omega^{\gamma bc}_{\mathrm{ch}}(E; \mathcal{U})$ of the category  $\textit{DegWt-}\mathit{VSA_{\mathbb{K}}}\textit{-IndSh}_M^\mathbb{N}$ \textit{do not} depend on the choice of  framed coverings and are  unique up to isomorphism.
\end{remark}
Therefore we simply denote by $\Omega^{\gamma c}_\mathrm{ch}(E)$ and $\Omega^{\gamma bc}_{\mathrm{ch}}(E)$ the VSA-inductive sheaves $\Omega^{\gamma c}_\mathrm{ch}(E; \mathcal{U})$ and $\Omega^{\gamma bc}_{\mathrm{ch}}(E; \mathcal{U})$, respectively. 

\begin{remark}
From the way of gluing and Remark \ref{eq: WEIGHT ZERO SPACE IS CLASSICAL}, the sheaf $\underrightarrow{\mathrm{Lim}}\, {\Omega^{\gamma c}_\mathrm{ch}(E)}[0]$  is canonically isomorphic to the sheaf of local  sections of $\bigwedge E^*$, denoted by $\underline{\bigwedge E^*}$.
\end{remark}

\begin{remark}
Let $E$ be the tangent bundle $TM$ of a manifold $M$.  
The presheaf associated with the VSA-inductive sheaf $\Omega^{\gamma c}_\mathrm{ch}(TM; \mathcal{U}^M)$ coincides with the small CDR for $M$ constructed in \cite{LL}, denoted by $\mathcal{Q}'_M$, where $\mathcal{U}_M=(U_\lambda)_{\lambda\in\Lambda}$ is the framed covering consisting of all open subsets $U_\lambda$ with a chart $\mathbf{x}_\lambda$ of $U_\lambda$ and the standard frame $(\partial/\partial x_\lambda^i)_{i=1, \dots, m}$.
\end{remark}

\subsection{Chiral Lie Algebroid Complex}
We consider the case when the vector bundle $E$ is a Lie algebroid. We construct a differential on $\Omega^{\gamma c}_\mathrm{ch}(E)$.

Let $(A, a, [ , ])$ be a Lie algebroid on a manifold $M$. We set $m:=\dim M$ and $r:=\mathrm{rank}\, A$. Let $\mathcal{U}=(U_\lambda)_{\lambda\in\Lambda}$ be a  framed covering of $A$. As before we denote by $\mathbf{x}_\lambda$ and $\mathbf{e}_\lambda$ the chart and the frame associated with $U_\lambda$, respectively. 
First we define a differential on each $\Omega^{\gamma c}_\mathrm{ch}(A; \mathcal{U})_\lambda$. Then we will glue them.

Fix $\lambda \in \Lambda$. 
We write the anchor map and the bracket  as 
$$
a(e_\lambda^j)=\sum_{\substack{1\le i\le m}}f^{\lambda, {i j}}\frac{\partial}{\partial x_\lambda^i}, 
$$
and
$$
[e_\lambda^j,e_\lambda^k]=\sum_{1\le l \le r}\Gamma^{\lambda, j k}_l e_\lambda^l,
$$
for $j, k=1, \dots, r$, where $f^{\lambda, {i j}}, \Gamma^{\lambda, j k}_l \in C^\infty(U_\lambda)$.

Let $V$ be an open subset of $U_\lambda$.
Motivated by the differential $d_{\mathrm{Lie}}$ for the Lie algebroid cohomology, we define an odd element $Q^\lambda(V)$ of degree $1$ and weight $1$ in $\Gamma\Bigl(V, \underrightarrow{\mathrm{Lim}}\, {\Omega_\mathrm{ch}(A; \mathcal{U})_\lambda}\Bigr)=\Gamma\Bigl(\mathbf{x}_\lambda(V), \underrightarrow{\Omega_{\mathrm{ch}}}(\mathbb{R}^{m|r})\Bigr)$ by setting 
$$
Q^\lambda(V):=\sum_{\substack{1\le i \le m \\ 1\le j \le r}}\beta^i_{-1}f^{\lambda, {i j}} c^j_{0}\mathbf{1} -\frac{1}{2}\sum_{1\le j, k, l \le r}\Gamma^{\lambda, j k}_{l} c^j_0 c^k_0 b^l_{-1}\mathbf{1}.
$$

Notice that the corresponding vertex operator $\underrightarrow{D^\lambda_{\mathrm{Lie}}}(V):=Q^\lambda(V)_{(0)}$ preserves the subspace $\Gamma\Bigl(V, \underrightarrow{\mathrm{Lim}}\, {\Omega^{\gamma c}_\mathrm{ch}(A; \mathcal{U})_\lambda}\Bigr)=\Gamma\Bigl( \mathbf{x}_\lambda(V), \underrightarrow{\Omega^{\gamma c}_{\mathrm{ch}}}(\mathbb{R}^{m|r}) \Bigr)$ and moreover coincides with the differential $d_{\mathrm{Lie}}$ on the weight $0$ space identified with $\Gamma(V, \bigwedge E^*)$.

\begin{lemma}\label{lem: D^2=0 PRESHEAF VERSION}
$$
(\underrightarrow{D^\lambda_{\mathrm{Lie}}}(V))^2=0.
$$
\end{lemma}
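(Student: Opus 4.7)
The plan is to invoke Lemma \ref{lem: odd derivation is 0 if so on generators}. Since $Q^\lambda(V)$ is an odd element of weight $1$, its zero-mode $Q^\lambda(V)_{(0)}=\underrightarrow{D^\lambda_{\mathrm{Lie}}}(V)$ is an odd derivation on the vertex superalgebra $\Gamma\bigl(\mathbf{x}_\lambda(V),\underrightarrow{\Omega_{\mathrm{ch}}}(\mathbb{R}^{m|r})\bigr)$. It therefore suffices to verify that $\bigl(\underrightarrow{D^\lambda_{\mathrm{Lie}}}(V)\bigr)^2$ vanishes on a generating set. I would take as generators the smooth functions $g\in C^\infty$ (which already contain the $\gamma^i_0\mathbf{1}=x^i$), the odd elements $c^j_0\mathbf{1}$ and $b^j_{-1}\mathbf{1}$ for $j=1,\dots,r$, and the even elements $\beta^i_{-1}\mathbf{1}$ for $i=1,\dots,m$.

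First I would compute $\underrightarrow{D^\lambda_{\mathrm{Lie}}}(V)$ on each generator by extracting the appropriate residue of the OPE with $Q^\lambda(V)(z)$, using the OPE relations of Lemma \ref{lem: [beta(z),f(w)]=df/dx(w)delta(z-w)} for $\beta^i(z)g(w)$ together with the standard $b^l(z)c^j(w)\sim\delta_{lj}(z-w)^{-1}$ of the $bc$-system. The results on $g$ and on $c^j_0\mathbf{1}$ reproduce exactly the classical Lie-algebroid differential,
\[
Q^\lambda_{(0)}g=\sum_{i,j}f^{\lambda,ij}\tfrac{\partial g}{\partial x^i}\,c^j_0\mathbf{1},\qquad Q^\lambda_{(0)}(c^l_0\mathbf{1})=-\tfrac{1}{2}\sum_{j,k}\Gamma^{\lambda,jk}_l\,c^j_0c^k_0\mathbf{1},
\]
so that the classical identity $d_{\mathrm{Lie}}^2=0$ on $\Gamma(\wedge A^*)$ immediately handles the cases $g\in C^\infty$ and $c^j_0\mathbf{1}$.

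The remaining cases $b^l_{-1}\mathbf{1}$ and $\beta^i_{-1}\mathbf{1}$, which have positive weight, require a direct computation. Applying the OPEs once yields
\[
Q^\lambda_{(0)}(b^l_{-1}\mathbf{1})=\sum_i f^{\lambda,il}\beta^i_{-1}\mathbf{1}-\sum_{j,l'}\Gamma^{\lambda,jl'}_l\,c^j_0 b^{l'}_{-1}\mathbf{1},
\]
and an analogous but longer expression for $Q^\lambda_{(0)}(\beta^i_{-1}\mathbf{1})$ involving $\partial f^{\lambda,i'j}/\partial x^i$ and $\partial\Gamma^{\lambda,jk}_l/\partial x^i$. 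Iterating $Q^\lambda_{(0)}$ a second time and collecting terms, the two resulting expressions vanish precisely by the two Lie-algebroid axioms written out in local coordinates: the anchor-compatibility identity
\[
\sum_l \Gamma^{\lambda,jk}_l f^{\lambda,il}=\sum_{i'}\Bigl(f^{\lambda,i'j}\tfrac{\partial f^{\lambda,ik}}{\partial x^{i'}}-f^{\lambda,i'k}\tfrac{\partial f^{\lambda,ij}}{\partial x^{i'}}\Bigr),
\]
coming from $a([e^j_\lambda,e^k_\lambda])=[a(e^j_\lambda),a(e^k_\lambda)]$, and the Jacobi identity for $[\,,\,]$ combined with the Leibniz rule $[X,fY]=a(X)(f)Y+f[X,Y]$, which yields the corresponding quadratic identity on the $\Gamma^{\lambda,jk}_l$ and their derivatives.

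The main obstacle is the computation on $\beta^i_{-1}\mathbf{1}$: the OPE of $Q^\lambda(z)$ with $\beta^i(w)$ produces both single- and double-pole contributions, and the second application of $Q^\lambda_{(0)}$ generates normal-ordered polynomials of mixed weight whose reorganisation into a sum that visibly cancels requires careful bookkeeping, balancing the derivative-of-$f$ terms (anchor homomorphism) against the derivative-of-$\Gamma$ terms (Jacobi). Once these cancellations are carried out on all four types of generators, Lemma \ref{lem: odd derivation is 0 if so on generators} immediately yields $(\underrightarrow{D^\lambda_{\mathrm{Lie}}}(V))^2=0$ on the whole vertex superalgebra.
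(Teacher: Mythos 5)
Your verification on $g\in C^\infty$ and on $c^j_0\mathbf{1}$, combined with Lemma \ref{lem: odd derivation is 0 if so on generators}, is in substance the paper's entire proof: the algebra on which the lemma is actually needed (and on which $\underrightarrow{D^\lambda_{\mathrm{Lie}}}$ is used afterwards) is the subalgebra $\Gamma\bigl(\mathbf{x}_\lambda(V), \underrightarrow{\Omega^{\gamma c}_{\mathrm{ch}}}(\mathbb{R}^{m|r})\bigr)$, which is generated by its weight-$0$ subspace $C^\infty\otimes\bigwedge(c^1_0,\dots,c^r_0)$, and on that subspace the operator coincides with the classical $d_{\mathrm{Lie}}$, so $d_{\mathrm{Lie}}^2=0$ finishes the argument. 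No computation on $\beta^i_{-1}\mathbf{1}$ or $b^j_{-1}\mathbf{1}$ is required.

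The extension you propose, namely proving $(Q^\lambda(V)_{(0)})^2=0$ on all of $\Gamma\bigl(\mathbf{x}_\lambda(V), \underrightarrow{\Omega_{\mathrm{ch}}}(\mathbb{R}^{m|r})\bigr)$ by also treating the generators $b^j_{-1}\mathbf{1}$ and $\beta^i_{-1}\mathbf{1}$, is exactly where the gap lies, and in fact it cannot be closed: the cancellations you invoke (anchor compatibility and Jacobi) only control the single-contraction terms, while the second application of $Q^\lambda_{(0)}$ also produces normally-ordered contraction corrections with no classical counterpart, and these survive in general. Concretely, take the Lie algebroid $M\times\mathfrak{g}$ with zero anchor and constant structure constants of a semisimple $\mathfrak{g}$ (a bundle of Lie algebras). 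Then $Q^\lambda_{(0)}b^a_{-1}\mathbf{1}=-\sum_{k,m}\Gamma^{ak}_m c^k_0b^m_{-1}\mathbf{1}$, and in the second application one needs $(c^p_0c^q_0\mathbf{1})_{(-1)}b^l_{-1}\mathbf{1}=c^p_0c^q_0b^l_{-1}\mathbf{1}+\delta^{ql}c^p_{-1}\mathbf{1}-\delta^{pl}c^q_{-1}\mathbf{1}$; the quadratic terms cancel by Jacobi, but the linear contraction terms add up to a nonzero multiple of $\sum_p\mathrm{tr}(\mathrm{ad}_{e^a}\mathrm{ad}_{e^p})\,c^p_{-1}\mathbf{1}$, i.e.\ $(Q^\lambda_{(0)})^2 b^a_{-1}\mathbf{1}\neq0$ whenever the Killing form is nonzero. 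This is the usual semi-infinite/BRST anomaly, and no amount of bookkeeping in your $\beta$- and $b$-cases will remove it. So the square-zero statement is false on the big algebra for a general Lie algebroid; it must be read, as the paper's proof and subsequent use make clear, as a statement about the restriction to the $\gamma c$-subalgebra, where your first two cases together with Lemma \ref{lem: odd derivation is 0 if so on generators} already suffice.
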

\begin{proof}
It suffices to check $
(\underrightarrow{D^\lambda_{\mathrm{Lie}}}(V))^2=0
$ on the weight $0$ subspace by Lemma \ref{lem: odd derivation is 0 if so on generators}. This follows from the fact that the differential $\underrightarrow{D^\lambda_{\mathrm{Lie}}}(V)$ on the weight $0$ subspace is nothing but the differential $d_{\mathrm{Lie}}$ for the Lie algebroid cohomology.
\end{proof}

We can see that the linear maps $\underrightarrow{D^\lambda_{\mathrm{Lie}}}(V)$ with open subsets $V\subset U_\lambda$ define a linear endomorphism $\underrightarrow{D^\lambda_{\mathrm{Lie}}}$ of the presheaf $\underrightarrow{\mathrm{Lim}}\, {\Omega^{\gamma c}_\mathrm{ch}(A; \mathcal{U})_\lambda}$ by the definition of the restriction maps. Thus we have a differential $\underrightarrow{D^\lambda_{\mathrm{Lie}}}$ on  $\underrightarrow{\mathrm{Lim}}\, {\Omega^{\gamma c}_\mathrm{ch}(A; \mathcal{U})_\lambda}$.

\begin{lemma}\label{lem: THE DIFFERENTIAL IS GLUED PRESHEAF VERSION}
The following holds: 
$$
\underrightarrow{D^\lambda_{\mathrm{Lie}}}|_{U_\lambda\cap\mu}\circ\underrightarrow{\vartheta'_{\lambda \mu}}=\underrightarrow{\vartheta'_{\lambda \mu}}\circ \underrightarrow{D^\mu_{\mathrm{Lie}}}|_{U_\mu\cap U_\lambda},
$$
for any $\lambda, \mu \in \Lambda$.
\end{lemma}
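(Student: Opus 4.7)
The plan is to reduce the equality on the entire presheaves to an equality on a small generating subset, and then to show the latter coincides with the classical statement that the Lie algebroid differential $d_{\mathrm{Lie}}$ is globally well-defined. Throughout I work with an open subset $V\subset U_\lambda\cap U_\mu$ and treat both sides as linear maps from $\Gamma\bigl(\mathbf{x}_\mu(V),\underrightarrow{\Omega^{\gamma c}_{\mathrm{ch}}}(\mathbb{R}^{m|r})\bigr)$ to $\Gamma\bigl(\mathbf{x}_\lambda(V),\underrightarrow{\Omega^{\gamma c}_{\mathrm{ch}}}(\mathbb{R}^{m|r})\bigr)$. It will be convenient to pass from $\underrightarrow{\Omega^{\gamma c}_{\mathrm{ch}}}$ to its ambient $\underrightarrow{\Omega_{\mathrm{ch}}}(\mathbb{R}^{m|r})$ (where $\underrightarrow{D^\lambda_{\mathrm{Lie}}}=Q^\lambda_{(0)}$ is already defined), check the identity there, and then restrict to the $\gamma c$-subalgebra which is preserved by both sides.

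First I record two algebraic facts. The map $\underrightarrow{\vartheta'_{\lambda\mu}}$ (extended to an ambient morphism via the definitions \eqref{df: phi^*f}--\eqref{df: phi^*c}, which I still call $\underrightarrow{\vartheta'_{\lambda\mu}}$) is a morphism of vertex superalgebras, while $\underrightarrow{D^\lambda_{\mathrm{Lie}}}$ and $\underrightarrow{D^\mu_{\mathrm{Lie}}}$ are odd derivations since they are zero modes of odd elements. A standard computation then shows that the map
\[
\delta:=\underrightarrow{D^\lambda_{\mathrm{Lie}}}|_{U_\lambda\cap U_\mu}\circ\underrightarrow{\vartheta'_{\lambda\mu}}-\underrightarrow{\vartheta'_{\lambda\mu}}\circ \underrightarrow{D^\mu_{\mathrm{Lie}}}|_{U_\mu\cap U_\lambda}
\]
is an odd derivation through $\underrightarrow{\vartheta'_{\lambda\mu}}$, in the sense that
\[
\delta(a_{(n)}b)=(\delta a)_{(n)}\underrightarrow{\vartheta'_{\lambda\mu}}(b)+(-1)^{|a|}\underrightarrow{\vartheta'_{\lambda\mu}}(a)_{(n)}(\delta b),\qquad [T,\delta]=\delta\, T-T\,\delta=0,
\]
for all homogeneous $a,b$ and all $n\in\mathbb{Z}$. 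Consequently, if $\delta$ vanishes on a generating subset of the source vertex superalgebra, then $\delta$ vanishes identically.

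By the remark recorded just before the statement of Remark~\ref{rem: we can take any framed covering}, $\Gamma\bigl(\mathbf{x}_\mu(V),\underrightarrow{\Omega^{\gamma c}_{\mathrm{ch}}}(\mathbb{R}^{m|r})\bigr)$ is generated by its weight-$0$ subspace, which under the identification \eqref{eq: WEIGHT ZERO SPACE IS CLASSICAL} is $C^\infty(V)\otimes\bigwedge(\tilde c^1_0\mathbf{1},\dots,\tilde c^r_0\mathbf{1})=\Gamma(V,\bigwedge A^*)$. Hence it suffices to check $\delta(\tilde f)=0$ and $\delta(\tilde c^j_0\mathbf{1})=0$ for $\tilde f\in C^\infty(V)$ and $j=1,\ldots,r$. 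Both vanishings follow from a routine OPE computation using only the definition of $Q^\lambda,Q^\mu$ and the transition rules \eqref{df: phi^*f}--\eqref{df: phi^*c} together with the relations that the $f^{\lambda,ij},\Gamma^{\lambda,jk}_l$ and $f^{\mu,ij},\Gamma^{\mu,jk}_l$ satisfy, via the transition matrices $(f^{E,\,jj'}_{\lambda\mu})$, as representatives of the globally defined anchor map and bracket on $A$. More conceptually, both $\underrightarrow{D^\lambda_{\mathrm{Lie}}}$ and $\underrightarrow{D^\mu_{\mathrm{Lie}}}$ restrict on the weight-$0$ subspace to the classical Lie algebroid differential $d_{\mathrm{Lie}}$ on $\Gamma(V,\bigwedge A^*)$, and $\underrightarrow{\vartheta'_{\lambda\mu}}$ restricted to weight $0$ is precisely the canonical identification induced by the transition functions of $A$. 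The desired commutation on generators is therefore the classical fact that $d_{\mathrm{Lie}}$ is well defined on $\Gamma(V,\bigwedge A^*)$ independently of the chosen local frame.

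The step I expect to be the main obstacle is not the conceptual reduction above but the bookkeeping required to argue that $\delta$ preserves the $\gamma c$-subalgebra; indeed $Q^\lambda,Q^\mu$ contain $\beta$- and $b$-modes, so at first glance $\underrightarrow{D^\lambda_{\mathrm{Lie}}}$ might appear to produce elements lying outside $\underrightarrow{\Omega^{\gamma c}_{\mathrm{ch}}}$. This is handled by observing that $\underrightarrow{D^\lambda_{\mathrm{Lie}}}$ has already been shown (in the paragraph immediately preceding Lemma~\ref{lem: D^2=0 PRESHEAF VERSION}) to preserve $\Gamma(V,\underrightarrow{\mathrm{Lim}}\,\Omega^{\gamma c}_\mathrm{ch}(A;\mathcal{U})_\lambda)$, and the same holds for $\underrightarrow{D^\mu_{\mathrm{Lie}}}$. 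Thus the Leibniz-like argument, once checked inside the ambient $\underrightarrow{\Omega_{\mathrm{ch}}}(\mathbb{R}^{m|r})$, restricts cleanly to the $\gamma c$-subalgebra and yields the claimed identity.
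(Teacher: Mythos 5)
Your argument is, in substance, the paper's: the paper invokes Lemma \ref{lem: sufficient condition for B-linearilty} with $f=\underrightarrow{\vartheta'_{\lambda\mu}}$ and $N=0$, which is exactly your observation that the difference $\delta$ is a derivation through the morphism $\underrightarrow{\vartheta'_{\lambda\mu}}$ and hence vanishes once it vanishes on the weight-$0$ generators, where the statement reduces to the frame-independence of the classical $d_{\mathrm{Lie}}$ and the fact that $\underrightarrow{\vartheta'_{\lambda\mu}}$ is the gluing map of $\underline{\bigwedge A^*}$ in weight $0$.

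One caveat: your opening move of ``extending $\underrightarrow{\vartheta'_{\lambda\mu}}$ to an ambient morphism on $\underrightarrow{\Omega_{\mathrm{ch}}}(\mathbb{R}^{m|r})$ via \eqref{df: phi^*f}--\eqref{df: phi^*c}'' is not available: those formulas prescribe no image for the $\beta^i$'s, and the paper's remark following Lemma \ref{lem: OPEs of OMEGA_CHIRAL} points out that the natural candidate $\varphi^*\Tilde{\beta}^i$ violates the required OPE, so the transition maps do not extend beyond the $\gamma c$ (resp.\ $\gamma bc$) subalgebras, and the identity cannot even be stated on the ambient algebra. This detour is inessential, however: since $\underrightarrow{D^\lambda_{\mathrm{Lie}}}$ and $\underrightarrow{D^\mu_{\mathrm{Lie}}}$ preserve the $\gamma c$ subalgebras (as you note), the zero-mode derivation property restricts there, and your generator argument runs verbatim inside $\underrightarrow{\Omega^{\gamma c}_{\mathrm{ch}}}$, which is where the paper carries it out.
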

\begin{proof}
By Lemma \ref{lem: sufficient condition for B-linearilty} with $f=\underrightarrow{\vartheta'_{\lambda \mu}}$ and $N=0$, it suffices to check the relation on the weight $0$ subspace. This follows from the fact that $\underrightarrow{\vartheta'_{\lambda \mu}}$ and $\underrightarrow{D^\lambda_{\mathrm{Lie}}}$ coincide with the gluing map of the sheaf of sections $\underline{\bigwedge A^*}$ and the differential for the Lie algebroid cohomology, respectively.
\end{proof}

The morphism $\underrightarrow{D^\lambda_{\mathrm{Lie}}}$ consists of homogeneous maps. By Remark \ref{rem: homogeneous morphism of presheaves induces that of ind-objects}, the operator  $\underrightarrow{D^\lambda_{\mathrm{Lie}}}$ induces a strict  morphism of ind-objects of sheaves $D^\lambda_{\mathrm{Lie}}\!:  \Omega^{\gamma c}_\mathrm{ch}(A; \mathcal{U})_\lambda\!\to \Omega^{\gamma c}_\mathrm{ch}(A; \mathcal{U})_\lambda$.

%THE DIFFERENTIAL IS GLUED
\begin{lemma}\label{lem: THE DIFFERENTIAL IS GLUED}
The following holds: 
$$
D^\lambda_{\mathrm{Lie}}|_{U_\lambda\cap\mu}\circ\vartheta'_{\lambda \mu}=\vartheta'_{\lambda \mu}\circ D^\mu_{\mathrm{Lie}}|_{U_\mu\cap U_\lambda},
$$
for any $\lambda, \mu \in \Lambda$.
\end{lemma}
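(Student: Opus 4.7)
The plan is to deduce this from the presheaf-level analogue, Lemma \ref{lem: THE DIFFERENTIAL IS GLUED PRESHEAF VERSION}, via the functoriality of the passage from presheaves to VSA-inductive sheaves discussed in Section \ref{subsection: From Presheaves to VSA-Inductive Sheaves}.

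First I would observe that both $D^\lambda_{\mathrm{Lie}}|_{U_\lambda\cap U_\mu}\circ\vartheta'_{\lambda \mu}$ and $\vartheta'_{\lambda \mu}\circ D^\mu_{\mathrm{Lie}}|_{U_\mu\cap U_\lambda}$ are strict morphisms of ind-objects on $U_\lambda\cap U_\mu$ of the form described in Remark \ref{rem: homogeneous morphism of presheaves induces that of ind-objects}. Indeed, $\vartheta'_{\lambda\mu}$ comes from a morphism of presheaves of degree-weight-graded vertex superalgebras by the construction preceding the lemma, and $D^\lambda_{\mathrm{Lie}}$ comes from the homogeneous presheaf morphism $\underrightarrow{D^\lambda_{\mathrm{Lie}}}$ of degree $1$ and weight $0$. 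By the explicit description $D^\lambda_{\mathrm{Lie}} = \bigl([\underrightarrow{D^\lambda_{\mathrm{Lie}}}|_{\underrightarrow{\mathrm{Lim}}\,\Omega_\mathrm{ch}^{\gamma c}(A;\mathcal{U})_\lambda[\le N]}]\bigr)_{N\in\mathbb{N}}$ (and analogously for $\vartheta'_{\lambda\mu}$), the composition at the ind-object level is induced by the composition of the underlying presheaf morphisms.

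Next I would apply the functor $\underrightarrow{\mathrm{Lim}}\,$ to both sides of the desired equality. By the naturality noted in Remark \ref{rem: homogeneous morphism of presheaves induces that of ind-objects}, we have
\begin{gather*}
\underrightarrow{\mathrm{Lim}}\,\bigl(D^\lambda_{\mathrm{Lie}}|_{U_\lambda\cap U_\mu}\circ\vartheta'_{\lambda \mu}\bigr) = \underrightarrow{D^\lambda_{\mathrm{Lie}}}|_{U_\lambda\cap U_\mu}\circ\underrightarrow{\vartheta'_{\lambda \mu}},\\
\underrightarrow{\mathrm{Lim}}\,\bigl(\vartheta'_{\lambda \mu}\circ D^\mu_{\mathrm{Lie}}|_{U_\mu\cap U_\lambda}\bigr) = \underrightarrow{\vartheta'_{\lambda \mu}}\circ\underrightarrow{D^\mu_{\mathrm{Lie}}}|_{U_\mu\cap U_\lambda}.
\end{gather*}
By Lemma \ref{lem: THE DIFFERENTIAL IS GLUED PRESHEAF VERSION}, the right-hand sides coincide as morphisms of presheaves.

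It remains to upgrade this presheaf equality to equality of the corresponding ind-object morphisms. Since both sides of the desired equation are morphisms of ind-objects of the specific form $\bigl([\tilde{G}|_{\,\cdot\,[\le N]}]\bigr)_{N\in\mathbb{N}}$ induced from homogeneous presheaf morphisms $\tilde{G}$, and two such induced morphisms are equal whenever the presheaf morphisms $\tilde{G}$ that produce them agree, the identification at the presheaf level yields the claimed identification at the ind-object level. The only subtle point—and the main bookkeeping obstacle—is checking that the compositions $D^\lambda_{\mathrm{Lie}}|_{U_\lambda\cap U_\mu}\circ\vartheta'_{\lambda \mu}$ and $\vartheta'_{\lambda \mu}\circ D^\mu_{\mathrm{Lie}}|_{U_\mu\cap U_\lambda}$ themselves have this presheaf-induced form, which amounts to a straightforward verification using the definition of composition in $\mathrm{Ind}(\mathit{Presh}_X(\mathit{Vec}^{\mathrm{super}}_\mathbb{K}))$ together with the fact that all the maps in the inductive systems involved are injective.
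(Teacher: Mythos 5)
Your proposal is correct and takes essentially the same route as the paper, which proves this lemma by simply deducing it from the presheaf-level identity of Lemma \ref{lem: THE DIFFERENTIAL IS GLUED PRESHEAF VERSION}. The bookkeeping you supply (that $D^\lambda_{\mathrm{Lie}}$ and $\vartheta'_{\lambda\mu}$ are strict morphisms induced from homogeneous presheaf morphisms as in Remark \ref{rem: homogeneous morphism of presheaves induces that of ind-objects}, that their composites are again of this induced form, and that equality of the underlying presheaf morphisms forces equality of the induced ind-object morphisms) is exactly the implicit content of the paper's one-line argument.
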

\begin{proof}
This relation follows from \ref{lem: THE DIFFERENTIAL IS GLUED PRESHEAF VERSION}.
\end{proof}

By Lemma \ref{lem: THE DIFFERENTIAL IS GLUED}, we can glue the strict morphisms $(D^\lambda_{\mathrm{Lie}})_{\lambda\in\Lambda}$ into a morphism $D_{\mathrm{Lie}}$ on $\Omega^{\gamma c}_\mathrm{ch}(A)$. Note that  by Remark \ref{rem: morphisms coincides if so locally}, the morphism $D_{\mathrm{Lie}}$ is independent of the choice of framed coverings.

\begin{lemma}
The morphism $D_{\mathrm{Lie}}$ is a differential on $\Omega^{\gamma c}_\mathrm{ch}(A)$. 
\end{lemma}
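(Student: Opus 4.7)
The plan is to verify each of the axioms for a differential on a degree-weight-graded VSA-inductive sheaf (namely \eqref{eq: deg 1 wt 0 condition of differential on VSA-inductive sheaf}, \eqref{eq: square 0 condition of differential on VSA-inductive sheaf}, \eqref{eq: derivation condition of differential on VSA-inductive sheaf}, and \eqref{eq: degree derivation condition of differential on VSA-inductive sheaf}) by reducing it to the local model $\Omega^{\gamma c}_\mathrm{ch}(A;\mathcal{U})_\lambda$ and exploiting the fact that there $D^\lambda_{\mathrm{Lie}}$ is realised as the zero-mode of the vertex operator associated with $Q^\lambda(V)$.

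First I would check that $D_{\mathrm{Lie}}$ has the correct parity and grading. Because $Q^\lambda(V)$ is an odd vector of degree $1$ and weight $1$ in the vertex superalgebra $\Gamma\bigl(V,\underrightarrow{\mathrm{Lim}}\,\Omega^{\gamma c}_\mathrm{ch}(A;\mathcal{U})_\lambda\bigr)$, the zero mode $Q^\lambda(V)_{(0)}$ is odd of degree $1$ and weight $0$. This verifies the bracket relations $[\underline{H},D_{\mathrm{Lie}}]=0$ and $[\underline{J},D_{\mathrm{Lie}}]=D_{\mathrm{Lie}}$ locally; since these relations are equalities between morphisms of ind-objects and they hold on each $U_\lambda$, Remark \ref{rem: morphisms coincides if so locally} gives them globally on $\Omega^{\gamma c}_\mathrm{ch}(A)$.

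Next I would handle $D_{\mathrm{Lie}}^2=0$. Lemma \ref{lem: D^2=0 PRESHEAF VERSION} shows $(\underrightarrow{D^\lambda_{\mathrm{Lie}}}(V))^2=0$ for every open $V\subset U_\lambda$, so $(D^\lambda_{\mathrm{Lie}})^2=0$ as a morphism of ind-objects on $U_\lambda$. Because the global $D_{\mathrm{Lie}}$ restricts to $D^\lambda_{\mathrm{Lie}}$ under the gluing isomorphisms $\Phi^\lambda$ (this is how it was produced via Proposition \ref{prop: GLUING MORPHISMS OF VsaIndSh}), and because composition of glued morphisms is the gluing of composites (Proposition \ref{prop: FUNCTORIALITY OF GLUING}), $D_{\mathrm{Lie}}^2$ is the gluing of $(D^\lambda_{\mathrm{Lie}})^2=0$, hence zero by the uniqueness in Proposition \ref{prop: GLUING MORPHISMS OF VsaIndSh} (equivalently Remark \ref{rem: morphisms coincides if so locally}).

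Finally, I would establish the derivation conditions \eqref{eq: derivation condition of differential on VSA-inductive sheaf} and \eqref{eq: degree derivation condition of differential on VSA-inductive sheaf}. By the same locality principle, it suffices to prove them on each local piece $\Omega^{\gamma c}_\mathrm{ch}(A;\mathcal{U})_\lambda$. There the underlying operator $\underrightarrow{D^\lambda_{\mathrm{Lie}}}(V)=Q^\lambda(V)_{(0)}$ is the zero mode of an odd, degree-$1$ vector in a degree-weight-graded vertex superalgebra. By the standard identity
\[
[A_{(0)},B_{(n)}]=\sum_{i\ge0}\binom{0}{i}(A_{(i)}B)_{(n-i)}=(A_{(0)}B)_{(n)},
\]
combined with the super/degree sign arising because $A=Q^\lambda(V)$ is odd (respectively of degree $1$), the operator $Q^\lambda(V)_{(0)}$ is a graded derivation of every $n$-th product in the appropriate $\mathbb{Z}/2\mathbb{Z}$- and $\mathbb{Z}$-graded sense. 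Translated into the ind-object language via Remark \ref{rem: homogeneous morphism of presheaves induces that of ind-objects}, this is exactly \eqref{eq: derivation condition of differential on VSA-inductive sheaf} and \eqref{eq: degree derivation condition of differential on VSA-inductive sheaf} for $D^\lambda_{\mathrm{Lie}}$. Gluing with Proposition \ref{prop: FUNCTORIALITY OF GLUING} then promotes these equalities to $\Omega^{\gamma c}_\mathrm{ch}(A)$. The only potentially delicate step is keeping track of the fact that \eqref{eq: derivation condition of differential on VSA-inductive sheaf} and \eqref{eq: degree derivation condition of differential on VSA-inductive sheaf} are stated as equalities of bilinear morphisms of ind-objects; but because all the operators involved (namely $\underline{(n)}$, $D_{\mathrm{Lie}}$, and the projectors to the parity and degree components) are strict and compatible with the gluing, this reduction is routine.
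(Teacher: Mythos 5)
Your proposal is correct and follows essentially the same route as the paper: reduce all four defining relations to a local check via Remark \ref{rem: morphisms coincides if so locally} (and the gluing propositions), and verify them on each $\Omega^{\gamma c}_\mathrm{ch}(A;\mathcal{U})_\lambda$ using that $\underrightarrow{D^\lambda_{\mathrm{Lie}}}$ is the zero mode of the odd, degree-$1$, weight-$1$ vector $Q^\lambda$ together with Lemma \ref{lem: D^2=0 PRESHEAF VERSION}. The paper's proof simply states this local fact without spelling out the zero-mode derivation and grading computations, which you make explicit.
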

\begin{proof}
By Remark \ref{rem: morphisms coincides if so locally}, it suffices to check the relations \eqref{eq: deg 1 wt 0 condition of differential on VSA-inductive sheaf}, \eqref{eq: square 0 condition of differential on VSA-inductive sheaf}, \eqref{eq: derivation condition of differential on VSA-inductive sheaf} and \eqref{eq: degree derivation condition of differential on VSA-inductive sheaf} locally. This follows from the fact that $\underrightarrow{D_{\mathrm{Lie}}^\lambda}$ is a differential. 
\end{proof}

Let $\Gamma\bigl(M, \underrightarrow{\mathrm{Lim}}\, {\Omega^{\gamma c}_\mathrm{ch}(A)}\bigr)$ be the space of all global sections of the presheaf  $\underrightarrow{\mathrm{Lim}}\, {\Omega^{\gamma c}_\mathrm{ch}(A)}$ obtained by applying the functor $\underrightarrow{\mathrm{Lim}}\, $ to the degree-weight-graded  VSA-inductive sheaf $\Omega^{\gamma c}_\mathrm{ch}(A)$. 
By the lemma above and Remark \ref{rem: presheaf  of differential VSA-inductive sheaf}, we have the following. 

\begin{theorem}
The pair $\Bigl(\Gamma\bigl(M, \underrightarrow{\mathrm{Lim}}\, {\Omega^{\gamma c}_\mathrm{ch}(A)}\bigr), \underrightarrow{\mathrm{Lim}}\, {D_{\mathrm{Lie}}}(M)\Bigr)$ is a differential degree-weight-graded vertex superalgebra. 
\end{theorem}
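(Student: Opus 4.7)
The plan is to read this off as a formal consequence of the preceding lemma combined with the functorial properties of $\underrightarrow{\mathrm{Lim}}$ already established in Section~\ref{section: VSA-inductive Sheaves}. The previous lemma exhibits $(\Omega^{\gamma c}_\mathrm{ch}(A), D_{\mathrm{Lie}})$ as an object of $\textit{Diff-}\textit{DegWt-}\mathit{VSA_{\mathbb{K}}}\textit{-IndSh}_M$, so the job is simply to transfer this structure to global sections.

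First, I would apply Remark~\ref{rem: presheaf  of differential VSA-inductive sheaf}, which says that for any differential degree-weight-graded VSA-inductive sheaf $(\mathcal{V}, D)$ the pair $(\underrightarrow{\mathrm{Lim}}\, \mathcal{V}, \underrightarrow{\mathrm{Lim}}\,  D)$ is a presheaf of differential degree-weight-graded vertex superalgebras; this instantly gives that $\bigl(\underrightarrow{\mathrm{Lim}}\, \Omega^{\gamma c}_\mathrm{ch}(A),\,\underrightarrow{\mathrm{Lim}}\,  D_{\mathrm{Lie}}\bigr)$ is such a presheaf on $M$. Next, taking global sections is a functor from presheaves of (differential) degree-weight-graded vertex superalgebras to (differential) degree-weight-graded vertex superalgebras, so evaluating at $U=M$ yields that $\Gamma\bigl(M,\underrightarrow{\mathrm{Lim}}\,\Omega^{\gamma c}_\mathrm{ch}(A)\bigr)$ is a degree-weight-graded vertex superalgebra equipped with the endomorphism $\underrightarrow{\mathrm{Lim}}\,  D_{\mathrm{Lie}}(M)$.

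It only remains to verify that $\underrightarrow{\mathrm{Lim}}\,  D_{\mathrm{Lie}}(M)$ is a genuine differential, i.e.\ an odd vertex superalgebra derivation of degree $1$ and weight $0$ that squares to zero. Each of these properties was already checked at the level of the VSA-inductive sheaf in the previous lemma (relations \eqref{eq: deg 1 wt 0 condition of differential on VSA-inductive sheaf}--\eqref{eq: degree derivation condition of differential on VSA-inductive sheaf} are imposed on $D_{\mathrm{Lie}}$), and these relations are preserved under $\underrightarrow{\mathrm{Lim}}$ and under evaluation at $M$ because both operations commute with the formation of $\underline{(n)}$, $\underline{H}$ and $\underline{J}$. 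Hence the required properties descend, giving the claimed differential degree-weight-graded vertex superalgebra structure.

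There is no real obstacle here; the only minor point worth double-checking is bookkeeping around the functor $\underrightarrow{\mathrm{Lim}}$ as described after Lemma~\ref{lem: PRESHEAF associated with A VSA-inductive sheaf}, to ensure that the image of a differential on an ind-object really is a differential on the corresponding presheaf of vertex superalgebras and not merely an odd endomorphism. Once that is noted, the statement is a direct consequence of the preceding lemma and the remarks cited above.
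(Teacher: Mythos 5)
Your proposal is correct and follows essentially the same route as the paper: the paper deduces the theorem directly from the preceding lemma (that $D_{\mathrm{Lie}}$ is a differential on $\Omega^{\gamma c}_\mathrm{ch}(A)$) together with Remark \ref{rem: presheaf  of differential VSA-inductive sheaf}, then evaluates the resulting presheaf of differential degree-weight-graded vertex superalgebras at $M$. Your additional bookkeeping about $\underrightarrow{\mathrm{Lim}}$ preserving the differential is exactly what that remark encapsulates, so nothing further is needed.
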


The theorem above leads us to the following definition. 

\begin{definition}
Let $A$ be a Lie algebroid on a manifold $M$. Its \textbf{chiral Lie algebroid cohomology}, denoted by $H_{\mathrm{ch}}(A)$, is the cohomology of the complex 
$\Bigl(\Gamma\bigl(M, \underrightarrow{\mathrm{Lim}}\, {\Omega^{\gamma c}_\mathrm{ch}(A)}\bigr), \underrightarrow{\mathrm{Lim}}\, {D_{\mathrm{Lie}}}(M)\Bigr)$.
\end{definition}

\begin{remark}
From the construction, the chiral Lie algebroid cohomology $\!H_{\mathrm{ch}}(A)$ is a  $\mathbb{Z}_{\ge 0}$-graded vertex superalgebra and the subspace of weight $0$, $H_{\mathrm{ch}}(A)[0]$, coincides with the classical Lie algebroid cohomology. 
\end{remark}

\section{Chiral Equivariant Lie Algebroid Cohomology}\label{section: Chiral Equivariant Lie Algebroid Cohomology}

\subsection{Definition of Chiral Equivariant Lie Algebroid Cohomology}
Let $(A, a, [ , ])$ be a Lie algebroid on a manifold $M$. 
We will first equip $\Bigl(\Gamma\bigl(M, \underrightarrow{\mathrm{Lim}}\, {\Omega^{\gamma c}_\mathrm{ch}(A)}\bigr),$ $\underrightarrow{\mathrm{Lim}}\, {D_{\mathrm{Lie}}}(M)\Bigr)$ with a differential $\mathfrak{s}\Gamma(M, A)[t]$-module structure, where $\Gamma(M, A)$ is the Lie algebra of global sections of $A$. 

Set $m:=\dim M$ and $r:=\mathrm{rank}\, A$. Let $\mathcal{U}=(U_\lambda)_{\lambda\in\Lambda}$ be a  framed covering of $A$. As before we denote by $\mathbf{x}_\lambda$ and $\mathbf{e}_\lambda$ the chart and the frame associated with $U_\lambda$, respectively. 
Let $X\in \Gamma(M, A)$ be a global section.
Fix $\lambda\in\Lambda$.
We can write $X$ on $U_\lambda$ as $X|_{U_\lambda}=\sum_{j=1}^r f^{\lambda, j} e_\lambda^j$, where $f^{\lambda, j}$ is a function on $U_\lambda$.  
We set 
$$
\iota_{X}(V):=\sum_{j=1}^r f^{\lambda, j} b^j_{-1}\mathbf{1} \in \Gamma\Bigl(\mathbf{x}_\lambda (V), \underrightarrow{\Omega^{\gamma bc}_{\mathrm{ch}}}(\mathbb{R}^{m|r})\Bigr)=\Gamma(V, \underrightarrow{\mathrm{Lim}}\, {\Omega^{\gamma bc}_{\mathrm{ch}}(A; \mathcal{U}}_{\lambda})),
$$
for an open subset $V\subset U_\lambda$.
For each $n\in\mathbb{Z}$, the corresponding vertex operators $\iota_X(V)_{(n)}$ with open subsets $V\subset U_\lambda$ form a morphism $\underrightarrow{\iota_{X, (n)}^\lambda}$ on the presheaf $\underrightarrow{\mathrm{Lim}}\, {\Omega^{\gamma bc}_{\mathrm{ch}}(A; \mathcal{U}}_{\lambda})$. Note that for $n\ge0$, the morphism $\underrightarrow{\iota_{X, (n)}^\lambda}$ preserves the subpresheaf  $\underrightarrow{\mathrm{Lim}}\, {\Omega^{\gamma c}_\mathrm{ch}(A; \mathcal{U})_{\lambda}}$. Let $n\ge0$. Since the morphism $\underrightarrow{\iota_{X, (n)}^\lambda}$ is  homogeneous of weight $-n$, it induces a strict morphism of ind-objects $\iota_{X, (n)}^\lambda: \Omega^{\gamma c}_\mathrm{ch}(A; \mathcal{U})_{\lambda}\to \Omega^{\gamma c}_\mathrm{ch}(A; \mathcal{U})_{\lambda}$ by Remark \ref{rem: homogeneous morphism of presheaves induces that of ind-objects}.

\begin{lemma}
Let $n\in \mathbb{Z}_{\ge0}$. Then
$$
\iota_{X, (n)}^\lambda|_{U_\lambda\cap\mu}\circ\vartheta'_{\lambda \mu}=\vartheta'_{\lambda \mu}\circ \iota_{X, (n)}^\mu|_{U_\mu\cap U_\lambda},
$$
hold for all $\lambda, \mu\in\Lambda$.
\end{lemma}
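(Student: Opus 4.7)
The plan is to reduce the equality of strict morphisms of ind-objects to an equality of morphisms of presheaves, and then, on sections over each open $V\subset U_\lambda\cap U_\mu$, to apply Lemma \ref{lem: sufficient condition for B-linearilty} with $f=\underrightarrow{\vartheta'_{\lambda\mu}}(V)$, with $S$ the weight-zero subspace, and with $A_{(n)}=\underrightarrow{\iota^\mu_{X,(n)}}(V)$, $B_{(n)}=\underrightarrow{\iota^\lambda_{X,(n)}}(V)$. The weight-zero subspace of $\Gamma\bigl(V,\underrightarrow{\mathrm{Lim}}\,\Omega^{\gamma c}_\mathrm{ch}(A;\mathcal{U})_\mu\bigr)$ generates that vertex superalgebra, as noted after the construction of $\Omega^{\gamma c}_\mathrm{ch}(\mathbb{R}^{m|r})$. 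The map $\underrightarrow{\vartheta'_{\lambda\mu}}(V)$ is a vertex superalgebra morphism by construction, so the hypothesis of Lemma \ref{lem: sufficient condition for B-linearilty} on $f$ is met.

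Next, I would verify the Jacobi-type commutation relations \eqref{eq: B-commutation relation} and \eqref{eq: B'-commutation relation} for $A_{(n)}$ and $B_{(n)}$. These modes are by definition $A_{(n)}=Y\bigl(\iota_X(V),z\bigr)_{(n)}$ in the ambient vertex superalgebra $\Gamma\bigl(V,\underrightarrow{\mathrm{Lim}}\,\Omega^{\gamma bc}_\mathrm{ch}(A;\mathcal{U})_\mu\bigr)$, where $\iota_X(V)$ was defined just before the lemma. Thus the standard vertex-algebra commutator formula yields \eqref{eq: B-commutation relation} on the $\gamma bc$-algebra for all $n\in\mathbb{Z}$, $k\in\mathbb{Z}$, $v$ in the ambient algebra. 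Since, as noted in the paragraph preceding the lemma, the operator $\underrightarrow{\iota^\mu_{X,(n)}}$ preserves the subpresheaf $\underrightarrow{\mathrm{Lim}}\,\Omega^{\gamma c}_\mathrm{ch}(A;\mathcal{U})_\mu$ for $n\ge 0$, and $\Omega^{\gamma c}_\mathrm{ch}$ is a vertex subalgebra of $\Omega^{\gamma bc}_\mathrm{ch}$, the relation \eqref{eq: B-commutation relation} restricts cleanly to the $\gamma c$-sections; the same argument applies to $B_{(n)}$.

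It then remains to check the equality on the weight-zero subspace $S$, which under \eqref{eq: WEIGHT ZERO SPACE IS CLASSICAL} is identified with $\Gamma(V,\bigwedge A^*)$. For $n\ge 1$, the operator $\underrightarrow{\iota^\mu_{X,(n)}}(V)$ is homogeneous of weight $-n$; since the weight grading is bounded below by $0$, it vanishes on $S$, and likewise for $\underrightarrow{\iota^\lambda_{X,(n)}}(V)$ on the image. For $n=0$, a direct computation using the bracket $[b^j_m,c^k_n]=\delta_{jk}\delta_{m+n,0}\tau$ shows that $\underrightarrow{\iota^\mu_{X,(0)}}(V)$ acts on $g\,c^{j_1}_0\cdots c^{j_k}_0\mathbf{1}$ exactly as the classical interior product $\iota_X$ of $X|_{U_\mu}=\sum_j f^{\mu,j}e^j_\mu$ on $\Gamma(V,\bigwedge A^*)$, and similarly for the $\lambda$-version. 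Meanwhile $\underrightarrow{\vartheta'_{\lambda\mu}}$ restricts on the weight-zero subspace, by \eqref{df: phi^*f} and \eqref{df: phi^*c}, to the classical transition map of $\bigwedge A^*$. The required equality on $S$ then reduces to the classical fact that the interior product by a well-defined global section of $A$ intertwines the transition maps of $\bigwedge A^*$.

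Finally, having established $\underrightarrow{\iota^\lambda_{X,(n)}}\big|_{U_\lambda\cap U_\mu}\circ\underrightarrow{\vartheta'_{\lambda\mu}}=\underrightarrow{\vartheta'_{\lambda\mu}}\circ\underrightarrow{\iota^\mu_{X,(n)}}\big|_{U_\mu\cap U_\lambda}$ as morphisms of presheaves, the corresponding equality of strict morphisms of ind-objects follows from Remark \ref{rem: homogeneous morphism of presheaves induces that of ind-objects}. The main obstacle is the bookkeeping between the $\gamma c$- and $\gamma bc$-algebras: the operators $\iota^\mu_{X,(n)}$ originate as modes in the bigger vertex superalgebra, and one must carefully justify restricting both the Jacobi identity and the operators themselves to the $\gamma c$-subalgebra. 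Once this is in place, the rest is a routine verification on the weight-zero generating subspace.
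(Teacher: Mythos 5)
Your proposal is correct and follows essentially the same route as the paper: reduce to the presheaf level, invoke Lemma \ref{lem: sufficient condition for B-linearilty} with $f=\underrightarrow{\vartheta'_{\lambda\mu}}$ to reduce to the weight-zero generating subspace, and conclude there since $\underrightarrow{\iota_{X,(0)}}$ is the classical interior product intertwining the transition maps of $\bigwedge A^*$ while $\underrightarrow{\iota_{X,(n)}}$ for $n\ge1$ lowers weight and so kills weight zero. Your extra care about the commutator hypotheses coming from modes in the larger $\gamma bc$-algebra and restricting to the $\gamma c$-subalgebra is exactly the bookkeeping the paper leaves implicit.
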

\begin{proof}
It suffices to show $\underrightarrow{\iota_{X, (n)}^\lambda}\big|_{U_\lambda\cap\mu}\circ\underrightarrow{\vartheta'_{\lambda \mu}}=\underrightarrow{\vartheta'_{\lambda \mu}}\circ \underrightarrow{\iota_{X, (n)}^\mu}\big|_{U_\mu\cap U_\lambda}$ for each $\lambda, \mu\in\Lambda$. By Lemma \ref{lem: sufficient condition for B-linearilty}, it suffices to check the relation on the weight $0$ subspace. This follows from the fact that the operator  $\underrightarrow{\iota_{X, (0)}^\lambda}$ coincides with the inner product for $X$ on the weight $0$ subspace and the fact that the operator $\underrightarrow{\iota_{X, (n)}^\lambda}$ has weight $n$. 
\end{proof}

Thus for $n\ge0$, we have a morphism of ind-objects $\iota_{X, (n)}: \Omega^{\gamma c}_\mathrm{ch}(A)\to\Omega^{\gamma c}_\mathrm{ch}(A)$ by gluing the strict morphisms $(\iota_{X, (n)}^\lambda)_{\lambda\in\Lambda}$, which is independent of the choice of framed covering as in the case of $D_{\mathrm{Lie}}$. 
We set 
$$
L_{X, (n)}:= [D_{\mathrm{Lie}},\iota_{X, (n)}],
$$
for $n \ge 0$.

%CHIRAL CARTAN RELATIONS
\begin{lemma}\label{lem: CHIRAL CARTAN RELATIONS}
Let $X, Y$ be global sections of $A$ and $n, k$  non-negative integers. Then the following relations hold:
\begin{enumerate}[(i)]
 \setlength{\topsep}{1pt}
     \setlength{\partopsep}{0pt}
     \setlength{\itemsep}{1pt}
     \setlength{\parsep}{0pt}
     \setlength{\leftmargin}{20pt}
     \setlength{\rightmargin}{0pt}
     \setlength{\listparindent}{0pt}
     \setlength{\labelsep}{3pt}
     \setlength{\labelwidth}{15pt}
     \setlength{\itemindent}{0pt}
     \renewcommand{\makelabel}{\upshape}
\item $[L_{X, (n)},\iota_{Y, (k)}]=\iota_{[X,Y], (n+k)}$,
\item $[L_{X, (n)},L_{Y, (k)}]=L_{[X,Y], (n+k)}$,
\item $[D_{\mathrm{Lie}},L_{X, (n)}+\iota_{Y, (k)}]=L_{Y, (k)}$.
\end{enumerate}
\end{lemma}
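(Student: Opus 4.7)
The plan is to handle (iii) and to deduce (ii) from (i) by graded Jacobi manipulations, then to tackle (i) locally via an OPE computation. First, observe $D_{\mathrm{Lie}}^2 = 0$ globally: this holds locally by Lemma \ref{lem: D^2=0 PRESHEAF VERSION} and is transported globally by the gluing in Lemma \ref{lem: THE DIFFERENTIAL IS GLUED}. Since $D_{\mathrm{Lie}}$ is odd, $\{D_{\mathrm{Lie}},D_{\mathrm{Lie}}\} = 2D_{\mathrm{Lie}}^2 = 0$, so
\[
[D_{\mathrm{Lie}}, L_{X,(n)}] \;=\; [D_{\mathrm{Lie}},[D_{\mathrm{Lie}},\iota_{X,(n)}]] \;=\; 0,
\]
and together with $[D_{\mathrm{Lie}},\iota_{Y,(k)}] = L_{Y,(k)}$ (the definition) this gives (iii). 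For (ii), $L_{X,(n)}$ has even parity, so the graded Jacobi identity yields
\[
[L_{X,(n)}, L_{Y,(k)}] \;=\; [[L_{X,(n)}, D_{\mathrm{Lie}}], \iota_{Y,(k)}] + [D_{\mathrm{Lie}},[L_{X,(n)}, \iota_{Y,(k)}]],
\]
in which the first term vanishes by (iii) and the second evaluates, via (i), to $[D_{\mathrm{Lie}},\iota_{[X,Y],(n+k)}] = L_{[X,Y],(n+k)}$.

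Everything therefore reduces to (i). Both sides of (i) are assembled from their restrictions to each chart $U_\lambda$ through Proposition \ref{prop: GLUING MORPHISMS OF VsaIndSh}, and since commutators commute with gluing it suffices to verify (i) on each $U_\lambda$. There the operators $\iota_{Z,(n)}$ and $L_{Z,(n)}$ are honest modes of vertex operators in the ambient VSA $\underrightarrow{\Omega^{\gamma bc}_\mathrm{ch}}(A;\mathcal{U})_\lambda$: writing $X|_{U_\lambda} = \sum_j f^{\lambda,j} e_\lambda^j$ and $Y|_{U_\lambda} = \sum_k g^{\lambda,k} e_\lambda^k$, one has $\iota_X = \sum_j f^{\lambda,j} b^j_{-1}\mathbf{1}$, $\iota_Y = \sum_k g^{\lambda,k} b^k_{-1}\mathbf{1}$, and $L_X = Q^\lambda{}_{(0)}\iota_X$. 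By Borcherds' commutator formula, (i) is equivalent to the single OPE identity
\[
L_X(z)\,\iota_Y(w) \;\sim\; \frac{\iota_{[X,Y]}(w)}{z-w},
\]
i.e.\ $L_{X,(i)}\iota_Y = \delta_{i,0}\,\iota_{[X,Y]}$ for all $i \ge 0$.

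This OPE is computed by Wick's theorem using the basic contractions $\beta^i(z)f(w) \sim (\partial f/\partial x^i)(w)/(z-w)$, $b^j(z) c^k(w) \sim \delta_{jk}/(z-w)$, and the trivial OPEs among the remaining pairs (Lemmas \ref{lem: [beta(z),f(w)]=df/dx(w)delta(z-w)} and \ref{lem: relations for f(z)}). The anchor summand $\sum \beta^i_{-1} f^{\lambda,ij} c^j_0\mathbf{1}$ of $Q^\lambda$ produces the derivation contributions $a(X)(g^{\lambda,l}) - a(Y)(f^{\lambda,l})$, while the structure-constant summand $-\tfrac{1}{2}\sum \Gamma^{\lambda,jk}_l c^j_0 c^k_0 b^l_{-1}\mathbf{1}$ produces the bracket contribution $\sum_{j,k} f^{\lambda,j} g^{\lambda,k}\Gamma^{\lambda,jk}_l$. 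Collecting these reproduces exactly the coefficient of $b^l_{-1}\mathbf{1}$ in $\iota_{[X,Y]}$ under the local expansion
\[
[X,Y]|_{U_\lambda} \;=\; \sum_l \Bigl(\sum_{j,k} f^{\lambda,j} g^{\lambda,k} \Gamma^{\lambda,jk}_l + a(X)(g^{\lambda,l}) - a(Y)(f^{\lambda,l})\Bigr) e^l_\lambda
\]
of the Lie-algebroid bracket.

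The main obstacle is this Wick-theorem bookkeeping: beyond matching the single-pole residue with $\iota_{[X,Y]}$, one must verify that all higher-pole coefficients $L_{X,(i)}\iota_Y$ with $i \ge 1$ vanish. The relevant cancellations involve double contractions through $\beta^i$ and through $b^l c^j$ and rely essentially on the anchor-compatibility axiom $[X, fY] = f[X,Y] + a(X)(f) Y$ (equivalently, $a([e^j,e^k]) = [a(e^j), a(e^k)]$) of the Lie algebroid. Once this local OPE identity is in hand, the remaining assertions are formal consequences.
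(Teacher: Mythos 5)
Your proof is correct, and it organizes the argument differently from the paper. The paper treats (i)--(iii) uniformly: by Remark \ref{rem: morphisms coincides if so locally} it suffices to check the relations locally on each chart, where $D_{\mathrm{Lie}}$, $\iota_{X,(n)}$ and $L_{X,(n)}$ become the modes $Q^\lambda_{(0)}$, $(\iota_X)_{(n)}$ and $(Q^\lambda_{(0)}\iota_X)_{(n)}$ of fields of the big local algebra, and all three identities are then established by direct OPE computations. You instead get (iii) formally from the definition $L_{Y,(k)}=[D_{\mathrm{Lie}},\iota_{Y,(k)}]$ together with $D_{\mathrm{Lie}}^2=0$ (already available from Lemma \ref{lem: D^2=0 PRESHEAF VERSION} and the gluing), and (ii) from (i) and (iii) by the graded Jacobi identity, so that only (i) needs an OPE computation, which you correctly reduce via the commutator formula to the local statement $(L_X)_{(i)}\iota_Y=\delta_{i,0}\,\iota_{[X,Y]}$ for $i\ge 0$. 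This is a legitimate, arguably cleaner organization; the paper's uniform treatment simply avoids the formal reductions at the cost of three local computations instead of one. Two small corrections to your write-up: the ambient algebra in which $L_X=Q^\lambda_{(0)}\iota_X$ lives is the full $\underrightarrow{\Omega_{\mathrm{ch}}}(\mathbb{R}^{m|r})$, not $\underrightarrow{\Omega^{\gamma bc}_{\mathrm{ch}}}$, since $Q^\lambda$ and hence $L_X$ contain $\beta$-terms (this is exactly why the paper keeps the bigger VSA-inductive sheaf around); and the vanishing of the higher-pole coefficients does not really hinge on the anchor-compatibility axiom: since $\iota_X(z)\iota_Y(w)$ is regular one has $(L_X)_{(i)}\iota_Y=\iota_{X,(i)}(Q^\lambda_{(0)}\iota_Y)$, and because no term of $Q^\lambda_{(0)}\iota_Y$ contains both a $\beta$ and a $c$, the OPE of $\iota_X(z)$ with $(Q^\lambda_{(0)}\iota_Y)(w)$ admits no double contractions and hence at most a simple pole (weights kill $i\ge 2$ in any case); the Leibniz/anchor rule enters only when identifying the simple-pole coefficient with the local components of $[X,Y]$.
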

\begin{proof}
By Remark \ref{rem: morphisms coincides if so locally}, it suffices to check the relations locally. 
Since the operators $D_{\mathrm{Lie}}^\lambda=D_{\mathrm{Lie}}|_{U_\lambda}$, $\iota_{Y, (k)}^\lambda=\iota_{Y, (k)}|_{U_\lambda}$ and $L_{X, (n)}^\lambda:=L_{X, (n)}|_{U_\lambda}$ come from morphisms of presheaves, 
it suffices to check the corresponding morphisms $\underrightarrow{D_{\mathrm{Lie}}^\lambda}=\underrightarrow{\mathrm{Lim}}\,  D_{\mathrm{Lie}}^\lambda$,  $\underrightarrow{\iota_{Y, (k)}^\lambda}=\underrightarrow{\mathrm{Lim}}\, \iota_{Y, (k)}^\lambda$ and $\underrightarrow{L_{X, (n)}^\lambda}:=\underrightarrow{\mathrm{Lim}}\,  L_{X, (n)}^\lambda$ on $\underrightarrow{\mathrm{Lim}}\, \Omega^{\gamma c}_\mathrm{ch}(A; \mathcal{U})_\lambda$ satisfy the same relations. 
This is done by direct computations of OPEs. 
\end{proof}

\begin{theorem}
Let $\mathfrak{g}$ be a Lie algebra.
Suppose given a morphism of Lie algebras 
$
x^A: \mathfrak{g}\to \Gamma(M, A), \quad \xi\mapsto x^A_\xi.
$
Then 
the assignment
$$
\mathfrak{sg}[t]\ni(\xi, \eta)t^n\mapsto \underrightarrow{\mathrm{Lim}}\, {L_{x^A_\xi, (n)}}(M)+\underrightarrow{\mathrm{Lim}}\, {\iota_{x^A_\eta, (n)}}(M)\in\mathrm{End}\bigl(\Gamma\bigl(M, \underrightarrow{\mathrm{Lim}}\, {\Omega^{\gamma c}_\mathrm{ch}(A)}\bigr)\bigr),
$$
defines a differential $\mathfrak{sg}[t]$-module structure on  the complex $\Bigl(\Gamma\bigl(M, \underrightarrow{\mathrm{Lim}}\, {\Omega^{\gamma c}_\mathrm{ch}(A)}\bigr),$ $ \underrightarrow{\mathrm{Lim}}\, {D_{\mathrm{Lie}}}(M)\Bigr)$. 
\end{theorem}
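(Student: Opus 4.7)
The plan is to verify each of the defining conditions of a differential $\mathfrak{sg}[t]$-module from Section 2 in turn: (a) that the assignment is a Lie superalgebra morphism, (b) that it is compatible with differentials, (c) the degree/weight conditions, and (d) the continuity of the action. Items (a) and (b) are the substantial content; (c) and (d) will follow from the construction once one recalls the gradings of the relevant vertex operators.

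For (a), the Lie superalgebra structure on $\mathfrak{sg}[t]$ has three types of brackets to check. The brackets $[(\xi_1,0)t^n,(\xi_2,0)t^m]=([\xi_1,\xi_2],0)t^{n+m}$ and $[(\xi,0)t^n,(0,\eta)t^m]=(0,[\xi,\eta])t^{n+m}$ correspond exactly to relations (ii) and (i) of Lemma \ref{lem: CHIRAL CARTAN RELATIONS}, once we use that $x^A$ is a Lie algebra morphism so that $[x^A_{\xi_1},x^A_{\xi_2}]=x^A_{[\xi_1,\xi_2]}$ in $\Gamma(M,A)$. The remaining relation $[\iota_{Y_1,(n)},\iota_{Y_2,(m)}]=0$ is not stated in Lemma \ref{lem: CHIRAL CARTAN RELATIONS}, so I would add its verification: by Remark \ref{rem: morphisms coincides if so locally} it suffices to work locally on each $U_\lambda$, where $\iota_{Y_j}(V)=\sum_k f^{\lambda,k}_j b^k_{-1}\mathbf{1}$ is built from the odd generators $b^k_{-1}\mathbf{1}$ whose mutual OPEs vanish; hence the corresponding vertex operators $(n)$-commute. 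For (b), the condition $\rho(d((\xi,\eta)t^n))=[D_{\mathrm{Lie}},\rho((\xi,\eta)t^n)]$ reduces to $L_{x^A_\eta,(n)}=[D_{\mathrm{Lie}},L_{x^A_\xi,(n)}+\iota_{x^A_\eta,(n)}]$, which is relation (iii) of Lemma \ref{lem: CHIRAL CARTAN RELATIONS} together with the definition $L_{X,(n)}=[D_{\mathrm{Lie}},\iota_{X,(n)}]$ and the fact that $D_{\mathrm{Lie}}$ is a differential so $[D_{\mathrm{Lie}},L_{X,(n)}]=0$.

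For (c), the element $(\xi,0)t^n\in\mathfrak{sg}[t]$ is even and assigned the operator $L_{x^A_\xi,(n)}=[D_{\mathrm{Lie}},\iota_{x^A_\xi,(n)}]$; since $D_{\mathrm{Lie}}$ has degree $1$, weight $0$ and $\iota_{x^A_\xi,(n)}$ has degree $-1$, weight $-n$ (the generator $\iota_{X}(V)$ is an odd vector of degree $-1$ and weight $1$, so its $n$-th product lowers weight by $n+1-1=n$ and degree by $-1$), we get that $L_{x^A_\xi,(n)}$ has degree $0$ and weight $-n$ as required. The odd element $(0,\eta)t^n$ is sent to $\iota_{x^A_\eta,(n)}$ which by the same count has degree $-1$ and weight $-n$. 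For (d), one uses that $\underrightarrow{\mathrm{Lim}}\,\Omega^{\gamma c}_{\mathrm{ch}}(A)(M)$ is weight-graded with non-negative weights: any vector $v$ lies in finitely many weight spaces, and since the operators $L_{x^A_\xi,(n)},\iota_{x^A_\eta,(n)}$ strictly lower the weight by $n$, they annihilate $v$ once $n$ exceeds the maximum weight appearing in $v$, giving the required continuity.

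The main obstacle, if any, will be the verification of the new relation $[\iota_{Y_1,(n)},\iota_{Y_2,(m)}]=0$ and the careful bookkeeping that all of the locally-defined relations from Lemma \ref{lem: CHIRAL CARTAN RELATIONS} descend to global relations on $\Gamma(M,\underrightarrow{\mathrm{Lim}}\,\Omega^{\gamma c}_{\mathrm{ch}}(A))$. The former is routine by the OPE computation sketched above; the latter is handled by Remark \ref{rem: morphisms coincides if so locally}, reducing every identity among global morphisms of ind-objects to the corresponding identity on each $U_\lambda$, where it can be checked at the level of the presheaves $\underrightarrow{\mathrm{Lim}}\,\Omega^{\gamma c}_{\mathrm{ch}}(A;\mathcal{U})_\lambda$ using Lemma \ref{lem: sufficient condition for B-linearilty} to reduce to weight zero when needed. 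Everything else is then a direct translation of the chiral Cartan relations into the $\mathfrak{sg}[t]$-module axioms.
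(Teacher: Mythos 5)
Your proof is correct and follows essentially the same route as the paper: it reduces the module axioms to the chiral Cartan relations of Lemma \ref{lem: CHIRAL CARTAN RELATIONS}, checks that $\iota_{X,(n)}$ has degree $-1$ and weight $-n$ (hence $L_{X,(n)}$ degree $0$ and weight $-n$), and obtains continuity from the weight grading being bounded below. Your explicit verification of $[\iota_{Y_1,(n)},\iota_{Y_2,(m)}]=0$ via the vanishing OPEs of the local fields $\sum_j f^{\lambda,j}b^j$ is a correct supplement to a point the paper leaves implicit (it is needed because the odd part of $\mathfrak{sg}$ is abelian).
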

\begin{proof}
By the above lemma, it suffices to check that the operator  $\underrightarrow{\mathrm{Lim}}\, {\iota_{X, (n)}}(M)$ has degree $-1$ and weight $-n$ for any $n\in\mathbb{N}$ and $X\in\Gamma(M, A)$.  Note that the continuity of the action of $\mathfrak{sg}[t]$ follows from the fact that the weight-grading on $\Gamma\bigl(M, \underrightarrow{\mathrm{Lim}}\, {\Omega^{\gamma c}_\mathrm{ch}(A)}\bigr)$ is bounded from the below.  For that purpose, it suffices to show that $[\underline{J}, \iota_{X, (n)}]=-\iota_{X, (n)}$ and $[\underline{H}, \iota_{X, (n)}]=-n\iota_{X, (n)}$ for any $n\in\mathbb{N}$ and $X\in\Gamma(M, A)$, where $\underline{J}$ and $\underline{H}$ is the degree-grading operator and the Hamiltonian of $\Gamma\bigl(M, \underrightarrow{\mathrm{Lim}}\, {\Omega^{\gamma c}_\mathrm{ch}(A)}\bigr)$. These relations follow from the fact that the same relations hold locally. 
\end{proof}

This leads us to the following definitions. (See Section \ref{subsection: Chiral Equivariant Cohomology} for the definitions of differential $\mathfrak{sg}[t]$-modules, the chiral basic cohomology and the chiral equivariant cohomology)

\begin{definition}
Let $G$ be a compact connected Lie group and  $\mathfrak{g}$ the Lie algebra $\mathrm{Lie}(G)^\mathbb{K}$.
Let $A$ be a Lie algabroid on a manifold $M$ with a Lia algebra morphism $\mathfrak{g}\to \Gamma(M, A)$. 
The \textbf{chiral basic Lie algebroid cohomology} of $A$, denoted by $H_{\mathrm{ch}, bas}(A)$, is the chiral basic cohomology of  the differential $\mathfrak{sg}[t]$-module 
$\Bigl(\Gamma\bigl(M, \underrightarrow{\mathrm{Lim}}\, {\Omega^{\gamma c}_\mathrm{ch}(A)}\bigr), \underrightarrow{\mathrm{Lim}}\, {D_{\mathrm{Lie}}}(M)\Bigr)$. The \textbf{chiral equivariant Lie algebroid cohomology} of $A$, denoted by $H_{\mathrm{ch}, G}(A)$, is the chiral equivariant cohomology of  the differential $\mathfrak{sg}[t]$-module 
$\Bigl(\Gamma\bigl(M, \underrightarrow{\mathrm{Lim}}\, {\Omega^{\gamma c}_\mathrm{ch}(A)}\bigr), \underrightarrow{\mathrm{Lim}}\, {D_{\mathrm{Lie}}}(M)\Bigr)$.
\end{definition}

\begin{remark}
The chiral basic Lie algebroid cohomology $H_{\mathrm{ch}, bas}(A)$ and the chiral equivariant Lie algebroid cohomology $H_{\mathrm{ch}, G}(A)$ are  $\mathbb{Z}_{\ge0}$-graded vertex superalgebras. Indeed, $[\underrightarrow{\mathrm{Lim}}\, {L_{x^A_\xi, (n)}}(M), v_{(k)}]=\sum_{i\ge0}\binom{n}{i}(\underrightarrow{\mathrm{Lim}}\, {L_{x^A_\xi, (i)}}(M)v)_{(n+k-i)}$ and $[\underrightarrow{\mathrm{Lim}}\, {\iota_{x^A_\xi, (n)}}(M), v_{(k)}]=\sum_{i\ge0}\binom{n}{i}(\underrightarrow{\mathrm{Lim}}\, {\iota_{x^A_\xi, (i)}}(M)v)_{(n+k-i)}$ hold for any $n\ge0$, $k\in\mathbb{Z}$,  $v\in\Gamma\bigl(M, \underrightarrow{\mathrm{Lim}}\, {\Omega^{\gamma c}_\mathrm{ch}(A)}\bigr)$ and $\xi\in\mathfrak{g}$ since the same relations hold locally. Then the assertion follows from Lemma \ref{lem: generalized commutant} and Lemma \ref{lem: tensor commutant} together with the fact that $\Gamma\bigl(M, \underrightarrow{\mathrm{Lim}}\, {\Omega^{\gamma c}_\mathrm{ch}(A)}\bigr)$ is $\mathbb{Z}_{\ge0}$-graded. Moreover $H_{\mathrm{ch}, bas}(A)[0]$ and  $H_{\mathrm{ch}, G}(A)[0]$ coincide with  the classical basic Lie algebroid cohomology and the classical equivariant Lie algebroid cohomology, respectively. This follows from the fact that $\Gamma\bigl(M, \underrightarrow{\mathrm{Lim}}\, {\Omega^{\gamma c}_\mathrm{ch}(A)}\bigr)$ is $\mathbb{Z}_{\ge0}$-graded and the fact that $\underrightarrow{\mathrm{Lim}}\, {L_{x^A_\xi, (0)}}(M)$ and $\underrightarrow{\mathrm{Lim}}\, {\iota_{x^A_\xi, (0)}}(M)$ coincides with the classical Lie derivative and the classical interior product, respectively. 
\end{remark}

\subsection{Transformation Lie Algebroid Cases}
Let $M$ be a manifold with an infinitesimal action of a finite-dimensional Lie algebra $\mathfrak{g}$:
$$
x^M: \mathfrak{g}\to \mathscr{X}(M), \quad \xi\mapsto x^M_\xi.
$$
Let $A=M\times \mathfrak{g}$ be the corresponding transformation Lie algebroid (see Example \ref{ex: transformation Lie algebroid}).
Let $(\xi_j)_j$ be a basis of $\mathfrak{g}$, $(\xi^*_j)_j$ the  dual basis for $\mathfrak{g}^*$  and $(\Gamma^k_{i j})_{i, j, k}$  the structure constants, that is, constants satisfying $[\xi_i,\xi_j]=\sum_{k=1}^{\dim \mathfrak{g}}\Gamma^k_{i j}\xi_k$ for each $i, j=1, \dots, \dim\mathfrak{g}$.

Let $\mathcal{U}=(U_\lambda)_{\lambda\in\Lambda}$ be a framed covering of $A$ consisting of  open subsets with the constant frame $(\xi_j)_j$ as the frame on them. We can use the VSA-inductive sheaf $\Omega^{\gamma c}_\mathrm{ch}(A; \mathcal{U})$ for computations of the  cohomologies $H_{\mathrm{ch}}(A)$, $H_{\mathrm{ch}, bas}(A)$ and $H_{\mathrm{ch}, G}(A)$, since they do not depend on the choice of framed coverings. 
By the construction of $\Omega^{\gamma c}_\mathrm{ch}(A; \mathcal{U})$, we see $\underrightarrow{\mathrm{Lim}}\, {\Omega^{\gamma c}_\mathrm{ch}(A; \mathcal{U})}=\underrightarrow{\mathrm{Lim}}\, {C^{\infty, \gamma c}_{\mathrm{ch}, M}}\otimes\langle c \rangle$, where $C^{\infty, \gamma c}_{\mathrm{ch}, M}:=\Omega^{\gamma c}_\mathrm{ch}(M\times \{0\})$, and $\langle c \rangle$ is the subalgebra of $\mathcal{E}(\mathfrak{g})$  generated by $c^{\xi^*}_0\mathbf{1}$ with $\xi^*\in\mathfrak{g}^*$. Therefore 
by the construction, the differential for the corresponding chiral Lie algebroid cohomology $H_{\mathrm{ch}}(A)$ is nothing but the differential  for the continuous Lie algebra cohomology with coefficients in the $\mathfrak{g}[t]$-module $\underrightarrow{\mathrm{Lim}}\, {C^{\infty, \gamma c}_{\mathrm{ch}, M}}(M)$. The action of the element $\xi_j t^n\in\mathfrak{g}[t]$ on $\underrightarrow{\mathrm{Lim}}\, {C^{\infty, \gamma c}_{\mathrm{ch}, M}}(M)$ is given by the operator 
$$
\sum_{k\ge0}\sum_{i=1}^{\dim M}f^{i j}_{-k-n} \beta^i_k,
$$
on each space $\underrightarrow{\mathrm{Lim}}\, {C^{\infty, \gamma c}_{\mathrm{ch}, M}}(U_\lambda)$ of local sections, 
where the vector field $x^M_{\xi_j}|_{U_\lambda}$ is written as $\sum_{i=1}^{\dim M}f^{i j} \partial/\partial x^i$ with $f^{i j}\in C^\infty(U_\lambda)$. Thus we have 
$$
H_{\mathrm{ch}}(A)=H\bigl(\mathfrak{g}[t]; \underrightarrow{\mathrm{Lim}}\, {C^{\infty, \gamma c}_{\mathrm{ch}, M}}(M)\bigr).
$$

Consider the Lie algebra morphism 
\begin{equation}\label{eq: canonical Lie algebra morphism of transformation Lie algebroids}
\mathfrak{g}\to \Gamma(M, A)=C^\infty(M)\otimes \mathfrak{g}, \quad \xi \mapsto 1\otimes \xi.
\end{equation}
We will compute the corresponding chiral equivariant cohomology  of $\mathcal{A}:=\Gamma(M, \underrightarrow{\mathrm{Lim}}\, {\Omega^{\gamma c}_\mathrm{ch}(A; \mathcal{U})})$,  namely, $H_{\mathrm{ch}, G}(A)$. Notice that 
\begin{equation}\label{eq: iota for transformation Lie algebroids}
\iota^\mathcal{A}_{\xi, (n)}=b^{\xi}_n,
\end{equation}
for all  $\xi\in\mathfrak{g}$ and $n\ge0$. 
Then the chiral basic cohomology $H_{\mathrm{ch}, bas}(A)$ of $\mathcal{A}=\Gamma(M, \underrightarrow{\mathrm{Lim}}\, {\Omega^{\gamma c}_\mathrm{ch}(A; \mathcal{U})})$ is  as follows:
%Chiral basic cohomology $H_{ch, bas}(A)$
\begin{equation}\label{eq: chiral basic cohomology of transformation Lie algebroid}
H_{\mathrm{ch}, bas}^i(A)=
\begin{cases}
\underrightarrow{\mathrm{Lim}}\, {C^{\infty, \gamma c}_{\mathrm{ch}, M}}(M)^{\mathfrak{g}[t]}, & \text{when $i=0$,} \\
0,  & \text{otherwise.}
\end{cases}
\end{equation}
Indeed, from \eqref{eq: iota for transformation Lie algebroids} we have $\bigl(\Gamma(M, \underrightarrow{\mathrm{Lim}}\, {\Omega^{\gamma c}_\mathrm{ch}(A; \mathcal{U})})\bigr)_{hor}=\underrightarrow{\mathrm{Lim}}\, {C^{\infty, \gamma c}_{\mathrm{ch}, M}}(M)$ and therefore 
$$
\bigl(\Gamma(M, \underrightarrow{\mathrm{Lim}}\, {\Omega^{\gamma c}_\mathrm{ch}(A; \mathcal{U})})\bigr)_{bas}=\underrightarrow{\mathrm{Lim}}\, {C^{\infty, \gamma c}_{\mathrm{ch}, M}}(M)^{\mathfrak{g}[t]}.
$$

Equip $\mathcal{A}=\Gamma(M, \underrightarrow{\mathrm{Lim}}\, {\Omega^{\gamma c}_\mathrm{ch}(A; \mathcal{U})})=\underrightarrow{\mathrm{Lim}}\, {C^{\infty, \gamma c}_{\mathrm{ch}, M}}(M)\otimes\langle c \rangle$ with the $\langle c \rangle$-module structure given by the left multiplication. 
We claim that this $\langle c \rangle$-module structure induces a chiral $W^*$-module structure. Proposition \ref{prop: construction of chiral W^*-modules} will be applied. Let $d_\mathcal{A}$ be the differential for $\mathcal{A}$, that is, that for the Lie algebra cohomology $H\bigl(\mathfrak{g}[t], \underrightarrow{\mathrm{Lim}}\, {C^{\infty, \gamma c}_{\mathrm{ch}, M}}(M)\bigr)$. From the definition of $d_\mathcal{A}$ and the commutation relations in $\mathcal{E}(\mathfrak{g})$, we have 
\begin{align}
[d_\mathcal{A},c^{\xi^*_l, \mathcal{A}}(z)]&=\Bigl[-\frac{1}{2}\sum_{i, j , k =1}^{\dim \mathfrak{g}}\sum_{\substack{s, t\le0,\\ \notag s+t+u=0}}\Gamma_{i j}^kc^{\xi^*_i}_sc^{\xi^*_j}_tb^{\xi_k}_u,\ c^{\xi^*_l, \mathcal{A}}(z)\Bigr] \\ \notag
&=-\frac{1}{2}\sum_{i, j , k =1}^{\dim \mathfrak{g}}\sum_{\substack{s, t\le0,\\ s+t+u=0}}\Gamma_{i j}^kc^{\xi^*_i}_sc^{\xi^*_j}_t\langle \xi^*_l,  \xi_k\rangle z^u \\ \notag
&=-\frac{1}{2}\sum_{i, j=1}^{\dim \mathfrak{g}}\sum_{\substack{s, t\le0,\\ s+t+u=0}}\Gamma_{i j}^lc^{\xi^*_i}_sc^{\xi^*_j}_t z^u \\ 
&=-\frac{1}{2}\sum_{i, j=1}^{\dim \mathfrak{g}}\Gamma_{i j}^lc^{\xi^*_i, \mathcal{A}}(z)c^{\xi^*_j, \mathcal{A}}(z).\label{eq: the formula of [d_A,c] for translation Lie algebroid}
\end{align}
Therefore we have $[c^{\xi^*, \mathcal{A}}(z),[d_\mathcal{A}, c^{\eta^*, \mathcal{A}}(w)]]=0$ for all $\xi^*, \eta^*\in\mathfrak{g}$.
By Lemma \ref{prop: construction of chiral W^*-modules}, we obtain a  $\langle c, \gamma \rangle$-module structure $Y^\mathcal{A}$ on $\mathcal{A}$ by extending the above $\langle c \rangle$-module structure.  From the definition of $\gamma^{\xi^*_l, \mathcal{A}}(z)$ (see the proof of Proposition \ref{prop: construction of chiral W^*-modules}) 
and \eqref{eq: the formula of [d_A,c] for translation Lie algebroid}, we have 
$$
\gamma^{\xi^*_l, \mathcal{A}}(z)=[d_\mathcal{A},c^{\xi^*_l, \mathcal{A}}(z)]+\frac{1}{2}\sum_{i, j=1}^{\dim \mathfrak{g}}\Gamma_{i j}^lc^{\xi^*_i, \mathcal{A}}(z)c^{\xi^*_j, \mathcal{A}}(z)=0,
$$
for all $l=1, \dots, \dim \mathfrak{g}$. Therefore $[\iota_\xi^\mathcal{A}(z)_-,\gamma^{\xi^*, \mathcal{A}}(x)]=0$ for all $\xi\in\mathfrak{g}$ and $\xi^*\in\mathfrak{g}^*$. 
Recall that $\iota^\mathcal{A}_{\xi, (n)}=b^{\xi}_n$ for all  $\xi\in\mathfrak{g}$ and $n\ge0$. From this, we have $[\iota_\xi^\mathcal{A}(z)_-,c^{\xi^*, \mathcal{A}}(w)]=\langle\xi^*, \xi\rangle\delta(z-w)_-$ for all $\xi\in\mathfrak{g}$ and $\xi^*\in\mathfrak{g}^*$. Thus we can apply Proposition \ref{prop: construction of chiral W^*-modules} and we see that the triple $(\mathcal{A}, d_\mathcal{A}, Y^\mathcal{A})$ is a chiral $W^*$-module. 
We have proved the following.

\begin{theorem}
For a transformation Lie algebroid $A=M\times\mathfrak{g}$ with the  Lie algebra morphism \eqref{eq: canonical Lie algebra morphism of transformation Lie algebroids}, the differential  $\mathfrak{sg}[t]$-module $\Gamma(M, \underrightarrow{\mathrm{Lim}}\, {\Omega^{\gamma c}_\mathrm{ch}(A)})$ has a canonical structure of a chiral $W^*$-module.
\end{theorem}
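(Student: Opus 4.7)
The plan is to invoke Proposition \ref{prop: construction of chiral W^*-modules}, which reduces the task to (i) constructing a $\langle c \rangle$-module structure on $\mathcal{A}=\Gamma(M, \underrightarrow{\mathrm{Lim}}\, {\Omega^{\gamma c}_\mathrm{ch}(A)})$, (ii) verifying the single OPE identity $[c^{\xi^*, \mathcal{A}}(z),[d_\mathcal{A}, c^{\eta^*, \mathcal{A}}(w)]]=0$, and (iii) checking the two compatibility conditions involving the interior product operators.

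For (i) and (ii), I would choose a framed covering $\mathcal{U}=(U_\lambda)_{\lambda\in\Lambda}$ with the constant frame $(\xi_j)_j$ on every chart, so that the VSA-inductive sheaf decomposes as a tensor product and $\mathcal{A}\cong \underrightarrow{\mathrm{Lim}}\, {C^{\infty, \gamma c}_{\mathrm{ch}, M}}(M)\otimes \langle c \rangle$, where $C^{\infty, \gamma c}_{\mathrm{ch}, M}:=\Omega^{\gamma c}_\mathrm{ch}(M\times\{0\})$. The natural $\langle c \rangle$-module structure $Y^\mathcal{A}_0$ is given by left multiplication on the second factor. To establish (ii), I would write $d_\mathcal{A}$ explicitly using the formula for $D_{\mathrm{Lie}}$ in the constant frame; in the transformation Lie algebroid case, the anchor-piece $\sum \beta^i_{-1}f^{ij}c^j_0\mathbf{1}$ involves only $\beta$'s and functions (which commute with all $c^{\eta^*, \mathcal{A}}(w)$), while the structure-constant piece $-\tfrac{1}{2}\sum\Gamma^k_{ij}c^i_0c^j_0b^k_{-1}\mathbf{1}$ has trivial $c$-$c$ OPE. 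Hence $[d_\mathcal{A}, c^{\eta^*, \mathcal{A}}(w)]$ reduces to a polynomial in the $c$-fields alone (explicitly, $-\tfrac{1}{2}\sum_{i,j}\Gamma^l_{ij}c^{\xi^*_i,\mathcal{A}}(w)c^{\xi^*_j,\mathcal{A}}(w)$ when $\eta^*=\xi^*_l$), and the double commutator vanishes because distinct $c$-fields have no singular OPE among themselves.

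For (iii), the first condition $[\iota^\mathcal{A}_\xi(z)_-, c^{\xi^*, \mathcal{A}}(w)]=\langle \xi^*,\xi\rangle \delta(z-w)_-$ follows at once from the identity $\iota^\mathcal{A}_{\xi,(n)}=b^\xi_n$, which itself is immediate because in the constant frame the global section $1\otimes\xi$ has constant coefficients (so the local definition of $\iota_{X,(n)}$ produces the mode $b^\xi_n$ verbatim). The second condition $[\iota^\mathcal{A}_\xi(z)_-, \gamma^{\xi^*, \mathcal{A}}(w)]=0$ I would deduce from the defining formula \eqref{eq: df of gamma(z)} in the proof of Proposition \ref{prop: construction of chiral W^*-modules}: combining it with the explicit computation of $[d_\mathcal{A},c^{\xi^*_l,\mathcal{A}}(z)]$ above shows the two summands cancel, giving $\gamma^{\xi^*,\mathcal{A}}(z)=0$ in this particular model, whence the commutator is trivially zero.

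The only step of substance is the OPE computation in (ii); the main obstacle is simply to confirm carefully that the anchor term in $d_\mathcal{A}$ contributes nothing to $[d_\mathcal{A},c^{\eta^*,\mathcal{A}}(w)]$ and that the residual structure-constant term is a normally ordered polynomial in the $c$-fields with no singular self-OPE. Everything else is bookkeeping with the $bc$-commutation relations and the universal formulas provided by Proposition \ref{prop: construction of chiral W^*-modules}.
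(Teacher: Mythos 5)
Your proposal is correct and follows essentially the same route as the paper: the constant-frame decomposition $\mathcal{A}\cong\underrightarrow{\mathrm{Lim}}\,{C^{\infty, \gamma c}_{\mathrm{ch}, M}}(M)\otimes\langle c\rangle$ with left multiplication as the $\langle c\rangle$-action, the computation showing $[d_\mathcal{A},c^{\xi^*_l,\mathcal{A}}(z)]=-\tfrac{1}{2}\sum_{i,j}\Gamma^l_{ij}c^{\xi^*_i,\mathcal{A}}(z)c^{\xi^*_j,\mathcal{A}}(z)$ (hence the vanishing double commutator), the identity $\iota^\mathcal{A}_{\xi,(n)}=b^\xi_n$, and the observation that $\gamma^{\xi^*,\mathcal{A}}(z)=0$, all fed into Proposition \ref{prop: construction of chiral W^*-modules}, are exactly the steps of the paper's argument.
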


Therefore by Theorem \ref{thm: CHIRAL BASIC=CHIRAL EQUIVARIANT} and \eqref{eq: chiral basic cohomology of transformation Lie algebroid}, we have the following.

\begin{corollary}\label{prop: chiral equivariant Lie algebroid cohomology for tramsf. Lie algebroids when comm}

Let $G$ be a compact connected Lie group, $\mathfrak{g}$ the Lie algebra $\mathrm{Lie}(G)^\mathbb{K}$ and $A=M\times\mathfrak{g}$ a transformation Lie algebroid. 
Assume that $G$ is commutative. Then the following holds:
\begin{equation}
H_{\mathrm{ch}, G}^{i}(A)=
\begin{cases}
\underrightarrow{\mathrm{Lim}}\, {C^{\infty, \gamma c}_{\mathrm{ch}, M}}(M)^{\mathfrak{g}[t]}, & \text{when $i=0$,} \\
0,  & \text{otherwise.}
\end{cases}
\end{equation}
\end{corollary}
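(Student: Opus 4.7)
The plan is to assemble the corollary directly from three ingredients already set up in this subsection: the chiral $W^*$-module structure on $\mathcal{A}=\Gamma(M,\underrightarrow{\mathrm{Lim}}\,{\Omega^{\gamma c}_\mathrm{ch}(A;\mathcal{U})})$ just established, the explicit computation of the chiral basic Lie algebroid cohomology in \eqref{eq: chiral basic cohomology of transformation Lie algebroid}, and the isomorphism $\mathbf{H}_{bas}\cong\mathbf{H}_G$ of Theorem \ref{thm: CHIRAL BASIC=CHIRAL EQUIVARIANT} valid under the hypothesis that $G$ is commutative.

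First I would invoke the preceding theorem, which gives $(\mathcal{A},d_\mathcal{A},Y^\mathcal{A})$ the structure of a chiral $W^*$-module with respect to the finite-dimensional Lie algebra $\mathfrak{g}=\mathrm{Lie}(G)^{\mathbb{K}}$ (noting that $\mathfrak{g}$ is automatically finite-dimensional since $G$ is a compact Lie group). Because $G$ is moreover assumed commutative, Theorem \ref{thm: CHIRAL BASIC=CHIRAL EQUIVARIANT} applies and yields a canonical isomorphism of graded vertex superalgebras
\[
\mathbf{H}_{bas}(\mathcal{A})\;\cong\;\mathbf{H}_{G}(\mathcal{A}).
\]
By definition, the right-hand side is exactly $H_{\mathrm{ch},G}(A)$.

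Second, the left-hand side $\mathbf{H}_{bas}(\mathcal{A})$ has already been identified in \eqref{eq: chiral basic cohomology of transformation Lie algebroid}: the computation there uses the formula $\iota^{\mathcal{A}}_{\xi,(n)}=b^{\xi}_{n}$ (equation \eqref{eq: iota for transformation Lie algebroids}) to get $\mathcal{A}_{hor}=\underrightarrow{\mathrm{Lim}}\,{C^{\infty,\gamma c}_{\mathrm{ch},M}}(M)$, and then intersecting with the $\mathfrak{g}[t]$-invariants produces $\mathcal{A}_{bas}=\underrightarrow{\mathrm{Lim}}\,{C^{\infty,\gamma c}_{\mathrm{ch},M}}(M)^{\mathfrak{g}[t]}$; since the differential $d_\mathcal{A}$ sends the horizontal subspace into the image of the $c^{\xi^*, \mathcal{A}}_{(n)}$'s and hence out of the horizontal subspace, the restriction $d_\mathcal{A}|_{\mathcal{A}_{bas}}$ vanishes, so the chiral basic cohomology is concentrated in degree zero and equals $\underrightarrow{\mathrm{Lim}}\,{C^{\infty,\gamma c}_{\mathrm{ch},M}}(M)^{\mathfrak{g}[t]}$.

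Combining these two identifications gives the stated formula. There is no real obstacle: once the chiral $W^*$-module structure is in hand (which is the content of the theorem immediately preceding the corollary) the result is a direct application of Theorem \ref{thm: CHIRAL BASIC=CHIRAL EQUIVARIANT} together with the computation \eqref{eq: chiral basic cohomology of transformation Lie algebroid}. The only point that merits attention is verifying that the hypotheses of Theorem \ref{thm: CHIRAL BASIC=CHIRAL EQUIVARIANT} are indeed satisfied in the present setting, namely that the Lie algebra used to define the chiral $W^*$-module structure coincides with $\mathrm{Lie}(G)^{\mathbb{K}}$ and that the commutativity of $G$ is what lets the small chiral Cartan model argument of Proposition \ref{prop: small Cartan model for chiral W^*-modules} and the subsequent quasi-isomorphism go through.
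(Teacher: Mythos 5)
Your proposal is correct and follows essentially the same route as the paper: the corollary is obtained by combining the chiral $W^*$-module structure from the preceding theorem with Theorem \ref{thm: CHIRAL BASIC=CHIRAL EQUIVARIANT} (applicable since $G$ is commutative) and the identification of the chiral basic cohomology in \eqref{eq: chiral basic cohomology of transformation Lie algebroid}. The only cosmetic difference is your rephrasing of why $d_\mathcal{A}$ vanishes on the basic subspace; the cleanest way to say it is that $\mathcal{A}_{bas}$ is a subcomplex concentrated in degree $0$ while $d_\mathcal{A}$ has degree $1$, which matches the paper's argument.
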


We consider a special case. 
Let $(G, \Pi)$ be a compact connected Poisson-Lie group with the Lie algebra $\mathfrak{g}$ and $(G^*, \Pi^*)$ the dual Poisson-Lie group of $G$. Recall the Lie algebra morphism 
$$
\mathfrak{g}\to\Gamma(G^*, T^*G^*), \quad \xi\mapsto \xi^l, 
$$
where we denote by $\xi^l$ the left invariant $1$-form on $G^*$ whose value at $e$ is $\xi\in\mathfrak{g}$.  
Consider the corresponding chiral equivariant cohomology. 
By \cite[Proposition 5.25]{Lu90}, we have an isomorphism of Lie algebroids
$$
T^*G^*\cong G^*\times \mathfrak{g},
$$
using the left invariant one-forms on $G^*$. Here we equip the trivial  bundle $G^*\times \mathfrak{g}$ with the transformation Lie algebroid structure defined by the infinitesimal  left dressing action. 
The following is a chiral analogue of \cite[Corollary 4.20]{Gin99}.

\begin{proposition}
In the above setting, 
 assume that $(G, \Pi)$ is commutative. Then the following holds:
\begin{equation}
H_{\mathrm{ch}, G}^{i}(T^*G^*)=
\begin{cases}
\underrightarrow{\mathrm{Lim}}\, {C^{\infty, \gamma c}_{\mathrm{ch}, G^*}}(G^*), & \text{when $i=0$,} \\
0,  & \text{otherwise.}
\end{cases}
\end{equation}
\end{proposition}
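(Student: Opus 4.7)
The strategy is to transport the computation along the stated Lie algebroid isomorphism $T^*G^*\cong G^*\times\mathfrak{g}$ and invoke Corollary \ref{prop: chiral equivariant Lie algebroid cohomology for tramsf. Lie algebroids when comm}, and then show that when $\mathfrak{g}$ is abelian the $\mathfrak{g}[t]$-invariants exhaust $\underrightarrow{\mathrm{Lim}}\, {C^{\infty,\gamma c}_{\mathrm{ch},G^*}}(G^*)$.

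First, I would make the identification of \cite[Proposition 5.25]{Lu90} explicit: left translation $\alpha\mapsto(g\mapsto L_g^*\alpha_g)$ is an isomorphism of Lie algebroids from the cotangent Lie algebroid of $(G^*,\Pi^*)$ to the transformation Lie algebroid of the infinitesimal left dressing action on $G^*$. Under this isomorphism, the left invariant $1$-form $\xi^l$ corresponds to the constant $\mathfrak{g}$-valued function $1\otimes\xi$, so the Lie algebra morphism $\xi\mapsto\xi^l$ matches the canonical one of \eqref{eq: canonical Lie algebra morphism of transformation Lie algebroids}. A local naturality check, using Remark \ref{rem: morphisms coincides if so locally} together with the fact that $D_{\mathrm{Lie}}$, $\iota_{X,(n)}$ and $L_{X,(n)}$ are defined from local Lie algebroid data on framed coverings, shows that this Lie algebroid isomorphism lifts to a strict isomorphism of VSA-inductive sheaves compatible with $D_{\mathrm{Lie}}$, $\iota_{X,(n)}$ and $L_{X,(n)}$. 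Hence $H^i_{\mathrm{ch},G}(T^*G^*)\cong H^i_{\mathrm{ch},G}(G^*\times\mathfrak{g})$.

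Applying Corollary \ref{prop: chiral equivariant Lie algebroid cohomology for tramsf. Lie algebroids when comm} then gives vanishing in positive degree and $H^0_{\mathrm{ch},G}(G^*\times\mathfrak{g})=\underrightarrow{\mathrm{Lim}}\, {C^{\infty,\gamma c}_{\mathrm{ch},G^*}}(G^*)^{\mathfrak{g}[t]}$, so it remains to show that these invariants exhaust the whole space. Since $\iota_{\xi,(n)}=b^\xi_n$ annihilates $\underrightarrow{\mathrm{Lim}}\, {C^{\infty,\gamma c}_{\mathrm{ch},G^*}}(G^*)$ (which contains no $b$'s), the only obstruction to invariance comes from $L_{\xi,(n)}=[D_{\mathrm{Lie}},b^\xi_n]$. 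For $n>0$ this is automatically zero by weight reasons, while for $n=0$ it acts as the classical Lie derivative along the dressing vector field of $\xi$. The claim therefore reduces to the triviality of the infinitesimal left dressing action of $\mathfrak{g}$ on $G^*$ in the commutative case, which is the key geometric input: in the Manin triple $\mathfrak{d}=\mathfrak{g}\oplus\mathfrak{g}^*$ the mixed bracket $[\xi,\eta]_\mathfrak{d}=\mathrm{ad}^*_\xi\eta-\mathrm{ad}^*_\eta\xi$ has vanishing $\mathfrak{g}^*$-component when $\mathfrak{g}$ is abelian, a short induction shows $\mathrm{Ad}_{u^{-1}}\xi\in\mathfrak{g}$ for every $u\in G^*$, so the dressing vector field, which is its $\mathfrak{g}^*$-projection, vanishes identically. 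The main technical point I expect to require care is the naturality step of paragraph one: it must be promoted to a strict isomorphism of VSA-inductive sheaves compatible with the Cartan operators, rather than merely an isomorphism of the underlying presheaves of vertex superalgebras.
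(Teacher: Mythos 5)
Your proposal follows essentially the same route as the paper's proof: identify $T^*G^*$ with the transformation Lie algebroid of the infinitesimal left dressing action via Lu's result, observe that commutativity of $G$ makes that action trivial, and then invoke Corollary \ref{prop: chiral equivariant Lie algebroid cohomology for tramsf. Lie algebroids when comm}, the triviality of the action forcing the $\mathfrak{g}[t]$-invariants of $\underrightarrow{\mathrm{Lim}}\, {C^{\infty, \gamma c}_{\mathrm{ch}, G^*}}(G^*)$ to be the whole space. The only quibble is your aside that $L_{\xi,(n)}$ vanishes for $n>0$ ``by weight reasons'' --- the module has elements of arbitrarily large weight, so that justification does not hold in general --- but this is harmless here, since the vanishing of the dressing vector fields already kills $L_{\xi,(n)}$ for every $n\ge 0$.
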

\begin{proof}
The infinitesimal left dressing action is trivial since $T$ is commutative. Therefore our assertion follows from Corollary  \ref{prop: chiral equivariant Lie algebroid cohomology for tramsf. Lie algebroids when comm}.
\end{proof}

\section*{Acknowledgments}
%\acknowledgement{
The author wishes to express his sincere gratitude to his advisor
Professor Atsushi Matsuo for helpful advice and continuous encouragement
during the course of this work. He is also thankful to Professor Hiroshi
Yamauchi (Tokyo Women's Christian University) for advice and
encouragement. 
%}

\providecommand{\bysame}{\leavevmode\hbox to3em{\hrulefill}\thinspace}
\providecommand{\MR}{\relax\ifhmode\unskip\space\fi MR }
% \MRhref is called by the amsart/book/proc definition of \MR.
\providecommand{\MRhref}[2]{%
  \href{http://www.ams.org/mathscinet-getitem?mr=#1}{#2}
}
\providecommand{\href}[2]{#2}

\end{document}